\documentclass[10pt,reqno]{amsart}

\usepackage[latin2]{inputenc}
\usepackage{amsmath}
\usepackage{graphicx}
\usepackage{amssymb}
\usepackage{esint}
\usepackage{color}
\usepackage{amsthm}
\usepackage{epsfig}
\usepackage{enumitem}
\usepackage{mathtools}
\usepackage{esvect}
\usepackage{tikz}
\usepackage{dsfont}
\usetikzlibrary{calc,intersections,through,backgrounds,patterns,arrows.meta, math,through}
\usepackage[labelformat = brace, position=top]{subcaption}
\captionsetup[subfigure]{position=t}
\usepackage[english]{babel}
\usepackage{mathrsfs}

\newtheorem{theorem}{Theorem}
\newtheorem{proposition}[theorem]{Proposition}
\newtheorem{lemma}[theorem]{Lemma}
\newtheorem{corollary}[theorem]{Corollary}
\newtheorem*{theorem*}{Theorem}

\theoremstyle{definition}
\newtheorem{definition}[theorem]{Definition}
\newtheorem{remark}[theorem]{Remark}

\def\XXint#1#2#3{{\setbox0=\hbox{$#1{#2#3}{\int}$ }
\vcenter{\hbox{$#2#3$ }}\kern-.6\wd0}}

\definecolor{Yellow}{rgb}{0.95,0.9,0.0} 
\definecolor{Red}{rgb}{0.8,0.1,0.1}
\definecolor{Green}{rgb}{0.1,0.65,0.2}
\definecolor{Blue}{rgb}{0.1,0.1,0.8}
\definecolor{Purple}{rgb}{0.7,0.1,0.7}
\definecolor{Grey}{rgb}{0.6,0.6,0.6}
\definecolor{LightRed}{rgb}{0.8,0.5,0.5}
\definecolor{LightBlue}{rgb}{0.3,0.2,0.7}

\newcommand{\supp}{\operatorname{supp}}

\newcommand{\dist}{\operatorname{dist}}

\newcommand{\Hc}{\mathcal{H}}

\newcommand{\Rd}[1][d]{{\mathbb{R}^{#1}}}
\newcommand{\R}{\mathbb{R}}
\newcommand{\N}{\mathbb{N}}
\allowdisplaybreaks[4]

\newcommand{\dH}[1][d{-}1]{\,\mathrm{d}\mathcal{H}^{#1}}

\newcommand{\ds}{\,\mathrm{d}s}
\newcommand{\dx}{\,\mathrm{d}x}
\newcommand{\dt}{\,\mathrm{d}t}

\newcommand{\eps}{\varepsilon}
\renewcommand{\vec}[1]{{\operatorname{#1}}}

\numberwithin{equation}{section}
\usepackage[linktocpage=true,colorlinks=true,linkcolor=Blue,citecolor=Green]{hyperref}

\begin{document}

\title[Convergence rates for the Allen--Cahn equation]
{Convergence rates for the Allen--Cahn equation with boundary contact energy: The non-perturbative regime}

\author{Sebastian Hensel}
\address{Institute of Science and Technology Austria (IST Austria), Am~Campus~1, 
3400 Klosterneuburg, Austria}
\email{sebastian.hensel@ist.ac.at}
\curraddr{Hausdorff Center for Mathematics, Universit{\"a}t Bonn, Endenicher Allee 62, 53115 Bonn, Germany
(\texttt{sebastian.hensel@hcm.uni-bonn.de})}

\author{Maximilian Moser}
\address{Fakult\"at f\"ur Mathematik, Universit\"at Regensburg, Universit\"atsstrasse 31, 93053 Regensburg, Germany}
\email{maximilian1.moser@mathematik.uni-regensburg.de}
\curraddr{Institute of Science and Technology Austria (IST Austria), Am~Campus~1, 
	3400 Klosterneuburg, Austria (\texttt{maximilian.moser@ist.ac.at})}


\begin{abstract}
We extend the recent rigorous convergence result of Abels and the second author 
(\href{https://arxiv.org/abs/2105.08434}{arXiv preprint 2105.08434})
concerning convergence rates for solutions of the Allen--Cahn equation 
with a nonlinear Robin boundary condition towards evolution by mean curvature flow 
with constant contact angle. More precisely, in the present work we manage 
to remove the perturbative assumption on the contact angle being close to ninety degree.
We establish under usual double-well type assumptions on the potential and for a certain
class of boundary energy densities the sub-optimal convergence rate of order~$\smash{\eps^{\frac{1}{2}}}$
for general contact angles~$\alpha \in (0,\pi)$.
For a very specific form of the boundary energy density, we even obtain from our methods
a sharp convergence rate of order~$\eps$; again for general contact angles~$\alpha \in (0,\pi)$.

Our proof deviates from the popular strategy based on rigorous asymptotic expansions and stability
estimates for the linearized Allen--Cahn operator. Instead, we follow
the recent approach by Fischer, Laux and Simon 
(\href{https://doi.org/10.1137/20M1322182}{SIAM J.\ Math.\ Anal.\ 52, 2020}),
thus relying on a relative entropy technique. We develop a careful adaptation
of their approach in order to encode the constant contact angle condition.
In fact, we perform this task at the level of the notion of gradient flow calibrations.
This concept was recently introduced in the context of weak-strong uniqueness for multiphase
mean curvature flow by Fischer, Laux, Simon and the first author 
(\href{https://arxiv.org/abs/2003.05478}{arXiv preprint 2003.05478}).

\medskip
\noindent \textbf{Keywords:} Mean curvature flow, contact angle,  
boundary contact energy, Allen--Cahn equation, relative entropy method, 
gradient flow calibration

\medskip
\noindent \textbf{Mathematical Subject Classification}:
	53E10, 
	35K57, 
	35K20, 
	35K61 
\end{abstract}

\maketitle
\tableofcontents

\section{Introduction} \label{sec:intro}

\subsection{Context} Curvature driven interface evolution
arises in a broad range of applications, including
for instance liquid-solid interface evolution in solidification processes (e.g., \cite{Langer1980}),
noise removal and feature enhancement in image processing (e.g., \cite{Sapiro2001}),
flame front propagation in combustion processes (e.g., \cite{Markstein1951}), or  
grain coarsening in an annealing polycrystal (e.g., \cite{Mullins1956}).
The present work is concerned with the most basic mathematical
model representing the evolution of an interface (i.e., the common boundary
of a binary system) driven by an extrinsic curvature quantity, namely
evolution by mean curvature flow (MCF). Of course, this is a classical
subject in the literature, see, e.g., the seminal works by Gage and Hamilton~\cite{Gage1986} 
and Grayson~\cite{Grayson1987} for the flow of a smooth and simple closed curve in~$\R^2$.

The main focus of the present work is related to the rigorous treatment of
a certain class of nontrivial boundary effects. More precisely, 
we are concerned with the mean curvature
flow of an interface within a physical domain~$\Omega \subset \Rd$
(e.g., a container holding a binary alloy with a moving internal interface), 
so that the interface intersects the domain boundary~$\partial\Omega$ at a constant contact
angle~$\alpha \in (0,\pi)$; see Figure~\ref{fig_strong_solution}
for an illustration of the geometry. 
The inclusion of such a boundary condition 
poses an interesting and nontrivial mathematical 
problem because the evolving geometry is necessarily singular 
due to the contact set.

Mean curvature flow of an interface with constant contact angle can be generated
as the $L^2$-gradient flow of a suitable energy functional. The total energy
consists of two contributions: \textit{i)} interfacial energy in the interior
of the container, and \textit{ii)} surface energy
along the boundary of the container. 
Given a disjoint partition of the container
into two phases represented by an open subdomain $\mathscr{A}\subset\Omega$ 
and its open complement $\Omega\setminus\overline{\mathscr{A}}$, denote with~$I$ the associated 
interface given by the common boundary of these two sets (cf.\ again 
Figure~\ref{fig_strong_solution}). Expressing the associated
surface tension constants by~$c_0$, $\sigma_+$ and~$\sigma_-$,
respectively, the total energy is then given by
\begin{align*}
E[\mathscr{A}] := c_0 \int_{I} 1\,\mathrm{d}\mathcal{H}^{d-1}
+ \sigma_+ \int_{\partial\mathscr{A} \cap \partial\Omega} 1\,\mathrm{d}\mathcal{H}^{d-1}
+ \sigma_- \int_{\partial(\Omega\setminus\overline{\mathscr{A}}) \cap \partial\Omega} 
1\,\mathrm{d}\mathcal{H}^{d-1},
\end{align*} 
or alternatively by subtracting the constant 
$\sigma_- \int_{\partial\Omega} 1\,\mathrm{d}\mathcal{H}^{d-1}$
\begin{align}
\label{eq:sharpInterfaceEnergy}
E[\mathscr{A}] := c_0 \int_{I} 1\,\mathrm{d}\mathcal{H}^{d-1}
+ (\sigma_+ {-} \sigma_-) \int_{\partial\mathscr{A} \cap \partial\Omega} 1\,\mathrm{d}\mathcal{H}^{d-1}.
\end{align} 
The surface tension constants are assumed to satisfy Young's relation, i.e., 
\begin{align*}
|\sigma_+ {-} \sigma_-| < c_0,
\end{align*} 
so that in particular there exists an angle
$\alpha \in (0,\pi)$ such that 
\begin{align*}
\sigma_+ {-} \sigma_- = c_0\cos\alpha.
\end{align*}
Switching the roles of~$\mathscr{A}$ and~$\Omega\setminus\overline{\mathscr{A}}$,
we may of course assume without loss of generality that~$\alpha \in (0,\frac{\pi}{2}]$.
The geometric interpretation of~$\alpha$ is that it represents the angle
formally formed by the intersection of the interface~$I$ with the boundary of
the container~$\partial\Omega$ through the domain~$\Omega\setminus\overline{\mathscr{A}}$
(cf.\ again Figure~\ref{fig_strong_solution}).

\begin{figure}
	\def\svgwidth{0.5\linewidth}
\begingroup%
  \makeatletter%
  \providecommand\color[2][]{%
    \errmessage{(Inkscape) Color is used for the text in Inkscape, but the package 'color.sty' is not loaded}%
    \renewcommand\color[2][]{}%
  }%
  \providecommand\transparent[1]{%
    \errmessage{(Inkscape) Transparency is used (non-zero) for the text in Inkscape, but the package 'transparent.sty' is not loaded}%
    \renewcommand\transparent[1]{}%
  }%
  \providecommand\rotatebox[2]{#2}%
  \newcommand*\fsize{\dimexpr\f@size pt\relax}%
  \newcommand*\lineheight[1]{\fontsize{\fsize}{#1\fsize}\selectfont}%
  \ifx\svgwidth\undefined%
    \setlength{\unitlength}{751.92462591bp}%
    \ifx\svgscale\undefined%
      \relax%
    \else%
      \setlength{\unitlength}{\unitlength * \real{\svgscale}}%
    \fi%
  \else%
    \setlength{\unitlength}{\svgwidth}%
  \fi%
  \global\let\svgwidth\undefined%
  \global\let\svgscale\undefined%
  \makeatother%
  \begin{picture}(1,0.58085213)%
    \lineheight{1}%
    \setlength\tabcolsep{0pt}%
    \put(0,0){\includegraphics[width=\unitlength,page=1]{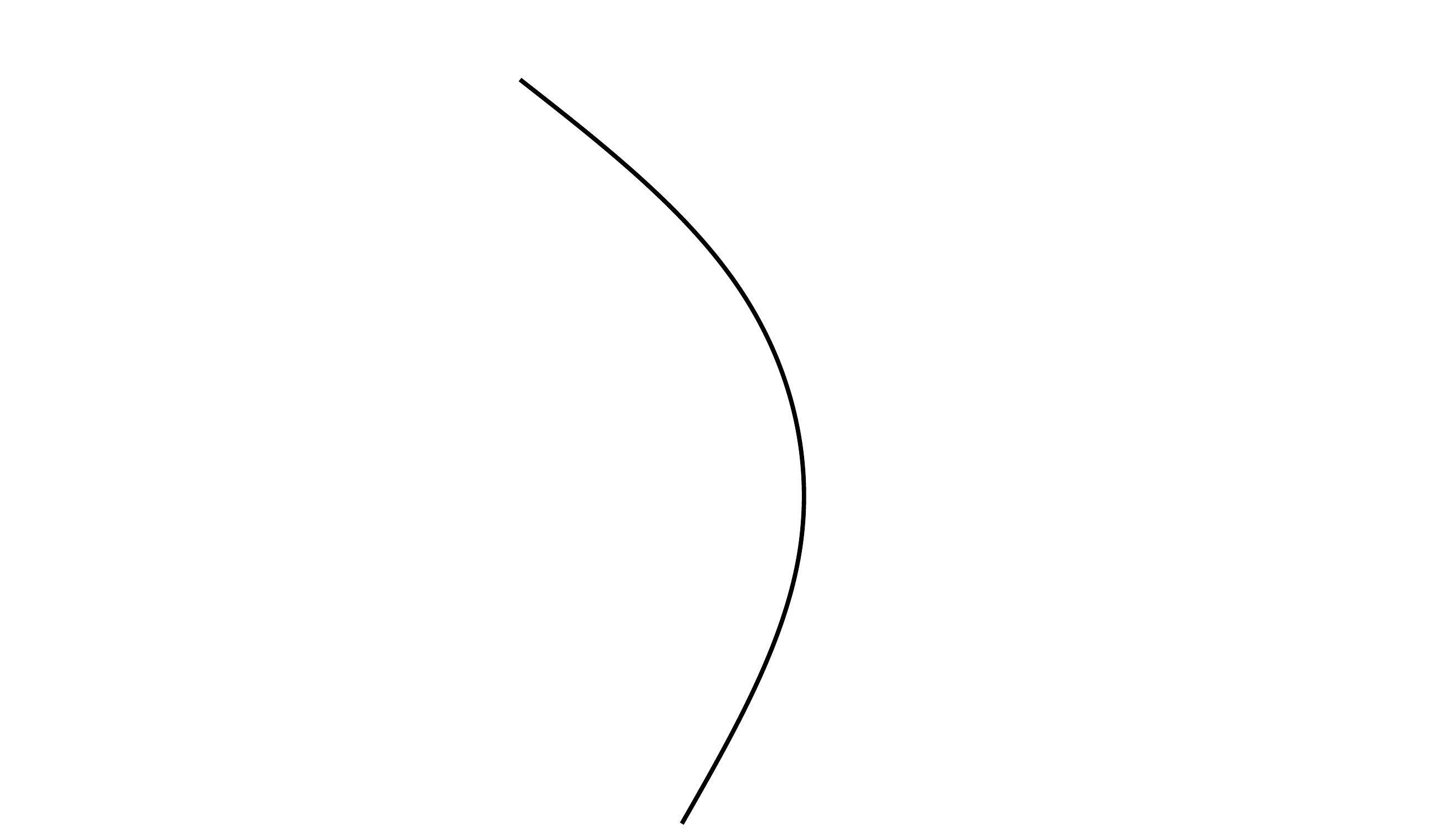}}%
    \put(0.84135978,0.15040361){\color[rgb]{0,0,0}\makebox(0,0)[lt]{\begin{minipage}{0.21801195\unitlength}\raggedright $\Omega$\end{minipage}}}%
    \put(0,0){\includegraphics[width=\unitlength,page=2]{strong_solution_picture2.pdf}}%
    \put(0.44609517,0.49999994){\color[rgb]{0,0,0}\makebox(0,0)[lt]{\lineheight{1.25}\smash{\begin{tabular}[t]{l}$\alpha$\end{tabular}}}}%
    \put(0,0){\includegraphics[width=\unitlength,page=3]{strong_solution_picture2.pdf}}%
    \put(0.53654134,0.07331495){\color[rgb]{0,0,0}\makebox(0,0)[lt]{\lineheight{1.25}\smash{\begin{tabular}[t]{l}$\alpha$\end{tabular}}}}%
    \put(0.092943,0.22997207){\color[rgb]{0,0,0}\makebox(0,0)[lt]{\begin{minipage}{0.39698135\unitlength}\raggedright $\mathscr{A}(t)$\end{minipage}}}%
    \put(0.52949483,0.40528996){\color[rgb]{0,0,0}\makebox(0,0)[lt]{\begin{minipage}{0.14307443\unitlength}\raggedright $I(t)$\end{minipage}}}%
    \put(0,0){\includegraphics[width=\unitlength,page=4]{strong_solution_picture2.pdf}}%
  \end{picture}%
\endgroup%

	\caption{Illustration of a prototypical geometry for interface evolution
	with constant contact angle.}\label{fig_strong_solution}
\end{figure}

As usual in the context of geometric evolution equations, the
corresponding flow in general can not avoid the occurrence
of topology changes and geometric singularities. For an
example specific to the framework of contact angle problems, one may
imagine an initially interior point of the interface
to touch the boundary of the container at a later time;
see~\cite[Figure~2]{Katsoulakis1995} for an illustration
of this scenario. It is for this reason that a global-in-time
representation of the dynamics is in general only possible
in a weaker form than the one provided by solution concepts relying on 
parametrized surfaces with boundary.

One popular approach in this direction consists of phase-field models
which are based on the introduction of a time-dependent order parameter taking values in the 
continuum~$[-1,1]$. Roughly speaking, the regions within the container~$\Omega$ 
in which the order parameter takes values close to~${+}1$ or~${-}1$ 
represent the two underlying evolving phases. The associated evolving interface
is in turn represented by the region in which the order parameter undergoes a transition 
between these two values. The relevant dynamics for the order parameter are again induced
by studying the (in our case $L^2$) gradient flow of an associated energy functional. 

Following the modeling in the sharp-interface regime, this energy also
consists of two contributions. Within the container~$\Omega$, we consider
the standard Cahn--Hilliard energy associated with a double-well type potential~$W$.
For the boundary contribution, we rely on the proposal of Cahn~\cite{Cahn1977}
and include a boundary contact energy in terms of a boundary energy density~$\sigma$.
Both contributions together then result in the following ansatz for the total energy
functional of the order parameter:
\begin{align}
\label{eq_energyPhaseField}
E_\eps[ u] :=  \int_{\Omega} \frac{\eps}{2}|\nabla u|^2
+ \frac{1}{\eps}W(u) \dx + \int_{\partial \Omega} \sigma(u) \dH.
\end{align}
The associated ($\smash{\frac{1}{\eps}}$-accelerated) $L^2$-gradient flow 
leads to the standard Allen--Cahn equation within the container~$\Omega$. 
Boundary effects along~$\partial\Omega$ are captured by a nonlinear 
Robin boundary condition; cf.\ \eqref{eq_AllenCahn}--\eqref{eq_initialData} below
for the full PDE~problem.

Concerning the static case, Modica~\cite{Modica1987} shows that
phase-field energies of the form~\eqref{eq_energyPhaseField} $\Gamma$-converge
to sharp-interface energies of the form~\eqref{eq:sharpInterfaceEnergy},
and thus relates the associated minimizers of these energy functionals in the limit~$\eps\searrow 0$.
The main goal of the present work is instead concerned with the corresponding dynamics.
It consists of a rigorous justification of the relation
of the $L^2$-gradient flows associated with the energies~\eqref{eq:sharpInterfaceEnergy}
and~\eqref{eq_energyPhaseField} in the limit $\eps\searrow 0$.
Computations based on formal asymptotic expansions by Owen and Sternberg~\cite{Owen1992}
suggest that solutions of the phase-field model based on~\eqref{eq_energyPhaseField}
converge to solutions of the sharp-interface model related with~\eqref{eq:sharpInterfaceEnergy}, 
i.e., mean curvature flow with constant contact angle.
The main result of the present work establishes this connection in a rigorous fashion 
for a certain class of double-well type potentials~$W$ and boundary energy densities~$\sigma$;
cf.\ Subsection~\ref{subsec:assumptions} below for precise assumptions.
To the best of our knowledge, our result is the first for which this is achieved
without any restriction on the value of the contact angle~$\alpha$.
Apart from the qualitative statement of convergence, we also establish
convergence rates as a consequence of a general quantitative stability estimate
between solutions of the phase-field model and solutions of its sharp-interface limit.
Within the full generality of our assumptions, these are suboptimal with
respect to the scaling in the parameter~$\eps$. However, for a specific choice
of the boundary energy density~$\sigma$, we even obtain optimal convergence rates.
We finally remark that our results hold true on a time horizon on which a sufficiently regular solution
to mean curvature flow with constant contact angle exists, i.e.,
prior to the occurrence of geometric singularities. We refer to Theorem~\ref{theo_mainResult} 
below for a complete mathematical statement.

Before we proceed in Subsection~\ref{subsec:assumptions} 
with a precise description of the mathematical setting and assumptions, 
let us first put our main result into the context of the existing literature.
In the most basic setting of the full-space problem $\Omega=\Rd$,
rigorous proofs for the convergence of solutions of the Allen--Cahn equation towards
solutions of MCF were already established several decades ago.
Evans, Soner and Souganidis~\cite{Evans1992} provide a
global-in-time convergence result based on the notion
of viscosity solutions for MCF, thus
relying in an essential way on the comparison principle.
The global-in-time convergence result of Ilmanen~\cite{ilmanen} instead
makes use of the notion of Brakke flows.
Only recently, Laux and Simon~\cite{LauxSimon18} succeeded in
deriving a conditional convergence result for the vectorial Allen--Cahn problem
whose sharp-interface limit is represented by multiphase~MCF.
Their result is phrased in terms of so-called BV~solutions
and is conditional due to a required energy convergence assumption
in the spirit of the seminal work by Luckhaus and Sturzenhecker~\cite{Luckhaus1995}.
Based on a natural varifold generalization of BV~solutions, 
even an unconditional convergence result holds true at least
in the two-phase regime as shown by Laux and the first author~\cite{Hensel2021e}. 
Finally, local-in-time convergence of solutions of the 
Allen--Cahn equation towards classical solutions of MCF
in the full-space setting~$\Omega=\Rd$ goes back to the seminal work of De~Mottoni and 
Schatzman~\cite{DeMottoni1995}. Their method is based on
rigorous asymptotic expansions as well as
stability estimates for the linearized Allen--Cahn operator.
%

When including boundary effects in form of constant contact angles,
the majority of the results in the existing literature treats the case
of vanishing boundary energy density~$\sigma=0$. In other words, 
a fixed-in-time ninety degree angle condition is prescribed for
the intersection of the interface with the boundary of the container.
In terms of the phase-field approximation, this modeling assumption 
leads to a homogeneous Neumann boundary condition for the order parameter.
Global-in-time convergence in this setting towards weak solutions
of~MCF interpreted in a viscosity sense is due to Katsoulakis, Kossioris and Reitich~\cite{Katsoulakis1995}.
A corresponding result with respect to a suitably generalized notion of
Brakke flows is derived by Mizuno and Tonegawa~\cite{Mizuno2015} (for strictly
convex and smooth containers) and Kagaya~\cite{Kagaya2018a} (for general
smooth containers). 

Local-in-time convergence results
in terms of smooth solutions to~MCF with constant ninety degree
angle condition were in turn established in a work of Chen~\cite{Chen1992}
and a recent work of Abels and the second author~\cite{Abels2019}.
The former relies on the construction of super- and subsolutions
of the Allen--Cahn equation as well as comparison principle arguments,
whereas the latter extends the method of De~Mottoni and 
Schatzman~\cite{DeMottoni1995} to the ninety degree contact angle setting;
see in this context also the work of the second author~\cite{Moser2021} for 
extensions of~\cite{Abels2019} in several directions.

We next comment on the literature in the
regime of general boundary energy densities~$\sigma$
modeling the case of general contact angles~$\alpha \in (0,\frac{\pi}{2}]$
in the sharp-interface limit. To the best of our knowledge,  
up to the present work no rigorous convergence result
allowing for arbitrary values of the contact angle has been established.
The only two results we are aware of consist of the
non-rigorous derivation of the sharp-interface limit by Owen and Sternberg~\cite{Owen1992}
as well as the recent work by Abels and the second author~\cite{Abels2021}, which
constitutes the first rigorous version of the formal arguments given
by Owen and Sternberg~\cite{Owen1992}. However, the results of~\cite{Abels2021} are restricted 
to a perturbative regime in the sense that the contact angle
is assumed to be close to ninety degrees. The present work does not rely on this requirement 
and therefore establishes for the first time a
local-in-time convergence proof for general contact angles~$\alpha \in (0,\frac{\pi}{2}]$,
which, similar to~\cite{Abels2021}, even provides convergence rates.
Note that in a companion article,
Laux and the first author~\cite{Hensel2021d} 
also prove a (purely qualitative) global-in-time convergence result
towards a novel notion of BV~solutions to~MCF
with general constant contact angle~$\alpha \in (0,\frac{\pi}{2}]$.

We conclude the discussion with some context on our methods.
In contrast to the work of Abels and the second author~\cite{Abels2021},
which makes use of rigorous asymptotic expansions and 
stability of the linearized Allen--Cahn operator in the spirit
of De~Mottoni and Schatzman~\cite{DeMottoni1995}, our proof
is directly inspired by the recent approach of 
Fischer, Laux and Simon~\cite{Fischer2020}. They employ a
novel relative entropy technique to prove, even in an optimally quantified way, 
local-in-time convergence of solutions of the 
full-space Allen--Cahn equation towards smooth solutions of~MCF.
Their technique is based on a natural phase-field analogue
of an error functional which has been extensively used
throughout recent years to study stability and weak-strong uniqueness
properties of weak solution concepts in interface evolution
problems on the sharp interface level. 

One version of this error functional,
which is supposed to measure the difference between two solutions
in a sufficiently strong sense, appeared for the first time
in the work of Jerrard and Smets~\cite{Jerrard2015} dealing
with binormal curvature flow of curves in~$\Rd[3]$. In a structurally analogous 
but slightly adapted form more suited for interface evolution, it was used
by Fischer and the first author~\cite{Fischer2020b} to establish
weak-strong uniqueness for a two-phase Navier--Stokes system with surface tension.
It was afterwards extended by Fischer, Laux, Simon and the first author~\cite{Fischer2020a}
to treat the case of planar multiphase~MCF (see also~\cite{Hensel2021} and~\cite{Hensel2021e}).
In the present work, we develop a careful adaptation of the approach by
Fischer, Laux and Simon~\cite{Fischer2020} to incorporate the 
contact angle condition. This is a nontrivial task due to the necessarily
singular nature of the geometry associated with a solution of~MCF
with constant contact angle. For a more detailed description
of our strategy, we refer to the discussion in Subsections~\ref{subsec:quantStab}
and~\ref{subsec:existenceCalibration} below.

\subsection{Assumptions and setting} \label{subsec:assumptions} 
In the present work, we study the convergence of solutions to the
Allen--Cahn equation with a nonlinear Robin boundary condition.
In its strong PDE formulation, the problem is given as follows:
\begin{subequations}
\begin{align}
\label{eq_AllenCahn}
\tag{AC1}
\partial_t u_\eps &= \Delta u_\eps - \frac{1}{\eps^2} W'( u_\eps)
&& \text{in } \Omega{\times}(0,T),
\\
\label{eq_RobinBoundaryCondition}
\tag{AC2}
(\mathrm{n}_{\partial \Omega}\cdot\nabla) u_\eps &= \frac{1}{\eps} \sigma'( u_\eps)
&& \text{on } \partial \Omega{\times} (0,T),
\\
\label{eq_initialData}
\tag{AC3}
u_\varepsilon|_{t=0} &= u_{\eps,0}
&& \text{in } \Omega.
\end{align}
\end{subequations}
Here, $\Omega\subset\Rd$ denotes a bounded (not necessarily convex) domain
with orientable and sufficiently regular boundary~$\partial\Omega$, the vector field
$\mathrm{n}_{\partial\Omega}$ denotes the associated inward pointing
unit normal, $T \in (0,\infty)$ is a finite time horizon, and 
$W\colon\Rd[]\to[0,\infty)$ is a standard free energy density (per unit volume) 
of double-well type whereas $\sigma\colon\Rd[]\to[0,\infty)$
denotes a boundary contact energy density (per unit surface area).
The latter two are assumed to be at least differentiable; more 
assumptions on~$W$ and~$\sigma$ will be imposed below.

As already mentioned previously, the Allen--Cahn problem~\eqref{eq_AllenCahn}--\eqref{eq_initialData}
can in fact be derived as the ($\smash{\frac{1}{\eps}}$-accelerated)
$L^2$-gradient flow of the free energy functional~\eqref{eq_energyPhaseField}.
In particular, sufficiently regular solutions to~\eqref{eq_AllenCahn}--\eqref{eq_initialData}
satisfy an energy dissipation equality of the form
\begin{align}
\label{eq_energyDissipationPhaseField}
E_\eps[ u_\eps(\cdot,T')] = E_\eps[ u_{\eps,0}]
- \int_0^{T'}\int_{\Omega} \frac{1}{\eps} H_\eps^2 \dx\dt
\end{align}
for all $T' \in [0,T]$, where the map~$H_\eps$ is defined by 
\begin{align}
\label{eq_curvaturePhaseField}
H_\eps := -\eps\Delta u_\eps + \frac{1}{\eps}W'( u_\eps).
\end{align}

We now specify our assumptions with respect to
the nonlinearities~$W$ and~$\sigma$. 
For the potential $W$, we impose $W\in C^2(\R)$ and the following conditions:
\begin{enumerate}[leftmargin=0.5cm]
\begin{subequations}
    \item[1.] $W$ has a double-well shape in the following sense:
    \begin{align}\label{eq_double_well1}
    W(\pm 1)=0, \quad W'(\pm1)=0, \quad W''(\pm1)>0, \quad  
    W>0\text{ in } \R\setminus\{\pm 1\}.
    \end{align}
	\item[2.] There exist $p\in[2,\infty)$ and constants $c,C,R>0$ such that 
	\begin{align}\label{eq_double_well2}
	c|u|^p\leq W(u)\leq C|u|^p \quad\text{ and }\quad |W'(u)|\leq C|u|^{p-1}\quad\text{ for all }|u|\geq R.
	\end{align}
	\item[3.] The decomposition $W=W_1+W_2$ holds with $W_1, W_2\in C^2(\R)$,
	\begin{align}\label{eq_double_well3}
	W_1\geq0\text{ convex }\quad\text{ and }\quad |W_2''|\leq C.
	\end{align}\end{subequations}
\end{enumerate}
Note that~\eqref{eq_double_well2} and~\eqref{eq_double_well3} represent
analogous assumptions as in~\cite{LauxSimon18}, 
where the vector-valued Allen-Cahn equation was considered
(see~\cite[Lemma~2.3]{LauxSimon18} for the existence of weak solutions in this case).
The standard choice satisfying the conditions~\eqref{eq_double_well1}--\eqref{eq_double_well3}
consists of course of $W(u) \sim (1 - u^2)^2$.

We next define
\begin{align}
\label{eq_auxIndicatorPhaseField}
\psi(r) &:= \int_{-1}^{r} \sqrt{2W(s)} \ds,
\quad r\in\Rd[],
\end{align}
as well as the interfacial surface tension constant
\begin{align}
\label{def_surfaceTension}
 c_0 := \int_{-1}^{1} \sqrt{2W(s)} \ds.
\end{align}
In view of the Modica--Mortola~\cite{Modica1977}/Bogomol'nyi~\cite{bogomolnyi} trick, 
the motivation behind this definition is
that the map~$\psi_\eps:=\psi( u_\eps)$ represents an approximation for
(a suitable multiple of) the indicator function of a phase with sharp interface
evolving by mean curvature flow. The boundary energy density is then assumed to satisfy
\begin{subequations}
\begin{align}
\label{eq_propBoundaryEnergyDensity}
\sigma\in C^{1,1}(\Rd[];[0,\infty)), \quad
\sigma' \geq 0 \text{ in } \Rd[], \quad
\supp\sigma'\subset [-1,1],
\end{align}
as well as
\begin{align}
\label{eq_propBoundaryEnergyDensity2}
\sigma(-1) = 0, 
\quad \sigma \geq \psi\cos\alpha \text{ on } [-1,1],
\quad \sigma(1) = \psi(1)\cos\alpha =  c_0\cos\alpha.
\end{align}
\end{subequations}
Due to~$\sigma(-1)=0$, the third item of~\eqref{eq_propBoundaryEnergyDensity2} 
in fact reads $\sigma(1) - \sigma(-1) =  c_0\cos\alpha$ and thus may be identified
with Young's law.

Under these assumptions on the potential~$W$ and the boundary energy density~$\sigma$,
we derive in the present work suboptimal convergence rates for solutions of
the Allen--Cahn problem~\eqref{eq_AllenCahn}--\eqref{eq_initialData} towards
smooth solutions of mean curvature flow with constant contact angle~$\alpha$
(cf.\ Theorem~\ref{theo_mainResult} below for a precise statement).
In order to achieve an optimal rate of convergence, our approach relies on a 
more restrictive assumption on the boundary energy density:
\begin{align}
\label{eq_choiceBoundaryEnergyDensity}
\sigma(r) := 
\begin{cases}
0 & r\in (-\infty,-1),
\\
\psi(r)\cos\alpha & r\in [-1,1],
\\
 c_0\cos\alpha & r \in (1,\infty).
\end{cases}
\end{align}
Note that~\eqref{eq_choiceBoundaryEnergyDensity} is obviously
consistent with~\eqref{eq_propBoundaryEnergyDensity} and~\eqref{eq_propBoundaryEnergyDensity2}.
%

\section{Main results \& definitions}
As already announced in the introduction, our main result
concerns the rigorous derivation of convergence rates for the Allen--Cahn
problem~\eqref{eq_AllenCahn}--\eqref{eq_initialData}
with well-prepared initial data towards the sharp interface limit
given by evolution by mean curvature flow 
with a constant contact angle $\alpha\in (0,\smash{\frac{\pi}{2}}]$. 
The precise statement reads as follows.

\begin{theorem}[Convergence rates for the Allen--Cahn problem~\eqref{eq_AllenCahn}--\eqref{eq_initialData}
towards strong solutions of mean curvature flow with constant contact 
angle~$0<\alpha\leq\smash{\frac{\pi}{2}}$]
\label{theo_mainResult} 
Consider a finite time horizon~$T\in (0,\infty)$ and a bounded $C^3$-domain $\Omega\subset\Rd[2]$,
and let $\mathscr{A}=\smash{\bigcup_{t\in [0,T]}\mathscr{A}(t){\times}\{t\}}$
be a strong solution to evolution by mean curvature flow in~$\Omega$ 
with constant contact angle~$\alpha\in (0,\smash{\frac{\pi}{2}}]$ in
the sense of Definition~\ref{def_strongSolution}. Denote for every $t\in[0,T]$ 
by~$\chi_{\mathscr{A}(t)}$ the characteristic function associated with~$\mathscr{A}(t)$.

Moreover, let a potential~$W$ and boundary energy density~$\sigma$ be
given such that the assumptions~\emph{\eqref{eq_double_well1}--\eqref{eq_double_well3}}
and~\emph{\eqref{eq_propBoundaryEnergyDensity}--\eqref{eq_propBoundaryEnergyDensity2}}  
are satisfied, respectively, and consider an initial phase 
field~$ u_{\eps,0}$ with finite energy~$E_\eps[ u_{\eps,0}]<\infty$
which moreover satisfies 
\begin{align}
\label{eq_boundInitialPhaseField}
& u_{\eps,0} \in [-1,1] \text{ almost everywhere in } \Omega.
\end{align}
Denote by~$u_\eps$ the associated weak solution
of the Allen--Cahn problem~\emph{\eqref{eq_AllenCahn}--\eqref{eq_initialData}}
in the sense of Definition~\ref{def_weakSolutionAllenCahn} (on a time horizon $> T$).

Then, there exists a constant $C=C(\mathscr{A},T) > 0$ such that it holds
\begin{align}
\label{eq_optimalConvergenceRates}
\big\|\psi\big( u_\eps(\cdot,T')\big) {-}  c_0\chi_{\mathscr{A}(T')}\big\|_{L^1(\Omega)}
\leq e^{CT'}\sqrt{E_{\mathrm{relEn}}\big[ u_{\eps,0}|\mathscr{A}(0)\big]
{+} E_{\mathrm{bulk}}[ u_{\eps,0}|\mathscr{A}(0)]}
\end{align}
for all $T'\in [0,T]$, where we recall 
from~\eqref{eq_auxIndicatorPhaseField} and~\eqref{def_surfaceTension}
the definition of~$\psi$ and~$c_0$, respectively.
For the definition of the relative energy functional~$E_{\mathrm{relEn}}$ 
and the bulk error functional~$E_{\mathrm{bulk}}$, we refer to~\eqref{eq_relEnergy} 
and~\eqref{eq_bulkError} below, respectively.

Furthermore, the class of finite energy initial phase fields 
satisfying~\eqref{eq_boundInitialPhaseField} and
\begin{align}
\label{eq_wellPreparedInitialRelativeEnergy}
&E_{\mathrm{relEn}}\big[ u_{\eps,0}|\mathscr{A}(0)\big]
+ E_{\mathrm{bulk}}[ u_{\eps,0}|\mathscr{A}(0)]
\lesssim \eps
\end{align}
is non-empty. In particular, for such well-prepared initial data
one obtains from the quantitative stability estimate~\eqref{eq_optimalConvergenceRates}
a suboptimal convergence rate of order $\eps^{\frac{1}{2}}$.
Finally, in case of the specific choice~\eqref{eq_choiceBoundaryEnergyDensity},
one may upgrade the requirement~\eqref{eq_wellPreparedInitialRelativeEnergy} from~$\eps$ to~$\eps^2$, and thus as a consequence the suboptimal convergence rate~$\eps^{\frac{1}{2}}$ 
to an optimal convergence rate of order~$\eps$.
\end{theorem}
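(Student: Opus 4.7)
The plan is to follow the relative entropy/calibration strategy of Fischer--Laux--Simon~\cite{Fischer2020}, adapting it so that both the constant contact angle condition and the nonlinear Robin condition~\eqref{eq_RobinBoundaryCondition} are encoded at the level of the functional $E_{\mathrm{relEn}}$. First, I would fix the strong solution $\mathscr{A}$ and construct a boundary-adapted gradient flow calibration $(\xi,B,\vartheta)$ on $\Omega\times[0,T]$: a vector field $\xi$ with $|\xi|\le 1$ extending the inward unit normal $\mathrm{n}_{I(t)}$ with $|\xi|=1$ on $I(t)$ and satisfying $\xi\cdot\mathrm{n}_{\partial\Omega}=\cos\alpha$ on~$\partial\Omega$; a transport velocity $B$ extending the normal velocity of $I(t)$ and tangent to~$\partial\Omega$; and a truncated signed-distance $\vartheta$ vanishing on $I(t)$ with the appropriate sign in each bulk phase. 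With these ingredients the relative entropy and bulk error functionals take the schematic form
\begin{align*}
E_{\mathrm{relEn}}[u_\eps|\mathscr{A}(t)] &= \int_\Omega\!\Big(\tfrac{\eps}{2}|\nabla u_\eps|^2 {+} \tfrac{1}{\eps}W(u_\eps)\Big)\dx - \int_\Omega \xi\cdot\nabla\psi(u_\eps)\dx + \int_{\partial\Omega}\!\big(\sigma(u_\eps) {-} \psi(u_\eps)\cos\alpha\big)\dH,\\
E_{\mathrm{bulk}}[u_\eps|\mathscr{A}(t)] &= \int_\Omega\!\big|\psi(u_\eps) - c_0\chi_{\mathscr{A}(t)}\big|\,|\vartheta|\dx,
\end{align*}
both non-negative thanks to the Bogomol'nyi inequality $\tfrac{\eps}{2}|\nabla u_\eps|^2+\tfrac{1}{\eps}W(u_\eps)\ge|\nabla\psi(u_\eps)|$, the bound $|\xi|\le 1$, and the lower bound on $\sigma$ in~\eqref{eq_propBoundaryEnergyDensity2}.

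The core step is then a Gronwall estimate for $E_{\mathrm{relEn}}+E_{\mathrm{bulk}}$. Differentiating in time, I would use the Allen--Cahn equation~\eqref{eq_AllenCahn} together with the energy dissipation~\eqref{eq_energyDissipationPhaseField} to rewrite the interior contributions, integrate by parts (turning $\mathrm{n}_{\partial\Omega}\cdot\nabla u_\eps$ into $\tfrac{1}{\eps}\sigma'(u_\eps)$ via~\eqref{eq_RobinBoundaryCondition}), and exploit the calibration identities (vanishing of $\partial_t\xi+(B\cdot\nabla)\xi+(\nabla B)^\top\xi$ and $\partial_t\vartheta+B\cdot\nabla\vartheta$ to appropriate order on $I(t)$) to absorb the cross-terms. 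The resulting boundary contributions reduce to an integral over $\partial\Omega$ of factors $\big(\sigma'(u_\eps)-\psi'(u_\eps)\cos\alpha\big)$ tested against a controlled weight; these are non-positive up to a Gronwall-compatible error under~\eqref{eq_propBoundaryEnergyDensity}--\eqref{eq_propBoundaryEnergyDensity2}, and vanish identically under~\eqref{eq_choiceBoundaryEnergyDensity}, since there $\sigma'=\psi'\cos\alpha$ on $[-1,1]$. A completion of squares using the $\tfrac{1}{\eps}H_\eps^2$ term from~\eqref{eq_energyDissipationPhaseField} then absorbs the remaining interior residuals, giving
\begin{align*}
\ddt\big(E_{\mathrm{relEn}}+E_{\mathrm{bulk}}\big) \leq C(\mathscr{A},T)\big(E_{\mathrm{relEn}}+E_{\mathrm{bulk}}\big),
\end{align*}
and Gronwall combined with the Cauchy--Schwarz bound $\|\psi(u_\eps)-c_0\chi_{\mathscr{A}(t)}\|_{L^1(\Omega)}^2\lesssim E_{\mathrm{relEn}}+E_{\mathrm{bulk}}$ (itself based on $|\vartheta|\gtrsim\min\{1,\dist(\cdot,I(t))\}$ on the support of the difference and on the Bogomol'nyi excess) produces~\eqref{eq_optimalConvergenceRates}.

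For well-prepared initial data I would take $u_{\eps,0}=\zeta(\vartheta(\cdot,0)/\eps)$, where $\zeta$ is the optimal transition profile solving $\zeta'=\sqrt{2W(\zeta)}$ with $\zeta(\pm\infty)=\pm 1$; a Taylor expansion in~$\eps$ shows $E_{\mathrm{relEn}}[u_{\eps,0}|\mathscr{A}(0)]+E_{\mathrm{bulk}}[u_{\eps,0}|\mathscr{A}(0)]\lesssim\eps$ in the general setting, and under~\eqref{eq_choiceBoundaryEnergyDensity} the leading-order boundary mismatch cancels exactly, improving the bound to $\lesssim\eps^2$ and hence the convergence rate from $\eps^{1/2}$ to~$\eps$. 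The main obstacle is the construction of the calibration $(\xi,B,\vartheta)$ near the contact set where $I(t)$ meets $\partial\Omega$: away from those points one mimics~\cite{Fischer2020} using a mollified signed distance, but at the contact points one must glue the interior tubular normal extension to a boundary extension encoding the fixed angle $\alpha$ while preserving both $|\xi|\le 1$ and the calibration identities. I plan to carry this out by flattening $\partial\Omega$ and $I(t)$ in local coordinates near each contact point, writing $\xi$ in these coordinates as an explicit interpolation between $\mathrm{n}_{I(t)}$ and the fixed unit vector making angle~$\alpha$ with $\mathrm{n}_{\partial\Omega}$, and gluing to the interior construction through a partition of unity; this is precisely the calibration existence problem to be addressed in Subsection~\ref{subsec:existenceCalibration}.
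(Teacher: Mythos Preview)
Your overall strategy matches the paper's proof: reduce to the existence of a boundary-adapted gradient flow calibration $(\xi,B,\vartheta)$ (Theorem~\ref{theo_existenceBoundaryAdaptedCalibration}), derive Gronwall-type stability estimates for $E_{\mathrm{relEn}}$ and $E_{\mathrm{bulk}}$ with respect to this calibration (Theorem~\ref{theo_convergenceRatesCalibratedPartition}), post-process via the coercivity $\|\psi(u_\eps)-c_0\chi\|_{L^1}^2\lesssim E_{\mathrm{bulk}}$ (Lemma~\ref{lem_coercivityBulkError}), and construct well-prepared initial data (Lemma~\ref{lem_wellPreparedInitialData}). The gluing of local calibration candidates near the contact points via interpolation on wedge-shaped sectors is also how the paper proceeds.

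Two points where your description would lead you astray. First, the boundary residual in the time evolution of $E_{\mathrm{relEn}}$ is not of the form $\sigma'(u_\eps)-\psi'(u_\eps)\cos\alpha$; after all integrations by parts it is $\int_{\partial\Omega}(\sigma(u_\eps)-\psi(u_\eps)\cos\alpha)\,\nabla^{\mathrm{tan}}\!\cdot B\dH$ (see~\eqref{eq_timeEvolutionRelEnergy}), and the mechanism is not a sign condition but direct absorption into $E_{\mathrm{relEn}}$ via the nonnegative boundary contribution~\eqref{eq_coercivityProp0}, which is precisely where the inequality $\sigma\geq\psi\cos\alpha$ from~\eqref{eq_propBoundaryEnergyDensity2} enters. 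Second, and more seriously, your proposed initial data $u_{\eps,0}=\zeta(\vartheta(\cdot,0)/\eps)$ fails with the weight~$\vartheta$ actually needed in $E_{\mathrm{bulk}}$. Near the contact points the signed distance to $I(t)$ is not defined throughout a full neighborhood via the tubular neighborhood diffeomorphism (the boundary wedge $W_{\partial\Omega}$ lies outside $\mathrm{im}(X_I)$), so to retain $C^1$-regularity and the transport estimate~\eqref{eq_weightEvol} the paper's $\vartheta$ is built to vanish not only on $I(t)$ but also along $\partial\Omega$ (cf.~\eqref{eq_weightCoercivity} and Section~\ref{sec_calib_theta}). Plugging such a $\vartheta$ into the optimal profile would create an $O(\eps)$-wide transition layer along all of $\partial\Omega$, making the initial relative energy $O(1)$ rather than $O(\eps)$. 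The paper therefore uses a \emph{separate} extended signed distance $\widetilde s$ to $I(0)$, obtained by continuing $I(0)$ linearly past $\partial\Omega$ along its tangent line at each contact point, and sets $u_{\eps,0}=\theta_0(\widetilde s/\eps)$; see Appendix~\ref{sec_appB}.
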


\begin{proof}
First note that due to Theorem~\ref{theo_existenceBoundaryAdaptedCalibration} there exists a boundary adapted gradient flow calibration~$(\xi,B,\vartheta)$ with respect to a strong solution~$\mathscr{A}$ 
evolving by mean curvature flow in~$\Omega$ with constant contact 
angle~$\alpha\in (0,\smash{\frac{\pi}{2}}]$ in the sense of Definition~\ref{def_strongSolution}. 
Hence the estimate~\eqref{eq_optimalConvergenceRates} follows directly from a combination 
of the quantitative stability estimates
relative to a calibrated evolution, see Theorem~\ref{theo_convergenceRatesCalibratedPartition},
a post-processing of the latter based on Lemma~\ref{lem_coercivityBulkError},
and Gronwall's inequality.

The assertions with respect to the existence of well-prepared initial phase fields
are part of Lemma~\ref{lem_wellPreparedInitialData}.
\end{proof}

\subsection{Quantitative stability with respect to calibrated evolutions in $d\geq 2$}
\label{subsec:quantStab}
Our approach to the proof of Theorem~\ref{theo_mainResult} 
is directly inspired by the recent work~\cite{Fischer2020} of Fischer, Laux and Simon,
who establish the same result in a full space setting. In contrast to other approaches (cf.\ Section~\ref{sec:intro}), 
they capitalize on a novel relative
entropy technique. Their strategy can be interpreted as a diffuse interface
analogue of the relative entropy approach to weak-strong uniqueness
for certain mean curvature driven sharp interface evolution problems 
as introduced in~\cite{Fischer2020b} by Fischer and the first author
(cf.\ also the earlier work~\cite{Jerrard2015} of Jerrard and Smets for a similar approach in the 
setting of a codimension two evolution problem).

However, in comparison to the work~\cite{Fischer2020} of Fischer, Laux and Simon,
we will employ a conceptually more general viewpoint by splitting the task into
two separate steps. This two-step procedure is directly inspired by the
recent work~\cite{Fischer2020a} of Fischer, Laux, Simon and the first author
on weak-strong uniqueness for planar multiphase mean curvature flow
(cf.\ also the work~\cite{Hensel2021} of Laux and the first author).
The first step concerns the notion of a calibrated evolution along the
gradient flow of an interfacial energy, which in a sense generalizes the classical notion
of calibrations from minimal surface theory to an evolutionary setting,
and to prove quantitative stability of solutions to~\eqref{eq_AllenCahn}--\eqref{eq_initialData}
with respect to such calibrated evolutions. The second step then consists of
showing that sufficiently regular solutions to mean curvature flow with
constant contact angle are in fact calibrated, so that the stability estimates
from the first step can be used to yield the asserted convergence rate. 

The following definition represents a generalization of the two-phase versions 
of~\cite[Definition~2 and Definition~4]{Fischer2020a} 
in order to encode the correct constant contact angle condition for the
intersection of the evolving interface with the boundary of the container.

\begin{definition}[Calibrated evolutions and boundary adapted gradient flow calibrations]
\label{def_boundaryAdaptedCalibration}
Let $T\in (0,\infty)$ be a finite time horizon and let~$\Omega$
be a bounded $C^2$-domain in~$\R^d$. 
Consider $\mathscr{A}=\smash{\bigcup_{t\in [0,T]}\mathscr{A}(t){\times}\{t\}}$
such that for each $t\in [0,T]$ the set~$\mathscr{A}(t)$
is an open subset of~$\Omega$ with finite perimeter in~$\Rd$ 
and the closure of $\partial^*\mathscr{A}(t) \subset \overline{\Omega}$
is given by $\partial\mathscr{A}(t)$. Denote for all $t\in [0,T]$ 
by~$\mathrm{n}_{\partial^*\mathscr{A}(t)}$ the measure-theoretic
unit normal along~$\partial^*\mathscr{A}(t)$ pointing inside~$\mathscr{A}(t)$.
Writing~$\chi(\cdot,t)$ for the characteristic function associated with~$\mathscr{A}(t)$,
we assume that $\chi\in BV(\Rd{\times}(0,T)) \cap C([0,T];L^1(\Rd))$
and that the measure $\partial_t\chi$ is absolutely continuous 
with respect to the measure~$|\nabla\chi|$ restricted 
to~$\smash{\bigcup_{t\in (0,T)}(\partial^*\mathscr{A}(t) \cap \Omega){\times}\{t\}}$ 
(i.e., the associated Radon--Nikod\'{y}m derivative yields a normal speed).
Let $\alpha\in (0,\smash{\frac{\pi}{2}}]$ and~$ c_0>0$ be two constants. 

We then call~$\mathscr{A}=\smash{\bigcup_{t\in [0,T]}\mathscr{A}(t){\times}\{t\}}$ 
a \emph{calibrated evolution} for the $L^2$-gradient flow of the sharp interface energy functional
\begin{align}
E[\mathscr{A}(t)] :=  c_0 \int_{\partial^*\mathscr{A}(t) \cap \Omega} 1 \, \dH
+  c_0 \int_{\partial^*\mathscr{A}(t) \cap \partial \Omega} \cos\alpha \, \dH
\end{align}
if there exists a triple $(\xi,B,\vartheta)$ of maps as well as 
constants $c\in (0,1)$ and $C>0$ subject to the following conditions.
First, concerning regularity it is required that
\begin{subequations}
\begin{align}
\label{eq_regularityXi}
\xi &\in C^1\big(\overline{\Omega}{\times}[0,T];\Rd\big)
\cap C\big([0,T];C^2_{\mathrm{b}}(\Omega;\Rd)\big),
\\ \label{eq_regularityB}
B &\in C\big([0,T];C^1(\overline{\Omega};\Rd)\cap C^2_{\mathrm{b}}(\Omega;\Rd)\big),
\\ \label{eq_regularityWeight}
\vartheta &\in C^1_{\mathrm{b}}\big(\Omega{\times}[0,T]\big) 
\cap C\big(\overline{\Omega}{\times}[0,T];[-1,1]\big).
\end{align}
\end{subequations}
Second, for each~$t\in [0,T]$ the vector field~$\xi(\cdot,t)$ models an extension of the 
unit normal of~$\partial^*\mathscr{A}(t) \cap \Omega$ and the vector field~$B(\cdot,t)$ models 
an extension of a velocity vector field of~$\partial^*\mathscr{A}(t) \cap \Omega$ in the precise
sense of the conditions
\begin{subequations}
\begin{align}
\label{eq_consistencyProperty}
\xi(\cdot,t) &= \mathrm{n}_{\partial^*\mathscr{A}(t)}
\text{ and } \big(\nabla\xi(\cdot,t)\big)^\mathsf{T}\mathrm{n}_{\partial^*\mathscr{A}(t)} = 0
&& \text{along } \partial^*\mathscr{A}(t)\cap \Omega, 
\\
\label{eq_quadraticLengthConstraint}
|\xi|(\cdot,t) &\leq 1 {-} c\min\big\{1,\dist^2\big(\cdot,\overline{\partial^*\mathscr{A}(t)\cap \Omega}\big)\big\}
&& \text{in } \Omega,
\end{align}
as well as
\begin{align}\label{eq_calibration1}
|\partial_t\xi + (B\cdot\nabla)\xi + (\nabla B)^\mathsf{T}\xi|(\cdot,t)
&\leq C\min\big\{1,\dist\big(\cdot,\overline{\partial^*\mathscr{A}(t)\cap \Omega}\big)\big\}
&& \text{in } \Omega,
\\\label{eq_calibration2}
|\xi\cdot(\partial_t\xi+(B\cdot\nabla)\xi)|(\cdot,t)
&\leq C\min\big\{1,\dist^2\big(\cdot,\overline{\partial^*\mathscr{A}(t)\cap \Omega}\big)\big\}
&& \text{in } \Omega,
\\ \label{eq_calibrationEvolByMCF}
|\xi\cdot B + \nabla\cdot\xi|(\cdot,t) 
&\leq C\min\big\{1,\dist\big(\cdot,\overline{\partial^*\mathscr{A}(t)\cap \Omega}\big)\big\}
&& \text{in } \Omega,
\\
|\xi\cdot(\xi\cdot\nabla) B|(\cdot,t) 
&\leq C\min\big\{1,\dist\big(\cdot,\overline{\partial^*\mathscr{A}(t)\cap \Omega}\big)\big\}
&& \text{in } \Omega,\label{eq_calibration4}
\end{align}
which are accompanied by the (natural) boundary conditions
\begin{align}
\label{eq_boundaryCondXi}
\xi(\cdot,t)\cdot\mathrm{n}_{\partial \Omega}(\cdot) &= \cos \alpha
&&\text{along } \partial \Omega,
\\\label{eq_boundaryCondVelocity}
B(\cdot,t)\cdot\mathrm{n}_{\partial \Omega}(\cdot) &= 0
&&\text{along } \partial \Omega.
\end{align}
\end{subequations}
Third, for all~$t\in [0,T]$ the weight~$\vartheta(\cdot,t)$ models 
a truncated and sufficiently regular ``signed distance'' of~$\partial^*\mathscr{A}(t)\cap \Omega$ 
in the sense that 
\begin{subequations}
\begin{align}
\label{eq_weightNegativeInterior}
\vartheta(\cdot,t) &< 0 
&&\text{in the essential interior of } \mathscr{A}(t) \text{ within } \Omega,
\\ \label{eq_weightPositiveExterior}
\vartheta(\cdot,t) &> 0 
&&\text{in the essential exterior of } \mathscr{A}(t),
\\ \label{eq_weightZeroInterface}
\vartheta(\cdot,t) &= 0
&&\text{along } \partial^*\mathscr{A}(t) \cap \Omega,
\end{align}
as well as
\begin{align}
\label{eq_weightCoercivity}
\min\{\dist(\cdot,\partial\Omega),
\dist\big(\cdot,\overline{\partial^*\mathscr{A}(t) \cap \Omega}\big), 1
\big\} &\leq C|\vartheta|(\cdot,t)
&&\text{in } \Omega,
\\
\label{eq_weightEvol}
|\partial_t\vartheta + (B\cdot\nabla)\vartheta|(\cdot,t)
&\leq C|\vartheta|(\cdot,t)
&&\text{in } \Omega. 
\end{align}
\end{subequations}

Given a calibrated evolution~$\mathscr{A}=\smash{\bigcup_{t\in [0,T]}\mathscr{A}(t){\times}\{t\}}$,
an associated triple~$(\xi,B,\vartheta)$ subject to these requirements
is called a \emph{boundary adapted gradient flow calibration}.
\end{definition}		

We remark that the second property in~\eqref{eq_consistencyProperty} only
enters the proof of Lemma \ref{lem_wellPreparedInitialData} on the existence 
of well-prepared initial data in the sense of Theorem \ref{theo_mainResult},
and thus is in principle only needed at the initial time $t=0$. 
Note also that for sufficiently small $c$ in~\eqref{eq_quadraticLengthConstraint} 
there is no contradiction with~\eqref{eq_boundaryCondXi}.

Keeping in mind that the vector field~$\xi$ models an extension of the
unit normal vector field of the evolving interface whereas~$B$ models
an extension of an associated velocity vector field, the boundary
conditions~\eqref{eq_boundaryCondXi} and~\eqref{eq_boundaryCondVelocity}
are natural. Indeed, the former simply encodes the constant contact
angle condition	along the evolving contact set whereas the latter
is directly motivated by the fact that the evolution of the contact set
occurs within the domain boundary. Note also that condition~\eqref{eq_calibrationEvolByMCF}
is then the only requirement in the previous definition which formally
makes a connection to evolution by mean curvature flow.

The merit of Definition~\ref{def_boundaryAdaptedCalibration} consists of the fact
that it already implies a rigorous justification of the heuristic that
solutions to the Allen--Cahn problem~\eqref{eq_AllenCahn}--\eqref{eq_initialData} 
with well-prepared initial data
represent an approximation to evolution by mean curvature flow with
constant contact angle (for a non-rigorous derivation based on formally
matched asymptotic expansions, see Owen and Sternberg~\cite{Owen1992}).
More precisely, we show that solutions to the Allen--Cahn problem~\eqref{eq_AllenCahn}--\eqref{eq_initialData}
can in a way be interpreted as stable perturbations of a calibrated evolution
(as measured in the sense of a relative energy).
						
\begin{theorem}[Quantitative stability for the 
Allen--Cahn problem~\eqref{eq_AllenCahn}--\eqref{eq_initialData} 
with respect to a calibrated evolution]
\label{theo_convergenceRatesCalibratedPartition}
Consider a finite time horizon~$T\in (0,\infty)$ and a bounded $C^2$-domain $\Omega\subset\Rd$,
fix a contact angle $\alpha\in (0,\smash{\frac{\pi}{2}}]$, and 
let $\mathscr{A}=\smash{\bigcup_{t\in [0,T]}\mathscr{A}(t){\times}\{t\}}$ be a calibrated
evolution with respect to this data in the sense
of Definition~\ref{def_boundaryAdaptedCalibration}.
Furthermore, let a potential~$W$ as well as a boundary energy density~$\sigma$ be
given such that the assumptions~\emph{\eqref{eq_double_well1}--\eqref{eq_double_well3}}
and~\emph{\eqref{eq_propBoundaryEnergyDensity}--\eqref{eq_propBoundaryEnergyDensity2}}  
are satisfied, respectively. Consider finally an initial 
phase field~$ u_{\eps,0}\in H^1(\Omega)$ with finite energy~$E_\eps[ u_{\eps,0}]<\infty$
such that $ u_{\eps,0} \in [-1,1]$ almost everywhere in~$\Omega$.

Then, denoting by~$ u_\eps$ the associated weak solution
of the Allen--Cahn problem~\emph{\eqref{eq_AllenCahn}--\eqref{eq_initialData}}
in the sense of Definition~\ref{def_weakSolutionAllenCahn}, by~$\chi$
the time-dependent characteristic function associated with~$\mathscr{A}$,
as well as by~$E_{\mathrm{relEn}}[ u_\eps|\mathscr{A}]$ and $E_{\mathrm{bulk}}[ u_\eps|\mathscr{A}]$
the relative energy functional and bulk error functional defined by~\eqref{eq_relEnergy}
and~\eqref{eq_bulkError}, respectively, there exists a constant~$C = C(\mathscr{A},T) > 0$ such that
for all $T'\in [0,T]$
\begin{align*}
E_{\mathrm{relEn}}[ u_\eps|\mathscr{A}](T')
&\leq E_{\mathrm{relEn}}[ u_\eps|\mathscr{A}](0)
+ C\int_0^{T'} E_{\mathrm{relEn}}[ u_\eps|\mathscr{A}](t) \,\dt,
\\
E_{\mathrm{bulk}}[ u_\eps|\mathscr{A}](T')
&\leq (E_{\mathrm{relEn}}{+}E_{\mathrm{bulk}})[ u_\eps|\mathscr{A}](0)
+ C\int_0^{T'} (E_{\mathrm{relEn}}{+}E_{\mathrm{bulk}})[ u_\eps|\mathscr{A}](t) \,\dt.
\end{align*}
\end{theorem}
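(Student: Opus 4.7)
I plan to prove both Gronwall-type inequalities by differentiating in time the relative energy functional, which I expect to take the form
\begin{align*}
E_{\mathrm{relEn}}[u_\eps|\mathscr{A}](t) = \int_\Omega \bigg(\frac{\eps}{2}|\nabla u_\eps|^2 + \frac{1}{\eps}W(u_\eps)\bigg)\dx - \int_\Omega \xi\cdot\nabla\psi(u_\eps)\dx + \int_{\partial\Omega} \bigl(\sigma(u_\eps){-}\psi(u_\eps)\cos\alpha\bigr)\dH,
\end{align*}
along the Allen--Cahn flow and bounding every remainder term by $C\cdot E_{\mathrm{relEn}}$. Non-negativity follows from a Cauchy--Schwarz/Bogomol'nyi rearrangement of the bulk part, which exhibits both the Modica--Mortola discrepancy $(\sqrt{\eps}|\nabla u_\eps|{-}\sqrt{2W(u_\eps)/\eps})^2$ and a non-negative tilt defect proportional to $(1{-}|\xi|)|\nabla\psi(u_\eps)|$ (using \eqref{eq_quadraticLengthConstraint}), and from the assumption $\sigma\geq\psi\cos\alpha$ on $[-1,1]$ for the boundary part; both estimates reappear as coercivity in the later bounding step. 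The bulk error I expect to be of the form $E_{\mathrm{bulk}}[u_\eps|\mathscr{A}](t)=\int_\Omega |\psi(u_\eps(\cdot,t)){-}c_0\chi_{\mathscr{A}(t)}|\,|\vartheta(\cdot,t)|\dx$, or a signed variant that exploits the sign information on $\vartheta$ from \eqref{eq_weightNegativeInterior}--\eqref{eq_weightZeroInterface}.

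\textbf{Differentiating along the Allen--Cahn flow.} First I use \eqref{eq_energyDissipationPhaseField} to rewrite $\frac{d}{dt}E_\eps[u_\eps]=-\int_\Omega \eps^{-1}H_\eps^2\dx$. I then split $\frac{d}{dt}\int_\Omega \xi\cdot\nabla\psi(u_\eps)\dx$ into a $\partial_t\xi$-contribution and a $\partial_t u_\eps$-contribution; for the latter I use $\partial_t\psi(u_\eps)=\sqrt{2W(u_\eps)}\partial_t u_\eps=-\eps^{-1}\sqrt{2W(u_\eps)}\,H_\eps$ and integrate by parts. The Robin condition \eqref{eq_RobinBoundaryCondition} combined with \eqref{eq_boundaryCondXi} arranges that the trace on $\partial\Omega$ is precisely what is needed to absorb $\frac{d}{dt}\int_{\partial\Omega}(\sigma(u_\eps){-}\psi(u_\eps)\cos\alpha)\dH$, up to contributions that vanish off $\{|u_\eps|\leq 1\}$ by \eqref{eq_propBoundaryEnergyDensity}. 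After artificially inserting the velocity $B$ wherever it is missing, the bulk terms reorganise into a favourably signed square involving $H_\eps$ and the discrepancy (which can be dropped), plus remainders pairing the small calibration quantities from \eqref{eq_calibration1}--\eqref{eq_calibration4} with factors of $\nabla\psi(u_\eps)$ or $H_\eps$.

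\textbf{Absorbing the remainder and the bulk error.} Since each of those calibration quantities is pointwise bounded by $C\,\dist(\cdot,\overline{\partial^*\mathscr{A}(t)\cap\Omega})$, and since exactly this distance is what the tilt defect and the discrepancy in $E_{\mathrm{relEn}}$ can be ``traded'' against via Young's inequality, every remainder is bounded by $C\cdot E_{\mathrm{relEn}}$, exactly as in the two-phase Fischer--Laux--Simon argument. For the bulk error I differentiate $E_{\mathrm{bulk}}$ in time and use the evolution \eqref{eq_weightEvol} of $\vartheta$, the normal-velocity interpretation of $\partial_t\chi_{\mathscr{A}}$ via $B\cdot\mathrm{n}_{\partial^*\mathscr{A}}$, and the Allen--Cahn equation on $\psi(u_\eps)$. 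The $\vartheta$-transport term contributes $C\cdot E_{\mathrm{bulk}}$ directly, and all residual differences between the diffusive evolution of $\psi(u_\eps)$ and pure transport by $B$ are already controlled by $E_{\mathrm{relEn}}$.

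\textbf{Main obstacle.} The essentially new ingredient compared to the full-space setting is the treatment of the boundary. The identity $\xi\cdot\mathrm{n}_{\partial\Omega}\equiv\cos\alpha$ in \eqref{eq_boundaryCondXi} is constant in space and time on $\partial\Omega$, so that $\partial_t\xi\cdot\mathrm{n}_{\partial\Omega}=0$ there; combined with the tangentiality \eqref{eq_boundaryCondVelocity} of $B$, this is what ensures that neither the calibration correction fields nor the velocity $B$ generate uncontrolled fluxes across $\partial\Omega$, even though the contact set $\partial^*\mathscr{A}(t)\cap\partial\Omega$ is singular from the point of view of the ambient geometry. Together with the compatibility $\sigma(\pm 1)=\psi(\pm 1)\cos\alpha$, $\sigma\geq\psi\cos\alpha$ on $[-1,1]$, and $\sigma'\geq 0$ with $\supp\sigma'\subset[-1,1]$, these properties conspire to yield an exact boundary cancellation; organising this cancellation is the technical heart of the proof, and once it is in place, Gronwall's inequality closes both of the claimed estimates.
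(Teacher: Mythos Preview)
Your proposal is correct and follows essentially the same route as the paper: split $E_{\mathrm{relEn}}=E_\eps-\int_\Omega\xi\cdot\nabla\psi_\eps-\int_{\partial\Omega}\psi_\eps\cos\alpha$, use the sharp energy dissipation identity for $E_\eps$, differentiate the calibration term and integrate by parts (the constancy of $\xi\cdot\mathrm{n}_{\partial\Omega}$ and the tangentiality of $B$ kill the dangerous boundary fluxes), insert $B$ and complete two squares in $H_\eps$, and then close via the coercivity properties and \eqref{eq_calibration1}--\eqref{eq_calibration4}; the bulk error is handled exactly as you describe via the transport estimate \eqref{eq_weightEvol} on $\vartheta$ together with $\vartheta=0$ on the interface. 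One minor correction to your ``exact boundary cancellation'': after all manipulations a residual boundary term $\int_{\partial\Omega}\big(\sigma(u_\eps)-\psi(u_\eps)\cos\alpha\big)\,\nabla^{\mathrm{tan}}\!\cdot B\,\dH$ survives, and it is not cancelled but absorbed directly by the nonnegative boundary contribution of $E_{\mathrm{relEn}}$ (this is precisely where $\sigma\geq\psi\cos\alpha$ on $[-1,1]$ is used a second time).
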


Apart from the above quantitative stability result in terms of
the phase-field approximation, we remark that
a calibrated evolution in the sense of Definition~\ref{def_boundaryAdaptedCalibration}
also gives rise to a weak-strong uniqueness principle for a notion of BV~solutions
to evolution by mean curvature flow with constant contact angle.
This is made precise in a paper by Laux and the first author~\cite{Hensel2021d}
(for a major part of the argument, one may already consult Subsection~2.3.3 of the 
PhD~thesis~\cite{Hensel2021a} of the first author).

\subsection{Existence of boundary adapted gradient flow calibrations in $d=2$}
\label{subsec:existenceCalibration}
In view of Theorem~\ref{theo_convergenceRatesCalibratedPartition},
it essentially remains to show in a second step that sufficiently regular solutions to
evolution by mean curvature flow with constant contact
angle admit a boundary adapted gradient flow calibration.
This is the content of the following result, which is stated in the planar setting
for simplicity only. We expect an extension to the $d=3$ case (i.e.,
an evolving contact line) to be rather straightforward; definitely
less involved than the triple line construction in the recent
work~\cite{Hensel2021} of Laux and the first author.
For a related (yet again planar) construction in the case of two-phase Navier--Stokes
flow with constant ninety degree contact angle, we refer to the recent 
work~\cite{Fischer2021} of Marveggio and the first author.

\begin{theorem}[Strong solutions of planar mean curvature flow with constant contact 
angle~$0<\alpha\leq\smash{\frac{\pi}{2}}$ are calibrated]
\label{theo_existenceBoundaryAdaptedCalibration}
Fix a finite time horizon~$T\in (0,\infty)$ and a bounded $C^3$-domain $\Omega\subset\Rd[2]$,
and let $\mathscr{A}=\smash{\bigcup_{t\in [0,T]}\mathscr{A}(t){\times}\{t\}}$
be a strong solution to evolution by mean curvature flow in~$\Omega$ 
with constant contact angle~$\alpha\in (0,\smash{\frac{\pi}{2}}]$ in
the sense of Definition~\ref{def_strongSolution}. Then, the evolution given by~$\mathscr{A}$
is calibrated in the sense of Definition~\ref{def_boundaryAdaptedCalibration}.
\end{theorem}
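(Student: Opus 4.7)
My plan is to build the triple $(\xi, B, \vartheta)$ by a partition-of-unity argument combining a standard signed-distance construction on the interior part of a tubular neighborhood of $I(t) := \partial^*\mathscr{A}(t) \cap \Omega$ with a local construction around each of the contact points $\overline{I(t)} \cap \partial\Omega$. Since $d=2$ and $\mathscr{A}$ is a strong solution, the latter set consists of finitely many points $p_1(t),\dots,p_N(t)$ depending smoothly on $t$. I fix a small tubular neighborhood of $\overline{I(t)}$ together with a cover by one open set $U_0(t)$ staying at positive distance from $\partial\Omega$ and open sets $U_i(t)$ around each $p_i(t)$, along with a subordinate $t$-smooth partition of unity $\{\eta_i\}$.

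\textbf{Interior construction.} On $U_0(t)$ let $s(\cdot,t)$ denote the signed distance to $I(t)$ (with $s>0$ outside $\mathscr{A}$) and set $\xi_0 := \chi(s)\,\nabla s$, where $\chi\in C^\infty(\R;[0,1])$ satisfies $\chi(0)=1$, $\chi'(0)=0$, $1-\chi(r)\geq c r^2$ for small $|r|$, and has compact support. Using $\nabla^2 s\,\nabla s=0$ (from $|\nabla s|=1$) and $\chi'(0)=0$, one checks $\xi_0|_I=\mathrm{n}_{\partial^*\mathscr{A}}$, $(\nabla\xi_0)^{\mathsf{T}}\mathrm{n}_{\partial^*\mathscr{A}}=0$ on $I$, and the quadratic length constraint \eqref{eq_quadraticLengthConstraint}. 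Define $B_0(x,t):=-H(\pi_I(x,t),t)\,\nabla s(x,t)$ where $\pi_I$ is the nearest-point projection to $I$ and $H$ the scalar mean curvature (with signs chosen so that MCF reads $\partial_t s + B_0\cdot\nabla s=0$ on $I$). Then on $I$ one has $\xi_0\cdot B_0 = -H = -\nabla\cdot\xi_0$, giving \eqref{eq_calibrationEvolByMCF}, while Taylor-expanding around $I$ and using the MCF evolution of $s$ yields \eqref{eq_calibration1}--\eqref{eq_calibration2} and \eqref{eq_calibration4}.

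\textbf{Contact-point construction.} Near each $p_i(t)$ introduce local $C^3$-coordinates flattening $\partial\Omega\cap U_i$ to the real axis; in these coordinates $I(t)\cap U_i$ is a $C^2$ arc meeting the axis transversally at angle $\alpha$. With signs chosen so that
\[
\mathrm{n}_I(p_i(t),t) = \cos\alpha\,\mathrm{n}_{\partial\Omega}(p_i(t)) + \sin\alpha\,\mathrm{t}_{\partial\Omega}(p_i(t)),
\]
prescribe the traces
\[
\xi_i|_{I\cap U_i} := \mathrm{n}_I, \qquad \xi_i|_{\partial\Omega\cap U_i}(x,t) := \cos\alpha\,\mathrm{n}_{\partial\Omega}(x) + \sin\alpha\,\beta_i(x,t)\,\mathrm{t}_{\partial\Omega}(x),
\]
with $\beta_i\in C^\infty$ satisfying $\beta_i(p_i(t),t)=1$ and $\beta_i^2\leq 1-2c$ away from $p_i(t)$. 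These two traces agree at $p_i(t)$ by the contact angle condition, so a smooth extension $\xi_i$ to $U_i$ exists realizing both traces with $|\xi_i|\leq 1$ and the decay \eqref{eq_quadraticLengthConstraint}; one explicit realization is obtained in straightened coordinates by interpolating linearly in the distance to the two curves and projecting onto the unit disk where needed. Prescribe $B_i|_I:=-H\,\mathrm{n}_I$ and $B_i|_{\partial\Omega\cap U_i}:=v_i(\cdot,t)\,\mathrm{t}_{\partial\Omega}$ where $v_i$ is the smoothly extended tangential velocity of $p_i$ along $\partial\Omega$; the two values at $p_i(t)$ are compatible via the strong-solution kinematic constraint (the contact point's motion must match the interface's normal velocity at $p_i$, yielding $v_i\sin\alpha=-H$ at $p_i$), so a smooth extension $B_i$ with $B_i\cdot\mathrm{n}_{\partial\Omega}=0$ on $\partial\Omega$ exists, and the calibration identities on $I\cap U_i$ follow as for $B_0$.

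\textbf{Assembly, $\vartheta$, and main obstacle.} Set $\xi:=\sum_i\eta_i\xi_i$ and $B:=\sum_i\eta_i B_i$, extended by zero outside a larger tubular neighborhood of $\overline{I(t)}$. By arranging the local pieces to coincide with $\xi_0,B_0$ outside a small neighborhood of each $p_i$, the interior conditions \eqref{eq_consistencyProperty}--\eqref{eq_calibration4} and the quadratic length constraint are preserved under the convex combination, while the boundary conditions \eqref{eq_boundaryCondXi}--\eqref{eq_boundaryCondVelocity} hold by construction. For $\vartheta$, take $\vartheta:=\rho(s)$ near $I$ with $\rho$ a smooth odd truncation ($\rho(0)=0$, $\rho'(0)=-1$, $|\rho|\leq 1$), extended to the bulk of $\Omega$ by an appropriately signed continuation and, away from the contact points, multiplied by a weight vanishing like $\dist(\cdot,\partial\Omega)$ to secure \eqref{eq_weightCoercivity}; then \eqref{eq_weightNegativeInterior}--\eqref{eq_weightEvol} follow from $\partial_t s + B\cdot\nabla s = O(s)$. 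The principal difficulty is the simultaneous smooth matching of $\xi$ along $I$ and along $\partial\Omega$ near each $p_i(t)$ under the mutually constraining conditions $|\xi|=1$ on $I$, $\xi\cdot\mathrm{n}_{\partial\Omega}=\cos\alpha$ on $\partial\Omega$, and $|\xi|\leq 1-c\,\dist^2(\cdot,\overline{I})$ elsewhere. The contact angle condition is precisely what makes this compatible at $p_i(t)$, since $\mathrm{n}_I=\cos\alpha\,\mathrm{n}_{\partial\Omega}+\sin\alpha\,\mathrm{t}_{\partial\Omega}$ there; analogously, the kinematic relation $v_i\sin\alpha=-H$, which is part of the strong-solution hypothesis, makes the corresponding matching for $B$ possible.
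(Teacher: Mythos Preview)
Your overall architecture---local pieces near the interface, near the boundary, and near each contact point, glued by a partition of unity---matches the paper's. The genuine gap is in the contact-point step and in the claim that the motion-law estimates survive the gluing.

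You only arrange that the traces of $\xi_i$ on $I$ and on $\partial\Omega$ \emph{agree at the point} $p_i(t)$, and similarly for $B_i$. Zeroth-order matching is not enough. When you form $\xi=\sum\eta_i\xi_i$ and expand
\[
\partial_t\xi+(B\cdot\nabla)\xi+(\nabla B)^{\mathsf T}\xi,
\]
cross terms such as $(\partial_t\eta_i+(B\cdot\nabla)\eta_i)\,\xi_i$ and $\eta_i(\nabla B_j)^{\mathsf T}\xi_i$ appear. To bound these by $C\,\dist(\cdot,\overline{I(t)})$ near $p_i(t)$ you need the local pieces to be compatible \emph{to first order} there, i.e.\ $\xi_i-\xi_j=O(|x-p_i|)$ together with $(\nabla\xi_i-\nabla\xi_j)^{\mathsf T}\xi_i=O(|x-p_i|)$ and analogous statements for $B$ and $\nabla B$. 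The paper obtains this by making a specific ansatz on each side (of the form $\mathrm{n}+s\,\beta\,\tau-\tfrac12(s\beta)^2\mathrm{n}$ for $\xi$, and $H\mathrm{n}+(\gamma+s\rho)\tau$ for $B$) and then \emph{solving algebraic equations for the coefficients} so that values \emph{and} gradients of the interface-side and boundary-side candidates coincide at $p_i(t)$. The resulting system for $\nabla B$ is nontrivial: its solvability is exactly the higher-order compatibility condition obtained by differentiating the contact-angle relation in time (Remark~\ref{th_strongsol_comp_cond}, equation~\eqref{eq_comp_cond2}). Your proposal never invokes this condition, so the first-order matching---and hence the estimates~\eqref{eq_calibration1}--\eqref{eq_calibration4} in a neighbourhood of $p_i(t)$---is not secured.

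Relatedly, ``interpolating linearly in the distance to the two curves'' is not well defined at $p_i(t)$, where both distances vanish and the curves meet transversally; the paper replaces this by a wedge decomposition of $B_r(p_i(t))$ and interpolation functions $\lambda_\pm$ with the controlled blow-up $|\nabla\lambda_\pm|\lesssim|x-p_i(t)|^{-1}$, $|\nabla^2\lambda_\pm|\lesssim|x-p_i(t)|^{-2}$. These rates are harmless \emph{only} because of the first-order compatibility above, which converts $\nabla\lambda_\pm\,(\xi_i-\xi_j)$ into a bounded quantity. Finally, the assertion that ``the interior conditions \eqref{eq_consistencyProperty}--\eqref{eq_calibration4} \dots\ are preserved under the convex combination'' is precisely the delicate point and requires the compatibility estimates (cf.\ \eqref{eq:localComp1}--\eqref{eq:localComp4} and \eqref{eq:globalComp1}--\eqref{eq:globalComp4} in the paper); it cannot be asserted without them.
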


Even though not needed for the goals of the present work, we remark
that our construction of the pair of vector fields~$(\xi,B)$ 
satisfies the following additional conditions, which may become handy 
for potential future purposes:
\begin{align}
\label{eq:GradVelXiXi}
(\xi \cdot \nabla^\mathrm{sym}B)(\cdot,t) &= 0
&& \text{along } \partial\Omega,
\\ \label{eq:GradVelTangent}
(\mathrm{n}_{\partial\Omega} \cdot \nabla^\mathrm{sym}B)(\cdot,t) &= 0
&& \text{along } \partial\Omega,
\\ \label{eq:skewSymmetryGradVelInt}
|\xi \cdot \nabla^\mathrm{sym} B|(\cdot,t) &\leq C 
\min\big\{1,\dist\big(\cdot,\overline{\partial^*\mathscr{A}(t)\cap \Omega}\big)\big\}
&& \text{in } \Omega
\end{align}
for all $t \in [0,T]$. 
A proof of these three conditions is contained in the proof of 
Theorem~\ref{theo_existenceBoundaryAdaptedCalibration}.

\subsection{Weak solutions to the Allen--Cahn problem~\eqref{eq_AllenCahn}--\eqref{eq_initialData}}
\label{subsec_weakSolAllenCahn}
In this subsection, we introduce the definition of a weak solution concept for
the Allen--Cahn problem~\eqref{eq_AllenCahn}--\eqref{eq_initialData}.

\begin{definition}[Weak solutions of the Allen--Cahn problem~\eqref{eq_AllenCahn}--\eqref{eq_initialData}]
\label{def_weakSolutionAllenCahn}
We consider a finite time horizon $T\in (0,\infty)$, a potential $W$ that satisfies 
\eqref{eq_double_well2}--\eqref{eq_double_well3}, 
a boundary energy density $\sigma$ subject to the properties~\eqref{eq_propBoundaryEnergyDensity},
and an initial phase field $ u_{\eps,0}\in H^1(\Omega)$ with finite energy~$E_\eps[ u_{\eps,0}]<\infty$.

We call a measurable map $ u_\eps\colon \Omega{\times}[0,T]\to\Rd[]$
an associated weak solution of the Allen--Cahn problem~\eqref{eq_AllenCahn}--\eqref{eq_initialData}
if it satisfies the following conditions.
First, in terms of regularity we require that
\begin{subequations}
\begin{align}
\label{eq_regularityWeakSolution}
 u_\eps &\in H^1\big(0,T;L^2(\Omega)\big) 
\cap L^\infty\big(0,T;H^1(\Omega)\cap L^p(\Omega)\big).
\end{align}
Second, the evolution problem~\eqref{eq_AllenCahn}--\eqref{eq_RobinBoundaryCondition}
is satisfied in weak form of
\begin{align}
\label{eq_evolEquationWeakSolution}
&\int_{0}^{T'}\int_{\Omega} \zeta \partial_t u_\eps \dx \dt 
+ \int_{0}^{T'}\int_{\Omega} \nabla \zeta \cdot \nabla u_\eps  \dx \dt
\\& \nonumber
= - \int_{0}^{T'}\int_{\partial \Omega} \zeta \frac{1}{\eps}\sigma'( u_\eps) \dH \dt
- \int_{0}^{T'}\int_{\Omega} \zeta \frac{1}{\eps^2} W'( u_\eps) \dx \dt
\end{align}
for all $T'\in (0,T)$ and all $\zeta\in C^\infty_{\mathrm{cpt}}\big([0,T);C^\infty(\overline{\Omega})\big)$,
whereas the initial condition~\eqref{eq_initialData} is achieved in form of
\begin{align}
\label{eq_initialDataWeakSolution}
 u_\eps(\cdot,0) =  u_{\eps,0} \quad\text{almost everywhere in } \Omega.
\end{align}
\end{subequations}
\end{definition}

Existence of weak solutions in the sense
of the previous definition will be established
by means of a minimizing movements scheme. More
precisely, we obtain

\begin{lemma}[Existence of weak solutions]
\label{lem_existenceWeakSolutionAllenCahn}
Let $T\in (0,\infty)$ be a finite time horizon, let $W$ be a potential 
with \emph{\eqref{eq_double_well2}--\eqref{eq_double_well3}}, let~$\sigma$ be a 
boundary energy density with the properties~\eqref{eq_propBoundaryEnergyDensity},
and let $ u_{\eps,0}\in H^1(\Omega)$ be an initial phase field with finite energy~$E_\eps[ u_{\eps,0}]<\infty$.
Then there exists an associated unique weak solution
of the Allen--Cahn problem~\emph{\eqref{eq_AllenCahn}--\eqref{eq_initialData}}
in the sense of Definition~\ref{def_weakSolutionAllenCahn}. 

If the initial phase field in addition satisfies $ u_{\eps,0}\in [-1,1]$
a.e.~in~$\Omega$, then the associated weak solution~$ u_\eps$
of the Allen--Cahn problem~\emph{\eqref{eq_AllenCahn}--\eqref{eq_initialData}}
is subject to
\begin{align}
\label{eq_comparisonPrincipleWeakSolution}
 u_\eps(\cdot,T') \in [-1,1]
\text{ a.e. in } \Omega
\end{align}
for all $T'\in [0,T]$.
\end{lemma}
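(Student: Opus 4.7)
The plan is to combine a De~Giorgi minimizing movements (time-discretization) scheme for existence with an $L^2$-Gronwall argument for uniqueness, and to establish the $L^\infty$ bound via truncation testing exploiting the structural support condition $\supp\sigma'\subset[-1,1]$.

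For existence, fix a time step $\tau = T/N$, set $u_\tau^0 := u_{\eps,0}$, and define $u_\tau^k$ iteratively for $1 \leq k \leq N$ as a minimizer in $H^1(\Omega)$ of
\begin{align*}
F_k(u) := E_\eps[u] + \frac{1}{2\tau}\|u - u_\tau^{k-1}\|_{L^2(\Omega)}^2.
\end{align*}
Existence of minimizers follows from the direct method: $F_k$ is coercive on $H^1(\Omega)\cap L^p(\Omega)$ thanks to the Dirichlet contribution and the lower bound $W(u)\geq c|u|^p$ for $|u|\geq R$ from~\eqref{eq_double_well2}, while lower semicontinuity on weakly convergent bounded sequences combines weak lower semicontinuity of the Dirichlet integral, the convex-concave splitting $W=W_1+W_2$ of~\eqref{eq_double_well3} (Fatou for the nonnegative convex part $W_1$, continuity via dominated convergence for the Lipschitz-derivative part $W_2$), and continuity of $u\mapsto\int_{\partial\Omega}\sigma(u)\dH$ under the compact trace $H^1(\Omega)\hookrightarrow L^2(\partial\Omega)$, using $\sigma\in C^{1,1}$. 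Taking the first variation yields the time-discrete weak Euler--Lagrange identity, and the comparison $F_k(u_\tau^k)\leq F_k(u_\tau^{k-1})$ produces the discrete energy inequality $E_\eps[u_\tau^k] + \tfrac{1}{2\tau}\|u_\tau^k - u_\tau^{k-1}\|_{L^2(\Omega)}^2 \leq E_\eps[u_\tau^{k-1}]$.

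Summing over $k$ yields uniform-in-$\tau$ bounds for the piecewise constant interpolant $\bar u_\tau$ in $L^\infty(0,T;H^1(\Omega)\cap L^p(\Omega))$ and for the time derivative of the piecewise affine interpolant $\hat u_\tau$ in $L^2(0,T;L^2(\Omega))$. Aubin--Lions then furnishes a (not relabeled) subsequence with $\bar u_\tau \to u_\eps$ strongly in $L^2(0,T;L^2(\Omega))$ and a.e.\ on $\Omega\times(0,T)$, the limit enjoying the regularity~\eqref{eq_regularityWeakSolution}; the corresponding strong convergence of the boundary traces is a consequence of compactness of the trace operator. Passage to the limit in the time-discrete Euler--Lagrange identity then recovers~\eqref{eq_evolEquationWeakSolution}: the linear and time-derivative terms are immediate, the bulk nonlinearity $W'(\bar u_\tau)\to W'(u_\eps)$ in $L^1((0,T){\times}\Omega)$ follows from a.e.\ convergence together with Vitali uniform integrability (via $|W'(u)|\leq C(1{+}|u|^{p-1})$ and the uniform $L^p$ bound on $\bar u_\tau$, giving a uniform $L^{p/(p-1)}$ bound on $W'(\bar u_\tau)$), and the boundary nonlinearity $\sigma'(\bar u_\tau)\to\sigma'(u_\eps)$ strongly on $\partial\Omega$ thanks to the Lipschitz property of $\sigma'$. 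The initial condition~\eqref{eq_initialDataWeakSolution} is inherited from $\hat u_\tau(\cdot,0)=u_{\eps,0}$ via $H^1(0,T;L^2(\Omega))\hookrightarrow C([0,T];L^2(\Omega))$.

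Uniqueness follows from an $L^2$-Gronwall argument on $w := u_\eps - v_\eps$ for two candidate solutions: after temporal mollification of~\eqref{eq_evolEquationWeakSolution} to justify testing against $w$ itself, the convex-concave splitting yields $(W'(u_\eps){-}W'(v_\eps))w\geq -Cw^2$ (monotonicity of $W_1'$ plus Lipschitz bound on $W_2'$), while the Lipschitz property of $\sigma'$ combined with the trace inequality $\|w\|_{L^2(\partial\Omega)}^2\leq \delta\|\nabla w\|_{L^2(\Omega)}^2+C_\delta\|w\|_{L^2(\Omega)}^2$ (with $\delta>0$ small) absorbs the boundary contribution into the gradient term; Gronwall on $\|w\|_{L^2(\Omega)}^2$ with $w(\cdot,0)=0$ concludes. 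For the $L^\infty$ bound under the assumption $u_{\eps,0}\in[-1,1]$, test~\eqref{eq_evolEquationWeakSolution} with $\zeta = (u_\eps - 1)_+ \in L^\infty(0,T;H^1(\Omega))$ (admissible by density). The Robin boundary integral vanishes identically because $\supp\sigma'\subset[-1,1]$ forces $\sigma'(u_\eps)=0$ wherever $(u_\eps-1)_+>0$, and the pointwise bound $W'(u_\eps)\geq -C(u_\eps-1)_+$ on $\{u_\eps>1\}$ (from $W'(1)=0$, monotonicity of $W_1'$, and Lipschitz of $W_2'$) yields
\begin{align*}
\tfrac{1}{2}\ddt\|(u_\eps{-}1)_+\|_{L^2(\Omega)}^2 + \|\nabla(u_\eps{-}1)_+\|_{L^2(\Omega)}^2 \leq \tfrac{C}{\eps^2}\|(u_\eps{-}1)_+\|_{L^2(\Omega)}^2,
\end{align*}
so that $(u_{\eps,0}-1)_+\equiv0$ propagates via Gronwall to $(u_\eps-1)_+\equiv0$, i.e., $u_\eps\leq 1$ a.e.; the bound $u_\eps\geq -1$ follows symmetrically by testing with $-(-1-u_\eps)_+$ and invoking $W'(-1)=0$ together with $\supp\sigma'\subset[-1,1]$. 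The main technical obstacle throughout is the simultaneous passage to the limit in the nonlinear Robin boundary term and the lower semicontinuity of the boundary energy, both relying crucially on compactness of the trace and the $C^{1,1}$ regularity of $\sigma$; the clean form of the $L^\infty$ estimate in turn hinges decisively on the structural support condition $\supp\sigma'\subset[-1,1]$, which precisely cancels the boundary contribution on $\{|u_\eps|>1\}$.
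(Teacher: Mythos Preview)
Your existence and uniqueness arguments follow essentially the same route as the paper: a minimizing movements scheme with the direct method for the discrete minimizers, the telescoping energy estimate, Aubin--Lions compactness, and an $L^2$-Gronwall argument for uniqueness based on the convex--concave splitting~\eqref{eq_double_well3}.

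The genuine difference lies in the $[-1,1]$ bound. The paper establishes it at the \emph{discrete} level: by induction on $k$, truncating a minimizer $u_N^k$ to $[-1,1]$ does not increase $E_k$ (the Dirichlet term decreases, $W$ decreases since $W(\pm 1)=0\leq W$, $\sigma$ is unchanged since $\supp\sigma'\subset[-1,1]$, and the $L^2$-penalty decreases since $u_N^{k-1}\in[-1,1]$), so one may always select minimizers in $[-1,1]$ and the bound survives the limit $N\to\infty$. You instead argue directly on the continuous equation by testing with $(u_\eps-1)_+$ and running a parabolic maximum-principle style Gronwall. Your approach is equally valid and arguably more flexible (it would work for any limit solution, not just one obtained from this particular approximation), while the paper's approach fits more naturally into the variational viewpoint and avoids having to justify the nonlinear test function in the weak formulation. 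Both arguments silently invoke the double-well condition~\eqref{eq_double_well1}---you through $W'(\pm 1)=0$, the paper through $W(\pm 1)=0\leq W$---although the lemma as stated lists only~\eqref{eq_double_well2}--\eqref{eq_double_well3}; this is a minor imprecision in the hypotheses rather than a flaw in either proof.
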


As usual in the context of a minimizing movements scheme,
the associated energy estimate is short by a factor of~$2$
with respect to the sharp energy dissipation principle,
which is crucial for our purposes. If one does not want
to make use of De~Giorgi's variational interpolation and the concept of metric slope,
an alternative way to proceed is by means of higher regularity of weak solutions
(which we anyway rely on in the derivation of the estimate of the time evolution of the
relative energy). For our purposes, it suffices to prove the following result.

\begin{lemma}[Higher regularity for bounded weak solutions]
\label{lem_higherRegularityWeakSolutionAllenCahn}
In the situation of Lemma~\ref{lem_existenceWeakSolutionAllenCahn},
assume in addition that the initial phase field satisfies
$ u_{\eps,0}\in [-1,1]$ almost everywhere in~$\Omega$.
Then, the associated weak solution~$ u_\eps$
of the Allen--Cahn problem~\emph{\eqref{eq_AllenCahn}--\eqref{eq_initialData}}
satisfies the higher regularity
\begin{align}
\label{eq_higherRegularityPhaseField}
 u_\eps \in L^2\big(0,T;H^2(\Omega)\big)\cap C([0,T];H^1(\Omega)), 
\quad
\nabla\partial_t u_\eps \in L^2_{\mathrm{loc}}\big(0,T;L^2(\Omega)\big).
\end{align}
In particular, it holds
\begin{align}
\label{eq_weakSolutionsAreStrongSolutions}
\partial_t  u_\eps = \Delta u_\eps
- \frac{1}{\eps^2}W'( u_\eps) 
\quad\text{almost everywhere in } \Omega{\times}(0,T),
\end{align}
as well as
\begin{align}
\label{eq_weakFormRobinCondition}
\int_{\Omega} \zeta\,\Delta u_\eps(\cdot,T') \dx &=
- \int_{\Omega} \nabla\zeta\cdot\nabla  u_\eps(\cdot,T') \dx
- \int_{\partial \Omega} \zeta\,\frac{1}{\eps}\sigma'\big( u_\eps(\cdot,T')\big) \dH
\end{align}
for all~$\zeta\in C^\infty(\overline{\Omega})$ and almost every $T'\in (0,T)$.
\end{lemma}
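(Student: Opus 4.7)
The plan is to bootstrap the regularity of the weak solution $u_\eps$ by combining a slicewise-in-time application of elliptic regularity for the Neumann problem with a Nirenberg-type difference quotient in time. A crucial preliminary observation is that the $L^\infty$ bound $u_\eps \in [-1,1]$ from~\eqref{eq_comparisonPrincipleWeakSolution}, combined with $W\in C^2(\R)$ and $\sigma\in C^{1,1}(\R)$, ensures that all of $W'(u_\eps)$, $W''(u_\eps)$, $\sigma'(u_\eps)$, $\sigma''(u_\eps)$ are bounded a.e.\ on $\Omega{\times}(0,T)$, turning the problem morally into a linear parabolic one with Lipschitz dependence on $u_\eps$.

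For the elliptic regularity step, note that for a.e.\ $t\in(0,T)$ the weak formulation~\eqref{eq_evolEquationWeakSolution} asserts that $u_\eps(\cdot,t)$ solves the Neumann problem $-\Delta u = f(\cdot,t)$ in $\Omega$, $\mathrm{n}_{\partial\Omega}\cdot\nabla u = g(\cdot,t)$ on $\partial\Omega$, where
\begin{align*}
f:=-\partial_t u_\eps - \tfrac{1}{\eps^2}W'(u_\eps)\in L^2\big(\Omega{\times}(0,T)\big), \qquad g:=\tfrac{1}{\eps}\sigma'(u_\eps)\in L^\infty\big(0,T;H^1(\Omega)\big),
\end{align*}
the latter by the Lipschitz chain rule for $\sigma'$ applied to $u_\eps(\cdot,t)\in H^1(\Omega)$. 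Since the trace of $g$ lies in $L^\infty(0,T;H^{1/2}(\partial\Omega))$, standard $H^2$-regularity for the Neumann problem on the $C^2$-domain $\Omega$ yields $u_\eps\in L^2(0,T;H^2(\Omega))$, after which~\eqref{eq_weakSolutionsAreStrongSolutions} and~\eqref{eq_weakFormRobinCondition} follow by integration by parts in~\eqref{eq_evolEquationWeakSolution}. To obtain $\nabla\partial_t u_\eps\in L^2_{\mathrm{loc}}(0,T;L^2(\Omega))$, set $v_h(x,t):=h^{-1}(u_\eps(x,t{+}h)-u_\eps(x,t))$: subtracting~\eqref{eq_evolEquationWeakSolution} at times $t{+}h$ and $t$ and invoking the fundamental theorem of calculus for $W'$ and $\sigma'$ shows that $v_h$ satisfies a weak parabolic equation whose bulk source and Robin flux are both pointwise bounded by $C|v_h|$. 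Testing with $\eta^2 v_h$ for a cut-off $\eta\in C^\infty_c((0,T))$ and absorbing the boundary contribution via the trace inequality
\begin{align*}
\|v_h\|_{L^2(\partial\Omega)}^2 \leq \delta\,\|\nabla v_h\|_{L^2(\Omega)}^2 + C_\delta\,\|v_h\|_{L^2(\Omega)}^2
\end{align*}
with $\delta$ small delivers, via Gronwall, $h$-uniform bounds on $\eta v_h$ in $L^\infty(0,T;L^2(\Omega))$ and on $\eta\nabla v_h$ in $L^2(0,T;L^2(\Omega))$; passing $h\downarrow 0$ gives the claim. Finally, $u_\eps\in L^2(0,T;H^2)\cap H^1(0,T;L^2)$ implies $u_\eps\in C([0,T];H^1(\Omega))$ via the standard trace embedding for evolution triples.

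The main obstacle lies in the difference quotient step. The nonlinear Robin condition produces a boundary flux for $v_h$ that is merely $O(|v_h|)$ rather than zero, so that integration by parts leaves a boundary term of size $\eps^{-1}\int_{\partial\Omega}\eta^2|v_h|^2$ with no a priori sign; this must be absorbed into the interior dissipation $\int_{\Omega}\eta^2|\nabla v_h|^2$ via the trace inequality, which succeeds only because the Lipschitz constant of $\sigma'$ on $[-1,1]$ is finite. The temporal cut-off $\eta$ is likewise forced to vanish near $t=0$: otherwise initial-time contributions would require additional regularity of $u_{\eps,0}$ beyond $H^1(\Omega)$, which is not assumed. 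This explains the local-in-time statement $\nabla\partial_t u_\eps\in L^2_{\mathrm{loc}}$ rather than a global-in-time bound.
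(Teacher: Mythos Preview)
Your proof is correct, and the time-difference-quotient argument for $\nabla\partial_t u_\eps\in L^2_{\mathrm{loc}}$ matches the paper's Step~2 almost verbatim: the paper tests with $D^{-h}_t(\eta D^h_t u_\eps)$ and absorbs the boundary term via the same trace interpolation inequality. (Gronwall is not actually needed there, since the $L^2_{t,x}$ norm of $v_h$ is already controlled by $\|\partial_t u_\eps\|_{L^2}$, but your variant works as well.) The $C([0,T];H^1)$ conclusion via the embedding $L^2(0,T;H^2)\cap H^1(0,T;L^2)\hookrightarrow C([0,T];H^1)$ is likewise identical.

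The one genuine difference is how $u_\eps\in L^2(0,T;H^2(\Omega))$ is obtained. The paper does this from scratch: it flattens the boundary locally and runs a tangential spatial difference-quotient argument directly on the transformed weak formulation, estimating the commutators with the variable coefficients and the Jacobian factor $\sqrt{1+|\nabla_{x'}g|^2}$ in the boundary term. Your route is shorter: you observe that $\sigma'(u_\eps(\cdot,t))\in H^1(\Omega)$ by the Lipschitz chain rule (since $\sigma\in C^{1,1}$ and $u_\eps\in[-1,1]$), so its trace furnishes $H^{1/2}(\partial\Omega)$ Neumann data, and then invoke off-the-shelf $H^2$ elliptic regularity for the Neumann problem on a $C^2$ domain slice by slice. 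This buys brevity and avoids the boundary-flattening bookkeeping; the paper's approach is more self-contained but longer. Note also that the paper reverses your logical order: it first identifies $\Delta u_\eps\in L^2$ distributionally (yielding \eqref{eq_weakSolutionsAreStrongSolutions} and \eqref{eq_weakFormRobinCondition} before any $H^2$ regularity), and only afterwards upgrades to full $H^2$, whereas you obtain $H^2$ first and read off those identities as consequences.
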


With the previous regularity statement in place, we may then
establish the required sharp energy dissipation principle
which, as a consequence of the higher regularity, even occurs
as an identity.

\begin{lemma}[Energy dissipation equality for bounded weak solutions]
\label{lem_energyDissipationEquality}
In the situation of Lemma~\ref{lem_existenceWeakSolutionAllenCahn},
assume in addition that $ u_{\eps,0}\in [-1,1]$ almost everywhere in~$\Omega$.
Then, for the associated weak solution~$ u_\eps$
of the Allen--Cahn problem~\emph{\eqref{eq_AllenCahn}--\eqref{eq_initialData}},
the energy dissipation principle~\eqref{eq_energyDissipationPhaseField}
holds true in form of the following equality
\begin{align}
\label{eq_energyDissipationWeakSolution}
E_\eps[ u_\eps(\cdot,T')] 
+ \int_{0}^{T'} \int_{\Omega} \eps\big|\partial_t  u_\eps\big|^2 \dx \dt
= E_\eps[ u_{\eps,0}]
\end{align}
for all $T'\in (0,T)$.
\end{lemma}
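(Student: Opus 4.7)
The plan is to combine the pointwise strong form \eqref{eq_weakSolutionsAreStrongSolutions} with the interior-boundary integration by parts encoded in \eqref{eq_weakFormRobinCondition}, test against $\eps\,\partial_t u_\eps$, and recognise the resulting expressions as time derivatives of the three summands of $E_\eps$. Because Lemma \ref{lem_higherRegularityWeakSolutionAllenCahn} only gives $\nabla\partial_t u_\eps\in L^2_{\mathrm{loc}}(0,T;L^2(\Omega))$, I would first work on a subinterval $[t_0,T']\subset (0,T)$ and pass to the limit $t_0\to 0^+$ at the end using the continuity $u_\eps\in C([0,T];H^1(\Omega))$.

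Fix $0<t_0<T'<T$. The regularity in \eqref{eq_higherRegularityPhaseField} legitimises testing \eqref{eq_weakSolutionsAreStrongSolutions} by $\eps\,\partial_t u_\eps$ and integrating on $\Omega\times(t_0,T')$. For almost every $t\in(t_0,T')$ we have $\partial_t u_\eps(\cdot,t)\in H^1(\Omega)$, so using it as a test function in \eqref{eq_weakFormRobinCondition} yields
\begin{align*}
\int_\Omega \eps\,\Delta u_\eps\,\partial_t u_\eps \dx
= -\int_\Omega \eps\,\nabla u_\eps\cdot\nabla\partial_t u_\eps \dx
- \int_{\partial\Omega} \sigma'(u_\eps)\,\partial_t u_\eps \dH.
\end{align*}
Combining with the volume term $\eps^{-1}W'(u_\eps)\,\partial_t u_\eps$ from \eqref{eq_weakSolutionsAreStrongSolutions}, the pointwise identity
\begin{align*}
\int_\Omega \eps|\partial_t u_\eps|^2 \dx
= -\int_\Omega \eps\,\nabla u_\eps\cdot\nabla\partial_t u_\eps \dx
- \int_{\partial\Omega} \sigma'(u_\eps)\,\partial_t u_\eps \dH
- \int_\Omega \tfrac{1}{\eps}W'(u_\eps)\,\partial_t u_\eps \dx
\end{align*}
holds for a.e.\ $t$. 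I would then invoke the chain rule: because $W\in C^2$ and $\sigma\in C^{1,1}$, and because $u_\eps(\cdot,t)\in[-1,1]$ a.e.\ by \eqref{eq_comparisonPrincipleWeakSolution}, the nonlinearities $W'(u_\eps)$ and $\sigma'(u_\eps)$ are bounded and Lipschitz in $u_\eps$. Together with $u_\eps\in H^1(t_0,T';L^2(\Omega))$ and, for the gradient term, $u_\eps\in L^\infty(0,T;H^1)$ with $\partial_t u_\eps\in L^2(t_0,T';H^1(\Omega))$, standard chain rules for vector-valued Sobolev maps (and the trace theorem for the boundary integral) identify each right-hand term as a distributional time derivative:
\begin{align*}
\int_\Omega \eps\,\nabla u_\eps\!\cdot\!\nabla\partial_t u_\eps \dx
= \ddt\!\int_\Omega \tfrac{\eps}{2}|\nabla u_\eps|^2 \dx,
\ \int_\Omega \tfrac{1}{\eps}W'(u_\eps)\partial_t u_\eps \dx
= \ddt\!\int_\Omega \tfrac{1}{\eps}W(u_\eps) \dx,
\end{align*}
and similarly $\int_{\partial\Omega}\sigma'(u_\eps)\partial_t u_\eps\dH = \tfrac{d}{dt}\int_{\partial\Omega}\sigma(u_\eps)\dH$.

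Integrating on $(t_0,T')$ yields $\int_{t_0}^{T'}\!\int_\Omega\eps|\partial_t u_\eps|^2\dx\dt + E_\eps[u_\eps(\cdot,T')] = E_\eps[u_\eps(\cdot,t_0)]$. Finally, send $t_0\searrow 0$: by $u_\eps\in C([0,T];H^1(\Omega))$ the gradient part of $E_\eps$ is continuous at $0$; for $W(u_\eps)$ and for the trace $\sigma(u_\eps)$, the bounds $u_\eps\in[-1,1]$ together with the local Lipschitz property of $W$ and $\sigma$ and continuity of the trace operator $H^1(\Omega)\to L^2(\partial\Omega)$ give $E_\eps[u_\eps(\cdot,t_0)]\to E_\eps[u_{\eps,0}]$, while monotone convergence handles the dissipation integral. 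The main obstacle I anticipate is precisely this justification of the chain rule at the level of $L^2(t_0,T';H^1)\times L^\infty(0,T;H^1)$ regularity combined with passing to $t_0=0$; the boundary chain rule in particular requires some care because one needs to know that the trace of $\partial_t u_\eps$ coincides with the distributional time derivative of the trace of $u_\eps$, which follows from the commutativity of the trace operator with time-integration applied to $u_\eps\in H^1(t_0,T';H^1(\Omega))$.
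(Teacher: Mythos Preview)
Your proposal is correct and follows essentially the same route as the paper: test with $\eps\,\partial_t u_\eps$ (admissible thanks to Lemma~\ref{lem_higherRegularityWeakSolutionAllenCahn}), recognise the three terms as time derivatives via the chain rule, integrate over a compact subinterval $[t_0,T']\subset(0,T)$, and then pass to $t_0\searrow 0$ using $u_\eps\in C([0,T];H^1(\Omega))$ together with the Lipschitz and trace estimates for the $W$- and $\sigma$-contributions. The paper phrases the first step as testing the weak formulation with a temporal cutoff $\eta$ approximating $\chi_{[s,T']}$, which is just a cosmetic variant of your direct argument on $(t_0,T')$.
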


Proofs for the previous three results can be found in Appendix~\ref{sec_appA}.
We conclude this subsection on weak solutions for the Allen--Cahn problem~\eqref{eq_AllenCahn}--\eqref{eq_initialData}
by mentioning that the set of well-prepared initial data 
as formalized in the statement of Theorem~\ref{theo_mainResult} is indeed non-empty.
The construction of a well-prepared initial phase field
is deferred until Appendix~\ref{sec_appB}.

\begin{lemma}[Existence of well-prepared initial data]
\label{lem_wellPreparedInitialData}
Consider a finite time horizon~$T\in (0,\infty)$ and a bounded $C^{2}$-domain $\Omega\subset\Rd[2]$, 
and let $\mathscr{A}=\smash{\bigcup_{t\in [0,T]}\mathscr{A}(t){\times}\{t\}}$
be a strong solution to evolution by mean curvature flow in~$\Omega$ 
with constant contact angle~$\alpha\in (0,\smash{\frac{\pi}{2}}]$ in
the sense of Definition~\ref{def_strongSolution}. Let a boundary energy density~$\sigma$ be
given such that~\emph{\eqref{eq_propBoundaryEnergyDensity}--\eqref{eq_propBoundaryEnergyDensity2}} hold true.
 
Then there exists an initial phase field~$ u_{\eps,0}$ with finite energy~$E_\eps[ u_{\eps,0}]<\infty$ which is well-prepared with respect to~$\mathscr{A}(0)$ in the
precise sense of~\emph{\eqref{eq_boundInitialPhaseField} 
and~\eqref{eq_wellPreparedInitialRelativeEnergy}}. In case of the specific choice~\eqref{eq_choiceBoundaryEnergyDensity},
one may upgrade the requirement~\eqref{eq_wellPreparedInitialRelativeEnergy} to~$\eps^2$.
\end{lemma}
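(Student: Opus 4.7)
The plan is to use the classical optimal transition profile ansatz, adapted to the contact angle geometry via the very calibration fields $(\xi,\vartheta)$ provided by Theorem~\ref{theo_existenceBoundaryAdaptedCalibration} at $t=0$. More precisely, I would first define the heteroclinic connection $\theta\colon\R\to(-1,1)$ by $\theta'=\sqrt{2W(\theta)}$, $\theta(0)=0$, which then satisfies $\theta(\pm\infty)=\pm 1$ with exponential convergence, $\tfrac12|\theta'|^2=W(\theta)$, and $\tfrac{d}{ds}\psi(\theta(s))=|\theta'(s)|^2$. With this, the ansatz will be $u_{\eps,0}(x):=\theta(s_0(x)/\eps)$ for a suitable substitute~$s_0$ of the signed distance to $\partial^*\mathscr{A}(0)\cap\Omega$ that encodes the contact angle condition at $\partial\Omega$.

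\textbf{Construction of $s_0$.} I would build $s_0\in C^2_{\mathrm{b}}(\overline{\Omega})$ satisfying: (i) $s_0=0$ on $\partial^*\mathscr{A}(0)\cap\Omega$, (ii) $\nabla s_0=-\mathrm{n}_{\partial^*\mathscr{A}(0)}$ and hence $|\nabla s_0|=1$ along that interface, (iii) $\nabla s_0\cdot\mathrm{n}_{\partial\Omega}=\cos\alpha$ along $\partial\Omega$ in a neighborhood of the contact set, and (iv) $s_0>0$ in the exterior and $s_0<0$ in the interior of $\mathscr{A}(0)$. The natural way is to foliate a neighborhood of $\partial^*\mathscr{A}(0)\cap\Omega$ by integral curves of $-\xi(\cdot,0)$ (where $\xi$ is the calibration from Theorem~\ref{theo_existenceBoundaryAdaptedCalibration}) and take $s_0$ to be the arc-length parameter along this foliation, extended to all of $\overline{\Omega}$ in a bounded fashion. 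The second identity in~\eqref{eq_consistencyProperty}, together with $\xi\cdot\mathrm{n}_{\partial\Omega}=\cos\alpha$ from~\eqref{eq_boundaryCondXi}, guarantees that the foliation meets $\partial\Omega$ transversally at the correct angle, so that (iii) holds; moreover the same properties deliver the quadratic estimate $\bigl||\nabla s_0|^2-1\bigr|+|\xi-\nabla s_0|\leq C|s_0|$ in a neighborhood of the interface.

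\textbf{Energy comparison.} With $u_{\eps,0}=\theta(s_0/\eps)$, the standard equipartition computation gives $\tfrac{\eps}{2}|\nabla u_{\eps,0}|^2+\tfrac{1}{\eps}W(u_{\eps,0})=\tfrac{1}{2\eps}|\theta'(s_0/\eps)|^2(1+|\nabla s_0|^2)$, so that the Bogomol'nyi defect is $\tfrac{1}{2\eps}|\theta'(s_0/\eps)|^2(|\nabla s_0|-1)^2$. Expanding via the quadratic estimate above and the coarea formula, then changing variables $r=s_0/\eps$ and exploiting the exponential decay of $\theta'$, each of the bulk ingredients of $E_{\mathrm{relEn}}$ (the Bogomol'nyi defect, as well as the ``tilt'' term controlling $1-\xi\cdot\nabla\psi(u_{\eps,0})/|\nabla\psi(u_{\eps,0})|$) is bounded by $\eps$ times a geometric constant depending only on $\mathscr{A}(0)$. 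The boundary contribution $\int_{\partial\Omega}\bigl(\sigma(u_{\eps,0})-\psi(u_{\eps,0})\cos\alpha\bigr)\dH$ is nonnegative by~\eqref{eq_propBoundaryEnergyDensity2} and, being localized in a strip of width $O(\eps)$ around the contact points, contributes $O(\eps)$ as well. For the bulk error~\eqref{eq_bulkError}, the exponential convergence $\theta(\pm s/\eps)\to\pm 1$ confines the deviation $|\psi(u_{\eps,0})-c_0\chi_{\mathscr{A}(0)}|$ to an $O(\eps|\log\eps|)$-strip on which $|\vartheta|\lesssim|s_0|\lesssim\eps|\log\eps|$ by~\eqref{eq_weightCoercivity} and Lipschitz continuity, yielding an $O(\eps)$ bound.

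\textbf{The sharp case and main obstacle.} Under the specific choice~\eqref{eq_choiceBoundaryEnergyDensity} the boundary integrand $\sigma(u_{\eps,0})-\psi(u_{\eps,0})\cos\alpha$ vanishes identically on $\{u_{\eps,0}\in[-1,1]\}$, so the boundary term drops out completely and the remaining bulk contributions can be improved to $O(\eps^2)$, because the Bogomol'nyi and tilt defects are quadratic in $s_0$ (hence, after change of variables and exponential decay of $\theta'$, genuinely second order in $\eps$). The main obstacle is the construction of $s_0$ satisfying simultaneously the eikonal-type condition at the interface and the angle condition at $\partial\Omega$, together with $C^2$ regularity up to the contact points; this is where the full strength of the calibration $(\xi,\vartheta)$ and the compatibility built into Definition~\ref{def_strongSolution} are genuinely used, and is also the place where the quadratic (rather than merely linear) vanishing of $|\nabla s_0|-1$ and $\xi-\nabla s_0$ near the interface has to be extracted carefully in order to unlock the $\eps^2$ scaling.
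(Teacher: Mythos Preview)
Your overall strategy---optimal profile composed with a signed-distance substitute---is correct, but the implementation is over-engineered and misidentifies where the $\eps$ versus $\eps^2$ loss occurs. The paper does \emph{not} build $s_0$ from integral curves of~$\xi$ and does not impose any angle condition $\nabla s_0\cdot\mathrm{n}_{\partial\Omega}=\cos\alpha$. Instead it simply extends the curve $\overline{\partial^*\mathscr{A}(0)\cap\Omega}$ linearly past each contact point along its tangent line, obtaining a $C^1$-curve $\widetilde I(0)$, and sets $\widetilde s$ equal to the signed distance to~$\widetilde I(0)$. Then $|\nabla\widetilde s|=1$ exactly in a tubular neighborhood, so the Bogomol'nyi defect vanishes there and the bulk integrand of $E_{\mathrm{relEn}}$ reduces to $\tfrac{1}{\eps}|\theta_0'|^2\,\widetilde{\mathrm n}\cdot(\widetilde{\mathrm n}-\xi)$. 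The quadratic bound $|\widetilde{\mathrm n}\cdot(\xi-\widetilde{\mathrm n})|\lesssim\widetilde s^{\,2}$ follows from a Taylor expansion using \emph{both} identities in~\eqref{eq_consistencyProperty}, and this already gives the bulk part of $E_{\mathrm{relEn}}$ as well as all of $E_{\mathrm{bulk}}$ at order~$\eps^2$, for \emph{every} admissible~$\sigma$. The only term that degrades to~$O(\eps)$ is the boundary integral $\int_{\partial\Omega}(\sigma(u_{\eps,0})-\psi(u_{\eps,0})\cos\alpha)\,\dH$, which is nonnegative by~\eqref{eq_propBoundaryEnergyDensity2} and supported on an $O(\eps)$ arc near each contact point; under~\eqref{eq_choiceBoundaryEnergyDensity} it vanishes identically. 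So your claim that the bulk terms are $O(\eps)$ in general and must be ``improved'' is wrong: the bottleneck is entirely the boundary term, and no angle condition on~$s_0$ is needed to handle it.

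Your integral-curve construction also has concrete loose ends. The flow of~$\xi$ inherits only the regularity of~$\xi$ itself ($C^1$ up to~$\overline\Omega$ by~\eqref{eq_regularityXi}), so $s_0\in C^2_{\mathrm b}(\overline\Omega)$ is not automatic; near the contact points the foliation has to be set up with some care to cover a full $\overline\Omega$-neighborhood of $\overline{\partial^*\mathscr{A}(0)\cap\Omega}$, precisely the geometric subtlety the tangent-line extension sidesteps. There is also a sign inconsistency: your item~(ii) gives $\nabla s_0=-\mathrm{n}_{\partial^*\mathscr{A}(0)}$, hence $\nabla s_0\cdot\xi=-1$ on the interface, which makes the relative energy $O(1)$ rather than small and contradicts your item~(iii). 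Finally, the estimate $|\vartheta|\lesssim|s_0|$ comes from~\eqref{eq_weightZeroInterface} and Lipschitz continuity of~$\vartheta$, not from~\eqref{eq_weightCoercivity}, which is a lower bound on~$|\vartheta|$.
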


\subsection{Definition of strong solutions to planar MCF with contact angle}
For completeness, we make precise what we mean by a sufficiently regular
solution to evolution by mean curvature flow with a constant contact angle.
We model the evolving geometry by the space-time track 
$\mathscr{A}=\bigcup_{t\in [0,T]} \mathscr{A}(t)\times\{t\}$
of a time-dependent family~$(\mathscr{A}(t))_{t \in [0,T]}$ of
sufficiently regular open sets in~$\Omega$.
For simplicity only, we will reduce ourselves to the most basic topological setup:
the phase~$\mathscr{A}(t)$ consists of only one connected component and
the associated interface $I(t):=\overline{\partial^\ast\mathscr{A}(t)\cap\Omega}$
is a sufficiently regular connected curve with exactly two distinct boundary points
which in turn are located on~$\partial\Omega$;
recall Figure~\ref{fig_strong_solution} for a sketch.
We emphasize that the chosen setup already involves all the major difficulties.

\begin{definition}[Strong solutions of planar mean curvature flow with constant contact angle~$0<\alpha\leq\smash{\frac{\pi}{2}}$] \label{def_strongSolution}
Let $\Omega\subset\R^2$ be a bounded domain with $C^3$-boundary, $T>0$ and $\alpha\in(0,\frac{\pi}{2}]$. We call $\mathscr{A}=\bigcup_{t\in [0,T]} \mathscr{A}(t)\times\{t\}$ a \textup{strong solution to mean curvature flow with constant contact angle $\alpha$} if the following conditions are satisfied:
\begin{enumerate}[leftmargin=0.7cm]
	\item[1.] \emph{Evolving regular partition in $\Omega$:} For all $t\in[0,T]$ the set $\mathscr{A}(t)\subset\Omega$ is open and connected with finite perimeter in $\R^2$ such that $\overline{\partial^\ast\mathscr{A}(t)}=\partial\mathscr{A}(t)$. The interface $I(t):=\overline{\partial^\ast\mathscr{A}(t)\cap\Omega}$ is a compact, connected, one-dimensional embedded $C^5$-manifold with boundary such that its interior~$I(t)^\circ$ lies in~$\Omega$ and its boundary~$\partial I(t)$ consists of exactly two distinct points which are located on the boundary of the domain, i.e., $\partial I(t)\subset\partial\Omega$.
	
	Moreover, there are diffeomorphisms $\Phi(\cdot,t)\colon\R^2\rightarrow\R^2$, $t\in[0,T]$, with
	$\Phi(\cdot,0)=\mathrm{Id}$ as well as
	\[
	\Phi(\mathscr{A}(0),t) = \mathscr{A}(t), \quad \Phi(I(0),t)= I(t)
	\quad\text{and}\quad \Phi(\partial I(0),t) = \partial I(t)
	\]
	for all $t\in[0,T]$, such that $\Phi\colon I(0)\times[0,T]\rightarrow I:=\bigcup_{t\in [0,T]} I(t)\times\{t\}$ is a diffeomorphism of class $C_t^0C_x^5\cap C_t^1C_x^3$.
	\item[2.] \emph{Mean curvature flow:} the interface~$I$ evolves by~MCF in the classical sense.
	\item[3.] \emph{Contact angle condition:} Let $\mathrm{n}_I(\cdot,t)$ denote the inner unit normal of $I(t)$ with respect to $\mathscr{A}(t)$ and let $\mathrm{n}_{\partial\Omega}$ be the inner unit normal of $\partial\Omega$ with respect to $\Omega$. Let $p_0 \in \partial I(0)$ be a boundary point, and let $p(t):=\Phi(p_0,t) \in \partial I(t)$. Then 
	\begin{align}\label{eq_angle}
	\mathrm{n}_I|_{(p(t),t)} \cdot \mathrm{n}_{\partial\Omega}|_{p(t)} = \cos\alpha
	\end{align}
	for all $t\in[0,T]$ encodes the contact angle condition. 
\end{enumerate}
\end{definition}

We emphasize that the required regularity of a strong solution implies
necessary (higher-order) compatibility conditions at the contact points
for the initial data. For the purposes of this work, we only rely on the one which
one obtains from differentiating in time the contact angle
condition~\eqref{eq_angle} and sending~$t \searrow 0$. To formulate it, let~$J$ denote 
the constant counter-clockwise rotation by $90^\circ$, and define the tangent vector fields 
$\tau_{\partial\Omega}:=J^\mathsf{T}\mathrm{n}_{\partial\Omega}$ as well as
$\tau_I(\cdot,0):=J^\mathsf{T}\mathrm{n}_{I}(\cdot,0)$. Denoting by~$H^{\partial\Omega}$ and~$H^{I}(\cdot,0)$
the scalar mean curvature of~$\partial\Omega$ and~$I(0)$ oriented with respect
to~$\mathrm{n}_{\partial\Omega}$ and~$\mathrm{n}_I(\cdot,0)$, respectively, we then have
as a necessary condition for the initial data the identity
(for a derivation, see Remark~\ref{th_strongsol_comp_cond})
\begin{align*}
&- H^I|_{(p_0,0)} H^{\partial\Omega}|_{p_0} 
+ (H^I)^2 \tau_I|_{(p_0,0)}\cdot \tau_{\partial\Omega}|_{p_0}  
- \mathrm{n}_{\partial\Omega}|_{p_0} \cdot
((\tau_I \cdot \nabla) H^I) \tau_I|_{(p_0,0)}=0
\end{align*}
for each of the two contact points~$p_0 \in \partial I(0)$. 

\subsection{Structure of the paper}
The remaining parts of the paper are structured as follows.
In Section~\ref{sec_stabilityEstimateRelEnergy},
we define the relative energy functional, cf.\ \eqref{eq_relEnergy}, encoding a
distance measure between solutions of~\eqref{eq_AllenCahn}--\eqref{eq_initialData}
and a calibrated evolution, discuss its coercivity properties,
and finally derive the associated stability estimate
from Theorem~\ref{theo_convergenceRatesCalibratedPartition}.
We then proceed in Section~\ref{sec_quantStabilityCalibrated}
to derive, based on the stability estimate for the relative energy,
a stability estimate in terms of a phase field version of a 
Luckhaus--Sturzenhecker type error functional, cf.\ \eqref{eq_bulkError}, 
which in turn controls the square of the $L^1$-error appearing on the
left hand side of the main quantitative convergence
estimate~\eqref{eq_optimalConvergenceRates}. Section~\ref{sec_constructionCalibration}
is devoted to the construction of a boundary adapted gradient flow
calibration with respect to a sufficiently regular evolution by
mean curvature flow with constant contact angle, thus providing
a proof of Theorem~\ref{theo_existenceBoundaryAdaptedCalibration}.
We conclude with two appendices, Appendix~\ref{sec_appA} and Appendix~\ref{sec_appB},
providing the proofs for the auxiliary results on weak solutions
of~\eqref{eq_AllenCahn}--\eqref{eq_initialData} as stated in Subsection~\ref{subsec_weakSolAllenCahn} 
and the existence of well-prepared initial data, Lemma~\ref{lem_wellPreparedInitialData}. 

\section{The stability estimate for the relative energy}
\label{sec_stabilityEstimateRelEnergy}
The aim of this section is to derive the 
first stability estimate from Theorem~\ref{theo_convergenceRatesCalibratedPartition},
which is phrased in terms of a suitable relative energy.
With respect to the definition and the coercivity
properties of the relative energy functional, we follow 
closely~\cite[Subsection~2.2 and Subsection~2.3]{Fischer2020}.

\subsection{Definition of the relative energy}
Let $\mathscr{A}=\smash{\bigcup_{t\in [0,T]}\mathscr{A}(t){\times}\{t\}}$
be a calibrated evolution in~$\Omega\subset\Rd$ with associated boundary adapted
gradient flow calibration~$(\xi,B,\vartheta)$ in the sense of Definition~\ref{def_boundaryAdaptedCalibration}.
Let~$ u_\eps$ be a solution to the Allen--Cahn problem~\eqref{eq_AllenCahn}--\eqref{eq_initialData}
in the sense of Definition~\ref{def_weakSolutionAllenCahn}
with finite energy initial data satisfying~$ u_{\eps,0}\in [-1,1]$. 
To be precise, we assume that the boundary energy density~$\sigma$
satisfies both~\eqref{eq_propBoundaryEnergyDensity} and~\eqref{eq_propBoundaryEnergyDensity2}.
Recalling~\eqref{eq_auxIndicatorPhaseField}, define
\begin{align}
\label{eq_indicatorPhaseField}
\psi_\eps(x,t) &:= \psi\big( u_\eps(x,t)\big)=\int_{-1}^{ u_\varepsilon(x,t)} \sqrt{2W(s)} \ds,
&& (x,t) \in \overline{\Omega}{\times} [0,T],
\end{align} 
and by fixing an arbitrary unit vector $\vec{s}\in\mathbb{S}^{d-1}$
\begin{align}
\label{eq_phaseFieldNormal}
\mathrm{n}_\eps :=
\begin{cases}
\frac{\nabla u_\eps}{|\nabla u_\eps|} &\text{if } \nabla u_\eps \neq 0,
\\
\vec{s} &\text{else.}
\end{cases}
\end{align}
Due to the regularity properties of the weak solution $ u_\varepsilon$ from Definition \ref{def_weakSolutionAllenCahn} as well as Lemma \ref{lem_existenceWeakSolutionAllenCahn} and Lemma \ref{lem_higherRegularityWeakSolutionAllenCahn}, it holds $\psi_\varepsilon\in C([0,T],H^1(\Omega))$, $\mathrm{n}_\varepsilon\in L^\infty((0,T)\times \Omega)$ and together with the features of $\xi, B$ from Definition \ref{def_boundaryAdaptedCalibration} the following computations are rigorous. First, note that the definitions~\eqref{eq_indicatorPhaseField} and~\eqref{eq_phaseFieldNormal} 
imply the relations
\begin{align}
\label{eq_normalAuxIdentity}
\nabla\psi_\varepsilon=\sqrt{2W( u_\varepsilon)}\nabla u_\varepsilon, \quad
\mathrm{n}_\eps |\nabla u_\eps| = \nabla u_\eps,
\quad
\mathrm{n}_\eps |\nabla\psi_\eps| = \nabla\psi_\eps.
\end{align}
Given this data, we define a relative energy as follows
\begin{align}
\label{eq_relEnergy}
E_{\mathrm{relEn}}[ u_\eps|\mathscr{A}](t) 
&:= \int_{\Omega} \frac{\eps}{2}\big|\nabla u_\eps(\cdot,t)\big|^2
+ \frac{1}{\eps}W\big( u_\eps(\cdot,t)\big) 
- \nabla\psi_\eps(\cdot,t)\cdot\xi(\cdot,t) \dx
\\&~~~\nonumber
+ \int_{\partial \Omega} \sigma\big( u_\eps(\cdot,t)\big)
- \psi\big( u_\eps(\cdot,t)\big)\cos\alpha \dH,
\quad t\in [0,T].
\end{align}
\subsection{Coercivity properties of the relative energy}
Using $\nabla\psi_\varepsilon=\sqrt{2W( u_\varepsilon)}\nabla u_\varepsilon$ and completing the square yields the alternative representation
\begin{equation}
\label{eq_alternativeRepRelEnergy}
\begin{aligned}
E_{\mathrm{relEn}}[ u_\eps|\mathscr{A}](t) &= \int_{\Omega} \frac{1}{2}\bigg(\sqrt{\eps}
|\nabla u_\eps(\cdot,t)| - \frac{\sqrt{2W( u_\eps(\cdot,t))}}{\sqrt{\eps}}\bigg)^2 \dx
\\&~~~
+ \int_{\Omega} (1 - \mathrm{n}_\eps\cdot\xi)(\cdot,t) |\nabla\psi_\eps(\cdot,t)| \dx
\\&~~~
+ \int_{\partial \Omega} \sigma\big( u_\eps(\cdot,t)\big)
- \psi\big( u_\eps(\cdot,t)\big)\cos\alpha \dH,
\quad t\in [0,T].
\end{aligned}
\end{equation}
It follows immediately from the representation~\eqref{eq_alternativeRepRelEnergy}
and the first item of~\eqref{eq_propBoundaryEnergyDensity2} that for all $t\in [0,T]$
\begin{align}
\label{eq_coercivityProp1}
0 \leq \int_{\Omega} \frac{1}{2}\bigg(\sqrt{\eps} |\nabla u_\eps(\cdot,t)| 
- \frac{\sqrt{2W( u_\eps(\cdot,t))}}{\sqrt{\eps}}\bigg)^2 \dx 
&\leq E_{\mathrm{relEn}}[ u_\eps|\mathscr{A}](t),
\\
\label{eq_coercivityProp2}
0 \leq \int_{\Omega} (1 - \mathrm{n}_\eps\cdot\xi)(\cdot,t) |\nabla\psi_\eps(\cdot,t)| \dx
&\leq E_{\mathrm{relEn}}[ u_\eps|\mathscr{A}](t),
\\
\label{eq_coercivityProp0}
0 \leq \int_{\partial \Omega} \sigma\big( u_\eps(\cdot,t)\big)
- \psi\big( u_\eps(\cdot,t)\big)\cos\alpha \dH
&\leq E_{\mathrm{relEn}}[ u_\eps|\mathscr{A}](t).
\end{align}
Moreover, it is a consequence of the length constraint~\eqref{eq_quadraticLengthConstraint}
and the coercivity property~\eqref{eq_coercivityProp2} that
\begin{align}
\label{eq_coercivityProp3}
\int_{\Omega} \min\big\{1,\dist^2\big(\cdot,\overline{\partial^*\mathscr{A}(t)\cap \Omega}\big)\big\} 
|\nabla\psi_\eps(\cdot,t)| \dx &\leq \frac{1}{c}E_{\mathrm{relEn}}[ u_\eps|\mathscr{A}](t), \quad t\in [0,T],
\\\label{eq_coercivityProp4}
\int_{\Omega} |(\mathrm{n}_\eps {-} \xi)(\cdot,t)|^2
|\nabla\psi_\eps(\cdot,t)| \dx &\leq 2E_{\mathrm{relEn}}[ u_\eps|\mathscr{A}](t), \quad t\in [0,T].
\end{align}

Finally, as it turns out in the sequel, we have to control analogous terms with 
the diffuse surface measure $\varepsilon|\nabla u_\varepsilon|^2$ instead of $|\nabla\psi_\varepsilon|$. Therefore adding zero as well as using that $\nabla\psi_\eps=\sqrt{2W( u_\eps)}\nabla u_\eps$
in a first step, and then applying Young's inequality together with~\eqref{eq_quadraticLengthConstraint}
in form of~$|\xi|\leq 1$ in a second step yields an auxiliary estimate for all~$t\in[0,T]$
\begin{align*}
&\int_{\Omega} |(\mathrm{n}_\eps {-} \xi)(\cdot,t)|^2 \eps|\nabla u_\eps(\cdot,t)|^2 \dx
\\&
= \int_{\Omega} |(\mathrm{n}_\eps {-} \xi)(\cdot,t)|^2 \sqrt{\eps}|\nabla u_\eps(\cdot,t)|
\bigg(\sqrt{\eps} |\nabla u_\eps(\cdot,t)| - \frac{\sqrt{2W( u_\eps(\cdot,t))}}{\sqrt{\eps}}\bigg) \dx 
\\&~~~
+ \int_{\Omega} |(\mathrm{n}_\eps {-} \xi)(\cdot,t)|^2 |\nabla\psi_\eps(\cdot,t)| \dx
\\&
\leq \frac{1}{2} \int_{\Omega} |(\mathrm{n}_\eps {-} \xi)(\cdot,t)|^2 \eps|\nabla u_\eps(\cdot,t)|^2 \dx
+ 2\int_{\Omega} \bigg(\sqrt{\eps} |\nabla u_\eps(\cdot,t)| 
- \frac{\sqrt{2W( u_\eps(\cdot,t))}}{\sqrt{\eps}}\bigg)^2 \dx 
\\&~~~
+ \int_{\Omega} |(\mathrm{n}_\eps {-} \xi)(\cdot,t)|^2 |\nabla\psi_\eps(\cdot,t)| \dx.
\end{align*}
Hence, absorbing the first right hand side term of this inequality
into the corresponding left hand side, and recalling the coercivity 
properties~\eqref{eq_coercivityProp1} and~\eqref{eq_coercivityProp4}, respectively, 
we thus obtain the bound
\begin{align}
\label{eq_coercivityProp5}
\int_{\Omega} |(\mathrm{n}_\eps {-} \xi)(\cdot,t)|^2 \eps|\nabla u_\eps(\cdot,t)|^2 \dx
&\leq 12 E_{\mathrm{relEn}}[ u_\eps|\mathscr{A}](t), \quad t\in [0,T].
\end{align}
Along similar lines using also~\eqref{eq_coercivityProp3}, one establishes that
for all $t\in [0,T]$
\begin{align}
\label{eq_coercivityProp6}
\int_{\Omega} \min\big\{1,\dist^2\big(\cdot,\overline{\partial^*\mathscr{A}(t)\cap \Omega}\big)\big\}  
\eps|\nabla u_\eps(\cdot,t)|^2 \dx
&\leq (1{+}2c^{-1})E_{\mathrm{relEn}}[ u_\eps|\mathscr{A}](t).
\end{align}

\subsection{Time evolution of the relative energy}
We proceed with the derivation of the stability estimate
for the relative energy from Theorem~\ref{theo_convergenceRatesCalibratedPartition}.
The basis is given by the following relative energy inequality.

\begin{lemma}
\label{lem_relativeEnergyInequality}
In the setting of Theorem~\ref{theo_convergenceRatesCalibratedPartition},
the following estimate on the time evolution of
the relative energy~$E_{\mathrm{relEn}}[ u_\eps|\mathscr{A}]$ 
defined by~\eqref{eq_relEnergy} holds true:
\begin{align}
\nonumber
&E_{\mathrm{relEn}}[ u_\eps|\mathscr{A}](T')
\\& \nonumber
+ \int_0^{T'}\int_{\Omega} \frac{1}{2\eps} \Big(H_\eps {+}(\nabla\cdot\xi)\sqrt{2W( u_\eps)}\Big)^2
{+} \frac{1}{2\eps} \Big(H_\eps {-}(B\cdot\xi)\eps|\nabla u_\eps|\Big)^2 \dx\dt 
\\& \label{eq_timeEvolutionRelEnergy}
\leq E_{\mathrm{relEn}}[ u_\eps|\mathscr{A}](0) 
\\&~~~\nonumber
+ \int_0^{T'}\int_{\Omega} \frac{1}{\sqrt{\eps}} \Big(H_\eps {+} (\nabla\cdot\xi) \sqrt{2W( u_\eps)}\Big)
 \big(B\cdot(\mathrm{n}_\eps{-}\xi)\big) \,\sqrt{\eps}|\nabla u_\eps| \dx \dt
\\&~~~\nonumber
+ \int_{0}^{T'} \int_{\partial \Omega} \big(\sigma( u_\eps) - \psi_\eps\cos\alpha\big)
(\mathrm{Id} {-} \mathrm{n}_{\partial \Omega} \otimes \mathrm{n}_{\partial \Omega}) : \nabla B \dH \dt
\\&~~~\nonumber
+ \int_0^{T'}\int_{\Omega} 2 \big|(B\cdot\xi){+}(\nabla\cdot\xi)\big|^2\,\eps|\nabla u_\eps|^2 \dx \dt
\\&~~~\nonumber
+ \int_0^{T'}\int_{\Omega} 2 |\nabla\cdot\xi|^2 \bigg(\sqrt{\eps}|\nabla u_\eps|
{-}\frac{\sqrt{2W( u_\eps)}}{\sqrt{\eps}}\bigg)^2 \dx \dt
\\&~~~\nonumber
- \int_0^{T'}\int_{\Omega} (\mathrm{n}_\eps {-} \xi) \cdot
\big(\partial_t\xi {+} (B\cdot\nabla)\xi {+} (\nabla B)^\mathsf{T}\xi\big) \,|\nabla\psi_\eps| \dx \dt
\\&~~~\nonumber
- \int_0^{T'}\int_{\Omega} \xi \cdot \big(\partial_t\xi {+} (B\cdot\nabla)\xi\big) \,|\nabla\psi_\eps| \dx \dt
\\&~~~\nonumber
- \int_0^{T'}\int_{\Omega} (\mathrm{n}_\eps {-} \xi) \otimes (\mathrm{n}_\eps {-} \xi) : \nabla B  \,|\nabla\psi_\eps| \dx \dt
\\&~~~\nonumber
+ \int_0^{T'}\int_{\Omega} (\nabla\cdot B) (1 - \mathrm{n}_\eps\cdot\xi ) \,|\nabla\psi_\eps| \dx \dt
\\&~~~\nonumber
+ \int_0^{T'}\int_{\Omega} (\nabla\cdot B)\frac{1}{2}\bigg(\sqrt{\eps}
|\nabla u_\eps| - \frac{\sqrt{2W( u_\eps)}}{\sqrt{\eps}}\bigg)^2  \dx \dt
\\&~~~\nonumber
- \int_0^{T'}\int_{\Omega} (\mathrm{n}_\eps\otimes\mathrm{n}_\eps {-} \xi\otimes\xi) : \nabla B 
\,\big(\eps|\nabla u_\eps|^2 {-} |\nabla\psi_\eps|\big) \dx \dt
\\&~~~\nonumber
- \int_0^{T'}\int_{\Omega} \xi\otimes\xi : \nabla B 
\,\big(\eps|\nabla u_\eps|^2 {-} |\nabla\psi_\eps|\big) \dx \dt.
\end{align}
for all $T'\in (0,T]$.
\end{lemma}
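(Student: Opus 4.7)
The plan is to differentiate the relative energy in time and rearrange the result into~\eqref{eq_timeEvolutionRelEnergy}. The higher regularity from Lemma~\ref{lem_higherRegularityWeakSolutionAllenCahn} justifies every manipulation, and I shall use the pointwise identities $\nabla\psi_\eps=\sqrt{2W(u_\eps)}\nabla u_\eps$ together with the strong equation $\eps\partial_t u_\eps=-H_\eps$ throughout. The four pieces of $E_{\mathrm{relEn}}$ are handled in turn: the Modica--Mortola bulk energy $E_\eps[u_\eps(t)]$ via the dissipation equality of Lemma~\ref{lem_energyDissipationEquality}, yielding the dissipation $-\int_0^{T'}\int_\Omega\eps^{-1}H_\eps^2\dx\dt$; the mixed term $-\int_\Omega\nabla\psi_\eps\cdot\xi\dx$ by moving the spatial derivative onto $\xi$, with the resulting $\partial\Omega$-flux combined with the time derivative of $-\cos\alpha\int_{\partial\Omega}\psi_\eps\dH$ via $\xi\cdot\mathrm{n}_{\partial\Omega}=\cos\alpha$ from~\eqref{eq_boundaryCondXi}; and the remaining boundary contribution $\int_{\partial\Omega}\sigma'(u_\eps)\partial_t u_\eps\dH$ rewritten as a volume integral through the weak Robin identity~\eqref{eq_weakFormRobinCondition}.

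\textbf{Completing the two squares.} The algebraic heart of the argument is the split
\begin{align*}
\tfrac{1}{\eps}H_\eps^2 = \tfrac{1}{2\eps}\bigl(H_\eps+(\nabla\cdot\xi)\sqrt{2W(u_\eps)}\bigr)^2 + \tfrac{1}{2\eps}\bigl(H_\eps-(B\cdot\xi)\eps|\nabla u_\eps|\bigr)^2 + R,
\end{align*}
where $R$ collects the cross terms $-\eps^{-1}(\nabla\cdot\xi)\sqrt{2W(u_\eps)}\,H_\eps+(B\cdot\xi)|\nabla u_\eps|H_\eps$ and the quadratic residuals $-\eps^{-1}(\nabla\cdot\xi)^2W(u_\eps)-\tfrac{\eps}{2}(B\cdot\xi)^2|\nabla u_\eps|^2$. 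The first cross term is exactly the volume integral produced by the mixed-term IBP above, after substituting $\partial_t\psi_\eps=-\eps^{-1}\sqrt{2W(u_\eps)}H_\eps$. To cancel the second, I add and subtract the transport integral $\int_\Omega(B\cdot\nabla u_\eps)H_\eps\dx=\int_\Omega(B\cdot\mathrm{n}_\eps)H_\eps|\nabla u_\eps|\dx$ and split $\mathrm{n}_\eps=\xi+(\mathrm{n}_\eps-\xi)$: the $\xi$-portion supplies the cancellation, while the $(\mathrm{n}_\eps-\xi)$-portion combines with the leftover first cross term to form the leading remainder on the right of~\eqref{eq_timeEvolutionRelEnergy}. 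The quadratic residuals are absorbed into the $|(B\cdot\xi)+(\nabla\cdot\xi)|^2\eps|\nabla u_\eps|^2$ and $|\nabla\cdot\xi|^2\bigl(\sqrt\eps|\nabla u_\eps|-\sqrt{2W(u_\eps)}/\sqrt\eps\bigr)^2$ terms via $a^2\leq 2(a+b)^2+2b^2$ together with the equipartition expansion of $\eps|\nabla u_\eps|^2$.

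\textbf{Assembling the tensorial remainder.} The added-back transport $-\int_\Omega(B\cdot\nabla u_\eps)H_\eps\dx$ is the source of all further terms on the right. Two integrations by parts---one in the $-\eps\Delta u_\eps$ piece and one in the $\eps^{-1}W'(u_\eps)$ piece of $H_\eps$---transfer derivatives onto $B$ and produce $\nabla B$ contracted against $\eps\nabla u_\eps\otimes\nabla u_\eps$; decomposing via $\mathrm{n}_\eps\otimes\mathrm{n}_\eps=\xi\otimes\xi+(\mathrm{n}_\eps\otimes\mathrm{n}_\eps-\xi\otimes\xi)$ separates the tensor pieces weighted by the discrepancy measure $\eps|\nabla u_\eps|^2-|\nabla\psi_\eps|$ from those weighted by $|\nabla\psi_\eps|$. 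The tilt, length-constraint, and $(\nabla\cdot B)$-weighted terms emerge from the same product-rule expansion of $\partial_t\int_\Omega\nabla\psi_\eps\cdot\xi\dx$ after writing $|\nabla\psi_\eps|=\mathrm{n}_\eps\cdot\nabla\psi_\eps$ and $1-\mathrm{n}_\eps\cdot\xi=\tfrac{1}{2}|\mathrm{n}_\eps-\xi|^2+\tfrac{1}{2}(1-|\xi|^2)$. Finally, the bulk IBP leaves a $\partial\Omega$-flux $\eps\int_{\partial\Omega}(B\cdot\nabla u_\eps)(\mathrm{n}_{\partial\Omega}\cdot\nabla u_\eps)\dH$ which, via the Robin BC, becomes $\int_{\partial\Omega}\sigma'(u_\eps)(B\cdot\nabla u_\eps)\dH$; a subsequent integration by parts on the closed manifold $\partial\Omega$ (where $B$ is tangential by~\eqref{eq_boundaryCondVelocity}) turns this into the surface remainder $\int_{\partial\Omega}(\sigma(u_\eps)-\psi_\eps\cos\alpha)(\mathrm{Id}-\mathrm{n}_{\partial\Omega}\otimes\mathrm{n}_{\partial\Omega}):\nabla B\dH$.

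\textbf{Main obstacle.} The principal difficulty is the bookkeeping: each residual integral must land in one of the precise slots on the right of~\eqref{eq_timeEvolutionRelEnergy} so that, in the next subsection, it can be controlled by the calibration inequalities~\eqref{eq_calibration1}--\eqref{eq_calibration4} together with the coercivity estimates of Subsection~\ref{sec_stabilityEstimateRelEnergy}. A subtle point is the sign and tangential structure of the boundary remainder: it requires carefully tracking two sequential IBPs (one in $\Omega$, producing the $\partial\Omega$-flux, and one on $\partial\Omega$, producing the tangential divergence), and the non-negativity $\sigma\geq\psi\cos\alpha$ from~\eqref{eq_propBoundaryEnergyDensity2} ensures that this term is genuinely controlled by $E_{\mathrm{relEn}}$ via~\eqref{eq_coercivityProp0} rather than having to be absorbed on the left.
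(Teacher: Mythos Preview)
Your approach is correct and essentially identical to the paper's: the energy dissipation equality for $E_\eps$, integration by parts on the mixed term with the boundary flux absorbed via $\xi\cdot\mathrm{n}_{\partial\Omega}=\cos\alpha$, the two completed squares, the added-and-subtracted transport integral $\int_\Omega H_\eps(B\cdot\mathrm{n}_\eps)|\nabla u_\eps|$, the IBP producing the $\nabla B$ tensor terms, and the tangential divergence on $\partial\Omega$ for the surface remainder all appear in the same roles and order. One small slip in your first paragraph: once you invoke Lemma~\ref{lem_energyDissipationEquality} for the \emph{full} $E_\eps$ (which already contains $\int_{\partial\Omega}\sigma(u_\eps)$), there is no leftover term $\int_{\partial\Omega}\sigma'(u_\eps)\partial_t u_\eps$ to deal with---the surface remainder comes exactly as you describe in your third paragraph, from the $\partial\Omega$-flux of the bulk IBP on $\eps\nabla u_\eps\otimes\nabla u_\eps:\nabla B$ combined with the $\psi_\eps\cos\alpha$ piece.
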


\begin{proof}
Fix $T'\in (0,T]$. Based on the definitions~\eqref{eq_energyPhaseField}
and~\eqref{eq_relEnergy} of the energy functional and the relative
energy, respectively, and the boundary condition~\eqref{eq_boundaryCondXi} for~$\xi$,
we may write
\begin{align*}
E_{\mathrm{relEn}}[ u_\eps|\mathscr{A}](T') &= E_\eps[ u_\eps(\cdot,T')]
-\int_{\Omega} \nabla\psi_\eps(\cdot,T')\cdot\xi(\cdot,T') \dx
\\&~~~
-\int_{\partial \Omega} \psi_\eps(\cdot,T')
\big(\mathrm{n}_{\partial \Omega}\cdot\xi(\cdot,T')\big) \dH.
\end{align*}
Hence, by means of the energy dissipation equality~\eqref{eq_energyDissipationWeakSolution}
(which can be equivalently expressed in form of~\eqref{eq_energyDissipationPhaseField}
thanks to~\eqref{eq_weakSolutionsAreStrongSolutions}),
the analogous representation of the relative energy at the initial time,
the fundamental theorem of calculus facilitated by a standard
mollification argument in the time variable, the 
definitions~\eqref{eq_auxIndicatorPhaseField} and~\eqref{eq_indicatorPhaseField}
together with an application of the chain rule, 
the boundary condition~\eqref{eq_boundaryCondXi} for~$\xi$,
as well as finally an integration by parts, 
we then obtain the estimate
\begin{align}
\nonumber
&E_{\mathrm{relEn}}[ u_\eps|\mathscr{A}](T') 
\\& \nonumber
= E_{\mathrm{relEn}}[ u_\eps|\mathscr{A}](0)
- \int_0^{T'}\int_{\Omega} \frac{1}{\eps} H_\eps^2 \dx\dt
\\&~~~\nonumber
- \bigg(\int_{\Omega} \nabla\psi_\eps(\cdot,T')\cdot\xi(\cdot,T') \dx
- \int_{\Omega} \nabla\psi_\eps(\cdot,0)\cdot\xi(\cdot,0) \dx \bigg)
\\&~~~\nonumber
- \bigg(\int_{\partial \Omega} \psi_\eps(\cdot,T')
\big(\mathrm{n}_{\partial \Omega}\cdot\xi(\cdot,T')\big) \dH
- \int_{\partial \Omega} \psi_\eps(\cdot,0)
\big(\mathrm{n}_{\partial \Omega}\cdot\xi(\cdot,0)\big) \dH\bigg)
\\& \label{eq_relEnergyInequalityStep1}
= E_{\mathrm{relEn}}[ u_\eps|\mathscr{A}](0) - \int_0^{T'}\int_{\Omega} \frac{1}{\eps} H_\eps^2 \dx\dt
\\&~~~\nonumber
+  \int_0^{T'}\int_{\Omega} (\nabla\cdot\xi) \sqrt{2W( u_\eps)} \partial_t u_\eps \dx\dt
\\&~~~\nonumber
- \int_0^{T'}\int_{\Omega} \mathrm{n}_\eps \cdot\partial_t\xi \,|\nabla\psi_\eps| \dx \dt.
\end{align}
Adding zero twice implies
\begin{align*}
- \int_0^{T'}\int_{\Omega} \mathrm{n}_\eps \cdot\partial_t\xi \,|\nabla\psi_\eps| \dx \dt
&= - \int_0^{T'}\int_{\Omega} \mathrm{n}_\eps \cdot
\big(\partial_t\xi {+} (B\cdot\nabla)\xi {+} (\nabla B)^\mathsf{T}\xi\big) \,|\nabla\psi_\eps| \dx \dt
\\&~~~
+ \int_0^{T'}\int_{\Omega} \xi\otimes\mathrm{n}_\eps : \nabla B \,|\nabla\psi_\eps| \dx \dt
\\&~~~
+ \int_0^{T'}\int_{\Omega} \mathrm{n}_\eps\cdot (B\cdot\nabla)\xi \,|\nabla\psi_\eps| \dx \dt
\\&
= - \int_0^{T'}\int_{\Omega} (\mathrm{n}_\eps {-} \xi) \cdot
\big(\partial_t\xi {+} (B\cdot\nabla)\xi {+} (\nabla B)^\mathsf{T}\xi\big) \,|\nabla\psi_\eps| \dx \dt
\\&~~~
- \int_0^{T'}\int_{\Omega} \xi \cdot \big(\partial_t\xi {+} (B\cdot\nabla)\xi\big) \,|\nabla\psi_\eps| \dx \dt
\\&~~~
+ \int_0^{T'}\int_{\Omega} \xi\otimes(\mathrm{n}_\eps {-} \xi) : \nabla B \,|\nabla\psi_\eps| \dx \dt	
\\&~~~
+ \int_0^{T'}\int_{\Omega} \mathrm{n}_\eps\cdot (B\cdot\nabla)\xi \,|\nabla\psi_\eps| \dx \dt.
\end{align*}
Moreover, we may compute by means of $\mathrm{n}_\eps |\nabla\psi_\eps| = \nabla\psi_\eps$,
the product rule, and adding zero twice
\begin{align*}
\int_0^{T'}\int_{\Omega} \mathrm{n}_\eps\cdot (B\cdot\nabla)\xi \,|\nabla\psi_\eps| \dx \dt 
&= \int_0^{T'}\int_{\Omega} \nabla\psi_\eps \cdot (B\cdot\nabla)\xi \dx \dt
\\&
= \int_0^{T'}\int_{\Omega} \nabla\psi_\eps \cdot \big(\nabla\cdot (\xi\otimes B)\big) \dx \dt
\\&~~~
- \int_0^{T'}\int_{\Omega} (\mathrm{n}_\eps\cdot\xi - 1)(\nabla\cdot B) \,|\nabla\psi_\eps| \dx \dt
\\&~~~
- \int_0^{T'}\int_{\Omega} (\mathrm{Id}{-}\mathrm{n}_\eps\otimes\mathrm{n}_\eps) : \nabla B \,|\nabla\psi_\eps| \dx \dt
\\&~~~
- \int_0^{T'}\int_{\Omega} \mathrm{n}_\eps\otimes\mathrm{n}_\eps : \nabla B \,|\nabla\psi_\eps| \dx \dt.
\end{align*}
By an integration by parts based on the 
regularity~\eqref{eq_regularityXi}--\eqref{eq_regularityB}
of~$(\xi,B)$, an application of the product rule, 
the symmetry relation $\nabla\cdot \big(\nabla\cdot(\xi\otimes B)\big)=\nabla\cdot \big(\nabla\cdot(B\otimes\xi)\big)$,
and an application of the boundary condition~\eqref{eq_boundaryCondVelocity} for the velocity field~$B$,
we also get
\begin{align*}
\int_0^{T'}\int_{\Omega} \nabla\psi_\eps \cdot \big(\nabla\cdot (\xi\otimes B)\big) \dx \dt
&= - \int_0^{T'}\int_{\Omega} \psi_\eps \nabla\cdot \big(\nabla\cdot(\xi\otimes B)\big) \dx \dt
\\&~~~
- \int_0^{T'}\int_{\partial \Omega} \psi_\eps \mathrm{n}_{\partial \Omega}
\cdot \big(\nabla\cdot(\xi\otimes B)\big) \dH \dt
\\&
= \int_0^{T'}\int_{\Omega} \nabla\psi_\eps \cdot 
\big(\nabla\cdot(B\otimes\xi)\big) \dx \dt
\\&~~~
+ \int_0^{T'}\int_{\partial \Omega} \psi_\eps \mathrm{n}_{\partial \Omega} \cdot (\xi\cdot\nabla)B \dH \dt
\\&~~~
- \int_0^{T'}\int_{\partial \Omega} \psi_\eps \mathrm{n}_{\partial \Omega}
\cdot \big((B\cdot\nabla)\xi {+} (\nabla\cdot B)\xi\big) \dH \dt
\\&
= \int_0^{T'}\int_{\Omega} (\nabla\cdot\xi) (B\cdot\mathrm{n}_\eps) \,|\nabla\psi_\eps| \dx \dt
\\&~~~
+ \int_0^{T'}\int_{\Omega} \mathrm{n}_\eps\otimes\xi : \nabla B \,|\nabla\psi_\eps| \dx \dt
\\&~~~
- \int_0^{T'}\int_{\partial \Omega} \psi_\eps (\mathrm{n}_{\partial \Omega}\cdot\xi) (\nabla\cdot B) \dH \dt
\\&~~~
- \int_0^{T'}\int_{\partial \Omega} \psi_\eps \mathrm{n}_{\partial \Omega} \cdot (B\cdot\nabla)\xi \dH \dt
\\&~~~
+ \int_0^{T'}\int_{\partial \Omega} \psi_\eps \mathrm{n}_{\partial \Omega} \cdot (\xi\cdot\nabla)B \dH \dt.
\end{align*}
Splitting the vector field~$\xi$ into tangential and normal components in the form of
$\xi=(\mathrm{n}_{\partial \Omega}\cdot\xi)\mathrm{n}_{\partial \Omega} + (\mathrm{Id}{-}\mathrm{n}_{\partial \Omega}\otimes\mathrm{n}_{\partial \Omega})\xi$,
and making use of $\nabla^\mathrm{tan}(B\cdot \mathrm{n}_{\partial \Omega})=0$ due to the boundary condition~\eqref{eq_boundaryCondVelocity} for the velocity field~$B$ as well as the product rule, we may further equivalently express
\begin{align*}
&\int_0^{T'}\int_{\partial \Omega} \psi_\eps \mathrm{n}_{\partial \Omega} \cdot (\xi\cdot\nabla)B \dH \dt
\\&
= \int_0^{T'}\int_{\partial \Omega} \psi_\eps (\mathrm{n}_{\partial \Omega} \cdot \xi) 
\mathrm{n}_{\partial \Omega} \cdot (\mathrm{n}_{\partial \Omega}\cdot\nabla)B \dH \dt
\\&~~~
- \int_0^{T'}\int_{\partial \Omega} \psi_\eps B\cdot\big(
(\mathrm{Id}{-}\mathrm{n}_{\partial \Omega}\otimes\mathrm{n}_{\partial \Omega})\xi
\cdot\nabla\big)\mathrm{n}_{\partial \Omega} \dH \dt.
\end{align*}
Exploiting the boundary condition~\eqref{eq_boundaryCondVelocity} for~$B$,
applying the product rule, splitting again the vector field~$\xi$ into tangential
and normal components as before, and finally relying on the classical facts that
$(\nabla^\mathrm{tan}\mathrm{n}_{\partial \Omega})^\mathsf{T}\mathrm{n}_{\partial \Omega}=0$
and $(\nabla^\mathrm{tan}\mathrm{n}_{\partial \Omega})^\mathsf{T}=\nabla^\mathrm{tan}\mathrm{n}_{\partial \Omega}$ 
along~$\partial \Omega$, we also have
\begin{align*}
&- \int_0^{T'}\int_{\partial \Omega} \psi_\eps \mathrm{n}_{\partial \Omega} \cdot (B\cdot\nabla)\xi \dH \dt
\\&
= \int_0^{T'}\int_{\partial \Omega} \psi_\eps \xi \cdot (B\cdot\nabla)\mathrm{n}_{\partial \Omega} \dH \dt
- \int_0^{T'}\int_{\partial \Omega} \psi_\eps (B\cdot\nabla)(\xi\cdot\mathrm{n}_{\partial \Omega}) \dH \dt
\\&
=\int_0^{T'}\int_{\partial \Omega} \psi_\eps
(\mathrm{Id}{-}\mathrm{n}_{\partial \Omega}\otimes\mathrm{n}_{\partial \Omega}) \xi 
\cdot (B\cdot\nabla)\mathrm{n}_{\partial \Omega} \dH \dt
\\&~~~
- \int_0^{T'}\int_{\partial \Omega} \psi_\eps (B\cdot\nabla)(\xi\cdot\mathrm{n}_{\partial \Omega}) \dH \dt
\\&
=\int_0^{T'}\int_{\partial \Omega} \psi_\eps B\cdot\big(
(\mathrm{Id}{-}\mathrm{n}_{\partial \Omega}\otimes\mathrm{n}_{\partial \Omega})\xi
\cdot\nabla\big)\mathrm{n}_{\partial \Omega} \dH \dt
\\&~~~
- \int_0^{T'}\int_{\partial \Omega} \psi_\eps (B\cdot\nabla)(\xi\cdot\mathrm{n}_{\partial \Omega}) \dH \dt.
\end{align*}
The previous three displays in total imply
\begin{align*}
\int_0^{T'}\int_{\Omega} \nabla\psi_\eps \cdot \big(\nabla\cdot (\xi\otimes B)\big) \dx \dt
&= \int_0^{T'}\int_{\Omega} (\nabla\cdot\xi) (B\cdot\mathrm{n}_\eps) \,|\nabla\psi_\eps| \dx \dt
\\&~~~
+ \int_0^{T'}\int_{\Omega} \mathrm{n}_\eps\otimes\xi : \nabla B \,|\nabla\psi_\eps| \dx \dt
\\&~~~
- \int_0^{T'}\int_{\partial \Omega} \psi_\eps (\mathrm{n}_{\partial \Omega}\cdot\xi) (\nabla^{\mathrm{tan}}\cdot B) \dH \dt
\\&~~~
- \int_0^{T'}\int_{\partial \Omega} \psi_\eps (B\cdot\nabla)(\xi\cdot\mathrm{n}_{\partial \Omega}) \dH \dt,
\end{align*}
so that the combination of the previous six displays culminates into
\begin{align}
\label{eq_relEnergyInequalityStep2}
- \int_0^{T'}\int_{\Omega} \mathrm{n}_\eps \cdot\partial_t\xi \,|\nabla\psi_\eps| \dx \dt
&= \int_0^{T'}\int_{\Omega} (\nabla\cdot\xi) (B\cdot\mathrm{n}_\eps) \,|\nabla\psi_\eps| \dx \dt
\\&~~~\nonumber
- \int_0^{T'}\int_{\Omega} (\mathrm{Id}{-}\mathrm{n}_\eps\otimes\mathrm{n}_\eps) : \nabla B \,|\nabla\psi_\eps| \dx \dt
\\&~~~\nonumber
- \int_0^{T'}\int_{\partial \Omega} \psi_\eps (\mathrm{n}_{\partial \Omega}\cdot\xi) (\nabla^{\mathrm{tan}}\cdot B) \dH \dt
\\&~~~\nonumber
- \int_0^{T'}\int_{\partial \Omega} \psi_\eps (B\cdot\nabla)(\xi\cdot\mathrm{n}_{\partial \Omega}) \dH \dt
\\&~~~\nonumber
- \int_0^{T'}\int_{\Omega} (\mathrm{n}_\eps {-} \xi) \cdot
\big(\partial_t\xi {+} (B\cdot\nabla)\xi {+} (\nabla B)^\mathsf{T}\xi\big) \,|\nabla\psi_\eps| \dx \dt
\\&~~~\nonumber
- \int_0^{T'}\int_{\Omega} \xi \cdot \big(\partial_t\xi {+} (B\cdot\nabla)\xi\big) \,|\nabla\psi_\eps| \dx \dt
\\&~~~\nonumber
- \int_0^{T'}\int_{\Omega} (\mathrm{n}_\eps {-} \xi) \otimes (\mathrm{n}_\eps {-} \xi) : \nabla B  \,|\nabla\psi_\eps| \dx \dt
\\&~~~\nonumber
- \int_0^{T'}\int_{\Omega} (\mathrm{n}_\eps\cdot\xi - 1)(\nabla\cdot B) \,|\nabla\psi_\eps| \dx \dt.
\end{align}
Inserting~\eqref{eq_relEnergyInequalityStep2} back into~\eqref{eq_relEnergyInequalityStep1}
and inspecting the structure of the right hand side of the desired estimate~\eqref{eq_timeEvolutionRelEnergy},
we still have to post-process the terms $\mathrm{Res}:=\mathrm{Res}^{(1)} + \mathrm{Res}^{(2)}$,
where
\begin{align}
\label{eq_relEnergyInequalityStep3}
\mathrm{Res}^{(1)} &:= - \int_0^{T'}\int_{\Omega} \frac{1}{\eps} H_\eps^2 \dx\dt
+  \int_0^{T'}\int_{\Omega} (\nabla\cdot\xi) \sqrt{2W( u_\eps)} \partial_t u_\eps \dx\dt
\\&~~~~\nonumber
+ \int_0^{T'}\int_{\Omega} (\nabla\cdot\xi) (B\cdot\mathrm{n}_\eps) \,|\nabla\psi_\eps| \dx \dt
\\&~~~~\nonumber
- \int_0^{T'}\int_{\Omega} (\mathrm{Id}{-}\mathrm{n}_\eps\otimes\mathrm{n}_\eps) : \nabla B \,|\nabla\psi_\eps| \dx \dt,
\\\label{eq_relEnergyInequalityStep4}
\mathrm{Res}^{(2)} &:= 
- \int_0^{T'}\int_{\partial \Omega} \psi_\eps (\mathrm{n}_{\partial \Omega}\cdot\xi) (\nabla^{\mathrm{tan}}\cdot B) \dH \dt
\\&~~~~\nonumber
- \int_0^{T'}\int_{\partial \Omega} \psi_\eps (B\cdot\nabla)(\xi\cdot\mathrm{n}_{\partial \Omega}) \dH \dt.
\end{align}
We start with the first residual term~$\mathrm{Res}^{(1)}$, and rewrite the last term for $\varepsilon|\nabla u_\varepsilon|^2$ instead of~$|\nabla\psi_\varepsilon|$ using the boundary condition~\eqref{eq_boundaryCondVelocity} for~$B$ and the definition \eqref{eq_curvaturePhaseField} for~$H_\varepsilon$. It turns out later that the difference can be controlled. Recalling $\mathrm{n}_\eps |\nabla u_\eps| = \nabla u_\eps$
and integrating by parts in the sense of the identity~\eqref{eq_weakFormRobinCondition}
based on the higher regularity of~$ u_\eps$ provided by
the first item of~\eqref{eq_higherRegularityPhaseField}
shows that
\begin{align}
\nonumber
\int_0^{T'}\int_{\Omega} \mathrm{n}_\eps \otimes \mathrm{n}_\eps : \nabla B \,\eps|\nabla u_\eps|^2 \dx \dt
&= \int_0^{T'}\int_{\Omega} \eps \nabla u_\eps \otimes \nabla u_\eps  : \nabla B  \dx \dt
\\&\label{eq_relEnergyInequalityStep5}
= - \int_0^{T'}\int_{\Omega} \eps \Delta u_\eps (B\cdot\mathrm{n}_\eps)\,|\nabla u_\eps|  \dx \dt
\\&~~~\nonumber
- \int_0^{T'}\int_{\Omega} \eps \nabla u_\eps \otimes B : \nabla^2 u_\eps  \dx \dt
\\&~~~\nonumber
- \int_0^{T'}\int_{\partial \Omega} \sigma'( u_\eps) (B\cdot\nabla) u_\eps  \dH \dt.
\end{align}
Moreover, another integration by parts in combination with
the boundary condition~\eqref{eq_boundaryCondVelocity} for the velocity field~$B$ entails
\begin{align*}
- \int_0^{T'}\int_{\Omega} \eps \nabla u_\eps \otimes B : \nabla^2 u_\eps \dx \dt
&= \int_0^{T'}\int_{\Omega} \eps \nabla u_\eps \otimes B : \nabla^2 u_\eps \dx \dt
\\&~~~
+ \int_0^{T'}\int_{\Omega} (\nabla\cdot B)\,\eps|\nabla u_\eps|^2 \dx \dt.
\end{align*}
In other words,
\begin{align*}
- \int_0^{T'}\int_{\Omega} \eps \nabla u_\eps \otimes B : \nabla^2 u_\eps \dx \dt
= \int_0^{T'}\int_{\Omega} (\nabla\cdot B)\,\frac{1}{2}\eps|\nabla u_\eps|^2 \dx \dt,
\end{align*}
so that completing the square, exploiting $\nabla\psi_\eps=\sqrt{2W( u_\eps)}\nabla u_\eps$,
integrating by parts (relying in the process on $\mathrm{n}_\eps |\nabla u_\eps| = \nabla u_\eps$
as well as yet again the boundary condition~\eqref{eq_boundaryCondVelocity} for the velocity field~$B$)
and finally recalling the definition~\eqref{eq_curvaturePhaseField} of the map~$H_\eps$ yields
\begin{align}
\nonumber
&- \int_0^{T'}\int_{\Omega} \eps \Delta u_\eps (B\cdot\mathrm{n}_\eps)\,|\nabla u_\eps|  \dx \dt
- \int_0^{T'}\int_{\Omega} \eps \nabla u_\eps \otimes B : \nabla^2 u_\eps \dx \dt
\\&\label{eq_relEnergyInequalityStep6}
= \int_0^{T'}\int_{\Omega} (\nabla\cdot B)\frac{1}{2}\bigg(\sqrt{\eps}
|\nabla u_\eps| - \frac{\sqrt{2W( u_\eps)}}{\sqrt{\eps}}\bigg)^2  \dx \dt
\\&~~~\nonumber
+ \int_0^{T'}\int_{\Omega} (\nabla\cdot B) \,|\nabla\psi_\eps| \dx \dt
+ \int_0^{T'}\int_{\Omega} H_\eps (B\cdot\mathrm{n}_\eps)\,|\nabla u_\eps|  \dx \dt.
\end{align}
Inserting back~\eqref{eq_relEnergyInequalityStep6} into~\eqref{eq_relEnergyInequalityStep5},
making use of the chain rule and the surface divergence theorem in form of
(relying in the process also on the higher regularity of~$ u_\eps$ provided by
the first item of~\eqref{eq_higherRegularityPhaseField} and the boundary condition~\eqref{eq_boundaryCondVelocity} for~$B$)
\begin{align*}
- \int_0^{T'}\int_{\partial \Omega} \sigma'( u_\eps) (B\cdot\nabla) u_\eps  \dH \dt
= \int_0^{T'}\int_{\partial \Omega} \sigma( u_\eps) (\nabla^{\mathrm{tan}} \cdot B) \dH \dt,
\end{align*}
and adding zero several times thus implies
\begin{align}
\label{eq_relEnergyInequalityStep7}
&- \int_0^{T'}\int_{\Omega} (\mathrm{Id}{-}\mathrm{n}_\eps\otimes\mathrm{n}_\eps) : \nabla B \,|\nabla\psi_\eps| \dx \dt
\\&\nonumber
= \int_0^{T'}\int_{\Omega} H_\eps (B\cdot\mathrm{n}_\eps)\,|\nabla u_\eps|  \dx \dt
- \int_0^{T'}\int_{\Omega} \mathrm{n}_\eps\otimes\mathrm{n}_\eps : \nabla B 
\,\big(\eps|\nabla u_\eps|^2 {-} |\nabla\psi_\eps|\big) \dx \dt
\\&~~~\nonumber
+ \int_0^{T'}\int_{\Omega} (\nabla\cdot B)\frac{1}{2}\bigg(\sqrt{\eps}
|\nabla u_\eps| - \frac{\sqrt{2W( u_\eps)}}{\sqrt{\eps}}\bigg)^2  \dx \dt
\\&~~~\nonumber
+ \int_0^{T'}\int_{\partial \Omega} \sigma( u_\eps) (\nabla^{\mathrm{tan}} \cdot B) \dH \dt.
\end{align}
Appealing to the boundary condition~\eqref{eq_boundaryCondXi} of the vector field~$\xi$ 
in form of
\begin{align*}
\mathrm{Res}^{(2)} = - \int_0^{T'}\int_{\partial \Omega} 
\psi_\eps \cos\alpha \, (\nabla^{\mathrm{tan}}\cdot B) \dH \dt
\end{align*}
we may thus infer from~\eqref{eq_relEnergyInequalityStep7} and
the definitions~\eqref{eq_relEnergyInequalityStep3}--\eqref{eq_relEnergyInequalityStep4} 
of the two residual terms
that it holds for $\mathrm{Res} = \mathrm{Res}^{(1)} + \mathrm{Res}^{(2)}$
\begin{align}
\label{eq_relEnergyInequalityStep8}
\mathrm{Res} &= - \int_0^{T'}\int_{\Omega} \frac{1}{\eps} H_\eps^2 \dx\dt
+  \int_0^{T'}\int_{\Omega} (\nabla\cdot\xi) \sqrt{2W( u_\eps)} \partial_t u_\eps \dx\dt
\\&~~~\nonumber
+ \int_0^{T'}\int_{\Omega} (\nabla\cdot\xi) (B\cdot\mathrm{n}_\eps) \,|\nabla\psi_\eps| \dx \dt
+ \int_0^{T'}\int_{\Omega} H_\eps (B\cdot\mathrm{n}_\eps)\,|\nabla u_\eps|  \dx \dt
\\&~~~\nonumber
+ \int_{0}^{T'} \int_{\partial \Omega} \big(\sigma( u_\eps) - \psi_\eps\cos\alpha\big)
(\mathrm{Id} {-} \mathrm{n}_{\partial \Omega} \otimes \mathrm{n}_{\partial \Omega}) : \nabla B \dH \dt
\\&~~~\nonumber
- \int_0^{T'}\int_{\Omega} (\mathrm{n}_\eps\otimes\mathrm{n}_\eps {-} \xi\otimes\xi) : \nabla B 
\,\big(\eps|\nabla u_\eps|^2 {-} |\nabla\psi_\eps|\big) \dx \dt
\\&~~~\nonumber
- \int_0^{T'}\int_{\Omega} \xi\otimes\xi : \nabla B 
\,\big(\eps|\nabla u_\eps|^2 {-} |\nabla\psi_\eps|\big) \dx \dt
\\&~~~\nonumber
+ \int_0^{T'}\int_{\Omega} (\nabla\cdot B)\frac{1}{2}\bigg(\sqrt{\eps}
|\nabla u_\eps| - \frac{\sqrt{2W( u_\eps)}}{\sqrt{\eps}}\bigg)^2  \dx \dt.
\end{align}
For the derivation of the desired relative energy inequality, it thus suffices
in view of~\eqref{eq_relEnergyInequalityStep1}, \eqref{eq_relEnergyInequalityStep2}
and~\eqref{eq_relEnergyInequalityStep8} to post-process the first four 
right hand side terms of~\eqref{eq_relEnergyInequalityStep8}. To this end,
one may argue as follows. First, based on the definition~\eqref{eq_curvaturePhaseField}
of the map~$H_\eps$, $\mathrm{n}_\eps |\nabla\psi_\eps| = \nabla\psi_\eps$,
the identity $\nabla\psi_\eps=\sqrt{2W( u_\eps)}\nabla u_\eps$,
and finally the Allen--Cahn equation~\eqref{eq_AllenCahn} expressed
in form of $\partial_t u_\eps = -\smash{\frac{1}{\eps}}H_\eps$
thanks to~\eqref{eq_weakSolutionsAreStrongSolutions} 
and~\eqref{eq_curvaturePhaseField}, we obtain by completing the square and adding zero
\begin{align}
\nonumber
&- \int_0^{T'}\int_{\Omega} \frac{1}{\eps} H_\eps^2 \dx\dt
+  \int_0^{T'}\int_{\Omega} (\nabla\cdot\xi) \sqrt{2W( u_\eps)} \partial_t u_\eps \dx\dt
\\&\nonumber
+ \int_0^{T'}\int_{\Omega} (\nabla\cdot\xi) (B\cdot\mathrm{n}_\eps) \,|\nabla\psi_\eps| \dx \dt
+ \int_0^{T'}\int_{\Omega} H_\eps (B\cdot\mathrm{n}_\eps)\,|\nabla u_\eps|  \dx \dt
\\&\label{eq_relEnergyInequalityStep9}
= - \int_0^{T'}\int_{\Omega} \frac{1}{2\eps} \Big(H_\eps {+}(\nabla\cdot\xi)\sqrt{2W( u_\eps)}\Big)^2 \dx\dt 
\\&~~~\nonumber
- \int_0^{T'}\int_{\Omega} \frac{1}{2\eps} H_\eps^2 \dx\dt
+ \int_0^{T'}\int_{\Omega} H_\eps (B\cdot\mathrm{n}_\eps)\,|\nabla u_\eps|  \dx \dt
\\&~~~\nonumber
+ \int_0^{T'}\int_{\Omega} \frac{1}{2}\bigg((\nabla\cdot\xi)\frac{\sqrt{2W( u_\eps)}}{\sqrt{\eps}}\bigg)^2 \dx \dt
\\&~~~\nonumber
+ \int_0^{T'}\int_{\Omega} (\nabla\cdot\xi) \frac{\sqrt{2W( u_\eps)}}{\sqrt{\eps}}
 (B\cdot\xi) \,\sqrt{\eps}|\nabla u_\eps| \dx \dt
\\&~~~\nonumber
+ \int_0^{T'}\int_{\Omega} \frac{1}{\sqrt{\eps}} (\nabla\cdot\xi) \sqrt{2W( u_\eps)}
 \big(B\cdot(\mathrm{n}_\eps{-}\xi)\big) \,\sqrt{\eps}|\nabla u_\eps| \dx \dt.
\end{align}
Completing the square yet again also entails
\begin{align}
\nonumber
&- \int_0^{T'}\int_{\Omega} \frac{1}{2\eps} H_\eps^2 \dx\dt
+ \int_0^{T'}\int_{\Omega} H_\eps (B\cdot\mathrm{n}_\eps)\,|\nabla u_\eps|  \dx \dt
\\&\label{eq_relEnergyInequalityStep10}
= - \int_0^{T'}\int_{\Omega} \frac{1}{2\eps} \big(H_\eps {-}(B\cdot\xi)\eps|\nabla u_\eps|\big)^2 \dx\dt 
\\&~~~\nonumber
+ \int_0^{T'}\int_{\Omega} \frac{1}{\sqrt{\eps}}H_\eps 
\big(B\cdot(\mathrm{n}_\eps{-}\xi)\big)\,\sqrt{\eps}|\nabla u_\eps|  \dx \dt
+ \int_0^{T'}\int_{\Omega} \frac{1}{2}(B\cdot\xi)^2\,\eps|\nabla u_\eps|^2  \dx \dt.
\end{align}
Finally, observe that it holds
\begin{align}
\nonumber
&\int_0^{T'}\int_{\Omega} \frac{1}{2}\bigg((\nabla\cdot\xi)\frac{\sqrt{2W( u_\eps)}}{\sqrt{\eps}}\bigg)^2 \dx \dt
+ \int_0^{T'}\int_{\Omega} \frac{1}{2}(B\cdot\xi)^2\,\eps|\nabla u_\eps|^2  \dx \dt
\\&\nonumber
+ \int_0^{T'}\int_{\Omega} (\nabla\cdot\xi) \frac{\sqrt{2W( u_\eps)}}{\sqrt{\eps}}
 (B\cdot\xi) \,\sqrt{\eps}|\nabla u_\eps| \dx \dt
\\&\nonumber
= \int_0^{T'}\int_{\Omega} \frac{1}{2} \bigg|(B\cdot\xi)\sqrt{\eps}|\nabla u_\eps|
+  (\nabla\cdot\xi)\frac{\sqrt{2W( u_\eps)}}{\sqrt{\eps}}\bigg|^2 \dx \dt
\\&\nonumber
= \int_0^{T'}\int_{\Omega} \frac{1}{2} \bigg|\big((B\cdot\xi){+}(\nabla\cdot\xi)\big)\sqrt{\eps}|\nabla u_\eps|
-  (\nabla\cdot\xi)\bigg(\sqrt{\eps}|\nabla u_\eps|
{-}\frac{\sqrt{2W( u_\eps)}}{\sqrt{\eps}}\bigg)\bigg|^2 \dx \dt
\\&\label{eq_relEnergyInequalityStep11}
\leq \int_0^{T'}\int_{\Omega} 2 \big|(B\cdot\xi){+}(\nabla\cdot\xi)\big|^2\,\eps|\nabla u_\eps|^2 \dx \dt
\\&~~~\nonumber
+ \int_0^{T'}\int_{\Omega} 2 |\nabla\cdot\xi|^2 \bigg(\sqrt{\eps}|\nabla u_\eps|
{-}\frac{\sqrt{2W( u_\eps)}}{\sqrt{\eps}}\bigg)^2 \dx \dt.
\end{align}
Hence, the combination of~\eqref{eq_relEnergyInequalityStep9}--\eqref{eq_relEnergyInequalityStep11} yields
\begin{align*}
&- \int_0^{T'}\int_{\Omega} \frac{1}{\eps} H_\eps^2 \dx\dt
+  \int_0^{T'}\int_{\Omega} (\nabla\cdot\xi) \sqrt{2W( u_\eps)} \partial_t u_\eps \dx\dt
\\&
+ \int_0^{T'}\int_{\Omega} (\nabla\cdot\xi) (B\cdot\mathrm{n}_\eps) \,|\nabla\psi_\eps| \dx \dt
+ \int_0^{T'}\int_{\Omega} H_\eps (B\cdot\mathrm{n}_\eps)\,|\nabla u_\eps|  \dx \dt
\\&
\leq - \int_0^{T'}\int_{\Omega} \frac{1}{2\eps} \Big(H_\eps {+}(\nabla\cdot\xi)\sqrt{2W( u_\eps)}\Big)^2 \dx\dt 
\\&~~~
- \int_0^{T'}\int_{\Omega} \frac{1}{2\eps} \big(H_\eps {-}(B\cdot\xi)\eps|\nabla u_\eps|\big)^2 \dx\dt 
\\&~~~
+ \int_0^{T'}\int_{\Omega} \frac{1}{\sqrt{\eps}} \Big(H_\eps {+} (\nabla\cdot\xi) \sqrt{2W( u_\eps)}\Big)
 \big(B\cdot(\mathrm{n}_\eps{-}\xi)\big) \,\sqrt{\eps}|\nabla u_\eps| \dx \dt
\\&~~~
+ \int_0^{T'}\int_{\Omega} 2 \big|(B\cdot\xi){+}(\nabla\cdot\xi)\big|^2\,\eps|\nabla u_\eps|^2 \dx \dt
\\&~~~
+ \int_0^{T'}\int_{\Omega} 2 |\nabla\cdot\xi|^2 \bigg(\sqrt{\eps}|\nabla u_\eps|
{-}\frac{\sqrt{2W( u_\eps)}}{\sqrt{\eps}}\bigg)^2 \dx \dt.
\end{align*}
This in turn concludes the proof.
\end{proof}

A post-processing of the relative energy inequality
based on the coercivity properties of the relative
energy functional now yields the asserted stability estimate.

\begin{corollary}
In the setting of Theorem~\ref{theo_convergenceRatesCalibratedPartition},
there exist two constants $c\in (0,1)$ and $C\in (1,\infty)$ such that
\begin{align}
\nonumber
&E_{\mathrm{relEn}}[ u_\eps|\mathscr{A}](T') 
\\& \nonumber
+ \int_0^{T'}\int_{\Omega} \frac{c}{2\eps} \Big(H_\eps {+}(\nabla\cdot\xi)\sqrt{2W( u_\eps)}\Big)^2
{+} \frac{1}{2\eps} \Big(H_\eps {-} (B\cdot\xi)\eps|\nabla u_\eps|\Big)^2 \dx\dt 
\\& \label{eq_stabilityEstimateRelEnergy}
\leq E_{\mathrm{relEn}}[ u_\eps|\mathscr{A}](0) 
+ C \int_{0}^{T'} E_{\mathrm{relEn}}[ u_\eps|\mathscr{A}](t) \dt
\end{align}
for all $T'\in (0,T]$.
\end{corollary}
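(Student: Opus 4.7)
The plan is to post-process the relative energy inequality from Lemma~\ref{lem_relativeEnergyInequality} by estimating each of its right-hand side terms individually using the calibration bounds \eqref{eq_calibration1}--\eqref{eq_calibration4}, the boundedness of $\nabla\xi$ and $\nabla B$ from \eqref{eq_regularityXi}--\eqref{eq_regularityB}, and the coercivity estimates \eqref{eq_coercivityProp1}--\eqref{eq_coercivityProp6}. The only truly delicate issue is the first right-hand side term of \eqref{eq_timeEvolutionRelEnergy}, which must be absorbed back into the left-hand side dissipation, explaining why the constant $c \in (0,1)$ appears in \eqref{eq_stabilityEstimateRelEnergy}.

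First I would apply Young's inequality to the term
\begin{align*}
\int_0^{T'}\!\!\int_{\Omega} \frac{1}{\sqrt{\eps}} \Big(H_\eps + (\nabla\cdot\xi)\sqrt{2W(u_\eps)}\Big)
\big(B\cdot(\mathrm{n}_\eps - \xi)\big)\,\sqrt{\eps}|\nabla u_\eps|\,\dx\,\dt,
\end{align*}
splitting it as $\tfrac{1-c}{2\eps}(H_\eps + (\nabla\cdot\xi)\sqrt{2W(u_\eps)})^2 + \tfrac{1}{2(1-c)}|B|^2|\mathrm{n}_\eps-\xi|^2\,\eps|\nabla u_\eps|^2$ for a small $c \in (0,1)$. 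The square term gets absorbed into the corresponding dissipation on the left-hand side of \eqref{eq_timeEvolutionRelEnergy}, leaving the coefficient $c$ in \eqref{eq_stabilityEstimateRelEnergy}, and the remaining term is controlled by $C E_{\mathrm{relEn}}[u_\eps|\mathscr{A}](t)$ via coercivity estimate \eqref{eq_coercivityProp5} and $\|B\|_{L^\infty} < \infty$.

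Next I would dispatch the remaining bulk terms one by one. The boundary term is immediately bounded by $\|\nabla B\|_{L^\infty}$ times \eqref{eq_coercivityProp0}. The terms involving $|(B\cdot\xi)+(\nabla\cdot\xi)|^2\eps|\nabla u_\eps|^2$ and $|\nabla\cdot\xi|^2(\sqrt{\eps}|\nabla u_\eps|-\sqrt{2W(u_\eps)}/\sqrt{\eps})^2$ are handled respectively by the key calibration condition \eqref{eq_calibrationEvolByMCF} combined with \eqref{eq_coercivityProp6}, and by $\|\nabla\xi\|_{L^\infty}$ combined with \eqref{eq_coercivityProp1}. For the three $|\nabla\psi_\eps|$-weighted terms involving time derivatives of~$\xi$, I would apply \eqref{eq_calibration1} and \eqref{eq_calibration2} together with Young's inequality to split into pieces controlled by \eqref{eq_coercivityProp4} and \eqref{eq_coercivityProp3}. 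The remaining quadratic-in-$(\mathrm{n}_\eps-\xi)$ term, the $(1-\mathrm{n}_\eps\cdot\xi)$-term, and the equipartition-squared term are bounded directly by $\|\nabla B\|_{L^\infty}$ times \eqref{eq_coercivityProp4}, \eqref{eq_coercivityProp2}, and \eqref{eq_coercivityProp1}, respectively.

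The main obstacle I expect are the last two terms, involving the diffuse equipartition defect $\eps|\nabla u_\eps|^2 - |\nabla\psi_\eps|$. The key identity here is
\begin{align*}
\eps|\nabla u_\eps|^2 - |\nabla\psi_\eps| = \sqrt{\eps}|\nabla u_\eps|\bigg(\sqrt{\eps}|\nabla u_\eps| - \frac{\sqrt{2W(u_\eps)}}{\sqrt{\eps}}\bigg),
\end{align*}
which allows me to distribute one factor of $\sqrt{\eps}|\nabla u_\eps|$ to combine with the tensor contractions. For the mixed term I would use the algebraic decomposition $\mathrm{n}_\eps\otimes\mathrm{n}_\eps - \xi\otimes\xi = (\mathrm{n}_\eps{-}\xi)\otimes\mathrm{n}_\eps + \xi\otimes(\mathrm{n}_\eps{-}\xi)$, so that $|\mathrm{n}_\eps\otimes\mathrm{n}_\eps - \xi\otimes\xi| \leq 2|\mathrm{n}_\eps - \xi|$, and then Young's inequality yields a linear combination of \eqref{eq_coercivityProp5} and \eqref{eq_coercivityProp1}. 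The genuinely pure term $\xi\otimes\xi : \nabla B$ cannot be controlled by smallness of $|\mathrm{n}_\eps - \xi|$; this is precisely the reason for the additional calibration condition \eqref{eq_calibration4}, which gives $|\xi\otimes\xi:\nabla B| \leq C\min\{1,\dist(\cdot,\overline{\partial^*\mathscr{A}(t)\cap\Omega})\}$, after which Young's inequality reduces the term to a combination of \eqref{eq_coercivityProp6} and \eqref{eq_coercivityProp1}. Summing all contributions and taking $C$ large enough yields \eqref{eq_stabilityEstimateRelEnergy}.
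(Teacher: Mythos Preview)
Your proposal is correct and follows precisely the approach the paper intends: the paper's own proof merely records the equipartition identity $\eps|\nabla u_\eps|^2 - |\nabla\psi_\eps| = \sqrt{\eps}|\nabla u_\eps|\big(\sqrt{\eps}|\nabla u_\eps| - \sqrt{2W(u_\eps)}/\sqrt{\eps}\big)$ and then states that all right-hand side terms of \eqref{eq_timeEvolutionRelEnergy} are estimated by the coercivity properties \eqref{eq_coercivityProp1}--\eqref{eq_coercivityProp6} and the calibration conditions \eqref{eq_calibration1}--\eqref{eq_calibration4}, with absorption into the first dissipation term. Your write-up is exactly a detailed execution of this sentence (the minor miscount of ``three'' transport-of-$\xi$ terms rather than two is harmless).
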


\begin{proof}
Note that by~\eqref{eq_indicatorPhaseField}, \eqref{eq_auxIndicatorPhaseField},
and the chain rule
\begin{align}
\label{eq_auxEquipartitionEnergy}
\eps|\nabla u_\eps|^2 - |\nabla\psi_\eps|
= \sqrt{\eps}|\nabla u_\eps|\bigg(\sqrt{\eps}|\nabla u_\eps|
- \frac{\sqrt{2W( u_\eps)}}{\sqrt{\eps}}\bigg).
\end{align}
Hence, the right hand side terms of~\eqref{eq_timeEvolutionRelEnergy} can all be estimated
in terms of the relative energy itself (or by absorption into the first quadratic term
on the left hand side of~\eqref{eq_timeEvolutionRelEnergy}) based on straightforward arguments exploiting
the coercivity properties~\eqref{eq_coercivityProp1}--\eqref{eq_coercivityProp6} 
of the relative energy and the properties \eqref{eq_calibration1}--\eqref{eq_calibration4} of 
the vector fields $(\xi, B)$.
\end{proof}

\section{Quantitative stability with respect to a calibrated evolution}
\label{sec_quantStabilityCalibrated}
The main goal of this section is to conclude the proof of Theorem~\ref{theo_convergenceRatesCalibratedPartition}.
To this end, we first define an error functional which gives a direct
control for the $L^1$-distance between the evolving indicator function 
associated with a calibrated evolution and the solution of~\eqref{eq_AllenCahn}--\eqref{eq_initialData}
in terms of~\eqref{eq_indicatorPhaseField}.

\subsection{Definition and coercivity properties of the bulk error functional}
Let $\mathscr{A}=\smash{\bigcup_{t\in [0,T]}\mathscr{A}(t){\times}\{t\}}$
be a calibrated evolution in~$\Omega\subset\Rd$ with associated boundary adapted
gradient flow calibration~$(\xi,B,\vartheta)$ in the sense of Definition~\ref{def_boundaryAdaptedCalibration}.
Denote by $\chi(\cdot,t)$ the characteristic function associated to~$\mathscr{A}(t)$, $t\in [0,T]$.
Let~$ u_\eps$ be a weak solution to the Allen--Cahn problem~\eqref{eq_AllenCahn}--\eqref{eq_initialData}
in the sense of Definition~\ref{def_weakSolutionAllenCahn}
with finite energy initial data satisfying~$ u_{\eps,0}\in [-1,1]$.

Recalling the definitions~\eqref{eq_auxIndicatorPhaseField}, \eqref{def_surfaceTension} and \eqref{eq_indicatorPhaseField} of $\psi$, $ c_0$ and $\psi_\varepsilon$, we then define a bulk error functional by means of
\begin{align}
\label{eq_bulkError}
E_{\mathrm{bulk}}[ u_{\eps}|\mathscr{A}](t)
:= \int_{\Omega} \big(\psi_{\eps}(\cdot,t) -  c_0\chi(\cdot,t)\big)
\vartheta(\cdot,t) \dx,
\quad t\in [0,T].
\end{align}
Note that thanks to~\eqref{eq_comparisonPrincipleWeakSolution} (in particular $\psi_\varepsilon\in[0, c_0]$)
and the fact that $\vartheta(\cdot,t) < 0$ (resp.\ $\vartheta(\cdot,t) > 0$) in the essential interior
of~$\mathscr{A}(t)$ within~$\Omega$ (resp.\ the essential exterior of~$\mathscr{A}(t)$),
definition~\eqref{eq_bulkError} indeed provides a non-negative quantity for all $t\in [0,T]$:
\begin{align}
\label{eq_nonNegativityBulkError}
E_{\mathrm{bulk}}[ u_{\eps}|\mathscr{A}](t)
= \int_{\Omega} |\psi_{\eps}(\cdot,t) -  c_0\chi(\cdot,t)\big|
\big|\vartheta(\cdot,t)\big| \dx \geq 0.
\end{align}
Under additional regularity assumptions on~$\mathscr{A}$,
one may further guarantee that the bulk error 
functional~$E_{\mathrm{bulk}}[ u_{\eps}|\mathscr{A}](t)$
controls the squared $L^1$-distance between~$\psi_{\eps}(\cdot,t)$
and~$ c_0\chi(\cdot,t)$ for all $t\in [0,T]$.
For simplicity, let us state and prove this auxiliary result in 
terms of a strong solution.

\begin{lemma}[Coercivity of the bulk error functional]
\label{lem_coercivityBulkError}
In the setting of Theorem~\ref{theo_mainResult},
there exists a constant~$C>0$ such that for all~$t\in [0,T]$
it holds
\begin{align}
\label{eq_coercivityBulkError}
\|\psi_{\eps}(\cdot,t) -  c_0\chi(\cdot,t)\|^2_{L^1(\Omega)}
\leq C E_{\mathrm{bulk}}[ u_{\eps}|\mathscr{A}](t).
\end{align}
\end{lemma}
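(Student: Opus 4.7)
The strategy is a standard tubular-neighborhood splitting matched to the coercivity \eqref{eq_weightCoercivity} of the weight~$\vartheta$. The key observation is that $|\vartheta(\cdot,t)|$ degenerates only near the ``singular set'' $S(t) := \overline{\partial^*\mathscr{A}(t)\cap\Omega} \cup \partial\Omega$, while outside a $\delta$-neighborhood of $S(t)$ it is bounded below linearly in~$\delta$.

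Fix $t\in[0,T]$ and, for $\delta \in (0,1]$ to be chosen, set
\begin{align*}
T_\delta(t) := \big\{x\in\Omega : \dist\big(x,\overline{\partial^*\mathscr{A}(t)\cap\Omega}\big) < \delta \big\} \cup \big\{x\in\Omega : \dist(x,\partial\Omega) < \delta \big\}.
\end{align*}
Because $\mathscr{A}$ is a strong solution in the sense of Definition~\ref{def_strongSolution} and $\partial\Omega$ is a $C^3$-curve in~$\R^2$, both~$I(t)$ and~$\partial\Omega$ are compact $C^1$ one-dimensional manifolds with tubular neighborhoods of uniformly controlled geometry over $t\in[0,T]$, so I would record the Lebesgue measure estimate $|T_\delta(t)| \leq C\delta$ with a constant $C = C(\mathscr{A},\Omega,T)$.

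Next I would use \eqref{eq_weightCoercivity} to obtain the pointwise bound $|\vartheta(x,t)| \geq C^{-1}\delta$ for all $x \in \Omega\setminus T_\delta(t)$, combined with the trivial bound $|\psi_\eps - c_0\chi| \leq c_0$ (which follows from $u_\eps \in [-1,1]$ a.e.\ by \eqref{eq_comparisonPrincipleWeakSolution} and the definitions \eqref{eq_auxIndicatorPhaseField} and \eqref{eq_indicatorPhaseField}, together with $\chi\in\{0,1\}$). Splitting the $L^1$-norm accordingly and using \eqref{eq_nonNegativityBulkError} yields
\begin{align*}
\|\psi_{\eps}(\cdot,t) - c_0\chi(\cdot,t)\|_{L^1(\Omega)}
&\leq c_0 |T_\delta(t)| + \int_{\Omega\setminus T_\delta(t)} |\psi_\eps - c_0\chi| \frac{|\vartheta|}{C^{-1}\delta} \dx \\
&\leq C\delta + C\delta^{-1} E_{\mathrm{bulk}}[ u_\eps|\mathscr{A}](t).
\end{align*}

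Finally I would optimize in~$\delta$: if $E_{\mathrm{bulk}}[u_\eps|\mathscr{A}](t) \leq 1$, choosing $\delta = \sqrt{E_{\mathrm{bulk}}[u_\eps|\mathscr{A}](t)} \in (0,1]$ gives the bound $\|\psi_\eps - c_0\chi\|_{L^1(\Omega)} \leq C\sqrt{E_{\mathrm{bulk}}}$; if instead $E_{\mathrm{bulk}}[u_\eps|\mathscr{A}](t) > 1$, the crude bound $\|\psi_\eps - c_0\chi\|_{L^1(\Omega)} \leq c_0|\Omega|$ still yields the same estimate after enlarging~$C$. Squaring delivers \eqref{eq_coercivityBulkError}. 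The only non-routine input is the uniform-in-$t$ measure estimate $|T_\delta(t)| \leq C\delta$, which requires the regularity of both $\partial\Omega$ and the evolving curve $I(t)$ built into Definition~\ref{def_strongSolution}; everything else is bookkeeping around \eqref{eq_weightCoercivity}.
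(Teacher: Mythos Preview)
Your proof is correct but follows a different route from the paper's. The paper works directly in tubular-neighborhood coordinates: for a compact $C^2$ hypersurface $M$ with tubular diffeomorphism $\Psi_M$, it proves the slicing inequality
\[
\int_M\bigg|\int_{-r_M}^{r_M}|g(x{+}s\mathrm{n}_M(x))|\ds\bigg|^2\dH \;\lesssim\; \int_M\int_{-r_M}^{r_M}|g(x{+}s\mathrm{n}_M(x))|\,|s|\ds\dH
\]
via the Fubini trick $\int_0^r\int_0^r = 2\int_{\{0\le s\le s'\le r\}}$ together with $|g|\le 1$, and then applies Cauchy--Schwarz over $M$ and a change of variables to obtain $(\int|g|\dx)^2 \lesssim \int|g|\dist(\cdot,M)\dx$ on the tube. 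This is combined with the decomposition of $\Omega$ into a bulk region (where $\min$ of the two distances is $\geq r$, so $|\vartheta|\gtrsim 1$) and tubes around $\partial\Omega$ and $I(t)$, together with \eqref{eq_weightCoercivity}.

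Your argument replaces the slicing/Fubini step by a Chebyshev-style splitting at level $\delta$ plus optimization: crude $L^\infty$-control on $T_\delta$ using $|T_\delta|\le C\delta$, weighted control outside using $|\vartheta|\ge C^{-1}\delta$, then balance $\delta = E_{\mathrm{bulk}}^{1/2}$. Both arguments rest on exactly the same two ingredients (the tubular volume bound and the coercivity \eqref{eq_weightCoercivity}); the paper's version avoids the optimization and works in any dimension with the same constants, while yours is slightly more elementary and sidesteps the explicit coordinate change. One minor point: the edge case $E_{\mathrm{bulk}}=0$ forces $\delta=0\notin(0,1]$, but this is harmless since then $|\vartheta|>0$ a.e.\ forces $\psi_\eps=c_0\chi$ a.e.\ directly.
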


\begin{proof}
We divide the proof into two steps.

\textit{Step 1: A slicing argument.} Let~$M\subset\Rd$ be 
a an embedded, compact and oriented $(d{-}1)$-dimensional $C^2$-submanifold
of~$\Rd$ (potentially with boundary). Moreover, let~$\mathrm{n}_M$ denote a unit normal vector field along~$M$. Based on the tubular neighborhood theorem, fix a localization scale $r_M\in (0,1)$ and a constant~$C_M>0$
such that the map
\begin{align*}
\Psi_{M}\colon M \times (-r_M,r_M) \to \Rd,\quad
(x,s) \mapsto x + s\mathrm{n}_M(x)
\end{align*}
defines a $C^2$-diffeomorphism onto its image and such that $\textup{sdist}_M=(\Psi^{-1}_M)_2$ on $\Psi_M(M \times (-r_M,r_M))$ as well as
\begin{align*}
|\nabla\Psi_M| \leq C_M, \quad |\nabla\Psi^{-1}_M| \leq C_M.
\end{align*}
For any measurable~$g\colon\R^d\rightarrow\R$ bounded by $1$ (or by a uniform constant) it holds
\begin{align*}
\int_{M} \bigg|\int_{-r_M}^{r_M}
\big|g\big(x{+}s\mathrm{n}_M(x)\big)\big| \ds \bigg|^2 \dH
\lesssim  \int_{M} \int_{-r_M}^{r_M}
\big|g\big(x{+}s\mathrm{n}_M(x)\big)\big| \big|s\big| \ds \dH .
\end{align*}
This estimate can be shown by splitting the inner integral at $0$, using the Fubini Theorem and by dividing $(0,r_M)^2$ into two triangles, cf.\ Fischer, Laux and Simon~\cite{Fischer2020}, proof of Theorem 1 for a similar argument. By changing variables back and forth by means of~$\Psi_M$ and~$\Psi^{-1}_M$,
respectively, implies the estimate
\begin{align}
\label{eq_auxCoercivityTubNbhd}
\bigg|\int_{\Psi_{M}(M{\times}(-r_M,r_M))} |g| \dx \bigg|^2
\lesssim \int_{\Psi_{M}(M{\times}(-r_M,r_M))} |g| \dist(\cdot,M) \dx.
\end{align}

\textit{Step 2: Proof of~\eqref{eq_coercivityBulkError}.}
We claim that in the setting of Theorem~\ref{theo_mainResult}, 
for any measurable~$\|g\|_{L^\infty(\Rd)}\leq 1$ it holds
\begin{align}
\label{eq_coercivityBulkErrorAux}
\bigg|\int_{\Omega} |g| \dx \bigg|^2 \lesssim 
\int_{\Omega} |g| |\vartheta|(\cdot,t) \dx
\end{align}
uniformly over all~$t\in [0,T]$, which in turn of course implies the claim.

For a proof of~\eqref{eq_coercivityBulkErrorAux}, fix~$t\in [0,T]$ and then define a 
scale $r:= \min\{r_{\partial\Omega},r_{\overline{\partial^*\mathscr{A}(t)\cap \Omega}}\}$,
a map $d_{\mathrm{min}}:=\min\{\dist(\cdot,\partial \Omega),\dist(\cdot,\overline{\partial^*\mathscr{A}(t)\cap \Omega})\}$,
as well as sets 
\begin{align*}
\Omega_{\mathrm{bulk}} &:= \big\{x\in \Omega\colon d_{\mathrm{min}}(x) \geq r \big\} 
\\
\Omega_{\partial\Omega} &:= \big\{x\in \Omega\setminus \Omega_{\mathrm{bulk}}\colon 
d_{\mathrm{min}}(x)=\dist(x,\partial\Omega)\big\}
\\
\Omega_{\overline{\partial^*\mathscr{A}(t)\cap \Omega}} &:= \big\{x\in \Omega\setminus \Omega_{\mathrm{bulk}}\colon 
d_{\mathrm{min}}(x)=\dist(x,\overline{\partial^*\mathscr{A}(t)\cap \Omega})\big\}.
\end{align*}
Then, it holds by a union bound
\begin{align*}
\bigg|\int_{\Omega} |g| \dx \bigg|^2 \lesssim 
\bigg|\int_{\Omega_{\mathrm{bulk}}} |g| \dx \bigg|^2
+ \bigg|\int_{\Omega_{\partial\Omega}} |g| \dx \bigg|^2
+ \bigg|\int_{\Omega_{\overline{\partial^*\mathscr{A}(t)\cap \Omega}}} |g| \dx \bigg|^2.
\end{align*}
Due to the definition of the set~$\Omega_{\mathrm{bulk}}$
and the lower bound~\eqref{eq_weightCoercivity} for the weight~$\vartheta$, 
the first right hand side term of the previous display obviously
admits an estimate of required form. 
For an estimate of the second term,
note that by the definition of the set~$\Omega_{\partial\Omega}$
and the choice of the scale~$r$, it holds $\Omega_{\partial\Omega} \subset \Omega \cap
\Psi_{\partial \Omega}(\partial{\Omega}{\times}(-r_{\partial \Omega},r_{\partial \Omega}))$.
In particular, one may apply the estimate~\eqref{eq_auxCoercivityTubNbhd}
and then post-process it to required form based on 
the definition of the set~$\Omega_{\partial\Omega}$ and
the lower bound~\eqref{eq_weightCoercivity} for the weight~$\vartheta$.
The argument for the third term is essentially analogous, at least once
one carefully noted that $\Omega_{\overline{\partial^*\mathscr{A}(t)\cap \Omega}}$
is contained in the union of
$\Omega \cap \Psi_{\partial \Omega}(\partial{\Omega}{\times}(-r,r))$
and $\Omega \cap \Psi_{\overline{\partial^*\mathscr{A}(t)\cap \Omega}}(\overline{\partial^*\mathscr{A}(t)\cap \Omega}{\times}(-r,r))$. 
This concludes the proof.
\end{proof}

\subsection{Time evolution of the bulk error functional}
In a next step, we derive a suitable representation
for the time evolution of the error functional~$E_{\mathrm{bulk}}[ u_\eps|\mathscr{A}]$.

\begin{lemma}
\label{lem_timeEvolBulkError}
In the setting of Theorem~\ref{theo_convergenceRatesCalibratedPartition},
the time evolution of the bulk error functional~$E_{\mathrm{bulk}}[ u_\eps|\mathscr{A}]$
defined by~\eqref{eq_bulkError} can be represented by
\begin{align}
\nonumber
E_{\mathrm{bulk}}[ u_\eps|\mathscr{A}](T')
&= E_{\mathrm{bulk}}[ u_\eps|\mathscr{A}](0)
+ \int_{0}^{T'} \int_{\Omega} \vartheta (B\cdot\xi) \, 
\big(|\nabla\psi_\eps| - \eps|\nabla u_\eps|^2\big) \dx \dt
\\&~~~ \label{eq_timeEvolBulkError}
+ \int_{0}^{T'} \int_{\Omega} \vartheta\sqrt{\eps}|\nabla u_\eps|
\Big((B\cdot\xi)\sqrt{\eps}|\nabla u_\eps|-\frac{H_\eps}{\sqrt{\eps}}\Big) \dx \dt
\\&~~~ \nonumber
+ \int_{0}^{T'} \int_{\Omega} \vartheta \bigg(\frac{H_\eps}{\sqrt{\eps}} 
{+} (\nabla\cdot\xi)\frac{\sqrt{2W( u_\eps)}}{\sqrt{\eps}}\bigg)
\bigg(\sqrt{\eps}|\nabla u_\eps| {-} \frac{\sqrt{2W( u_\eps)}}{\sqrt{\eps}}\bigg) \dx \dt
\\&~~~ \nonumber
+ \int_{0}^{T'} \int_{\Omega} (\nabla\cdot\xi)\vartheta
\bigg(\sqrt{\eps}|\nabla u_\eps|-\frac{\sqrt{2W( u_\eps)}}{\sqrt{\eps}}\bigg)^2 \dx \dt
\\&~~~ \nonumber
- \int_{0}^{T'} \int_{\Omega} (\nabla\cdot\xi)\vartheta\sqrt{\eps}|\nabla u_\eps|
\bigg(\sqrt{\eps}|\nabla u_\eps|-\frac{\sqrt{2W( u_\eps)}}{\sqrt{\eps}}\bigg) \dx \dt
\\&~~~ \nonumber
+ \int_{0}^{T'} \int_{\Omega} \vartheta \big(B\cdot(\mathrm{n}_\eps - \xi)\big) \, |\nabla\psi_\eps| \dx \dt
\\&~~~ \nonumber
+ \int_{0}^{T'} \int_{\Omega} (\psi_\eps -  c_0\chi) \vartheta (\nabla\cdot B) \dx \dt
\\&~~~ \nonumber
+ \int_{0}^{T'} \int_{\Omega} (\psi_\eps -  c_0\chi) 
\big(\partial_t\vartheta {+} (B\cdot\nabla)\vartheta\big) \dx \dt
\end{align}
for all~$T'\in [0,T]$.
\end{lemma}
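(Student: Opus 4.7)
The plan is to differentiate $E_{\mathrm{bulk}}[u_\eps|\mathscr{A}](t)$ in time, split the result into three contributions (one from $\partial_t\psi_\eps$, one from $\partial_t\chi$, one from $\partial_t\vartheta$), and then regroup so that all terms match the right-hand side of \eqref{eq_timeEvolBulkError}. The higher regularity from Lemma~\ref{lem_higherRegularityWeakSolutionAllenCahn} and the regularity of $\vartheta$ from \eqref{eq_regularityWeight} make the differentiation rigorous after a standard time-mollification.

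For the $\psi_\eps$ contribution I would use the chain rule $\partial_t\psi_\eps=\sqrt{2W(u_\eps)}\partial_t u_\eps$ together with the strong form \eqref{eq_weakSolutionsAreStrongSolutions}, which together with the definition \eqref{eq_curvaturePhaseField} of $H_\eps$ gives $\partial_t\psi_\eps=-\eps^{-1}\sqrt{2W(u_\eps)}H_\eps$. For the $\chi$ contribution I would exploit the calibrated evolution condition: the Radon--Nikod\'ym hypothesis together with the interpretation of $B$ as an extension of the interface velocity translates into the distributional transport identity $\partial_t\chi+B\cdot\nabla\chi=0$ on $\Omega\times(0,T)$. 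Testing with $\vartheta$ and integrating by parts yields
\[
-c_0\int_\Omega\vartheta\,\partial_t\chi\,\mathrm{d}x
= -c_0\int_\Omega\chi\,\nabla\cdot(\vartheta B)\,\mathrm{d}x,
\]
where the boundary contribution vanishes because of the no-penetration condition \eqref{eq_boundaryCondVelocity}. Adding and subtracting $\psi_\eps\nabla\cdot(\vartheta B)$ and integrating by parts once more (again using $B\cdot\mathrm{n}_{\partial\Omega}=0$) rewrites everything into the advective form $(\psi_\eps-c_0\chi)(\partial_t\vartheta+(B\cdot\nabla)\vartheta)+(\psi_\eps-c_0\chi)\vartheta\nabla\cdot B$ plus an interior term $\int\vartheta\,B\cdot\nabla\psi_\eps\,\mathrm{d}x$. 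Using $\mathrm{n}_\eps|\nabla\psi_\eps|=\nabla\psi_\eps$ I would split the latter as $\int\vartheta(B\cdot\xi)|\nabla\psi_\eps|+\int\vartheta\,B\cdot(\mathrm{n}_\eps-\xi)|\nabla\psi_\eps|$, which already reproduces the last three displayed lines of \eqref{eq_timeEvolBulkError}.

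What remains is to show that the sum $\int\vartheta\partial_t\psi_\eps+\int\vartheta(B\cdot\xi)|\nabla\psi_\eps|$ equals the first five lines of \eqref{eq_timeEvolBulkError}. This is a purely algebraic identity. Introducing the shorthands $A:=\sqrt{\eps}|\nabla u_\eps|$, $B':=\sqrt{2W(u_\eps)}/\sqrt{\eps}$, $D:=A-B'$, $h:=H_\eps/\sqrt{\eps}$, one checks that the five lines sum to $\vartheta(B\cdot\xi)AB'-\vartheta Ah+\vartheta hD=\vartheta(B\cdot\xi)|\nabla\psi_\eps|-\vartheta\eps^{-1}\sqrt{2W(u_\eps)}H_\eps$ by the simple cancellations $A-D=B'$ and $B'+D-A=0$, which is exactly $\int\vartheta\partial_t\psi_\eps+\int\vartheta(B\cdot\xi)|\nabla\psi_\eps|$ by the computation of $\partial_t\psi_\eps$ above. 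Hence the identity closes.

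The main obstacle is the justification of the transport identity for $\partial_t\chi$: the Radon--Nikod\'ym assumption only asserts that $\partial_t\chi$ is absolutely continuous with respect to $|\nabla\chi|$ restricted to the interior track of $\partial^{*}\mathscr{A}\cap\Omega$, so one has to argue that the associated normal speed coincides with $B\cdot\mathrm{n}_{\partial^{*}\mathscr{A}(t)}$ (this is the content of $B$ being an extension of the velocity field, made rigorous by \eqref{eq_calibrationEvolByMCF} and the boundary conditions \eqref{eq_boundaryCondXi}--\eqref{eq_boundaryCondVelocity}, which together with the geometric consistency \eqref{eq_consistencyProperty} permit a well-defined signed trace of $B\cdot\nabla\chi$). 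Everything else reduces to careful bookkeeping of the quadratic expressions $(\sqrt{\eps}|\nabla u_\eps|-\sqrt{2W(u_\eps)}/\sqrt{\eps})^{2}$ and the cross-term identities displayed above.
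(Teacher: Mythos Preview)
Your algebraic bookkeeping is correct: the identity showing that the first five lines of \eqref{eq_timeEvolBulkError} collapse to $\int\vartheta(B\cdot\xi)|\nabla\psi_\eps|+\int\vartheta\,\partial_t\psi_\eps$ is exactly right, and the split $B\cdot\nabla\psi_\eps=(B\cdot\xi)|\nabla\psi_\eps|+B\cdot(\mathrm{n}_\eps-\xi)|\nabla\psi_\eps|$ reproduces the remaining three lines. The issue is the treatment of the $\chi$ contribution.

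You propose to use a distributional transport identity $\partial_t\chi+B\cdot\nabla\chi=0$, and you correctly flag its justification as the main obstacle. In fact, this identity is \emph{not} part of the hypotheses of a calibrated evolution in Definition~\ref{def_boundaryAdaptedCalibration}: the phrase ``$B$ models an extension of a velocity vector field'' is informal, and the only formal link between $B$ and the motion of $\partial^*\mathscr{A}(t)$ is through the approximate equations \eqref{eq_calibration1}--\eqref{eq_calibration4} and \eqref{eq_weightEvol}, none of which pin down the actual normal speed of $\chi$. So as written, this step is a genuine gap in the setting of Theorem~\ref{theo_convergenceRatesCalibratedPartition}.

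The paper bypasses the problem entirely by using condition~\eqref{eq_weightZeroInterface}, namely $\vartheta(\cdot,t)=0$ along $\partial^*\mathscr{A}(t)\cap\Omega$. Since $\partial_t\chi$ is absolutely continuous with respect to $|\nabla\chi|$ restricted to that set, one gets $\int_\Omega\vartheta\,\partial_t\chi=0$ directly, and for the same reason $\int_\Omega\chi\,\nabla\cdot(\vartheta B)=-\int_\Omega\vartheta B\cdot\mathrm{d}(\nabla\chi)=0$ (the boundary term vanishes by~\eqref{eq_boundaryCondVelocity}). Both sides of your claimed identity are therefore zero individually, and no transport equation is needed. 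Once you replace your transport step by this observation, the rest of your argument goes through unchanged and coincides with the paper's proof.
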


\begin{proof}
By an application of the fundamental theorem of calculus together with
a standard mollification argument in the time variable, an application
of the chain rule, as well as by exploiting that the measure $\partial_t\chi$ 
is absolutely continuous with respect to the measure~$|\nabla\chi|$ restricted 
to the set~$\smash{\bigcup_{t\in (0,T)}(\partial^*\mathscr{A}(t) \cap \Omega){\times}\{t\}}$, on which
in turn the weight~$\vartheta$ vanishes due to~\eqref{eq_weightZeroInterface}, it holds
\begin{align*}
E_{\mathrm{bulk}}[ u_\eps|\mathscr{A}](T')
&= E_{\mathrm{bulk}}[ u_\eps|\mathscr{A}](0)
+ \int_{0}^{T'} \int_{\Omega} \vartheta \sqrt{2W( u_\eps)} \partial_t u_\eps \dx \dt
\\&~~~
+ \int_{0}^{T'} \int_{\Omega} (\psi_\eps -  c_0\chi) \partial_t\vartheta \dx \dt.
\end{align*}
Adding zero twice, making use of the chain rule, and integrating by parts 
(exploiting in the process the boundary condition~\eqref{eq_boundaryCondVelocity}
for~$B$ and again the condition~\eqref{eq_weightZeroInterface} for~$\vartheta$) yields 
the following update of the previous display
\begin{align*}
E_{\mathrm{bulk}}[ u_\eps|\mathscr{A}](T')
&= E_{\mathrm{bulk}}[ u_\eps|\mathscr{A}](0)
+ \int_{0}^{T'} \int_{\Omega} \vartheta \sqrt{2W( u_\eps)} \partial_t u_\eps \dx \dt
\\&~~~
+ \int_{0}^{T'} \int_{\Omega} \vartheta (B\cdot\xi) \, |\nabla\psi_\eps| \dx \dt
\\&~~~
+ \int_{0}^{T'} \int_{\Omega} \vartheta \big(B\cdot(\mathrm{n}_\eps - \xi)\big) \, |\nabla\psi_\eps| \dx \dt
\\&~~~
+ \int_{0}^{T'} \int_{\Omega} (\psi_\eps -  c_0\chi) \vartheta (\nabla\cdot B) \dx \dt
\\&~~~
+ \int_{0}^{T'} \int_{\Omega} (\psi_\eps -  c_0\chi) 
\big(\partial_t\vartheta {+} (B\cdot\nabla)\vartheta\big) \dx \dt,
\end{align*}
for which we also recall $\mathrm{n}_\eps |\nabla\psi_\eps| = \nabla\psi_\eps$.
Moreover, inserting the Allen--Cahn equation~\eqref{eq_AllenCahn} 
in form of $\partial_t u_\eps = -\smash{\frac{1}{\eps}}H_\eps$
thanks to~\eqref{eq_weakSolutionsAreStrongSolutions} and~\eqref{eq_curvaturePhaseField}
entails together with adding zero twice that
\begin{align*}
&\int_{0}^{T'} \int_{\Omega} \vartheta \sqrt{2W( u_\eps)} \partial_t u_\eps \dx \dt
+ \int_{0}^{T'} \int_{\Omega} \vartheta (B\cdot\xi) \, |\nabla\psi_\eps| \dx \dt
\\&
= \int_{0}^{T'} \int_{\Omega} \vartheta (B\cdot\xi) \, 
\big(|\nabla\psi_\eps| - \eps|\nabla u_\eps|^2\big) \dx \dt
\\&~~~
+ \int_{0}^{T'} \int_{\Omega} \vartheta\sqrt{\eps}|\nabla u_\eps|
\Big((B\cdot\xi)\sqrt{\eps}|\nabla u_\eps|-\frac{H_\eps}{\sqrt{\eps}}\Big) \dx \dt
\\&~~~
+ \int_{0}^{T'} \int_{\Omega} \vartheta \frac{H_\eps}{\sqrt{\eps}}
\bigg(\sqrt{\eps}|\nabla u_\eps|-\frac{\sqrt{2W( u_\eps)}}{\sqrt{\eps}}\bigg) \dx \dt.
\end{align*}
Continuing in this fashion by adding appropriate zeros moreover gives
\begin{align*}
&\int_{0}^{T'} \int_{\Omega} \vartheta \frac{H_\eps}{\sqrt{\eps}}
\bigg(\sqrt{\eps}|\nabla u_\eps|-\frac{\sqrt{2W( u_\eps)}}{\sqrt{\eps}}\bigg) \dx \dt
\\&
= \int_{0}^{T'} \int_{\Omega} \vartheta \bigg(\frac{H_\eps}{\sqrt{\eps}} 
+ (\nabla\cdot\xi)\frac{\sqrt{2W( u_\eps)}}{\sqrt{\eps}}\bigg)
\bigg(\sqrt{\eps}|\nabla u_\eps|-\frac{\sqrt{2W( u_\eps)}}{\sqrt{\eps}}\bigg) \dx \dt
\\&~~~
+ \int_{0}^{T'} \int_{\Omega} (\nabla\cdot\xi)\vartheta
\bigg(\sqrt{\eps}|\nabla u_\eps|-\frac{\sqrt{2W( u_\eps)}}{\sqrt{\eps}}\bigg)^2 \dx \dt
\\&~~~
- \int_{0}^{T'} \int_{\Omega} (\nabla\cdot\xi)\vartheta\sqrt{\eps}|\nabla u_\eps|
\bigg(\sqrt{\eps}|\nabla u_\eps|-\frac{\sqrt{2W( u_\eps)}}{\sqrt{\eps}}\bigg) \dx \dt.
\end{align*}
The collection of the previous four displays now entails the claim.
\end{proof}

We have everything in place to proceed with the proof of the first main result
of this work concerning quantitative stability for the 
Allen--Cahn problem~\eqref{eq_AllenCahn}--\eqref{eq_initialData} 
with respect to a calibrated evolution.

\subsection{Proof of Theorem~\ref{theo_convergenceRatesCalibratedPartition}}
\label{subsec_proofConvergenceRatesCalibrated}
Recalling the identity~\eqref{eq_auxEquipartitionEnergy}, the 
coercivity properties~\eqref{eq_coercivityProp1}--\eqref{eq_coercivityProp6},
the estimate~\eqref{eq_stabilityEstimateRelEnergy} for the time evolution
of the relative energy functional, the representation~\eqref{eq_timeEvolBulkError}
of the time evolution of the bulk error functional, 
as well as the properties~\eqref{eq_weightZeroInterface}-\eqref{eq_weightEvol}
of the weight~$\vartheta$ (here \eqref{eq_weightZeroInterface} implies the estimate $|\vartheta(\cdot,t)| \leq 
C\|\nabla\vartheta(\cdot,t)\|_{L^\infty(\Omega)}\min\{1,\dist(\cdot,\overline{\partial^*\mathscr{A}(t)\cap \Omega})\})$
for all $t\in [0,T]$), we obtain by straightforward arguments that there exists
two constants $c\in (0,1)$ and $C>0$ such that
\begin{align*}
&E_{\mathrm{bulk}}[ u_\eps|\mathscr{A}](T') 
\\& 
+ \int_0^{T'}\int_{\Omega} \frac{c}{2\eps} \Big(H_\eps {+}(\nabla\cdot\xi)\sqrt{2W( u_\eps)}\Big)^2
{+} \frac{c}{2\eps} \Big(H_\eps {-} (B\cdot\xi)\eps|\nabla u_\eps|\Big)^2 \dx\dt 
\\& 
\leq (E_{\mathrm{relEn}}+E_{\mathrm{bulk}})[ u_\eps|\mathscr{A}](0) 
+ C \int_{0}^{T'} (E_{\mathrm{relEn}}+E_{\mathrm{bulk}})[ u_\eps|\mathscr{A}](t) \dt
\end{align*}
for all $T'\in [0,T]$. Together with~\eqref{eq_stabilityEstimateRelEnergy},
this implies the desired estimates.
\qed

\section{Construction of boundary adapted gradient flow calibrations}
\label{sec_constructionCalibration}

We follow the strategy of~\cite{Fischer2020a} by constructing local candidates for the vector fields $(\xi,B)$ around each topological feature, i.e., the contact points in Section~\ref{sec_calib_contact}, the bulk interface in Section \ref{sec_calib_interface}, and the domain boundary in Section~\ref{sec_calib_bdry}. These local constructions are then merged together into the global one in Section~\ref{sec_calib_global}. The construction of~$\vartheta$ is simpler and carried out in Section~\ref{sec_calib_theta}.

Let~$\mathscr{A}$ be a strong solution for mean curvature flow with contact angle~$\alpha$ on the time interval $[0,T]$ as in Definition~\ref{def_strongSolution}. In the following we summarize some notation and assertions concerning tubular neighbourhoods for~$I$ and~$\partial\Omega$ in Remarks~\ref{th_strongsol_notation_tub}
and~\ref{th_strongsol_notation_bdry}, respectively. Necessary compatibility conditions at the contact points
are collected in Remark~\ref{th_strongsol_comp_cond}.

\begin{remark}[Notation and tubular neighbourhoods for strong solutions of planar mean curvature flow with constant contact angle~$0<\alpha\leq\smash{\frac{\pi}{2}}$] \label{th_strongsol_notation_tub}	
For the following, we refer to~\cite[Definition~21 and Lemma~23]{Fischer2020a} 
and comments there. 
	
	In the situation of Definition \ref{def_strongSolution}, the assumptions imply the existence of a uniform localization scale $r_I\in(0,1]$ such that natural ball conditions at interior and boundary points are satisfied. Moreover, the standard tubular neighbourhood map $X_I:I\times(-r_I,r_I)\rightarrow\R^2\times[0,T]:(x,t,s)\mapsto (x+s \mathrm{n}_I(x,t), t)$ is well-defined, bijective onto its image $\mathrm{im}(X_I)$, 
and the inverse has the regularity $C_tC_x^4\cap C_t^1C_x^2$ on $\overline{\textup{im}(X_I)}$.
	
	We denote by $s^I$ the signed distance function with respect to the normal $\mathrm{n}_I$
	and let $P^I$ be the orthogonal projection. Then $s^I$ is of class $C_tC_x^5\cap C_t^1C_x^3$ on $\overline{\textup{im}(X_I)}$ and $P^I$ the same except one regularity less in space. We note that
	\[
	|s^I(\tilde{x},t)|
	= \dist_x(\cdot,I)(\tilde{x},t)
	:= \dist(\tilde{x},I(t))\quad\text{ for }(\tilde{x},t)\in\overline{\textup{im}(X_I)},
	\]
	where the latter is also defined globally on~$\R^2$ and we will sometimes 
	use the notation $\dist_x(\cdot,I)$ for convenience.
	
	Moreover, the following definitions yield extensions of the inner unit normal $\mathrm{n}_I$ and the (mean) curvature $H^I$ to the tubular neighbourhood: 
	\begin{align}
	\label{eq:normalExtensionNormalCurvature}
	\mathrm{n}_I:=\nabla s^I\quad\text{ and }\quad 
	H^I:=-\Delta s^I|_{(P^I,\textup{pr}_t)}\quad\text{on }\overline{\textup{im}(X_I)},
	\end{align}
	where $\textup{pr}_t$ is the projection onto the time component. Then $\mathrm{n}_I$ has the regularity $C_tC_x^4\cap C_t^1C_x^2$ on $\overline{\textup{im}(X_I)}$ and $H^I$ the same just one order less in space. Moreover,
	\[
	|\nabla s^I|^2=1, \quad \nabla s^I=\nabla s^I|_{(P^I,\textup{pr}_t)}\quad\text{ and }\quad 
	\partial_t s^I=\partial_t s^I|_{(P^I,\textup{pr}_t)}\quad\text{on }\overline{\textup{im}(X_I)}.
	\]
	Finally, let us define $\tau_I:=J^\mathsf{T} \mathrm{n}_I$ pointwise on $\overline{\textup{im}(X_I)}$, where $J$ is the constant rotation by $90$° counter-clockwise. Then by \cite[(128) and (129)]{Fischer2020a}, we have
	\begin{align}\label{eq_nabla_normale_und_tau}
	\nabla n^I=-H^I \tau_I\otimes \tau_I\quad \text{and}\quad \nabla\tau^I = 
	H^I \mathrm{n}_I \otimes \tau_I\quad\text{ on }I.
	\end{align}
	Note that we did not use 2.~and 3.~from Definition \ref{def_strongSolution} up to now. If 2.~holds, then 
	\begin{align}\label{eq_signdist}
	\partial_ts^I=\Delta s^I|_{(P^I,\textup{pr}_t)}=-H^I\quad\text{and}\quad \partial_t\mathrm{n}_I
	=-\nabla H^I\quad\text{ on }\overline{\textup{im}(X_I)}.
	\end{align}
\end{remark}

\begin{remark}[Notation for the boundary]\label{th_strongsol_notation_bdry}
Since the boundary $\partial\Omega$ of the domain is~$C^3$, we can use similar constructions and definitions as in the last Remark \ref{th_strongsol_notation_tub}, except equations \eqref{eq_signdist}. In particular, there is a suitable localization scale $r_{\partial\Omega}\in(0,1]$ and an associated (time-independent) tubular neighbourhood diffeomorphism $X_{\partial\Omega}$, so that  $s^{\partial\Omega}$ denotes the signed distance, $P^{\partial\Omega}$ the orthogonal projection and $\mathrm{n}_{\partial\Omega}$, $\tau_{\partial\Omega}$,
and~$H^{\partial\Omega}$ are defined in the analogous way as in Remark \ref{th_strongsol_notation_tub}. Concerning regularity $s^{\partial\Omega}$ is $C_x^3$, $P^{\partial\Omega}$, $\mathrm{n}_{\partial\Omega}$, $\tau_{\partial\Omega}$ are $C_x^2$ and $H^{\partial\Omega}$ is of class $C_x^1$.
\end{remark}

\begin{remark}[Compatibility conditions for strong solutions of planar mean curvature flow with constant contact angle~$0<\alpha\leq\smash{\frac{\pi}{2}}$]\label{th_strongsol_comp_cond}
	We remark that 1.--3.~in Definition \ref{def_strongSolution} imply compatibility conditions at the boundary points. The latter will be important for the local construction of the calibrations close to the boundary points. Let us fix a boundary point $p\in\partial I(0)$ for the initial interface and set $p(t):=\Phi(p,t)$ for $t\in[0,T]$. Then $p(t)\in\partial\Omega$ and mean curvature flow yield
	\begin{align}
	\label{eq_comp_cond1}
	\frac{\mathrm{d}}{\mathrm{d}t}p(t)\cdot \mathrm{n}_{\partial\Omega}|_{p(t)}=0\quad\text{ and }\quad 
	\frac{\mathrm{d}}{\mathrm{d}t}p(t)\cdot \mathrm{n}_I|_{(p(t),t)}=H^I|_{(p(t),t)}, \quad t\in[0,T].
	\end{align}
	In order to obtain a higher order compatibility condition, 
	we differentiate the angle condition~\eqref{eq_angle} with respect to time. 
	This yields together with~\eqref{eq_nabla_normale_und_tau}
	\begin{align*}
	0=&\left(-H^{\partial\Omega}\tau_{\partial\Omega}\otimes \tau_{\partial\Omega}|_{p(t)} 
	\frac{\mathrm{d}}{\mathrm{d}t}p(t)\right)\cdot \mathrm{n}_I|_{(p(t),t)}\\
	&+\mathrm{n}_{\partial\Omega}|_{p(t)}\cdot \left(-H^I  \tau_I\otimes\tau_I|_{(p(t),t)} 
	\frac{\mathrm{d}}{\mathrm{d}t}p(t) + \partial_t \mathrm{n}_I|_{(p(t),t)}\right)\quad\text{ for all }t\in[0,T].
	\end{align*}
	We insert the identities from~\eqref{eq_comp_cond1} for $\frac{\mathrm{d}}{\mathrm{d}t}p(t)$ and use the properties of the rotation $J$; the latter to rewrite $\mathrm{n}_{\partial\Omega}|_{p(t)}\cdot \tau_I|_{(p(t),t)} =-\tau_{\partial\Omega}|_{p(t)}\cdot \mathrm{n}_I|_{(p(t),t)}$. Therefore we obtain the next compatibility condition, which is third order concerning derivatives: for all $t\in[0,T]$ it holds
	\begin{align}\label{eq_comp_cond2}
	-H^I|_{(p(t),t)} H^{\partial\Omega}|_{p(t)} 
	+ (H^I)^2 \tau_I|_{(p(t),t)}\cdot \tau_{\partial\Omega}|_{p(t)}  
	- \mathrm{n}_{\partial\Omega}|_{p(t)} \cdot \nabla H^I|_{(p(t),t)}=0.
	\end{align}
\end{remark}

\subsection{Local building block for $(\xi, B)$ at contact points}\label{sec_calib_contact}

For the construction at the contact points we proceed in a similar way as in the case of a triple junction for multiphase mean curvature flow, see~\cite[Section~6]{Fischer2020a}. Therefore we introduce an appropriate localization radius~$r_p$ for the contact points, such that there are suitable evolving sectors confining the topological features on an evolving ball on this scale. This is done in Lemma~\ref{th_local_radius_contact} below. Then in Section~\ref{sec_calib_contact_local} we construct candidates for $(\xi,B)$ defined on tubular neighborhoods of the interface~$I$ and the boundary~$\partial\Omega$, respectively, which will serve as a definition on corresponding sectors. Here ideas from~\cite[Section~6.1]{Fischer2020a} are adjusted for the present situation. Finally, these constructions will be interpolated in Section~\ref{sec_calib_contact_intpol} analogously to~\cite[Section 6.2]{Fischer2020a}.

\begin{lemma}\label{th_local_radius_contact}
	Let $\mathscr{A}$ be a strong solution for mean curvature flow with contact angle $\alpha$ on the time interval $[0,T]$ as in Definition \ref{def_strongSolution}. Moreover, let $p\in\partial I(0)$, $p(t):=\Phi(p,t)$ for $t\in[0,T]$ and $\mathscr{P}:=\bigcup_{t\in[0,T]} \{p(t)\} \times \{t\}$ be the corresponding evolving contact point. Then there is a localization radius $r=r_p\in(0,\min\{r_I,r_{\partial\Omega}\}]$ such that the evolving ball $\mathscr{B}_r(p):=\bigcup_{t\in[0,T]} B_r(p(t))\times\{t\}$ has a wedge-decomposition in the following sense:
	\begin{enumerate}[leftmargin=0.6cm]
	\item[1.] $\mathscr{B}_r(p)$ is separated at each time $t\in[0,T]$ into open wedge-type domains $W_I(t)$, $W_\pm(t)$, $W_0(t)$ and an open double-wedge-type domain $W_{\partial\Omega}(t)$. The latter are disjoint and the union of the closures gives $\overline{B_r(p(t))}$. These domains are the intersections of $B_r(p(t))$ with cones defined from unit $C^1$-vector fields in time with constant-in-time angle relation (analogous 
	to~\cite[Definition~24]{Fischer2020a}). The corresponding space-time 
	domains are denoted by $W_I$, $W_\pm$, $W_0$ and $W_{\partial\Omega}$.
	\item[2.] Moreover, $\overline{W_\pm(t)}$, $\overline{W_{\partial\Omega}(t)}$, $\overline{W_0(t)}$
	are contained in the tubular neighborhood for~$\partial\Omega$, and $\overline{W_\pm(t)}$, $\overline{W_I(t)}$ are contained in the tubular neighborhood of~$I(t)$ for all $t\in[0,T]$. Additionally, for all $t\in[0,T]$ it holds
	\begin{align*}
	W_+(t)\subset\mathscr{A}(t), \quad W_-(t)\subset\Omega\setminus\overline{\mathscr{A}(t)},
	\quad W_0(t)\subset\R^2\setminus\overline{\Omega},
	\quad W_I(t) \subset \Omega,
	\end{align*}
	and finally $I(t)\cap B_r(p(t)) \subset W_I(t)\cup\{p(t)\}$ 
	and $\partial\Omega \cap B_r(p(t))
	\subset W_{\partial\Omega}(t)\cup\{p(t)\}$ for all $t \in [0,T]$.
	\item[3.] Finally, on each of the separating domains, there are uniform natural estimates comparing the distances to the different topological features (similar to~\cite[Definition 24]{Fischer2020a}).
	\end{enumerate}
\end{lemma}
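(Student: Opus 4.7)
The plan is to choose evolving tangent lines at the contact point, erect open wedges of uniform half-opening around them, and then shrink the radius until the interface and the boundary stay inside the wedges assigned to them. For each $t \in [0,T]$ let $\tau_I(t) \in \mathbb{S}^1$ denote the unit tangent to $I(t)$ at $p(t)$ pointing into $I(t)^\circ$, and let $\tau_{\partial\Omega}(t) := \tau_{\partial\Omega}(p(t))$ denote the unit tangent to $\partial\Omega$ at $p(t)$ with a fixed orientation. By the regularity of $\Phi$ and of $\partial\Omega$ assumed in Definition~\ref{def_strongSolution}, both lie in $C^1([0,T];\mathbb{S}^1)$. The contact-angle identity~\eqref{eq_angle} yields $\tau_I(t) \cdot \tau_{\partial\Omega}(t) \equiv \pm\cos\alpha$, so the unoriented angle between the two tangent lines is the constant $\alpha \in (0,\pi/2]$. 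In particular, the four rays $\pm\tau_I(t), \pm\tau_{\partial\Omega}(t)$ are mutually angularly separated by at least $\alpha$, uniformly in $t$.

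Next, I would fix a half-angle $\delta \in (0,\alpha/5)$ and, for each $t$, define
\begin{align*}
W_I(t) &:= \bigl\{x \in B_r(p(t)) \setminus \{p(t)\} : \angle(x{-}p(t),\tau_I(t)) < \delta\bigr\},\\
W_{\partial\Omega}(t) &:= \bigl\{x \in B_r(p(t)) \setminus \{p(t)\} : \min\{\angle(x{-}p(t),\tau_{\partial\Omega}(t)),\angle(x{-}p(t),-\tau_{\partial\Omega}(t))\} < \delta\bigr\}.
\end{align*}
The three connected components of $B_r(p(t)) \setminus \overline{W_I(t) \cup W_{\partial\Omega}(t)}$ are then labeled $W_+(t), W_-(t), W_0(t)$ so that at $t=0$ they sit inside $\mathscr{A}(0)$, $\Omega \setminus \overline{\mathscr{A}(0)}$ and $\R^2 \setminus \overline{\Omega}$, respectively. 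Since $\Phi$ is an isotopy of the identity and the wedges depend continuously on $t$, this labeling propagates without swapping over $[0,T]$.

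Then I would shrink $r \in (0,\min\{r_I,r_{\partial\Omega}\}]$ so that $I(t) \cap B_r(p(t)) \subset W_I(t) \cup \{p(t)\}$ and $\partial\Omega \cap B_r(p(t)) \subset W_{\partial\Omega}(t) \cup \{p(t)\}$. Both inclusions follow from the $C^2$-regularity of the two curves: the interface and the boundary deviate from their tangent lines at $p(t)$ by at most $\tfrac12 \|H^I(\cdot,t)\|_\infty r^2$ and $\tfrac12 \|H^{\partial\Omega}\|_\infty r^2$, respectively, and this quadratic deviation fits inside the $\delta$-wedges as soon as a linear condition of the form $Cr < \sin\delta$ holds. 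By compactness of $[0,T]$ and continuity of $\tau_I, \tau_{\partial\Omega}$ and $H^I$ in time, a single $r$ works for all $t$. Once these inclusions hold, the remaining assignments $W_\pm \subset \Omega$, $W_0 \subset \R^2 \setminus \overline{\Omega}$, $W_I \subset \Omega$, together with the correct side-of-$I$ assignment for $W_\pm$, are forced because each wedge is angularly separated from the opposite topological feature and must therefore lie on a definite side of it.

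Finally, the tubular-neighborhood statements of item~2 reduce to $r \leq \min\{r_I,r_{\partial\Omega}\}$, and the distance comparability of item~3 is elementary planar trigonometry: on each of the five wedges the angular distance to every ``opposite'' topological feature is bounded below by $\delta$, so $\dist(x,F)$ and $|x{-}p(t)|$ are comparable with constants depending only on $\delta$, while the analogous upper bound by $|x{-}p(t)|$ is trivial. I expect the only nontrivial step to be the uniform-in-$t$ selection of $r$ in the third paragraph, which is however routine given the compactness of $[0,T]$ and the continuity of all relevant geometric quantities inherited from Definition~\ref{def_strongSolution}.
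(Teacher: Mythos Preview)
Your sketch is the natural geometric argument and matches the spirit of the paper's proof, which simply refers to \cite[Lemma~25]{Fischer2020a} without giving details; you are in fact supplying more than the paper does.

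There is, however, one point where your argument is too quick. You write that ``the tubular-neighborhood statements of item~2 reduce to $r\leq\min\{r_I,r_{\partial\Omega}\}$''. This is fine for $\partial\Omega$, which is a closed curve: then $B_r(p(t))$ sits inside $\mathrm{im}(X_{\partial\Omega})$ once $r\leq r_{\partial\Omega}$. It is \emph{not} fine for $I(t)$, which is a manifold with boundary. Near the endpoint $p(t)$ the image of $X_I$ is only a half-strip $\{x+s\,\mathrm{n}_I(x):x\in I(t),\,|s|<r_I\}$, i.e.\ roughly the half-disk $\{y:(y-p(t))\cdot\tau_I\geq 0\}$ in $B_r(p(t))$; points on the $-\tau_I$ side are not covered no matter how small $r$ is. With your symmetric choice of half-angle $\delta$, the interpolation wedge on the $\mathscr{A}$-side has angular opening about $\pi-\alpha$; for any $\alpha<\pi/2$ this wedge reaches past the normal direction $\mathrm{n}_I(p(t))$ and hence leaves the half-strip. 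Since the local field $\tilde\xi^I$ in~\eqref{eq_xi_I} is only defined on $\overline{\mathrm{im}(X_I)}$, this would break the interpolation step in the proof of Theorem~\ref{th_localConstructionsContactPoint}.

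The fix is not hard, and is part of what \cite[Definition~24 and Lemma~25]{Fischer2020a} arrange: rather than giving $W_I$ and $W_{\partial\Omega}$ a common small half-angle $\delta$ and letting $W_\pm$ be the large leftover sectors, one chooses the opening angles so that the interpolation wedges $W_\pm$ remain within angle strictly less than $\pi/2$ of $\tau_I$ (for instance by letting the boundary double-wedge $W_{\partial\Omega}$ be wider on the $\mathscr{A}$-side, or by centering $W_\pm$ on bisecting directions with small half-angle). Alternatively one extends $I$ by its tangent line slightly past $p(t)$, as done in Appendix~\ref{sec_appB}, and works with the tubular neighborhood of the extension. Either way, the inclusion $\overline{W_\pm(t)}\subset\overline{\mathrm{im}(X_I)}$ is a genuine constraint on the wedge geometry, not a free consequence of $r\leq r_I$.
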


We henceforth call $W_I$ interface wedge, $W_{\partial\Omega}$ boundary (double-)wedge, $W_\pm$ bulk (or interpolation) wedges and $W_0$ outer wedge, cf.\ Figure~\ref{fig_loc_sectors}.

\begin{figure}
	\def\svgwidth{0.6\linewidth}
	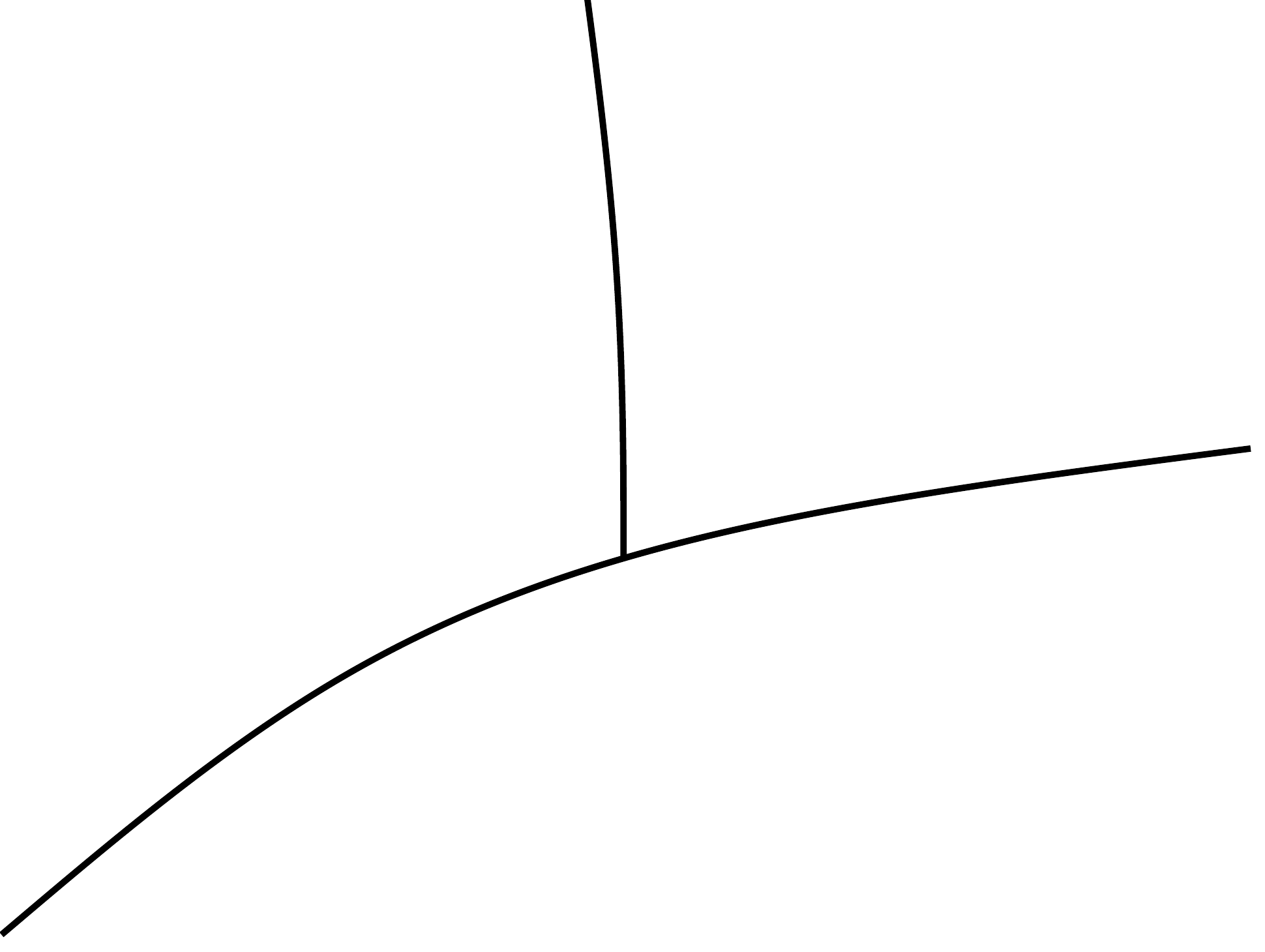
	\caption{Illustration of wedge decomposition at a contact point.}\label{fig_loc_sectors}
\end{figure}

\begin{proof}
The separating domains can be defined in a purely geometric way, 
and one may argue simply along the lines of the proof of~\cite[Lemma~25]{Fischer2020a}. 
Therefore, we refrain from going into details. 
\end{proof} 

We may now formulate the main result of this subsection.

\begin{theorem}
\label{th_localConstructionsContactPoint}
Let $\mathscr{A}$ be a strong solution for mean curvature flow 
with contact angle~$\alpha$ on the time interval $[0,T]$ as in 
Definition~\ref{def_strongSolution}, and let the notation
of Remark~\ref{th_strongsol_notation_tub} and Remark~\ref{th_strongsol_notation_bdry}
be in place. For each of the two contact points~$p_{\pm} \in \partial I(0)$
with associated trajectory~$p_{\pm}(t) \in \partial I(t)$, let $r_{\pm}=r_{p_{\pm}}$
be an associated localization radius in the sense of Lemma~\ref{th_local_radius_contact} above.
For a given $\hat r_{\pm} \in (0,r_{\pm}]$, we define
a space-time domain $\mathscr{B}_{\hat r_{\pm}}(p_{\pm}) := \bigcup_{t\in[0,T]}
B_{\hat r_{\pm}}(p_{\pm}(t)) {\times} \{t\}$.

There then exists a localization scale $\hat r_\pm \in (0,r_{p_{\pm}}]$
and a pair of local vector fields $\xi^{p_{\pm}},B^{p_{\pm}}\colon
\overline{\mathscr{B}_{\hat r_{\pm}}(p_{\pm})}
\cap (\overline{\Omega}{\times}[0,T]) \to \Rd[2]$ such that
the following conditions hold:
\begin{enumerate}[leftmargin=0.6cm]
\item[1.] \emph{(Regularity)} It holds
\begin{align}
\label{eq:qualRegularityXiContactPoint}
\xi^{p_{\pm}} &\in C^1(\overline{\mathscr{B}_{\hat r_{\pm}}(p_{\pm})} 
\cap (\overline{\Omega}{\times}[0,T])) \cap
C_tC^2_x(\mathscr{B}_{\hat r_{\pm}}(p_{\pm}) \cap (\Omega {\times} [0,T])),
\\ \label{eq:qualRegularityVelContactPoint}
B^{p_{\pm}} &\in 
C_tC^1_x(\overline{\mathscr{B}_{\hat r_{\pm}}(p_{\pm})} 
\cap (\overline{\Omega}{\times}[0,T])) \cap
C_tC^2_x(\mathscr{B}_{\hat r_{\pm}}(p_{\pm}) \cap (\Omega {\times} [0,T])),
\end{align}
and there exists $C>0$ such that
\begin{align}
\label{eq:quantRegularityContactPoint}
|\nabla^2\xi^{p_{\pm}}| + |\nabla^2 B^{p_{\pm}}| &\leq C
\quad \text{in } \mathscr{B}_{\hat r_{\pm}}(p_{\pm}) \cap (\Omega {\times} [0,T]).
\end{align}

\item[2.] \emph{(Consistency)} We have $|\xi^{p_{\pm}}| = 1$ in $\mathscr{B}_{\hat r_{\pm}}(p_{\pm})\cap(\Omega\times[0,T])$ as well as
\begin{align}
\label{eq:consistencyContactPoints}
\xi^{p_{\pm}} = \mathrm{n}_{I}
\text{ and } \big(\nabla\xi^{p_{\pm}}\big)^\mathsf{T}\mathrm{n}_{I} &= 0
&& \text{along } \mathscr{B}_{\hat r_{\pm}}(p_{\pm}) \cap I,
\\
\label{eq:consistencyVelocity} 
B^{p_{\pm}}(p_{\pm}(t),t) &= \frac{\mathrm{d}}{\mathrm{d}t}p_\pm(t)
&& \text{for all } t \in [0,T].
\end{align}

\item[3.] \emph{(Boundary conditions)} Moreover, it holds
\begin{align}
\label{eq:boundaryConditionsContactPoints}
\xi^{p_{\pm}} \cdot \mathrm{n}_{\partial\Omega} &= \cos\alpha
\text{ and } B^{p_{\pm}} \cdot \mathrm{n}_{\partial\Omega} = 0
&& \text{along } \mathscr{B}_{\hat r_{\pm}}(p_{\pm}) \cap (\partial\Omega {\times} [0,T]). 
\end{align}

\item[4.] \emph{(Motion laws)} In terms of evolution equations, there exists $C>0$ such that
\begin{align}
\label{eq:localEvolXiContactPoint}
|\partial_t \xi^{p_{\pm}} + (B^{p_{\pm}}\cdot\nabla)\xi^{p_{\pm}}
+ (\nabla B^{p_{\pm}})^\mathsf{T}\xi^{p_{\pm}}| 
&\leq C\dist_x(\cdot,I),
\\ \label{eq:localEvolLengthXiContactPoint}
|(\partial_t + B^{p_{\pm}}\cdot\nabla)|\xi^{p_{\pm}}|^2| 
&=0,
\\
\label{eq:localMCFContactPoint}
|B^{p_{\pm}}\cdot \xi^{p_{\pm}} + \nabla\cdot\xi^{p_{\pm}}|
&\leq C\dist_x(\cdot,I),
\\
\label{eq:localGradVelXiXiContactPoint}
|\xi^{p_{\pm}} \otimes \xi^{p_{\pm}} : \nabla B^{p_{\pm}}|
&\leq C\dist_x(\cdot,I)
\end{align}
throughout $\mathscr{B}_{\hat r_{\pm}}(p_{\pm}) \cap (\Omega\times[0,T])$.

\item[5.] \emph{(Additional constraints)} Finally, the construction of~$B^{p_{\pm}}$ 
may be arranged in a way to guarantee that
\begin{align}
\label{eq:gradVelSkewSym}
\nabla^{\mathrm{sym}} B^{p_{\pm}} = 0
\quad\text{along } \mathscr{B}_{\hat r_{\pm}}(p_{\pm}) \cap (I
\cup (\partial\Omega{\times}[0,T])).
\end{align}
\end{enumerate}
\end{theorem}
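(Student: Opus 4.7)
The plan is to build $(\xi^{p_\pm},B^{p_\pm})$ in three stages following the wedge decomposition of Lemma~\ref{th_local_radius_contact}: first construct a canonical pair $(\xi^I,B^I)$ on the tubular neighborhood of $I$, which will serve inside the interface wedge $W_I$; then construct a canonical pair $(\xi^{\partial\Omega},B^{\partial\Omega})$ on the tubular neighborhood of $\partial\Omega$, which will serve inside the boundary wedge $W_{\partial\Omega}$; and finally glue the two candidates through the bulk wedges $W_\pm$ by an interpolation that simultaneously preserves the unit-length constraint on $\xi$, the tangency condition $B\cdot\mathrm{n}_{\partial\Omega}=0$ along $\partial\Omega$, and the prescribed value $B(p_\pm(t),t)=\tfrac{\mathrm{d}}{\mathrm{d}t}p_\pm(t)$ at the contact trajectory $\mathscr{P}$. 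The reason this is feasible is the chain of matching identities at $p_\pm(t)$ provided by the geometry: the zeroth-order match comes from the contact-angle condition~\eqref{eq_angle} together with~\eqref{eq_comp_cond1}, while the first-order match needed for $C^1$ gluing and for the motion-law defects comes from the higher-order compatibility~\eqref{eq_comp_cond2}.

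For the interior construction, set $\xi^I:=\mathrm{n}_I=\nabla s^I$. Then $|\xi^I|=1$ identically, $\xi^I|_I=\mathrm{n}_I$, and $(\nabla\xi^I)^{\mathsf{T}}\mathrm{n}_I=\tfrac12\nabla|\nabla s^I|^2=0$. Define the velocity on $I$ by $B^I|_I:=H^I\mathrm{n}_I+\gamma\tau_I$, where the tangential coefficient $\gamma$ is fixed at the contact point by $B^I(p_\pm(t),t)=\tfrac{\mathrm{d}}{\mathrm{d}t}p_\pm(t)$ (consistent with~\eqref{eq_comp_cond1}) and continued smoothly along $I$; extend $B^I$ off $I$ by pull-back $B^I(\tilde{x},t):=B^I(P^I\tilde{x},t)$ so that $(\xi^I\cdot\nabla)(\xi^I\cdot B^I)$ vanishes on $I$. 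For the boundary construction, put $\xi^{\partial\Omega}:=\cos\alpha\,\mathrm{n}_{\partial\Omega}\circ P^{\partial\Omega}+\sin\alpha\,\tau_{\partial\Omega}\circ P^{\partial\Omega}$, with the orientation of $\tau_{\partial\Omega}$ fixed so that~\eqref{eq_angle} gives $\xi^{\partial\Omega}(p_\pm(t),t)=\mathrm{n}_I(p_\pm(t),t)$; this field is unit and satisfies $\xi^{\partial\Omega}\cdot\mathrm{n}_{\partial\Omega}=\cos\alpha$. Take $B^{\partial\Omega}:=\mu\,\tau_{\partial\Omega}\circ P^{\partial\Omega}$ purely tangential, with $\mu$ chosen so that $B^{\partial\Omega}(p_\pm(t),t)=\tfrac{\mathrm{d}}{\mathrm{d}t}p_\pm(t)$; the first relation of~\eqref{eq_comp_cond1} guarantees this is consistent, and $B^{\partial\Omega}\cdot\mathrm{n}_{\partial\Omega}=0$ is built in.

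Because both candidates for $\xi$ are unit-length, the interpolation across $W_\pm$ is performed on the angular representative: writing $\xi^I=(\cos\theta^I,\sin\theta^I)$ and $\xi^{\partial\Omega}=(\cos\theta^{\partial\Omega},\sin\theta^{\partial\Omega})$ in a fixed Cartesian frame (with angles chosen to coincide at $p_\pm(t)$ by~\eqref{eq_angle}), pick a smooth $[0,1]$-valued cutoff $\eta_\pm$ in the angular coordinate of the wedge about $p_\pm(t)$ transported by $\Phi$, and set $\theta:=(1-\eta_\pm)\theta^I+\eta_\pm\theta^{\partial\Omega}$ on $W_\pm$. This keeps $|\xi|=1$, and the convex interpolation $B:=(1-\eta_\pm)B^I+\eta_\pm B^{\partial\Omega}$ preserves $B\cdot\mathrm{n}_{\partial\Omega}=0$ along $\partial\Omega$ and the common contact-point value. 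Inside $W_I$ and $W_{\partial\Omega}$ we simply take $(\xi,B):=(\xi^I,B^I)$ and $(\xi,B):=(\xi^{\partial\Omega},B^{\partial\Omega})$, respectively.

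The motion-law bounds~\eqref{eq:localEvolXiContactPoint}--\eqref{eq:localGradVelXiXiContactPoint} are verified by first checking their exact analogues on $I$ using~\eqref{eq:normalExtensionNormalCurvature}, \eqref{eq_nabla_normale_und_tau}, and~\eqref{eq_signdist} (in particular $B\cdot\xi+\nabla\cdot\xi=H^I-H^I=0$ on $I$), and then expanding the interpolated fields to first order in $\dist_x(\cdot,I)$ inside $W_\pm$. The length equation~\eqref{eq:localEvolLengthXiContactPoint} is automatic from $|\xi|\equiv 1$. The additional constraint~\eqref{eq:gradVelSkewSym} is enforced by subtracting from $B^I$ and $B^{\partial\Omega}$, respectively, symmetric-gradient corrections that vanish along $I$ and along $\partial\Omega$; matching these two corrections at the contact point without disturbing $B\cdot\mathrm{n}_{\partial\Omega}=0$ and $B(p_\pm(t),t)=\tfrac{\mathrm{d}}{\mathrm{d}t}p_\pm(t)$ is the critical step, and it is precisely here that the third-order compatibility identity~\eqref{eq_comp_cond2} is indispensable: without it, $\nabla^{\mathrm{sym}}B^I$ and $\nabla^{\mathrm{sym}}B^{\partial\Omega}$ would carry incompatible first-order data at $p_\pm(t)$ and no smooth interpolation could simultaneously achieve $\nabla^{\mathrm{sym}}B=0$ on $I\cup(\partial\Omega\times[0,T])$. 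This compatibility obstruction is the main obstacle of the construction; everything else follows by straightforward Taylor expansion around $\mathscr{P}$.
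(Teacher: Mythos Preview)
Your overall three-stage plan (interface candidate, boundary candidate, wedge interpolation) is the same as the paper's, but the construction as written has a real gap: your bare candidates $\xi^I=\mathrm{n}_I$ and $\xi^{\partial\Omega}=R_\alpha(\mathrm{n}_{\partial\Omega}\circ P^{\partial\Omega})$ only agree to \emph{zeroth} order at $p_\pm(t)$, not to first order. Indeed, by~\eqref{eq_nabla_normale_und_tau} one has $\nabla\xi^I|_{p(t)}=-H^I\,\tau_I\otimes\tau_I$ whereas $\nabla\xi^{\partial\Omega}|_{p(t)}=-H^{\partial\Omega}\,\tau_I\otimes\tau_{\partial\Omega}$, and these differ unless $H^I\tau_I=H^{\partial\Omega}\tau_{\partial\Omega}$, which is generically false. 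The scalar identity~\eqref{eq_comp_cond2} does \emph{not} force these tensors to coincide; you are asking it to do something it cannot. Since your angular cutoff $\eta_\pm$ necessarily satisfies $|\nabla\eta_\pm|\sim\dist(\cdot,p(t))^{-1}$ and $|\nabla^2\eta_\pm|\sim\dist(\cdot,p(t))^{-2}$, the interpolated angle $\theta=(1-\eta_\pm)\theta^I+\eta_\pm\theta^{\partial\Omega}$ has $\nabla\theta$ with ray-dependent limits at $p(t)$ (so $\xi$ is not $C^1$ there) and $|\nabla^2\theta|$ blows up like $\dist(\cdot,p(t))^{-1}$ (so~\eqref{eq:quantRegularityContactPoint} fails). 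The same first-order mismatch also kills your motion-law argument in $W_{\partial\Omega}$: with only zeroth-order agreement you cannot transfer the vanishing of the defect from $(\xi^I,B^I)$ at $p(t)$ to $(\xi^{\partial\Omega},B^{\partial\Omega})$, and a direct computation shows $\partial_t\xi^{\partial\Omega}+(B^{\partial\Omega}\cdot\nabla)\xi^{\partial\Omega}+(\nabla B^{\partial\Omega})^\mathsf{T}\xi^{\partial\Omega}\neq 0$ at $p(t)$ for your fields.

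The paper fixes this by writing perturbed ans\"atze $\tilde\xi^I=\mathrm{n}_I+s^I\beta^I\tau_I-\tfrac12(s^I\beta^I)^2\mathrm{n}_I$, $\tilde\xi^{\partial\Omega}=R_\alpha\big[\mathrm{n}_{\partial\Omega}+s^{\partial\Omega}\beta^{\partial\Omega}\tau_{\partial\Omega}-\tfrac12(s^{\partial\Omega}\beta^{\partial\Omega})^2\mathrm{n}_{\partial\Omega}\big]$ and $\tilde B^I=H^I\mathrm{n}_I+(\gamma^I+s^I\rho^I)\tau_I$, $\tilde B^{\partial\Omega}=(\gamma^{\partial\Omega}+s^{\partial\Omega}\rho^{\partial\Omega})\tau_{\partial\Omega}$, with the coefficients $\beta^I,\beta^{\partial\Omega},\rho^I,\rho^{\partial\Omega}$ and the ODE $\tau_I\cdot\nabla\gamma^I=(H^I)^2$ chosen precisely so that $(\tilde\xi^I,\tilde B^I)$ and $(\tilde\xi^{\partial\Omega},\tilde B^{\partial\Omega})$ agree at $p(t)$ together with their full first-order derivatives $(\partial_t,\nabla)$. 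It is only \emph{after} introducing these free parameters that~\eqref{eq_comp_cond2} enters, as the consistency condition making the resulting linear system for the coefficients solvable (it is needed both for $\partial_t\tilde\xi^I|_{p(t)}=0$ and for $\rho^I|_{p(t)}=\rho^{\partial\Omega}|_{p(t)}$). With second-order agreement $|\tilde\xi^I-\tilde\xi^{\partial\Omega}|\le C\dist^2(\cdot,p(t))$ in hand, the blow-up of the interpolation cutoff is exactly compensated, giving~\eqref{eq:qualRegularityXiContactPoint}--\eqref{eq:quantRegularityContactPoint}; the paper then interpolates linearly and normalizes at the end rather than interpolating the angle. Your extension of $B^I$ by projection also cannot yield $\nabla^{\mathrm{sym}}B^I=0$ on $I$ (a matrix of the form $A\otimes\tau_I$ is symmetric-free only if $A=0$); the first-order term $s^I\rho^I\tau_I$ is what supplies the missing $\tau_I\otimes\mathrm{n}_I$ component needed to make $\nabla\tilde B^I|_I=\rho^I J$ skew.
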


The proof of this result occupies the whole remainder of this subsection.

\subsubsection{Construction of local candidates for $(\xi, B)$ at contact points}\label{sec_calib_contact_local}
We fix the contact point $\mathscr{P}$ in this section and consider a localization radius $r=r_p$ as in Lemma~\ref{th_local_radius_contact}. Then there is a unique rotation $R_\alpha$ (rotation by $-\alpha$ or $\alpha$) such that 
\begin{align}
\label{eq:choiceRotation}
R_\alpha \mathrm{n}_{\partial\Omega}|_{p(t)}=\mathrm{n}_I|_{(p(t),t)}\quad\text{ and hence }\quad 
R_\alpha \tau_{\partial\Omega}|_{p(t)}=\tau_I|_{(p(t),t)}.
\end{align}

Motivated from \cite[Section 6.1]{Fischer2020a}, and the conditions in Definition \ref{def_boundaryAdaptedCalibration}, we consider the following candidate vector fields
\begin{alignat}{2}\label{eq_xi_I}
	\tilde{\xi}^I &:= \mathrm{n}_I + s^I \beta^I \tau_I - \frac{1}{2} (s^I\beta^I)^2 \mathrm{n}_I&\quad&\text{ on } \mathscr{B}_r(p)\cap\overline{\textup{im}(X_I)},\\
	\tilde{\xi}^{\partial\Omega} &:= R_\alpha \left[ \mathrm{n}_{\partial\Omega} {+} s^{\partial\Omega} \beta^{\partial\Omega}\tau_{\partial\Omega} {-} \frac{1}{2}(s^{\partial\Omega} \beta^{\partial\Omega})^2 \mathrm{n}_{\partial\Omega} \right] &\quad&\text{ on } 
	\mathscr{B}_r(p)\cap (\overline{\textup{im}(X_{\partial\Omega})}{\times}[0,T]),\label{eq_xi_dO}
\end{alignat}
where $\beta^I=\hat{\beta}^I(P^I,\textup{pr}_t)$ on $\overline{\textup{im}(X_I)}$ and $\beta^{\partial\Omega}=\hat{\beta}^{\partial\Omega}(P^{\partial\Omega},\textup{pr}_t)$ on 
$(\overline{\textup{im}(X_{\partial\Omega})}{\times}[0,T])$ with $\hat{\beta}^I\colon I\rightarrow\R$ and $\hat{\beta}^{\partial\Omega}\colon\partial\Omega\times[0,T]\rightarrow\R$. Note that the quadratic terms are just added for a length correction later. Moreover, we introduce
\begin{alignat}{2}\label{eq_B_I}
\tilde{B}^I &:= H^I\mathrm{n}_I + (\gamma^I+s^I \rho^I) \tau_I &\quad&\text{ on } \mathscr{B}_r(p)\cap\overline{\textup{im}(X_I)},\\
\tilde{B}^{\partial\Omega} &:= (\gamma^{\partial\Omega}+s^{\partial\Omega} \rho^{\partial\Omega}) \tau_{\partial\Omega} &\quad&\text{ on } 
\mathscr{B}_r(p)\cap(\overline{\textup{im}(X_{\partial\Omega})}{\times}[0,T]),\label{eq_B_dO}
\end{alignat}
where $\gamma^I$, $\rho^I$ are defined via projection from some $\hat{\gamma}^I$, $\hat{\rho}^I\colon I\rightarrow\R$ and $\gamma^{\partial\Omega}$, $\rho^{\partial\Omega}$ are defined via projection from some $\hat{\gamma}^{\partial\Omega}$, $\hat{\rho}^{\partial\Omega}\colon \partial\Omega\times[0,T]\rightarrow\R$ analogously as before.

Our task is to choose the ansatz functions $\hat{\beta}^I, \hat{\gamma}^I, \hat{\rho}^I$ and $\hat{\beta}^{\partial\Omega}, \hat{\gamma}^{\partial\Omega}, \hat{\rho}^{\partial\Omega}$ in such a way that the above vector fields $\tilde{\xi}^I$, $\tilde{\xi}^{\partial\Omega}$ and $\tilde{B}^I$, $\tilde{B}^{\partial\Omega}$ are compatible at $\mathscr{P}$ up to first order in space derivatives, respectively, and that the latter equal 
$\frac{\mathrm{d}}{\mathrm{d}t}p$ at $\mathscr{P}$. Moreover, the property \eqref{eq_consistencyProperty} should be satisfied at $\mathscr{P}$ for both $\tilde{\xi}^I$ and $\tilde{\xi}^{\partial\Omega}$, and the left hand side of the equations \eqref{eq_calibration1}--\eqref{eq_calibration4} should be zero exactly on $I\cap\mathscr{B}_r(p)$ for $\tilde{\xi}^I, \tilde{B}^I$ and be zero at $\mathscr{P}$ for $\tilde{\xi}^{\partial\Omega}$, $\tilde{B}^{\partial\Omega}$. Finally, the boundary conditions \eqref{eq_boundaryCondXi}--\eqref{eq_boundaryCondVelocity} should be satisfied for $\tilde{\xi}^I$, $B^I$ at $\mathscr{P}$ and for $\tilde{\xi}^{\partial\Omega}$, $\tilde{B}^{\partial\Omega}$ on $(\partial\Omega\times[0,T])\cap\mathscr{B}_r(p)$. See also Theorem~\ref{th_calib_contact} below for more precise statements. Note that the consistency for second order space derivatives in the regularity class from Definition~\ref{def_boundaryAdaptedCalibration} is not needed and will be taken care of via a suitable interpolation in Section~\ref{sec_calib_contact_intpol}.  

Therefore let us compute the required derivatives to first order for the above vector fields:

\begin{proposition}\label{th_calib_contact_abl}
	Let $\hat{\beta}^I$, $\hat{\beta}^{\partial\Omega}$ be of class $C^1$
	on their respective domains of definition, and let $\hat{\gamma}^I, \hat{\rho}^I, \hat{\gamma}^{\partial\Omega}, \hat{\rho}^{\partial\Omega}$ have the regularity $C_tC_x^1$ on their
	respective domains of definition. Then it holds
	\begin{alignat*}{2}
	\partial_t\tilde{\xi}^I|_I &= -\nabla H^I - \beta^IH^I \tau_I&\quad&\text{ on }\mathscr{B}_r(p)\cap I,\\
	\partial_t\tilde{\xi}^{\partial\Omega}|_{\partial\Omega{\times}[0,T]} &= 0
	&\quad&\text{ on }\mathscr{B}_r(p)\cap(\partial\Omega{\times}[0,T]),\\
	\nabla\tilde{\xi}^I|_I &= \tau_I \otimes [-H^I\tau_I+\beta^I \mathrm{n}_I]&\quad&\text{ on }\mathscr{B}_r(p)\cap I,\\
	\nabla\tilde{\xi}^{\partial\Omega}|_{\partial\Omega{\times}[0,T]}
	&= R_\alpha\tau_{\partial\Omega} \otimes [-H^{\partial\Omega}\tau_{\partial\Omega}+\beta^{\partial\Omega} \mathrm{n}_{\partial\Omega}]&\quad&\text{ on }\mathscr{B}_r(p)\cap(\partial\Omega{\times}[0,T]),\\
	\nabla\tilde{B}^I|_I &=(\tau_I\cdot\nabla H^I+\gamma^I H^I) \mathrm{n}_I\otimes \tau_I&&\\ 
	&\phantom{=}+ (\tau_I\cdot\nabla \gamma^I-(H^I)^2) \tau_I\otimes\tau_I
	\\&~~~
	+\rho^I \tau_I\otimes \mathrm{n}_I
	&\quad&\text{ on }\mathscr{B}_r(p)\cap I,
	\\
	\nabla\tilde{B}^{\partial\Omega}|_{\partial\Omega{\times}[0,T]}
	&=(\gamma^{\partial\Omega} H^{\partial\Omega}) \mathrm{n}_{\partial\Omega}\otimes \tau_{\partial\Omega}&&\\ 
	&\phantom{=}+ (\tau_{\partial\Omega}\cdot\nabla \gamma^{\partial\Omega}) \tau_{\partial\Omega}\otimes\tau_{\partial\Omega}
	\\&~~~+\rho^{\partial\Omega} \tau_{\partial\Omega}\otimes \mathrm{n}_{\partial\Omega}
	&\quad&\text{ on }\mathscr{B}_r(p)\cap (\partial\Omega{\times}[0,T]).
	\end{alignat*}
\end{proposition}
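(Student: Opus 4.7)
The plan is to establish all four formulas by a direct expansion using the product and chain rules, then restricting to $I$ or $\partial\Omega\times[0,T]$ where many terms vanish thanks to $s^I|_I=0$ and $s^{\partial\Omega}|_{\partial\Omega}=0$. The key structural inputs are: (i) the identities $\nabla s^I = \mathrm{n}_I$ throughout the tubular neighborhood and, under MCF, $\partial_t s^I = -H^I$ and $\partial_t \mathrm{n}_I = -\nabla H^I$ from Remark~\ref{th_strongsol_notation_tub}, with analogous but time-independent statements on $\partial\Omega$; (ii) the surface formulas $\nabla \mathrm{n}_I|_I = -H^I \tau_I \otimes \tau_I$ and $\nabla \tau_I|_I = H^I \mathrm{n}_I \otimes \tau_I$ from \eqref{eq_nabla_normale_und_tau}, and their analogues for $\partial\Omega$; and (iii) the fact that $\beta^I, \gamma^I, \rho^I$ are extended to the tubular neighborhood by composition with $P^I$, so their spatial gradients on $I$ are purely tangential, e.g. $\nabla \gamma^I|_I = (\tau_I\cdot\nabla \gamma^I)\tau_I$, and similarly $\nabla H^I|_I = (\tau_I\cdot\nabla H^I)\tau_I$ since $H^I$ is also extended by projection.

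For $\tilde{\xi}^I$, expanding $\partial_t$ and $\nabla$ of \eqref{eq_xi_I} produces several terms; the quadratic correction $-\tfrac{1}{2}(s^I\beta^I)^2\mathrm{n}_I$ contributes nothing on $I$, and its gradient on $I$ vanishes as well, since it carries an overall factor of $s^I\beta^I$. Hence on $I$ only $\partial_t\mathrm{n}_I|_I$ and $(\partial_t s^I)|_I\,\beta^I\tau_I$ survive for the time derivative, giving $-\nabla H^I - \beta^I H^I\tau_I$; for the spatial gradient, only $\nabla\mathrm{n}_I|_I$ and $\tau_I\otimes\nabla(s^I\beta^I)|_I = \tau_I\otimes(\beta^I\mathrm{n}_I)$ remain, which combine via (ii) to $\tau_I\otimes[-H^I\tau_I+\beta^I\mathrm{n}_I]$. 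For $\tilde{\xi}^{\partial\Omega}$, the same calculation applies after pulling out the constant rotation $R_\alpha$, which commutes with $\partial_t$ and satisfies $\nabla(R_\alpha v)=R_\alpha\nabla v$, hence $R_\alpha(\tau_{\partial\Omega}\otimes w) = (R_\alpha\tau_{\partial\Omega})\otimes w$; since $\partial\Omega$ is stationary, $\partial_t\tilde{\xi}^{\partial\Omega}|_{\partial\Omega}=0$ is immediate.

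For $\tilde{B}^I|_I$, the product rule gives $\nabla(H^I\mathrm{n}_I) = \mathrm{n}_I\otimes\nabla H^I + H^I\nabla\mathrm{n}_I$ and $\nabla((\gamma^I+s^I\rho^I)\tau_I) = \tau_I\otimes\nabla(\gamma^I+s^I\rho^I) + (\gamma^I+s^I\rho^I)\nabla\tau_I$; restricting to $I$ and using $\nabla(s^I\rho^I)|_I = \rho^I\mathrm{n}_I$, the tangentiality of $\nabla H^I$ and $\nabla\gamma^I$, and the formulas in (ii), the contributions regroup exactly into the three dyadic components $\mathrm{n}_I\otimes\tau_I$, $\tau_I\otimes\tau_I$, $\tau_I\otimes\mathrm{n}_I$ of the stated identity. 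The $\partial\Omega$-case is verbatim except that the $H^I\mathrm{n}_I$ summand is absent in $\tilde{B}^{\partial\Omega}$, which removes the $\nabla H$ and $-(H)^2$ contributions and produces the claimed formula. No conceptual obstacle is expected; the only point requiring care is cleanly distinguishing, in each product rule step, between tangential gradients of extended scalars and the normal $\nabla s=\mathrm{n}$ arising from the $s\cdot(\cdot)$ factors.
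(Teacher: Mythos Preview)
Your proposal is correct and follows exactly the approach the paper has in mind: the paper's own proof is a one-liner stating that the identities follow from a straightforward calculation using the definitions \eqref{eq_xi_I}--\eqref{eq_B_dO} together with the facts from Remarks~\ref{th_strongsol_notation_tub} and~\ref{th_strongsol_notation_bdry}, and you have carried out precisely that calculation in detail. One tiny remark: for $\partial_t\tilde{\xi}^{\partial\Omega}|_{\partial\Omega}=0$ the stationarity of $\partial\Omega$ alone is not quite enough since $\beta^{\partial\Omega}$ may depend on time, but as you already observed for the $I$-case, every $\beta^{\partial\Omega}$-term carries a factor of $s^{\partial\Omega}$ and hence vanishes on $\partial\Omega$, so the conclusion is unaffected.
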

\begin{proof}
	This follows from a straightforward calculation using the 
	identities from Remark \ref{th_strongsol_notation_tub} and Remark \ref{th_strongsol_notation_bdry}
	and the definitions~\eqref{eq_xi_I}--\eqref{eq_B_dO}.
\end{proof}
Now we can insert the compatibility conditions 
and derive equations for the ansatz functions $\hat{\beta}^I, \hat{\gamma}^I, \hat{\rho}^I$ and $\hat{\beta}^{\partial\Omega}, \hat{\gamma}^{\partial\Omega}, \hat{\rho}^{\partial\Omega}$, respectively,
in order to satisfy the requirements mentioned just before Proposition~\ref{th_calib_contact_abl}.

First, we have by~\eqref{eq:choiceRotation}, \eqref{eq_xi_I} and~\eqref{eq_xi_dO}
\[
\tilde{\xi}^I|_{(p(t),t)} = \mathrm{n}_I|_{(p(t),t)} = R_\alpha \mathrm{n}_{\partial\Omega}|_{p(t)}=\tilde{\xi}^{\partial\Omega}|_{(p(t),t)}\quad\text{ for all }t\in[0,T].
\]
Moreover, note that it holds $R_\alpha \tau_{\partial\Omega}|_{p(t)}=\tau_I|_{(p(t),t)}$ for all $t\in[0,T]$. Therefore, due to Proposition \ref{th_calib_contact_abl} the compatibility of the gradient at the contact point, i.e., $\nabla\tilde{\xi}^I|_{(p(t),t)}=\nabla\tilde{\xi}^{\partial\Omega}|_{(p(t),t)}$ for $t\in[0,T]$, is equivalent to
\begin{align}\label{eq_beta_I}
\beta^I|_{(p(t),t)} &= 
-H^{\partial\Omega}|_{p(t)}\tau_{\partial\Omega}\cdot \mathrm{n}_I|_{(p(t),t)}+\beta^{\partial\Omega}|_{(p(t),t)} \mathrm{n}_{\partial\Omega}\cdot \mathrm{n}_I|_{(p(t),t)},\\
-H^I|_{(p(t),t)}&=-H^{\partial\Omega}|_{p(t)}\tau_{\partial\Omega}\cdot\tau_I|_{(p(t),t)}+\beta^{\partial\Omega}|_{(p(t),t)} \mathrm{n}_{\partial\Omega}\cdot\tau_I|_{(p(t),t)}\notag
\end{align}
for $t\in[0,T]$. Hence we obtain for $t\in[0,T]$ 
\begin{align}\label{eq_beta_dO}
\beta^{\partial\Omega}|_{(p(t),t)}&=\frac{1}{\mathrm{n}_{\partial\Omega}\cdot\tau_I|_{(p(t),t)}}\left(-H^I|_{(p(t),t)}+H^{\partial\Omega}|_{p(t)}\tau_{\partial\Omega}\cdot\tau_I|_{(p(t),t)}\right),
\end{align}
where $|\mathrm{n}_{\partial\Omega}\cdot\tau_I|_{(p(t),t)}|=\cos(\frac{\pi}{2}-\alpha)>0$. This determines also $\beta^I|_{(p(t),t)}$. Note that in the case $\alpha=\frac{\pi}{2}$ one simply gets 
$\beta^I|_{(p(t),t)}= -H^{\partial\Omega}|_{p(t)}$ and 
$\beta^{\partial\Omega}|_{(p(t),t)}= -H^I|_{p(t)}$, respectively.

Next, we consider the requirement
\[
\tilde{B}^I|_{(p(t),t)} = \frac{\mathrm{d}}{\mathrm{d}t}p(t) = \tilde{B}^{\partial\Omega}|_{(p(t),t)}\quad\text{ for }t\in[0,T].
\]
Because of~\eqref{eq_comp_cond1} for $\frac{\mathrm{d}}{\mathrm{d}t}p$ from Remark \ref{th_strongsol_comp_cond}, we simply obtain 
that for $t\in[0,T]$
\begin{align}\label{eq_gamma_I}
\gamma^I|_{(p(t),t)}&=\frac{\mathrm{d}}{\mathrm{d}t}p(t)\cdot\tau_{I}|_{(p(t),t)},\\
\gamma^{\partial\Omega}|_{(p(t),t)}&=
\frac{\mathrm{d}}{\mathrm{d}t}p(t)\cdot\tau_{\partial\Omega}|_{(p(t),t)}.\label{eq_gamma_dO}
\end{align}

Now let us consider $\nabla\tilde{B}^I$. Let us note that \eqref{eq_calibration1} is an approximate equation for the transport and rotation of the vector field $\xi$. This motivates us to require $\nabla\tilde{B}^I$ to be anti-symmetric on the interface $I$, since then the latter can be interpreted as an infinitesimal rotation. Hence in the formula for $\nabla\tilde{B}^I|_I$ in Proposition \ref{th_calib_contact_abl} the coefficient of $\tau_I\otimes\tau_I$ should vanish and the prefactors of $\mathrm{n}_I\otimes\tau_I$ and $\tau_I\otimes \mathrm{n}_I$ should be the negative of each other. This yields
\begin{alignat}{2}\label{eq_gamma1_I}
\tau_I\cdot\nabla \gamma^I&=(H^I)^2 &\quad&\text{ on }I,\\
\quad \rho^I&=-\tau_I\cdot\nabla H^I-\gamma^I H^I&\quad&\text{ on }I.\label{eq_rho_I}
\end{alignat}
Then the equation for $\nabla\tilde{B}^I|_I$ becomes
\begin{align}\label{eq_nablaB_I}
\nabla\tilde{B}^I=\rho^I (\tau_I\otimes \mathrm{n}_I-\mathrm{n}_I\otimes\tau_I)=\rho^I J\quad\text{ on }I,
\end{align}
with the counter-clockwise rotation $J$ by $90°$. Due to the same reason, we require $\nabla\tilde{B}^{\partial\Omega}$ to be anti-symmetric on $\partial\Omega\times[0,T]$ which yields 
\begin{align}\label{eq_gamma1_dO}
\tau_{\partial\Omega}\cdot\nabla \gamma^{\partial\Omega}&=0&\quad&\text{ on }\partial\Omega\times[0,T],\\
\rho^{\partial\Omega}&=-\gamma^{\partial\Omega} H^{\partial\Omega} &\quad&\text{ on }\partial\Omega\times[0,T],\label{eq_rho_dO}
\end{align}
and thus 
\begin{align}
\label{eq:skewSymmetryGradientTildeVelBoundary}
\nabla\tilde{B}^{\partial\Omega} &= \rho^{\partial\Omega} J
&&\text{on } \partial\Omega{\times}[0,T].
\end{align}
Hence, the compatibility condition at first order 
$\nabla\tilde{B}^I|_{(p(t),t)}=\nabla\tilde{B}^{\partial\Omega}|_{(p(t),t)}$ is equivalent to
\begin{align}\label{eq_rho_consistent}
\rho^{\partial\Omega}|_{(p(t),t)}=\rho^I|_{(p(t),t)}\quad\text{ for }t\in[0,T].
\end{align}
Because of \eqref{eq_gamma1_I}--\eqref{eq_rho_I} and \eqref{eq_gamma1_dO}--\eqref{eq_rho_dO} the latter is the same as
\[
-\gamma^{\partial\Omega}|_{(p(t),t)}H^{\partial\Omega}|_{p(t)}=
-\tau_I\cdot\nabla H^I-\gamma^I H^I|_{(p(t),t)}\quad\text{ for }t\in[0,T].
\]
By inserting \eqref{eq_gamma_I}--\eqref{eq_gamma_dO} and using 
\[
(\tau_{\partial\Omega}|_{p(t)}\cdot \mathrm{n}_I|_{(p(t),t)})\tau_I\cdot\nabla H^I=-\mathrm{n}_{\partial\Omega}|_{p(t)}\cdot\nabla H^I|_{(p(t),t)}
\]
due to the properties of~$J$ and~$H^{I}$ being constant in direction of~$\vec{n}_I$, 
we see that~\eqref{eq_rho_consistent} 
is in fact equivalent to the compatibility condition \eqref{eq_comp_cond2}, 
which in turn is valid because of Remark \ref{th_strongsol_comp_cond}. 

It will turn out that these choices will ensure the requirements for the candidate vector fields. Therefore let us fix these vector fields in the following definition.

\begin{definition}\label{def_calib_contact_local}
	We define $\tilde{\xi}^I, \tilde{\xi}^{\partial\Omega}$ and $\tilde{B}^I, \tilde{B}^{\partial\Omega}$ as in \eqref{eq_xi_I}--\eqref{eq_xi_dO} and \eqref{eq_B_I}--\eqref{eq_B_dO},
	respectively, with the following choices of
	the coefficient functions $\hat{\beta}^I, \hat{\gamma}^I, \hat{\rho}^I\colon I\rightarrow\R$ and $\hat{\beta}^{\partial\Omega}, \hat{\gamma}^{\partial\Omega}, \hat{\rho}^{\partial\Omega}\colon \partial\Omega{\times}[0,T]\rightarrow\R$:
	\begin{enumerate}[leftmargin=0.7cm]
		\item[1.] Let $\hat{\beta}^I\colon I\rightarrow\R$ and
		$\hat{\beta}^{\partial\Omega}\colon {\partial\Omega}{\times}[0,T] \to \R$
		be defined by the right hand side of~\eqref{eq_beta_I} and~\eqref{eq_beta_dO}, respectively,
		in the sense that these coefficient functions are independent of
		the space variable.
		\item[2.] Let $\hat{\gamma}^I\colon I\rightarrow\R$ be determined 
		by \eqref{eq_gamma_I} and \eqref{eq_gamma1_I}.
		\item[3.] Let $\hat{\gamma}^{\partial\Omega}\colon {\partial\Omega}{\times}[0,T]\rightarrow\R$ 
		be defined by the right hand side of~\eqref{eq_gamma_dO} in the sense
		that~$\hat{\gamma}^{\partial\Omega}$ is independent of the space variable.
		\item[4.] Finally, $\hat{\rho}^I\colon I\rightarrow\R$ is given by \eqref{eq_rho_I} and $\hat{\rho}^{\partial\Omega}\colon\partial\Omega\times[0,T]\rightarrow\R$ by \eqref{eq_rho_dO}.
	\end{enumerate}
\end{definition}
 	Note that the equations for $\hat{\gamma}^I$ can be reduced to a parameter-dependent ODE which can be explicitly solved, cf.\ \cite[Proof of Lemma~27]{Fischer2020a}. In the next theorem we prove the properties of the above construction. One may compare with Definition~\ref{def_boundaryAdaptedCalibration} and Theorem~\ref{th_localConstructionsContactPoint}.
\begin{theorem}\label{th_calib_contact}
	In the above situation and with the choices from Definition \ref{def_calib_contact_local} the following holds:
	\begin{enumerate}[leftmargin=0.6cm]
		\item[1.] \textup{Regularity:} $\tilde{\xi}^I$, $\tilde{\xi}^{\partial\Omega}$ are of class $C_tC_x^2\cap C_t^1C_x$ and $\tilde{B}^I$, $\tilde{B}^{\partial\Omega}$ have the regularity $C_tC_x^2$ on their respective domains of definition.
		\item[2.] \textup{Compatibility:} For $t\in[0,T]$ it holds 
		\begin{align}
		\label{eq:compContactPointXi}
		\tilde{\xi}^I|_{(p(t),t)}=\tilde{\xi}^{\partial\Omega}|_{(p(t),t)}, \quad(\partial_t,\nabla)\tilde{\xi}^I|_{(p(t),t)}=(\partial_t,\nabla)\tilde{\xi}^{\partial\Omega}|_{(p(t),t)},\\
		\label{eq:compContactPointVel}
		\tilde{B}^I|_{(p(t),t)}=\frac{\mathrm{d}}{\mathrm{d}t}p(t)=\tilde{B}^{\partial\Omega}|_{(p(t),t)}\quad\text{ and }\quad \nabla\tilde{B}^I|_{(p(t),t)}=\nabla\tilde{B}^{\partial\Omega}|_{(p(t),t)}.
		\end{align}
		\item[3.] \textup{Local gradient flow calibration properties:} We have 
		\begin{align}\label{eq_calib_local1}
		\tilde{\xi}^I|_I=\mathrm{n}_I\quad\text{ and }\quad(\nabla\tilde{\xi}^I)^\mathsf{T}\mathrm{n}_I|_I=0\quad\text{ on }\mathscr{B}_r(p)\cap I.
		\end{align}
		Moreover, it holds $|\tilde{\xi}^I|^2=1-\frac{1}{4}(\beta^I s^I)^4$ 
		\text{ on }$\mathscr{B}_r(p)\cap\overline{\textup{im}(X_I)}$ as well as
		\begin{alignat}{2}\label{eq_calib_local2}
		|\partial_t\tilde{\xi}^I + (\tilde{B}^I\cdot\nabla)\tilde{\xi}^I + 
		(\nabla \tilde{B}^I)^\mathsf{T}\tilde{\xi}^I|&\leq C|s^I|&\quad&\text{ on }\mathscr{B}_r(p)\cap\overline{\textup{im}(X_I)},\\\label{eq_calib_local3}
		|(\partial_t+\tilde{B}^I\cdot\nabla)|\tilde{\xi}^I|^2|&\leq C|s^I|^4 &\quad&\text{ on }\mathscr{B}_r(p)\cap\overline{\textup{im}(X_I)},\\\label{eq_calib_local4}
		|\tilde{\xi}^I\cdot \tilde{B}^I + \nabla\cdot\tilde{\xi}^I|&\leq C|s^I|&\quad&\text{ on }\mathscr{B}_r(p)\cap\overline{\textup{im}(X_I)},\\\label{eq_calib_local5}
		|\tilde{\xi}^I\cdot(\tilde{\xi}^I\cdot\nabla)\tilde{B}^I|&\leq C|s^I|&\quad&\text{ on }\mathscr{B}_r(p)\cap\overline{\textup{im}(X_I)}.
		\end{alignat}
		Additionally, $|\tilde{\xi}^{\partial\Omega}|^2=1-\frac{1}{4}(\beta^{\partial\Omega} s^{\partial\Omega})^4$ \text{ on }$\mathscr{B}_r(p)\cap\overline{\textup{im}(X_{\partial\Omega})}$ and
		\begin{alignat}{2}\label{eq_calib_local6}
		\partial_t\tilde{\xi}^{\partial\Omega} 
		{+} (\tilde{B}^{\partial\Omega}\cdot\nabla)\tilde{\xi}^{\partial\Omega} {+} 
		(\nabla \tilde{B}^{\partial\Omega})^\mathsf{T}\tilde{\xi}^{\partial\Omega}&=0 &\quad&\text{ at }\mathscr{P},\\\label{eq_calib_local7}
		|(\partial_t{+}\tilde{B}^{\partial\Omega}\cdot\nabla)|\tilde{\xi}^{\partial\Omega}|^2
		|&\leq C|s^{\partial\Omega}|^4 
		&\quad&\text{ on }\mathscr{B}_r(p)\cap(\overline{\textup{im}(X_{\partial\Omega})}{\times}[0,T]),
		\\\label{eq_calib_local8}
		\tilde{\xi}^{\partial\Omega}\cdot \tilde{B}^{\partial\Omega} + \nabla\cdot\tilde{\xi}^{\partial\Omega}&=0&\quad&\text{ at }\mathscr{P},\\\label{eq_calib_local9}
		\tilde{\xi}^{\partial\Omega}\cdot(\tilde{\xi}^{\partial\Omega}\cdot\nabla)\tilde{B}^{\partial\Omega}&=0&\quad&\text{ at }\mathscr{P}.
		\end{alignat}
		\item[4.] \textup{Boundary Conditions:} It holds 
		$\tilde{\xi}^{\partial\Omega}\cdot \mathrm{n}_{\partial\Omega}=\cos\alpha$ 
		as well as $\tilde{B}^{\partial\Omega}\cdot \mathrm{n}_{\partial\Omega}=0$ 
		on $\mathscr{B}_r(p)\cap(\partial\Omega{\times}[0,T])$. 
		\item[5.] \textup{Additional Constraints:} $\nabla\tilde{B}^I$ is anti-symmetric on $\mathscr{B}_r(p)\cap I$ and $\nabla\tilde{B}^{\partial\Omega}$ is anti-symmetric on $\mathscr{B}_r(p)\cap
		(\partial\Omega{\times}[0,T])$.
	\end{enumerate}
\end{theorem}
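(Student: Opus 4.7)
The proof will proceed by checking each of the five items in turn, exploiting the fact that the coefficient functions in Definition~\ref{def_calib_contact_local} were reverse-engineered precisely to force the desired identities on~$I$ (resp.\ on~$\partial\Omega{\times}[0,T]$, resp.\ at the contact point) at zeroth and first order, after which the relevant quantities may be Taylor-expanded in the normal variable. Regularity (item~1) will follow immediately from inspection of the ansatz~\eqref{eq_xi_I}--\eqref{eq_B_dO}: the signed distance functions $s^I$, $s^{\partial\Omega}$, the projections $P^I$, $P^{\partial\Omega}$ and the extended normals/tangents enjoy the regularity recorded in Remarks~\ref{th_strongsol_notation_tub} and~\ref{th_strongsol_notation_bdry}, while the scalar coefficients $\hat\beta^\bullet,\hat\gamma^\bullet,\hat\rho^\bullet$ are built from these (and from $H^I$, $H^{\partial\Omega}$) by at most one spatial derivative along the respective manifolds, so the asserted space-time regularity follows by composition.

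The compatibility statements in item~2 are precisely the content of the discussion leading to Definition~\ref{def_calib_contact_local}. Equality of $\tilde{\xi}^I$ and $\tilde{\xi}^{\partial\Omega}$ at $p(t)$ uses $R_\alpha \mathrm{n}_{\partial\Omega}|_{p(t)}=\mathrm{n}_I|_{(p(t),t)}$ from~\eqref{eq:choiceRotation}, and equality of $\tilde{B}^I$, $\tilde{B}^{\partial\Omega}$ at $p(t)$ with $\tfrac{\mathrm{d}}{\mathrm{d}t}p(t)$ reduces via~\eqref{eq_comp_cond1} to the choices~\eqref{eq_gamma_I}--\eqref{eq_gamma_dO}. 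For the first-order compatibility, the identity $\nabla\tilde{\xi}^I|_{(p(t),t)}=\nabla\tilde{\xi}^{\partial\Omega}|_{(p(t),t)}$ is the defining equation~\eqref{eq_beta_I}--\eqref{eq_beta_dO} for $\hat\beta^I,\hat\beta^{\partial\Omega}$ in view of Proposition~\ref{th_calib_contact_abl}, while $\nabla\tilde{B}^I|_{(p(t),t)}=\nabla\tilde{B}^{\partial\Omega}|_{(p(t),t)}$ reduces by~\eqref{eq_nablaB_I} and~\eqref{eq:skewSymmetryGradientTildeVelBoundary} to $\rho^I|_{(p(t),t)}=\rho^{\partial\Omega}|_{(p(t),t)}$, which upon substituting~\eqref{eq_gamma_I}--\eqref{eq_gamma_dO} and~\eqref{eq_rho_I},~\eqref{eq_rho_dO} is exactly the third-order compatibility~\eqref{eq_comp_cond2} ensured by Remark~\ref{th_strongsol_comp_cond}. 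The boundary conditions in item~4 follow at once by evaluating~\eqref{eq_xi_dO} and~\eqref{eq_B_dO} at $s^{\partial\Omega}=0$ together with $R_\alpha \mathrm{n}_{\partial\Omega}\cdot\mathrm{n}_{\partial\Omega}=\cos\alpha$ and $\tau_{\partial\Omega}\cdot\mathrm{n}_{\partial\Omega}=0$, and the anti-symmetry assertions in item~5 have already been recorded in~\eqref{eq_nablaB_I} and~\eqref{eq:skewSymmetryGradientTildeVelBoundary}.

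The heart of the argument is item~3. The length identities $|\tilde{\xi}^I|^2 = 1 - \tfrac14(\beta^I s^I)^4$ and the analogue for $\tilde{\xi}^{\partial\Omega}$ are a direct algebraic consequence of squaring~\eqref{eq_xi_I}--\eqref{eq_xi_dO} and using $|\mathrm{n}_\bullet|=|\tau_\bullet|=1$ with $\mathrm{n}_\bullet\perp\tau_\bullet$. The estimates~\eqref{eq_calib_local2},~\eqref{eq_calib_local4},~\eqref{eq_calib_local5} on the interface side will be obtained by verifying that each left-hand side vanishes identically on $\mathscr{B}_r(p)\cap I$: for~\eqref{eq_calib_local2} this combines the formulas for $\partial_t\tilde{\xi}^I|_I$ and $\nabla\tilde{\xi}^I|_I$ from Proposition~\ref{th_calib_contact_abl} with the MCF identity $\partial_t\mathrm{n}_I = -\nabla H^I$ from~\eqref{eq_signdist}, while~\eqref{eq_calib_local4} reduces on $I$ to $H^I - H^I = 0$ and~\eqref{eq_calib_local5} to the anti-symmetry~\eqref{eq_nablaB_I} contracted twice with~$\mathrm{n}_I$. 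Since each of these expressions is smooth across $I$ by item~1, a first-order Taylor expansion in $s^I$ then delivers the bound $C|s^I|$. For~\eqref{eq_calib_local3}, one uses that $s^I$ is transported as a distance function under the normal component of $\tilde{B}^I$, so that $(\partial_t + \tilde{B}^I\cdot\nabla)s^I$ vanishes to sufficiently high order; differentiating the explicit formula for $|\tilde{\xi}^I|^2$ and using that $\beta^I = \hat\beta^I\circ(P^I,\mathrm{pr}_t)$ is transported tangentially then produces the $|s^I|^4$ bound. The boundary estimates~\eqref{eq_calib_local6}--\eqref{eq_calib_local9} are easier: they are required only pointwise at $\mathscr{P}$, and reduce via Proposition~\ref{th_calib_contact_abl} and the identities~\eqref{eq_gamma_I}--\eqref{eq_rho_dO} to the already-established compatibility at the contact point, with~\eqref{eq_calib_local7} following from $B^{\partial\Omega}\cdot\mathrm{n}_{\partial\Omega}=0$ and the time-independence of $s^{\partial\Omega}$.

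The main bookkeeping obstacle I anticipate is~\eqref{eq_calib_local3}: one must show that the two ``quartic'' errors arising from differentiating $\tfrac14(\beta^I s^I)^4$ in time and along~$\tilde{B}^I$ do not accumulate a lower-order term, which requires using that the transport derivative of $s^I$ vanishes identically on $I$ (not merely to leading order) and that $\hat\beta^I$ depends only on $(P^I,\mathrm{pr}_t)$, so its material derivative remains $O(1)$. Everything else reduces to the algebraic identities already collected in the derivation preceding Definition~\ref{def_calib_contact_local}.
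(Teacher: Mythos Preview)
Your overall strategy matches the paper's: verify each item by evaluating on $I$ (resp.\ $\partial\Omega{\times}[0,T]$, resp.\ at $\mathscr{P}$) using Proposition~\ref{th_calib_contact_abl} and then Taylor-expand in the normal coordinate. Items~1, 3, 4, 5 are handled essentially as in the paper, and your remark that $(\partial_t+\tilde B^I\cdot\nabla)s^I=\partial_ts^I+H^I=0$ holds \emph{identically} on $\mathscr{B}_r(p)\cap\overline{\mathrm{im}(X_I)}$ (not merely to leading order) is exactly what the paper uses for~\eqref{eq_calib_local3} and~\eqref{eq_calib_local7}, so your concern in the final paragraph is unfounded.

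There is, however, a genuine gap in your treatment of item~2. The statement~\eqref{eq:compContactPointXi} demands $(\partial_t,\nabla)\tilde\xi^I|_{(p(t),t)}=(\partial_t,\nabla)\tilde\xi^{\partial\Omega}|_{(p(t),t)}$, and you only address the $\nabla$-part. The $\partial_t$-part is \emph{not} one of the defining equations in Definition~\ref{def_calib_contact_local}; it is an additional constraint that must be verified separately. Since $\partial_t\tilde\xi^{\partial\Omega}|_{\partial\Omega\times[0,T]}=0$ by Proposition~\ref{th_calib_contact_abl}, you need $\partial_t\tilde\xi^I|_{(p(t),t)}=0$, i.e.\ $(-\tau_I\cdot\nabla H^I-\beta^I H^I)|_{(p(t),t)}=0$. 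Substituting~\eqref{eq_beta_I}--\eqref{eq_beta_dO} for $\beta^I$ and multiplying through by $\mathrm{n}_{\partial\Omega}\cdot\tau_I$, this identity is \emph{not} automatic: it reduces (after the algebraic simplification $(\tau_{\partial\Omega}\cdot\mathrm{n}_I)(\mathrm{n}_{\partial\Omega}\cdot\tau_I)-(\mathrm{n}_{\partial\Omega}\cdot\mathrm{n}_I)(\tau_{\partial\Omega}\cdot\tau_I)=-1$) to the third-order compatibility condition~\eqref{eq_comp_cond2}. So~\eqref{eq_comp_cond2} is invoked \emph{twice} in the proof---once for $\rho^I=\rho^{\partial\Omega}$ at the contact point (which you do mention) and once again for the time-derivative matching of $\tilde\xi$ (which you miss). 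Without this step, your deduction of~\eqref{eq_calib_local6} ``from the already-established compatibility'' is circular, since that argument uses the full first-order compatibility including $\partial_t$.
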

Note that the anti-symmetry condition 5.~for $\nabla\tilde{B}^{\partial\Omega}$ is only used to derive the corresponding condition in Theorem \ref{th_localConstructionsContactPoint}. The latter will be used to obtain the additional conditions \eqref{eq:GradVelXiXi}--\eqref{eq:skewSymmetryGradVelInt}. If these are not needed, then it would suffice to require \eqref{eq_gamma1_dO}--\eqref{eq_rho_dO} at the contact point. Hence, $\hat{\rho}^{\partial\Omega}$ could be chosen space-independent.
\begin{proof}
	\textit{Ad 1.} The regularity properties can be derived by considering the equations determining the functions in Definition \ref{def_calib_contact_local}. For the coefficient, $\hat{\gamma}^I$ this can be done as 
	in~\cite[Proof of Lemma~27]{Fischer2020a}.\\
	\textit{Ad 2.} These compatibility assertions at the contact point follow from the choices in Definition~\ref{def_calib_contact_local} and the derivations from above between Proposition~\ref{th_calib_contact_abl}
	and Definition~\ref{def_calib_contact_local}, 
	except for the time derivative. Concerning the latter, observe that due to Proposition \ref{th_calib_contact_abl} we have to show $\partial_t\tilde{\xi}^I|_{(p(t),t)}=0$ for all $t\in[0,T]$, which 
	by the first identity of Proposition~\ref{th_calib_contact_abl} and $(\mathrm{n}_I\cdot\nabla)H^{I}=0$
	is equivalent to
	\begin{align}\label{eq_th_calib_contact_proof1}
	(-\tau_I\cdot\nabla H^I-\beta^IH^I)|_{(p(t),t)}=0\quad\text{ for }t\in[0,T].
	\end{align}
	We then use \eqref{eq_beta_I}--\eqref{eq_beta_dO},
	again $(\mathrm{n}_I\cdot\nabla)H^{I}=0$,
	and multiply by $\mathrm{n}_{\partial\Omega}\cdot\tau_I|_{(p(t),t)}$ to rewrite the 
	left side of~\eqref{eq_th_calib_contact_proof1} as
	\[
	-\mathrm{n}_{\partial\Omega}\cdot\nabla H^I 
	+ H^{\partial\Omega}H^I\left((\tau_{\partial\Omega}\cdot \mathrm{n}_I)(\mathrm{n}_{\partial\Omega}\cdot\tau_I)-(\mathrm{n}_{\partial\Omega}\cdot \mathrm{n}_I)(\tau_{\partial\Omega}\cdot\tau_I)\right)
	+ (H^I)^2 (\mathrm{n}_{\partial\Omega}\cdot \mathrm{n}_I),
	\]
	where all terms are evaluated at $(p(t),t)$ for arbitrary $t\in[0,T]$. However, due to
	\begin{align*}
	&(\tau_{\partial\Omega}\cdot \mathrm{n}_I)(\mathrm{n}_{\partial\Omega}\cdot\tau_I)-(\mathrm{n}_{\partial\Omega}\cdot \mathrm{n}_I)(\tau_{\partial\Omega}\cdot\tau_I)|_{(p(t),t)}
	=-|\mathrm{n}_{\partial\Omega}\cdot\tau_I|^2-|\mathrm{n}_{\partial\Omega}\cdot \mathrm{n}_I|^2|_{(p(t),t)}\\
	&=-|(\mathrm{n}_{\partial\Omega}\cdot\tau_I)\tau_I+(\mathrm{n}_{\partial\Omega}\cdot \mathrm{n}_I)\mathrm{n}_I|^2|_{(p(t),t)}=-|\mathrm{n}_{\partial\Omega}|^2|_{(p(t),t)}=-1
	\end{align*}
	and $\mathrm{n}_{\partial\Omega}\cdot \mathrm{n}_I|_{(p(t),t)}=\tau_{\partial\Omega}\cdot\tau_I|_{(p(t),t)}$ for $t\in[0,T]$, it turns out that the validity of \eqref{eq_th_calib_contact_proof1} is in fact equivalent to the compatibility condition \eqref{eq_comp_cond2}. The latter holds because of Remark \ref{th_strongsol_comp_cond}.\\
	\textit{Ad 3.} Equation \eqref{eq_calib_local1} is directly clear from the definition \eqref{eq_xi_I} of $\tilde{\xi}^I$ and the formula for $\nabla\tilde{\xi}^I$ in Proposition \ref{th_calib_contact_abl}. Moreover, the identities for $|\tilde{\xi}^I|^2$ and $|\tilde{\xi}^{\partial\Omega}|^2$ follow directly from the definitions \eqref{eq_xi_I}--\eqref{eq_xi_dO}. The latter yield the estimates \eqref{eq_calib_local3} and \eqref{eq_calib_local7} by using the product and chain rule for the differential operators $\partial_t+\tilde{B}^I\cdot\nabla$ and $\partial_t+\tilde{B}^{\partial\Omega}\cdot\nabla$, respectively, as well as
	\begin{alignat*}{2}
	(\partial_t+\tilde{B}^I\cdot\nabla)s^I=\partial_t s^I+H^I&=0&\quad&\text{ on }\mathscr{B}_r(p)\cap\overline{\textup{im}(X_I)},\\ (\partial_t+\tilde{B}^{\partial\Omega}\cdot\nabla)s^{\partial\Omega}=\partial_ts^{\partial\Omega}&=0&\quad&\text{ on }\mathscr{B}_r(p)\cap
	(\overline{\textup{im}(X_{\partial\Omega})}{\times}[0,T]).
	\end{alignat*}
	where we used Remarks~\ref{th_strongsol_notation_tub} and~\ref{th_strongsol_notation_bdry}
	as well as $(\tilde{B}^{\partial\Omega}\cdot\nabla)s^{\partial\Omega}
	= \tilde{B}^{\partial\Omega}\cdot\mathrm{n}_{\partial\Omega}=0$
	along $\partial\Omega{\times}[0,T]$ due to~\eqref{eq_B_dO}.
	
	Next, we observe 
	\[
	(\tilde{\xi}^I\cdot \tilde{B}^I + \nabla\cdot\tilde{\xi}^I)|_I=(H^I+\textup{Tr}\nabla\tilde{\xi}^I)|_I=0\quad\text{ on }\mathscr{B}_r(p)\cap I
	\] 
	because of $\textup{Tr}(\vec{a}\otimes\vec{b})=\vec{a}\cdot\vec{b}$ for vectors $\vec{a},\vec{b}$ 
	in $\R^d$. Then \eqref{eq_calib_local4} follows from a Taylor
	expansion argument, and \eqref{eq_calib_local8} holds due to the compatibility conditions~\eqref{eq:compContactPointXi}--\eqref{eq:compContactPointVel}. Furthermore, by~\eqref{eq_nablaB_I}
	and~\eqref{eq_calib_local1}
	\[
	\tilde{\xi}^I\cdot (\tilde{\xi}^I\cdot\nabla)\tilde{B}^I|_I=\tilde{\xi}^I\cdot (\nabla\tilde{B}^I)\tilde{\xi}^I|_I=\rho^I \mathrm{n}_I\cdot J\mathrm{n}_I|_I=0\quad\text{ on }\mathscr{B}_r(p)\cap I,
	\]
	Hence, \eqref{eq_calib_local5} and \eqref{eq_calib_local9} follow as above.
	
	Finally, we compute the left hand side of \eqref{eq_calib_local2} on $I$. By Proposition \ref{th_calib_contact_abl} and \eqref{eq_nablaB_I}
	\begin{align*}
	&[\partial_t\tilde{\xi}^I + (\tilde{B}^I\cdot\nabla)\tilde{\xi}^I + 
	(\nabla \tilde{B}^I)^\mathsf{T}\tilde{\xi}^I]|_I\\
	&=-\nabla H^I|_I - \beta^I H^I\tau_I + \tau_I \otimes [-H^I\tau_I+\beta^I \mathrm{n}_I]|_I (H^I\mathrm{n}_I + \gamma^I\tau_I)|_I+\rho^I J^\mathsf{T} \mathrm{n}_I|_I\\
	&=-\nabla H^I|_I-(\gamma^I H^I+\rho^I)\tau_I|_I=0,
	\end{align*}
	where the last equation follows from the form \eqref{eq_rho_I} of $\rho^I$
	and $(\mathrm{n}_I\cdot\nabla)H^{I}=0$. Therefore \eqref{eq_calib_local2} is valid because of
	a Taylor expansion argument. Finally, \eqref{eq_calib_local6} holds due to the compatibility
	conditions~\eqref{eq:compContactPointXi}--\eqref{eq:compContactPointVel}.\\
	\textit{Ad 4.} The boundary conditions are evident from the definitions \eqref{eq_xi_dO} and \eqref{eq_B_dO} for the vector fields $\tilde{\xi}^{\partial\Omega}$ and $\tilde{B}^{\partial\Omega}$, respectively.\\
	\textit{Ad 5.} This follows directly from the choices of Definition~\ref{def_calib_contact_local}.
	Indeed, recall that these imply~\eqref{eq_nablaB_I} and~\eqref{eq:skewSymmetryGradientTildeVelBoundary}.
\end{proof}

\subsubsection{Interpolation of local candidates for $(\xi, B)$ at contact points}
\label{sec_calib_contact_intpol}
In this section we piece together the local candidates from the last Section \ref{sec_calib_contact_local} in order to construct the ones in Theorem \ref{th_localConstructionsContactPoint}. Therefore, we use the wedge decomposition from Lemma \ref{th_local_radius_contact} and suitable interpolation functions on the interpolation wedges $W_\pm$:

\begin{lemma}[Interpolation Functions]\label{th_calib_intpol}
	Let $\mathscr{A}$ be a strong solution for mean curvature flow with contact angle $\alpha$ on the time interval $[0,T]$ as in Definition \ref{def_strongSolution}. Moreover, let $p\in\partial I(0)$, $p(t):=\Phi(p,t)$ for $t\in[0,T]$ and $\mathscr{P}=\bigcup_{t\in[0,T]} \{p(t)\} \times \{t\}$ be the corresponding evolving contact point. Finally, let $r=r_p$ be an admissible localization scale as in Lemma \ref{th_local_radius_contact} and recall the notation there.
	
	Then there exists a constant $C>0$ and interpolation functions 
	\[
	\lambda_\pm:\bigcup_{t\in [0,T]} \left(B_r(p(t))\cap\overline{W}_\pm(t)\setminus\{p(t)\}\right)\times\{t\}\rightarrow [0,1]
	\]
	of the class $C_t^1C_x^2$ such that:
	\begin{enumerate}[leftmargin=0.7cm]
		\item[1.] For all $t\in[0,T]$ it holds
		\begin{align}\label{eq_calib_intpol0}
		\lambda_\pm(.,t)&=0\quad\text{ on }(\partial W_\pm(t)\cap\partial W_{\partial\Omega}(t))\setminus\{p(t)\},\\
		\lambda_\pm(.,t)&=1\quad\text{ on }(\partial W_\pm(t)\cap\partial W_{I}(t))\setminus\{p(t)\}.
		\label{eq_calib_intpol1}
		\end{align}
		\item[2.] There is controlled blow-up of the derivatives when approaching the contact point. More precisely for all $t\in[0,T]$ we have in $B_r(p(t))\cap\overline{W}_\pm(t)\setminus\{p(t)\}$:
		\begin{align}\label{eq_calib_intpol_blowup1}
		|(\partial_t,\nabla)\lambda_\pm(.,t)| & \leq C \dist(\cdot,p(t))^{-1},\\
		|\nabla^2\lambda_\pm(.,t)| & \leq C \dist(\cdot,p(t))^{-2}.\label{eq_calib_intpol_blowup2}
		\end{align}
		Moreover, on the wedge lines these derivatives vanish: for all $t\in[0,T]$ it holds
		\begin{align}
		\label{eq_calib_intpol2}
		(\partial_t,\nabla,\nabla^2)\lambda_\pm=0\quad\text{ on } B_r(p(t))\cap\partial W_\pm(t)\setminus\{p(t)\}.
		\end{align}
		\item[3.] The advective derivative with respect to $\frac{\mathrm{d}}{\mathrm{d}t}p$ 
		stays bounded, i.e., for all $t\in[0,T]$:
		\begin{align}\label{eq_calib_intpol_advect}
		\left|\partial_t\lambda_\pm(.,t)+\left(\frac{\mathrm{d}}{\mathrm{d}t}p(t)\cdot\nabla\right)\lambda_\pm(.,t)\right|\leq C\quad\text{ in }B_r(p(t))\cap\overline{W}_\pm(t)\setminus\{p(t)\}.
		\end{align}
	\end{enumerate}
\end{lemma}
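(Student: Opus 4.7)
The plan is to build $\lambda_\pm$ from an angular coordinate around the \emph{moving} contact point. Let $\theta(x,t) \in (-\pi,\pi]$ denote the polar angle of $x - p(t)$ with respect to a fixed reference axis, defined on the punctured neighbourhood of $p(t)$. Lemma~\ref{th_local_radius_contact} provides two $C^1_t$ unit vector fields delimiting $W_\pm(t)$; let $\theta_0^\pm(t)$ (resp.\ $\theta_1^\pm(t)$) be the polar angle of the ray on $\partial W_\pm(t) \cap \partial W_{\partial\Omega}(t)$ (resp.\ $\partial W_\pm(t) \cap \partial W_I(t)$), and set $\delta^\pm(t) := \theta_1^\pm(t) - \theta_0^\pm(t)$, which is $C^1_t$ and bounded away from zero by the wedge property. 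Define the affine angular fraction $\mu_\pm(x,t) := (\theta(x,t) - \theta_0^\pm(t))/\delta^\pm(t)$ on $B_r(p(t)) \cap \overline{W}_\pm(t) \setminus \{p(t)\}$ and put $\lambda_\pm := \eta \circ \mu_\pm$, where $\eta \in C^\infty([0,1];[0,1])$ is nondecreasing with $\eta(0)=0$, $\eta(1)=1$, and $\eta^{(k)}(0) = \eta^{(k)}(1) = 0$ for every $k \geq 1$ (such a profile can be built from standard $e^{-1/s}$ bumps).

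The trace conditions \eqref{eq_calib_intpol0}--\eqref{eq_calib_intpol1} are immediate from $\mu_\pm|_{\theta = \theta_0^\pm} = 0$ and $\mu_\pm|_{\theta = \theta_1^\pm} = 1$. The blow-up bounds \eqref{eq_calib_intpol_blowup1}--\eqref{eq_calib_intpol_blowup2} follow from the standard polar scalings $|\nabla^j \theta(\cdot,t)| \lesssim \dist(\cdot,p(t))^{-j}$ for $j=1,2$ and $|\partial_t \theta(\cdot,t)| \lesssim \dist(\cdot,p(t))^{-1}$, combined with the chain rule
\begin{equation*}
\nabla \lambda_\pm = \eta'(\mu_\pm)\, \nabla \mu_\pm, \qquad
\nabla^2 \lambda_\pm = \eta''(\mu_\pm)\, \nabla \mu_\pm \otimes \nabla \mu_\pm + \eta'(\mu_\pm)\, \nabla^2 \mu_\pm,
\end{equation*}
and the analogous formula for $\partial_t \lambda_\pm$. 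On each wedge line $\mu_\pm$ takes the value $0$ or $1$, so every factor $\eta^{(k)}(\mu_\pm)$ with $k \geq 1$ vanishes there; in combination with the three displayed identities this yields \eqref{eq_calib_intpol2}.

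The key step --- and essentially the only one that drives the specific choice of the angular variable centered at $p(t)$ --- is the advective bound \eqref{eq_calib_intpol_advect}. Since $\theta(x,t)$ depends on $t$ only through the translation $x \mapsto x - p(t)$, the chain rule yields the exact pointwise cancellation
\begin{equation*}
\partial_t \theta(x,t) + \dot{p}(t) \cdot \nabla_x \theta(x,t) = 0
\qquad \text{in } B_r(p(t)) \setminus \{p(t)\}.
\end{equation*}
Consequently
\begin{equation*}
(\partial_t + \dot{p} \cdot \nabla)\mu_\pm
= -\frac{\dot\theta_0^\pm(t)}{\delta^\pm(t)} - \mu_\pm\, \frac{\dot\delta^\pm(t)}{\delta^\pm(t)},
\end{equation*}
which is uniformly bounded on $[0,T]$ (no singular $\dist(\cdot,p(t))^{-1}$ factor survives), and multiplying by $\eta'(\mu_\pm)$ gives \eqref{eq_calib_intpol_advect}. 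This cancellation --- rather than the scaling estimates, which are generic --- is the heart of the argument and the main obstacle to circumvent: it is what rules out alternative natural ansätze (e.g.\ interpolations based on a ratio of signed distances to $I$ and $\partial\Omega$) and singles out the angular variable centered at the moving contact point as the correct building block.
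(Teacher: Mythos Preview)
Your proposal is correct and follows essentially the same route as the paper, which simply defers to the construction in \cite[Proof of Lemma~32]{Fischer2020a}: there too the interpolation is built by composing a smooth profile~$\eta$ (with all derivatives vanishing at the endpoints) with the affine angular fraction of $x-p(t)$ relative to the wedge rays, and the advective bound comes from exactly the cancellation $\partial_t\theta+\dot p\cdot\nabla\theta=0$ you isolate. Your write-up is in fact more explicit than what the paper provides; the only slight simplification available from Lemma~\ref{th_local_radius_contact} is that the opening angles $\delta^\pm$ are constant in time, but your treatment covers this case.
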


\begin{proof}
	One can define these interpolation functions in an explicit and purely geometric way, 
	in fact completely analogously to~\cite[Proof of Lemma~32]{Fischer2020a}.
\end{proof}

\begin{proof}[Proof of Theorem \ref{th_localConstructionsContactPoint}] 
	The procedure is similar to~\cite[Proof of Proposition~26]{Fischer2020a}. In the following, we fix one of the two contact points $p \in \{p_\pm\}$ for convenience. Let $\hat{r}\in(0,r)$
	where~$r=r_p$ denotes the localization scale from Lemma~\ref{th_local_radius_contact}.

	\textit{Step 1: Definition of interpolations.} We define 
	$\hat{\xi}\colon \mathscr{B}_{\hat{r}}(p)\rightarrow\R^2$ by
	\begin{align*}
	\hat{\xi} := \begin{cases}
	\tilde{\xi}^I & \text{ on }\overline{W_I} \cap \mathscr{B}_{\hat{r}}(p),\\
	\tilde{\xi}^{\partial\Omega}
	& \text{ on } (\overline{W_{\partial\Omega}}\cup \overline{W_0})
	\cap \mathscr{B}_{\hat{r}}(p),\\
	\lambda_\pm\tilde{\xi}^I + (1{-}\lambda_\pm)\tilde{\xi}^{\partial\Omega}
	\quad&\text{ on }W_\pm \cap \mathscr{B}_{\hat{r}}(p),
	\end{cases}
	\end{align*}
	where $\tilde{\xi}^I, \tilde{\xi}^{\partial\Omega}$ are from Theorem~\ref{th_calib_contact} and $\lambda_\pm$ are from Lemma~\ref{th_calib_intpol}. In the analogous way, we define $B\colon\mathscr{B}_{\hat{r}}(p)\rightarrow\R^2$ with the $\tilde{B}^I, \tilde{B}^{\partial\Omega}$ from Theorem~\ref{th_calib_contact}. It will turn out to be enough to prove the desired properties for $\hat{\xi},B$ first and then to normalize $\hat{\xi}$ in the end.
This last step gives rise to the smaller domain of definition $\mathscr{B}_{\hat r}(p)$.
	
   \textit{Step 2: 
	Regularity~\emph{\eqref{eq:qualRegularityXiContactPoint}--\eqref{eq:qualRegularityVelContactPoint}} 
	and~\eqref{eq:quantRegularityContactPoint} for $\hat{\xi}, B$.} 
	In terms of the required qualitative regularity, in the following we even show that
	$\hat{\xi} \in C^1(\overline{\mathscr{B}_{\hat{r}}(p)}) \cap C_tC^2_x(\mathscr{B}_{\hat{r}}(p)\setminus\mathscr{P})$
	and $B \in C_tC^1_x(\overline{\mathscr{B}_{\hat{r}}(p)}) \cap C_tC^2_x(\mathscr{B}_{\hat{r}}(p)\setminus\mathscr{P})$.
	First, $\hat{\xi}, B$ are well-defined and have the 
	asserted regularity	on $W_I$ and on $W_{\partial\Omega}\cup \overline{W_0}\setminus\mathscr{P}$ 
	due to Theorem~\ref{th_calib_contact}. Within the interpolation wedges $W_\pm$, 
	we also have this qualitative regularity by Theorem~\ref{th_calib_contact} and Lemma~\ref{th_calib_intpol}. 
	Next, note that thanks to~\eqref{eq_calib_intpol0}, \eqref{eq_calib_intpol1}
	and~\eqref{eq_calib_intpol2} no jumps occur for 
	the vector fields $\hat{\xi},B$ and their required derivatives across the wedge lines 
	$\overline{B_{\hat{r}}(p(t))} \cap \partial W_{\pm}(t) \setminus \{p(t)\}$, $t\in[0,T]$,
	which proves $\hat{\xi},B \in C_tC^2_x(\mathscr{B}_{\hat{r}}(p)\setminus\mathscr{P})$.
	
	For a proof of $\hat{\xi} \in C^1(\overline{\mathscr{B}_{\hat{r}}(p)})$,
	$B \in C_tC^1_x(\overline{\mathscr{B}_{\hat{r}}(p)})$, and the
	quantitative regularity estimate~\eqref{eq:quantRegularityContactPoint},
	we need to study the behaviour when approaching the contact point.
	To this end, one employs the controlled blow-up 
	rates~\eqref{eq_calib_intpol_blowup1}--\eqref{eq_calib_intpol_blowup2} 
	of $\lambda_\pm$ from Lemma~\ref{th_calib_intpol} as well as the compatibility up to 
	first order for $\tilde{\xi}^I,\tilde{\xi}^{\partial\Omega}$ and 
	$\tilde{B}^I,\tilde{B}^{\partial\Omega}$ from Theorem~\ref{th_calib_contact};
	the latter in fact in form of the Lipschitz estimates
	\begin{align}
	\label{eq:secondOrderCompEstimate}
	|\tilde\xi^{I} {-} \tilde\xi^{\partial\Omega}| + 
	|\tilde B^{I} {-} \tilde B^{\partial\Omega}| &\leq C\dist^2_x(\cdot,\mathscr{P})
	&&\text{on } \overline{W_\pm}, 
	\\ \label{eq:firstOrderCompGradientEstimate}
	|(\partial_t,\nabla)\tilde\xi^{I} {-} (\partial_t,\nabla)\tilde\xi^{\partial\Omega}| +
	|\nabla\tilde B^{I} {-} \nabla\tilde B^{\partial\Omega}| &\leq C\dist_x(\cdot,\mathscr{P})
	&&\text{on } \overline{W_\pm}.
	\end{align}
	For example, $\nabla\hat{\xi}$ is continuous on $\overline{\mathscr{B}_{\hat{r}}(p)}$ 
	because on one side $\nabla\tilde{\xi}^I|_{\mathscr{P}}=\nabla\tilde{\xi}^{\partial\Omega}|_{\mathscr{P}}$
	due to~\eqref{eq:compContactPointXi}, so that on the other side by~\eqref{eq:secondOrderCompEstimate},
	\eqref{eq:firstOrderCompGradientEstimate} and~\eqref{eq_calib_intpol_blowup1}
   \begin{align}
	\nonumber
	&\big|\big(\nabla(\lambda_{\pm}\tilde{\xi}^{I}) {+} \nabla((1{-}\lambda_\pm)\tilde\xi^{\partial\Omega})\big)
	- \nabla\tilde{\xi}^I|_{\mathscr{P}}\big|
	\\ \nonumber
	&\leq \lambda_{\pm}\big|\nabla\tilde{\xi}^{I} {-} \nabla\tilde{\xi}^I|_{\mathscr{P}}\big|
	+ (1{-}\lambda_{\pm})\big|\nabla\tilde{\xi}^{\partial\Omega} 
	{-} \nabla\tilde{\xi}^{\partial\Omega}|_{\mathscr{P}}\big|
	+ |\nabla\lambda_{\pm}|\big|\tilde{\xi}^{I} {-} \tilde{\xi}^{\partial\Omega}\big|
	\leq C\dist_x(\cdot,\mathscr{P}).
   \end{align}
  Continuing in this fashion for the remaining
	first order derivatives $\partial_t\hat\xi$ and $\nabla B$,
	and employing an even simpler argument for~$\hat\xi$ and $B$ itself,
	we indeed obtain $\hat{\xi} \in C^1(\overline{\mathscr{B}_{\hat{r}}(p)})$ and
	$B \in C_tC^1_x(\overline{\mathscr{B}_{\hat{r}}(p)})$. 
	A similar argument based on the same 
	ingredients~\eqref{eq_calib_intpol_blowup1}--\eqref{eq_calib_intpol_blowup2} 
	and~\eqref{eq:secondOrderCompEstimate}--\eqref{eq:firstOrderCompGradientEstimate}
	also implies~\eqref{eq:quantRegularityContactPoint}.
   
   \textit{Step 3: Additional properties for $\hat{\xi},B$.} 
	Consider 2.-3.~and 5.~in Theorem~\ref{th_localConstructionsContactPoint} first. 
	The consistency in 2.~(except the $|.|$-constraint) is satisfied for 
	$\hat{\xi}$ on $\mathscr{B}_{\hat{r}}(p)\cap I$ because of its definition and 
	since this is true for $\hat{\xi}^I$ by Theorem~\ref{th_calib_contact}. 
	Moreover, the boundary conditions in Theorem~\ref{th_localConstructionsContactPoint}, 
	3., hold on $\mathscr{B}_{\hat{r}}(p)\cap(\partial\Omega{\times}[0,T])$ since these 
	are valid for $\tilde{\xi}^{\partial\Omega}$ and $\tilde{B}^{\partial\Omega}$ 
	due to Theorem~\ref{th_calib_contact}. Finally, $\nabla B$ is anti-symmetric 
	on $\mathscr{B}_{\hat{r}}(p)	\cap (I\cup(\partial\Omega{\times}[0,T]))$ because of 
	its definition and Theorem~\ref{th_calib_contact}.
   
  Next, we consider the required motion laws in 
	Theorem~\ref{th_localConstructionsContactPoint}, 4.,
	except for~\eqref{eq:localEvolLengthXiContactPoint}. The following estimates 
   \begin{align*}
   |\partial_t \hat{\xi} + (B\cdot\nabla)\hat{\xi}
   + (\nabla B)^\mathsf{T}\hat{\xi}| &\leq C\dist_x(\cdot,I),\\
   |B\cdot\hat{\xi} + \nabla\cdot\hat{\xi}|&\leq C\dist_x(\cdot,I),\\
   |\hat{\xi}\otimes\hat{\xi}:\nabla B|&\leq C\dist_x(\cdot,I)
   \end{align*}
   in $\mathscr{B}_{\hat{r}}(p) \cap (\Omega {\times} [0,T])$ can be shown in a similar manner as 
	in~\cite[Proof of Proposition~26, Steps~3 and~4]{Fischer2020a}, 
	where admittedly the analogue of the third estimate is not proven. 
	The idea, however, is the same for all 
	three estimates: away from the interpolation wedges,
	i.e., in $W_I$ and $W_{\partial\Omega}$, one can simply 
	use the corresponding estimates obtained from Theorem~\ref{th_calib_contact} 
	(with an additional Taylor expansion argument throughout $W_{\partial\Omega}$). 
	On the interpolation wedges, one 
	uses the definition of~$\hat{\xi}$ and~$B$ together with the product rule, 
	the corresponding estimates in Theorem~\ref{th_calib_contact},
	the Lipschitz estimates~\eqref{eq:secondOrderCompEstimate}--\eqref{eq:firstOrderCompGradientEstimate},
	and the controlled blow-up rates for the~$\lambda_{\pm}$ from Lemma~\ref{th_calib_intpol}. 
	For the first estimate, also the control of the advective derivative of $\lambda_\pm$ with respect 
	to $\frac{\mathrm{d}}{\mathrm{d}t}p$ in form of~\eqref{eq_calib_intpol_advect} enters.
   
   \textit{Step 4: Normalization of $\hat{\xi}$ and conclusion of the proof.} 
	In order to divide by $|\hat{\xi}|$ and to carry over the estimates and properties, 
	we have to control $|\hat{\xi}|$ and the first derivatives in a uniform way. 
	Indeed, one can prove
   \begin{align*}
   |1-|\hat{\xi}|^2| \leq C \dist^2_x(\cdot,I),\\
   |(\partial_t,\nabla)|\hat{\xi}|^2| \leq C \dist_x(\cdot,I)
   \end{align*}
   in $\mathscr{B}_{\hat{r}}(p)$ similar as in~\cite[Proof of Proposition~26, Step~5]{Fischer2020a}. 
	Again, away from the interpolation wedges this is a consequence of 
	Theorem~\ref{th_calib_contact} (even with the rates increased by $2$ in the orders). 
	On the interpolation wedges one uses the definition of $\hat{\xi}$, Theorem~\ref{th_calib_contact}, 
	and again the 
	Lipschitz estimates~\eqref{eq:secondOrderCompEstimate}--\eqref{eq:firstOrderCompGradientEstimate}
	as well as the controlled blow-up rates for the~$\lambda_{\pm}$ from Lemma~\ref{th_calib_intpol}. 
   
   Finally, we can choose $\hat{r}>0$ small such 
	that $\frac{1}{2}\leq|\hat{\xi}|^2\leq\frac{3}{2}$ 
	in $\mathscr{B}_{\hat{r}}(p)$. Then we define $\xi:=\hat{\xi}/|\hat{\xi}|$ 
	on~$\overline{\mathscr{B}_{\hat{r}}(p)}$. One can directly check that the 
	properties of $\hat{\xi}$ above carry over to $\xi$. Here one uses the chain 
	rule and the above estimates, cf.\ \cite[Proof of Proposition~26, Step~7]{Fischer2020a} 
	for a similar calculation. Additionally, it holds $|\xi|=1$ in $\mathscr{B}_{\hat{r}}(p)$ 
	by definition and this finally also yields \eqref{eq:localEvolLengthXiContactPoint}. 
	The proof of Theorem~\ref{th_localConstructionsContactPoint} is
	therefore completed.
\end{proof}

\subsection{Local building block for $(\xi, B)$ at the bulk interface}\label{sec_calib_interface}
We proceed with the less technical parts of the local constructions. In this subsection,
we take care of the local building blocks in the vicinity of the bulk interface.
Recalling the notation from Remark~\ref{th_strongsol_notation_tub}, we simply define
\begin{align}
\label{eq:choiceXiBulkInterface}
\xi^{I} := \mathrm{n}_I \quad \text{on } \overline{\mathrm{im}(X_I)} \cap (\overline{\Omega}{\times}[0,T]).
\end{align}
For a suitable definition of the velocity field~$B^{I}$ 
on~$\overline{\mathrm{im}(X_I)} \cap (\overline{\Omega}{\times}[0,T])$,
we first provide some auxiliary constructions. 
Denote by $\theta\colon\R \to [0,1]$ a standard smooth cutoff satisfying
$\theta \equiv 1$ on~$[-\frac{1}{2},\frac{1}{2}]$ and $\theta \equiv 0$ on~$\R \setminus (-1,1)$.
Furthermore, for each of the two contact points~$p_{\pm} \in \partial I(0)$
with associated trajectory~$p_{\pm}(t) \in \partial I(t)$,
denote by~$\hat r_{\pm}$ and~$B^{p_{\pm}}$ the associated localization scale
and local velocity field from Theorem~\ref{th_localConstructionsContactPoint}, respectively.
Define 
\begin{align}
\label{eq:locRadiusTangentialCorrection}
\hat r := \min\Big\{\hat r_{+},\hat r_{-},\frac{1}{3}\min_{t\in[0,T]}\dist(p_+(t),p_-(t))\Big\}
\end{align}
and 
\begin{align}
\widetilde\gamma^I \colon &\overline{\mathrm{im}(X_I)} \cap (\overline{\Omega}{\times}[0,T]) \to \R,
\\ \nonumber
&(x,t) \mapsto \theta\Big(\frac{\dist(x,p_+(t))}{\hat r}\Big) (\tau_I \cdot B^{p_+})(x,t)
+  \theta\Big(\frac{\dist(x,p_-(t))}{\hat r}\Big) (\tau_I \cdot B^{p_-})(x,t),
\\ \label{eq:choiceSkewSymmetricCorrection}
\widetilde\rho^{I} \colon &\overline{\mathrm{im}(X_I)} \cap (\overline{\Omega}{\times}[0,T]) \to \R,
\\ \nonumber
&(x,t) \mapsto -\big((\mathrm{n}_I\cdot\nabla)\widetilde\gamma^{I}\big)(x,t) - H^{I}(x,t)\widetilde\gamma^{I}(x,t)
-\big((\tau_I\cdot\nabla)H^{I}\big)(x,t),
\end{align}
as well as 
\begin{align}
\label{eq:choiceVelBulkInterface}
B^{I} := H^{I} \mathrm{n}_I + (\widetilde\gamma^{I} + \widetilde\rho^{I} s^{I}) \tau_I 
\quad \text{on } \overline{\mathrm{im}(X_I)} \cap (\overline{\Omega}{\times}[0,T]).
\end{align}
With these definitions in place, we then have the following result. 
%

\begin{lemma}
\label{lem:localConstructionsBulkInterface}
Let $\mathscr{A}$ be a strong solution for mean curvature flow 
with contact angle~$\alpha$ on the time interval $[0,T]$ as in 
Definition~\ref{def_strongSolution}, and let the notation 
from Remark~\ref{th_strongsol_notation_tub} be in place. 
Then, the local vector fields~$\xi^{I}$ and~$B^{I}$ defined by~\eqref{eq:choiceXiBulkInterface}
and~\eqref{eq:choiceVelBulkInterface} satisfy
\begin{align}
\label{eq:regBulkInterface}
\xi^{I} &\in (C^0_tC^4_x \cap C^1_tC^2_x)(\overline{\textup{im}(X_I)} 
\cap (\overline{\Omega}{\times}[0,T])),
\\
B^{I} &\in C_tC^1_x(\overline{\textup{im}(X_I)}
\cap (\overline{\Omega}{\times}[0,T])) \cap
C_tC^2_x(\textup{im}(X_I) \cap (\Omega {\times} [0,T])),
\end{align}
and there exists $C>0$ such that
\begin{align}
\label{eq:quantRegBulkInterface}
|\nabla^2 B^{I}| \leq C \quad\text{in } \textup{im}(X_I) \cap (\Omega {\times} [0,T]).
\end{align}
Moreover, it holds
\begin{align}
\label{eq:bulkInterface1}
\partial_t s^{I} + (B^{I}\cdot\nabla) s^{I} &= 0,
\\ \label{eq:bulkInterface2}
\partial_t\xi^{I} + (B^{I}\cdot\nabla) \xi^{I} + (\nabla B^{I})^\mathsf{T} \xi^{I} &= 0,
\\ \label{eq:bulkInterface3}
\xi^{I} \cdot (\partial_t+(B^{I}\cdot\nabla))\xi^{I} &= 0,
\\ \label{eq:bulkInterface4}
|B^{I}\cdot\xi^{I} + \nabla\cdot\xi^{I}| &\leq C |s^{I}|,
\\ \label{eq:bulkInterface5}
|\xi^{I} \cdot \nabla^\mathrm{sym} B^{I}| &\leq C|s^{I}|
\end{align}
on the whole space-time domain $\textup{im}(X_I) \cap (\Omega {\times} [0,T])$ as well as
\begin{align}
\label{eq:consistencyBulkInterface}
\xi^{I} &= \mathrm{n}_{I}
\text{ and } \big(\nabla\xi^{I}\big)^\mathsf{T}\mathrm{n}_{I} = 0
&& \text{along } I. 
\end{align}

For each~$p_{\pm} \in \partial I(0)$
with associated trajectory~$p_{\pm}(t) \in \partial I(t)$, denote further
by~$\xi^{p_{\pm}}$ and~$B^{p_{\pm}}$ the local vector fields 
from Theorem~\ref{th_localConstructionsContactPoint}, respectively.
These vector fields are compatible with~$\xi^{I}$ and~$B^{I}$ in the sense that
\begin{align}
\label{eq:localComp1}
\big|\xi^{I} - \xi^{p_\pm}\big| + \big|(\nabla\xi^{I}  -  \nabla\xi^{p_\pm})^\mathsf{T}\xi^{I}\big|
&\leq C|s^{I}|,
\\ \label{eq:localComp2}
\big|(\xi^{I}  -  \xi^{p_\pm})\cdot\xi^{I}\big| &\leq C|s^{I}|^2,
\\ \label{eq:localComp3}
\big|B^{I}  -  B^{p_\pm}\big| &\leq C|s^{I}|,
\\ \label{eq:localComp4}
\big|(\nabla B^{I}  -  \nabla B^{p_\pm})^\mathsf{T}\xi^{I}\big| &\leq C|s^{I}|.
\end{align}
on $B_{\hat r/2}(p_\pm(t)) \cap \big(W^{p_\pm}_I(t) \cup W^{p_\pm}_+(t) \cup W^{p_\pm}_-(t)\big) \subset \Omega$
for all $t \in [0,T]$, where~$\hat r$ was defined by~\eqref{eq:locRadiusTangentialCorrection}
and $W^{p_\pm}_I(t),W^{p_\pm}_+(t),W^{p_\pm}_-(t)$  denote the wedges from Lemma~\ref{th_local_radius_contact}
with respect to the contact points~$p_\pm$, respectively. 
\end{lemma}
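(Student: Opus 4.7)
The regularity statements~\eqref{eq:regBulkInterface}--\eqref{eq:quantRegBulkInterface} follow directly from Remark~\ref{th_strongsol_notation_tub}, Theorem~\ref{th_localConstructionsContactPoint}, and the smoothness of the cutoff $\theta$. The consistency~\eqref{eq:consistencyBulkInterface} reduces to $(\nabla\mathrm{n}_I)^\mathsf{T}\mathrm{n}_I|_I = 0$, which is immediate from~\eqref{eq_nabla_normale_und_tau}. For the level-set transport identity~\eqref{eq:bulkInterface1}, note $B^I\cdot\nabla s^I = B^I\cdot\mathrm{n}_I = H^I$ (tangential terms drop out since $\tau_I\perp\mathrm{n}_I$) and $\partial_t s^I = -H^I$ by~\eqref{eq_signdist}; similarly~\eqref{eq:bulkInterface3} follows from $|\xi^I|^2\equiv 1$. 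The transport identity~\eqref{eq:bulkInterface2} is obtained by taking the spatial gradient of~\eqref{eq:bulkInterface1} and invoking $\xi^I=\nabla s^I$ together with the product rule $\nabla(B^I\cdot\nabla s^I) = (\nabla B^I)^\mathsf{T}\nabla s^I + (B^I\cdot\nabla)\nabla s^I$. For~\eqref{eq:bulkInterface4} write $B^I\cdot\xi^I + \nabla\cdot\xi^I = H^I + \Delta s^I$ and use~\eqref{eq:normalExtensionNormalCurvature} in the form $H^I(x,t) = -\Delta s^I(P^I(x,t),t)$: a first-order Taylor expansion yields the $O(|s^I|)$ bound.

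The key algebraic step is the skew-symmetry estimate~\eqref{eq:bulkInterface5}. Using~\eqref{eq_nabla_normale_und_tau}, $(\mathrm{n}_I\cdot\nabla)H^I = 0$ (constancy in the normal direction), and $s^I|_I = 0$, a direct computation yields
\begin{align*}
\nabla B^I|_I &= \bigl((\tau_I\cdot\nabla)H^I + \widetilde\gamma^I H^I\bigr)\,\mathrm{n}_I\otimes\tau_I
+ \bigl((\tau_I\cdot\nabla)\widetilde\gamma^I - (H^I)^2\bigr)\,\tau_I\otimes\tau_I
\\&\quad
+ \bigl((\mathrm{n}_I\cdot\nabla)\widetilde\gamma^I + \widetilde\rho^I\bigr)\,\tau_I\otimes\mathrm{n}_I.
\end{align*}
The choice~\eqref{eq:choiceSkewSymmetricCorrection} of $\widetilde\rho^I$ is tailored precisely so that the coefficient of $\tau_I\otimes\mathrm{n}_I$ equals $-\bigl((\tau_I\cdot\nabla)H^I + \widetilde\gamma^I H^I\bigr)$, i.e., the negative of the coefficient of $\mathrm{n}_I\otimes\tau_I$. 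Hence $\nabla^{\mathrm{sym}} B^I|_I$ reduces to a scalar multiple of the symmetric tensor $\tau_I\otimes\tau_I$, and since $\xi^I\cdot\tau_I = 0$ one obtains $\xi^I\cdot\nabla^{\mathrm{sym}} B^I|_I = 0$. The global bound~\eqref{eq:bulkInterface5} then follows by Taylor expansion based on the $C^1$-regularity of $\nabla B^I$.

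For the compatibility estimates~\eqref{eq:localComp1}--\eqref{eq:localComp4}, I would proceed region by region using the wedge decomposition from Lemma~\ref{th_local_radius_contact}. In the interior wedge $W^{p_\pm}_I$, the field $\xi^{p_\pm}$ is the normalization of $\tilde\xi^I = \mathrm{n}_I + s^I\beta^I\tau_I + O(|s^I|^2)$, yielding $|\xi^I - \xi^{p_\pm}| \lesssim |s^I|$ at once; likewise $B^{p_\pm} = \tilde B^I = H^I\mathrm{n}_I + (\hat\gamma^I + s^I\hat\rho^I)\tau_I$, and the design of $\widetilde\gamma^I$ gives $\widetilde\gamma^I = \tau_I\cdot B^{p_\pm} = \hat\gamma^I + s^I\hat\rho^I$ wherever $\theta(\dist/\hat r) = 1$, so $B^I - B^{p_\pm} = s^I\widetilde\rho^I\tau_I$, bounded by $C|s^I|$. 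In the interpolation wedges $W^{p_\pm}_\pm$, the analogous estimates combine the controlled blow-up~\eqref{eq_calib_intpol_blowup1} of $\nabla\lambda_\pm$ with the second-order Lipschitz bounds~\eqref{eq:secondOrderCompEstimate}--\eqref{eq:firstOrderCompGradientEstimate} and the wedge geometry, which on the bounded ball $\mathscr{B}_{\hat r/2}(p_\pm)$ implies $\dist_x(\cdot,\mathscr{P}) \lesssim |s^I|$ and hence $\dist_x^2(\cdot,\mathscr{P}) \lesssim |s^I|$. The refined bound~\eqref{eq:localComp2} exploits $|\xi^I| = |\xi^{p_\pm}| = 1$ through $(\xi^I - \xi^{p_\pm})\cdot\xi^I = \tfrac{1}{2}|\xi^I - \xi^{p_\pm}|^2$, automatically upgrading the linear estimate to $O(|s^I|^2)$; and~\eqref{eq:localComp4} reduces on $I$ to $(\mathrm{n}_I\cdot\nabla)(B^I - B^{p_\pm})$, whose vanishing near $p_\pm$ follows from the same algebra as in Paragraph~2 (using $\widetilde\gamma^I|_I = \hat\gamma^I|_I$ and the definitions of $\widetilde\rho^I$ and $\hat\rho^I$). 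The main obstacle is this last step in the interpolation wedges: the $1/\dist_x(\cdot,\mathscr{P})$ blow-up of $\nabla\lambda_\pm$ must be balanced against the quadratic decay~\eqref{eq:secondOrderCompEstimate} of $\tilde\xi^I - \tilde\xi^{\partial\Omega}$ and $\tilde B^I - \tilde B^{\partial\Omega}$, and the geometric comparison $\dist_x(\cdot,\mathscr{P})\lesssim|s^I|$ provided by the wedge decomposition is what converts the resulting $\dist_x$-bound into the required $|s^I|$-bound.
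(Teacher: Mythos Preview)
Your approach is essentially the same as the paper's (which defers most details to \cite{Fischer2020a}), and the arguments for \eqref{eq:regBulkInterface}--\eqref{eq:bulkInterface4} and \eqref{eq:consistencyBulkInterface} are correct. The computation for \eqref{eq:bulkInterface5} is exactly right and matches the paper's remark that the choice of $\widetilde\rho^I$ is tailored to make $\nabla^{\mathrm{sym}}B^I|_I$ purely $\tau_I\otimes\tau_I$.

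There is, however, a genuine slip in your treatment of \eqref{eq:localComp4}. You write that it ``reduces on $I$ to $(\mathrm{n}_I\cdot\nabla)(B^I-B^{p_\pm})$, whose vanishing near $p_\pm$ follows\ldots''. But $(\nabla f)^{\mathsf{T}}\mathrm{n}_I$ is \emph{not} the same as $(\mathrm{n}_I\cdot\nabla)f$; with the convention $(\nabla f)_{ij}=\partial_j f_i$ one has $[(\nabla f)^{\mathsf{T}}v]_i=\sum_j(\partial_i f_j)v_j$, whereas $[(v\cdot\nabla)f]_i=\sum_j v_j\partial_j f_i$. On $W^{p_\pm}_I\cap B_{\hat r/2}$ you correctly compute $B^I-B^{p_\pm}=s^I\widetilde\rho^I\tau_I$, hence $(\mathrm{n}_I\cdot\nabla)(B^I-B^{p_\pm})|_I=\widetilde\rho^I\tau_I$, which does \emph{not} vanish. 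What saves \eqref{eq:localComp4} is that since $f:=B^I-B^{p_\pm}$ vanishes on $I$, one has $\nabla f|_I=[(\mathrm{n}_I\cdot\nabla)f]\otimes\mathrm{n}_I$, and therefore
\[
(\nabla f)^{\mathsf{T}}\mathrm{n}_I\big|_I
=\big[\mathrm{n}_I\cdot(\mathrm{n}_I\cdot\nabla)f\big]\,\mathrm{n}_I
=\big(\mathrm{n}_I\cdot\widetilde\rho^I\tau_I\big)\,\mathrm{n}_I=0.
\]
This is precisely what the paper means by ``the first order perturbation\ldots does not play a role since we contract in the end with $\xi^I$'': the normal derivative of the perturbation $s^I\widetilde\rho^I\tau_I$ is tangential, so it is killed by the final contraction with $\mathrm{n}_I$. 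Your conclusion is correct, but the stated reduction is not; fixing this also clarifies the interpolation-wedge step, where one can simply use that $(\nabla(B^I-B^{p_\pm}))^{\mathsf{T}}\mathrm{n}_I$ is Lipschitz (from \eqref{eq:quantRegularityContactPoint} and \eqref{eq:quantRegBulkInterface}), vanishes at $p_\pm$, and invoke $\dist_x(\cdot,\mathscr{P})\lesssim|s^I|$ on $W^{p_\pm}_\pm$.
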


\begin{proof}
The asserted regularity~\eqref{eq:regBulkInterface}--\eqref{eq:quantRegBulkInterface} is a consequence
of the definitions~\eqref{eq:choiceXiBulkInterface} and~\eqref{eq:choiceVelBulkInterface},
Remark~\ref{th_strongsol_notation_tub}, and Theorem~\ref{th_localConstructionsContactPoint}.
The identities~\eqref{eq:bulkInterface1}--\eqref{eq:bulkInterface3}
and the estimate~\eqref{eq:bulkInterface4} follow by straightforward
arguments, e.g., along the lines of~\cite[Proof of Lemma~22]{Fischer2020a}.
The estimate~\eqref{eq:bulkInterface5} is immediate from
the definitions~\eqref{eq:choiceXiBulkInterface} and~\eqref{eq:choiceVelBulkInterface}, 
the fact that it holds $(\mathrm{n}_I\cdot\nabla)H^{I}=0$ due to~\eqref{eq:normalExtensionNormalCurvature},
and the precise choice~\eqref{eq:choiceSkewSymmetricCorrection} of~$\widetilde\rho^{I}$.
Note also for this computation that~$\widetilde\gamma^{I}$ is not constant
in normal direction.
The properties of~\eqref{eq:consistencyBulkInterface} hold true
because of~\eqref{eq:choiceXiBulkInterface} and~\eqref{eq_nabla_normale_und_tau}.

The local compatibility estimates~\eqref{eq:localComp1}--\eqref{eq:localComp4}
follow along the lines of~\cite[Proof of Proposition~33]{Fischer2020a}.
Note for the proof of~\eqref{eq:localComp4} that the first order perturbation in the 
definition~\eqref{eq:localComp4} of~$B^{I}$ does not play a role since we contract in the end with~$\xi^{I}$.
\end{proof}

\subsection{Local building block for $(\xi, B)$ at the domain boundary}\label{sec_calib_bdry}
This subsection concerns the definition of local building blocks for $(\xi,B)$, which will be used
near to the domain boundary but away from the bulk interface.
This constitutes the by far easiest part of the local constructions.
Indeed, the conditions~\eqref{eq_calibration1}--\eqref{eq_calibration4}
only require to provide estimates with respect to the distance to the bulk interface and 
not the domain boundary. Essentially, we only have to respect the required boundary
conditions~\eqref{eq_boundaryCondXi} and~\eqref{eq_boundaryCondVelocity}.
The most straightforward choice to satisfy these consists of
\begin{align}
\label{eq:choiceXiVelBoundary}
\xi^{\partial\Omega}(x,t) := (\cos\alpha) \mathrm{n}_{\partial\Omega}(P^{\partial\Omega}(x)),
\,B^{\partial\Omega}(x,t) := 0, \,
(x,t) \in \overline{\mathrm{im}(X_{\partial\Omega})} {\times} [0,T],
\end{align}
for which we also recall the notation from Remark~\ref{th_strongsol_notation_bdry}.
This choice will also become handy for a proof of
the additional requirements~\eqref{eq:GradVelXiXi} and~\eqref{eq:GradVelTangent}.
Note finally that by Remark~\ref{th_strongsol_notation_bdry} it holds
\begin{align}
\label{eq:regXiBoundary}
\xi^{\partial\Omega} \in C^\infty_tC^2_x(\overline{\mathrm{im}(X_{\partial\Omega})}{\times}[0,T]).
\end{align}

\subsection{Global construction of $(\xi, B)$}\label{sec_calib_global}
We finally perform a gluing construction to lift
the local constructions from the previous three subsections
to a global construction. To fix notation, we denote 
again by $\xi^{I},B^{I}\colon\overline{\mathrm{im}(X_I)}\cap(\overline{\Omega}{\times}[0,T])\to\Rd[2]$
the local building blocks in the vicinity of the bulk interface
as defined by~\eqref{eq:choiceXiBulkInterface} and~\eqref{eq:choiceVelBulkInterface},
and by $\xi^{\partial\Omega},B^{\partial\Omega}\colon
\overline{\mathrm{im}(X_{\partial\Omega})}{\times}[0,T]\to\Rd[2]$
the local building blocks in the vicinity of the domain boundary as
defined by~\eqref{eq:choiceXiVelBoundary}. For each of the two contact points~$p_{\pm} \in \partial I(0)$
with associated trajectory~$p_{\pm}(t) \in \partial I(t)$, we further denote
by $\xi^{p_{\pm}},B^{p_{\pm}} \colon \overline{\mathscr{B}_{\hat r_{\pm}}(p_{\pm})}
\cap(\overline{\Omega}{\times}[0,T]) \to \Rd[2]$ the local building blocks
in the vicinity of the two moving contact points as provided by 
Theorem~\ref{th_localConstructionsContactPoint}, respectively.
For a recap of the definition of the associated space-time domains~$\mathrm{im}(X_I)$,
$\mathrm{im}(X_{\partial\Omega})$ and~$\mathscr{B}_{\hat r_{\pm}}(p_{\pm})$,
we refer to Remark~\ref{th_strongsol_notation_tub},
Remark~\ref{th_strongsol_notation_bdry} and Theorem~\ref{th_localConstructionsContactPoint}, respectively.

Before we proceed with the gluing construction, let us 
fix a final localization scale~$\bar r \in (0,1]$. To this
end, recall first from Remark~\ref{th_strongsol_notation_tub}	 
and Remark~\ref{th_strongsol_notation_bdry}	the choice of
the localization scales~$r_I\in (0,1]$ and~$r_{\partial\Omega} \in (0,1]$,
respectively. For each of the moving contact points, we then chose
a corresponding localization radius~$r_{\pm} \in (0,\min\{r_I,r_{\partial\Omega}\}]$
such that the conclusions of Lemma~\ref{th_local_radius_contact} hold true.
Next, we derived the existence of a potentially even smaller 
radius~$\hat r_{\pm} \in (0, r_{\pm}]$ so that also the conclusions
of Theorem~\ref{th_localConstructionsContactPoint} are satisfied.
Recalling in the end the definition~\eqref{eq:locRadiusTangentialCorrection}
of the localization scale~$\hat r$, we eventually define
\begin{align}
\label{eq:choiceMinLocScale}
\bar r := \frac{1}{2}\hat r = \frac{1}{2}
\min\Big\{\hat r_{+},\hat r_{-},\frac{1}{3}\min_{t\in[0,T]}\dist(p_+(t),p_-(t))\Big\}.
\end{align}

Apart from~$\bar r$, it turns out to be convenient to introduce a second
localization scale~$\bar\delta \in (0,1]$ which is chosen as follows. 
Recall from Remark~\ref{th_strongsol_notation_tub}	
and Remark~\ref{th_strongsol_notation_bdry}	
the definition of the tubular neighborhood diffeomorphisms~$X_I$
and~$X_{\partial\Omega}$, respectively. Their restrictions
to $I {\times} (-\bar\delta\bar r,\bar\delta\bar r)$ and
$\partial\Omega {\times} (-\bar\delta\bar r,\bar\delta\bar r)$
will be denoted by~$X_I^{\bar r,\bar\delta}$ and~$X_{\partial\Omega}^{\bar r,\bar\delta}$,
respectively. We then choose $\bar\delta\in (0,1]$ small enough such that
for all $t \in [0,T]$ the images of $X_I^{\bar r,\bar\delta}(\cdot,t,\cdot)$
and~$X_{\partial\Omega}^{\bar r,\bar\delta}$ do not overlap away from
the contact points~$p_\pm(t)$:
\begin{align}
\overline{\mathrm{im}(X_I^{\bar r,\bar\delta}(\cdot,t,\cdot))}
\setminus \bigcup_{p\in\{p_+,p_-\}} B_{\bar r}(p(t)) 
\label{eq:2ndMinLocScale}
\cap \overline{\mathrm{im}(X_{\partial\Omega}^{\bar r,\bar\delta})} 
\setminus \bigcup_{p\in\{p_+,p_-\}} B_{\bar r}(p(t)) = \emptyset.
\end{align}

The implementation of the gluing construction now works as follows. Given a set of
localization functions
\begin{align*}
\eta_I,\eta_{p_{\pm}},\eta_{\partial\Omega},\widetilde\eta_I,
\widetilde\eta_{p_{\pm}},\widetilde\eta_{\partial\Omega}
\colon &\overline{\Omega}{\times}[0,T] \to [0,1],
\end{align*}
whose supports are at least required to satisfy the natural conditions
$\supp\eta_I \cup \supp\widetilde\eta_I \subset \overline{\mathrm{im}(X_I)}\cap(\overline{\Omega}{\times}[0,T])$,
$\supp\eta_{p_{\pm}} \cup \supp\widetilde\eta_{p_{\pm}} \subset \overline{\mathscr{B}_{\hat r_{\pm}}(p_{\pm})}
\cap(\overline{\Omega}{\times}[0,T])$ and $\supp\eta_{\partial\Omega} \cup \supp\widetilde\eta_{\partial\Omega}
\subset \overline{\mathrm{im}(X_{\partial\Omega})}{\times}[0,T]$, one then defines
\begin{align}
\label{def:globalXi}
\xi \colon &\overline{\Omega}{\times}[0,T] \to \Rd[2],
&& (x,t) \mapsto \big(\eta_I \xi^{I} + \eta_{p_+} \xi^{p_+} 
+ \eta_{p_-} \xi^{p_-} + \eta_{\partial\Omega} \xi^{\partial\Omega}\big)(x,t),
\\ \label{def:globalVel}
B \colon &\overline{\Omega}{\times}[0,T] \to \Rd[2],
&& (x,t) \mapsto \big(\widetilde\eta_I B^{I} + \widetilde\eta_{p_+} B^{p_+} 
+ \widetilde\eta_{p_-} B^{p_-} + \widetilde\eta_{\partial\Omega} B^{\partial\Omega}\big)(x,t).
\end{align}
The main task then is to extract conditions on the localization functions
guaranteeing that the vector fields~$\xi$ and~$B$ defined by~\eqref{def:globalXi}
and~\eqref{def:globalVel}, respectively, satisfy the requirements
of a boundary adapted gradient flow calibration of 
Definition~\ref{def_boundaryAdaptedCalibration}. Such conditions 
are captured by the following definition. If one does not rely
on the additional constraints~\eqref{eq:GradVelXiXi}--\eqref{eq:skewSymmetryGradVelInt},
we remark that one may in fact choose $\widetilde\eta_I = \eta_I$,
$\widetilde\eta_{p_{\pm}} = \eta_{p_{\pm}}$
and~$\widetilde\eta_{\partial\Omega} = \eta_{\partial\Omega}$.

\begin{definition}
\label{def:admissibleLocFunctions}
In the setting of this subsection, we call a collection of
maps  $\eta_I,\eta_{p_{\pm}},\eta_{\partial\Omega},\widetilde\eta_I,
\widetilde\eta_{p_{\pm}},\widetilde\eta_{\partial\Omega}
\colon \overline{\Omega}{\times}[0,T] \to [0,1]$
an \emph{admissible family of localization functions}
if they satisfy the following list of requirements:
\begin{enumerate}[leftmargin=0.5cm]
\item[1.] \textit{(Regularity)} It holds
					\begin{align}
					\label{eq:qualRegCutoffs}
					\eta_I,\eta_{p_{\pm}},\eta_{\partial\Omega},\widetilde\eta_I,
					\widetilde\eta_{p_{\pm}},\widetilde\eta_{\partial\Omega}
					\in C^1(\overline{\Omega}{\times}[0,T])
					\cap C_tC^2_x(\Omega{\times}[0,T]),
					\end{align}
					and there exists $C>0$ such that
					\begin{align}
					\label{eq:quantRegCutoffs}
					|\nabla^2(\eta_I,\eta_{p_{\pm}},\eta_{\partial\Omega},\widetilde\eta_I,
					\widetilde\eta_{p_{\pm}},\widetilde\eta_{\partial\Omega})|
					\leq C \quad\text{in } \Omega{\times}[0,T].
					\end{align}
\item[2.] \textit{(Localization)} We have for all~$t \in [0,T]$
					\begin{align}
					\label{eq:supportBulkInterfaceCutoff}
					\supp\eta_I(\cdot,t) \subset \supp\widetilde\eta_I(\cdot,t) &\subset 
					\big(\mathrm{im}(X_I^{\bar r,\bar\delta}(\cdot,t,\cdot))\setminus\partial\Omega\big) \cup \partial I(t),
					\\ \label{eq:supportBoundaryCutoff}
					\supp\eta_{\partial\Omega}(\cdot,t) \subset \supp\widetilde\eta_{\partial\Omega}(\cdot,t)
					&\subset \mathrm{im}(X_{\partial\Omega}^{\bar r,\bar\delta}) \setminus (I(t) \cap \Omega),
					\\ \label{eq:supportContactPointCutoff}
					\supp\eta_{p_{\pm}}(\cdot,t) \subset \supp\widetilde\eta_{p_{\pm}}(\cdot,t)
					&\subset B_{\bar r}(p_{\pm}(t)) \cap \overline{\Omega},
					\end{align}
					such that for all~$t \in [0,T]$ one has minimal overlaps in the sense of
					\begin{align}
					\label{eq:overlapTwoContactPoints}
					&\supp\widetilde\eta_{p_+}(\cdot,t) \cap \supp\widetilde\eta_{p_-}(\cdot,t) = \emptyset,
					\\ \label{eq:overlapContactPointBulkInterface}
					&B_{\bar r}(p_{\pm}(t)) \cap \supp\widetilde\eta_I(\cdot,t) 
					\subset B_{\bar r}(p_{\pm}(t))
					\cap \big(W^{p_\pm}_I(t) \cup W^{p_\pm}_+(t) \cup W^{p_\pm}_-(t)\big),
					\\ \label{eq:overlapContactPointBoundary}
					&B_{\bar r}(p_{\pm}(t)) \cap \supp\widetilde\eta_{\partial\Omega}(\cdot,t) 
					\subset B_{\bar r}(p_{\pm}(t))
					\cap \big(W^{p_\pm}_{\partial\Omega}(t) \cup W^{p_\pm}_+(t) \cup W^{p_\pm}_-(t)\big),
					\\ \label{eq:overlapBoundaryBulkInterface}
					&\supp\widetilde\eta_{\partial\Omega}(\cdot,t) \cap \supp\widetilde\eta_I(\cdot,t) \subset 
					\bigcup_{p\in\{p_{\pm}\}} B_{\bar r}(p(t)) \cap \big(W^{p}_+(t) \cup W^{p}_-(t)\big).
					\end{align}
					Here, $W^{p_\pm}_I(t),W^{p_\pm}_+(t),W^{p_\pm}_-(t),W^{p_\pm}_{\partial\Omega}(t)$  
					denote the wedges from Lemma~\ref{th_local_radius_contact}
					with respect to the contact points~$p_\pm$. We emphasize that the 
					relations~\eqref{eq:overlapTwoContactPoints}--\eqref{eq:overlapBoundaryBulkInterface}
					also hold with $(\widetilde\eta_I,\widetilde\eta_{p_\pm},\widetilde\eta_{\partial\Omega})$
					replaced by $(\eta_I,\eta_{p_\pm},\eta_{\partial\Omega})$ thanks to the first inclusions
					of~\eqref{eq:supportBulkInterfaceCutoff}--\eqref{eq:supportContactPointCutoff}, respectively.
\item[3.] \textit{(Partition of unity)} Define $\eta_{\mathrm{bulk}} := 
					1 - \eta_I - \eta_{p_+} - \eta_{p_-} - \eta_{\partial\Omega}$. Then
					\begin{align}
					\label{eq:partitionOfUnity}
					\eta_{\mathrm{bulk}} \in [0,1] \text{ on } \overline{\Omega}{\times}[0,T]
					\quad\text{and}\quad 
					\eta_{\mathrm{bulk}} = 0 \text{ along } I \cup (\partial\Omega{\times}[0,T]).
					\end{align}
					The same properties are satisfied by $\widetilde\eta_{\mathrm{bulk}} := 
					1 - \widetilde\eta_I - \widetilde\eta_{p_+} - \widetilde\eta_{p_-} - \widetilde\eta_{\partial\Omega}$.
\item[4.] \textit{(Coercivity estimates)} There exists $C \geq 1$ such that 
					\begin{align}
					\label{eq:lowerBoundBulkCutoff}
					C^{-1} \min\{1,\dist^2(\cdot,I(t)),\dist^2(\cdot,\partial\Omega)\} 
					&\leq \eta_{\mathrm{bulk}}(\cdot,t),
					\\ \label{eq:upperBoundsCutoffs}
					(\eta_{\mathrm{bulk}} + \widetilde\eta_{\mathrm{bulk}} + 
					\eta_{\partial\Omega} + \widetilde\eta_{\partial\Omega})(\cdot,t)
					&\leq C\min\{1,\dist^2(\cdot,I(t))\},
					\\ \label{eq:upperBoundsGradientCutoffs}
					|(\nabla,\partial_t)(\eta_{\mathrm{bulk}},\widetilde\eta_{\mathrm{bulk}},
					\eta_{\partial\Omega},\widetilde\eta_{\partial\Omega})|(\cdot,t)
					&\leq C\min\{1,\dist(\cdot,I(t))\},
					\\ \label{eq:upperBoundGradientInterfaceCutoffBoundary}
					|(\nabla,\partial_t)(\eta_I,\widetilde\eta_I)|(\cdot,t) 
					&\leq C\min\{1,\dist(\cdot,\partial\Omega))\},
					\end{align}
					throughout~$\Omega$ for all~$t\in[0,T]$. Moreover, 
					there exists $C > 0$ such that
					\begin{align}
					\label{eq:tangentialSquaredCutoff}
					\dist^2(x,I(t)) \leq C(1 {-} \eta_{p_{\pm}})(x,t),
					\quad t \in [0,T],\,
					x \in B_{\bar r}(p_{\pm}(t)) \cap W^{p_\pm}_{\partial\Omega}(t).
					\end{align}
\item[5.] \textit{(Motion laws)} There exists $C>0$ such that
					\begin{align}
					\label{eq:transportBulkCutoff}
					|\partial_t\eta_{\mathrm{bulk}} + (B\cdot\nabla)\eta_{\mathrm{bulk}}|(\cdot,t)
					&\leq C\min\{1,\dist^2(\cdot,I(t))\},
					\\ \label{eq:transportBoundaryCutoff}
					|\partial_t\eta_{\partial\Omega} + (B\cdot\nabla)\eta_{\partial\Omega}|(\cdot,t)
					&\leq C\min\{1,\dist^2(\cdot,I(t))\} 
					\end{align}
					throughout~$\Omega$ for all~$t\in[0,T]$, where~$B$ is defined by~\eqref{def:globalVel}.
\item[6.] \textit{(Additional boundary constraints)}
					Finally, it holds for all $t \in [0,T]$
					\begin{align}
					\label{eq:tildePlateau}
					\widetilde\eta_{p_\pm}(\cdot,t) = 1 - \widetilde\eta_{\partial\Omega}(\cdot,t)
					&= 1 &&\text{along } \supp\eta_{p_\pm}(\cdot,t) \cap \partial\Omega,
					\\ \label{eq:tangentialBump}
					(\mathrm{n}_{\partial\Omega}\cdot\nabla)\widetilde\eta_{\partial\Omega}(\cdot,t)
					= (\mathrm{n}_{\partial\Omega}\cdot\nabla)\widetilde\eta_{p_{\pm}}(\cdot,t)
					&= 0 &&\text{along } \partial\Omega.
					\end{align}
\end{enumerate}
\end{definition}

With the above definition in place, we then have the following result.

\begin{proposition}
\label{prop:gluingConstruction}
Let $\eta_I,\eta_{p_{\pm}},\eta_{\partial\Omega},\widetilde\eta_I,
\widetilde\eta_{p_{\pm}},\widetilde\eta_{\partial\Omega}
\colon \overline{\Omega}{\times}[0,T] \to [0,1]$
be an admissible family of localization functions
in the sense of Definition~\ref{def:admissibleLocFunctions}.
Then the vector fields~$\xi$ and~$B$ defined by
means of~\eqref{def:globalXi} and~\eqref{def:globalVel}, respectively,
satisfy the requirements~\emph{\eqref{eq_regularityXi}--\eqref{eq_regularityB}}
and~\emph{\eqref{eq_consistencyProperty}--\eqref{eq_boundaryCondVelocity}}
of a boundary adapted gradient flow calibration of 
Definition~\ref{def_boundaryAdaptedCalibration} as well as 
the additional constraints~\emph{\eqref{eq:GradVelXiXi}--\eqref{eq:skewSymmetryGradVelInt}}.
\end{proposition}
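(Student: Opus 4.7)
The plan is to verify each requirement in Definition~\ref{def_boundaryAdaptedCalibration} together with the additional constraints \eqref{eq:GradVelXiXi}--\eqref{eq:skewSymmetryGradVelInt} by performing a case analysis based on which of the local building blocks $\xi^a$ (resp.\ $B^a$) with $a \in \{I, p_+, p_-, \partial\Omega\}$ carry mass at a given space-time point, and by exploiting the compatibility estimates supplied by Theorem~\ref{th_localConstructionsContactPoint} and Lemma~\ref{lem:localConstructionsBulkInterface} to absorb gluing errors into the prescribed $\dist(\cdot,I(t))$ or $\dist^2(\cdot,I(t))$ scalings. The regularity \eqref{eq_regularityXi}--\eqref{eq_regularityB} follows immediately from the regularity of the three local building blocks (cf.\ \eqref{eq:qualRegularityXiContactPoint}--\eqref{eq:quantRegularityContactPoint}, \eqref{eq:regBulkInterface}--\eqref{eq:quantRegBulkInterface}, and \eqref{eq:regXiBoundary}) combined with that of the admissible cutoffs from \eqref{eq:qualRegCutoffs}--\eqref{eq:quantRegCutoffs}.

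Next I would handle the algebraic requirements. For the boundary conditions \eqref{eq_boundaryCondXi}--\eqref{eq_boundaryCondVelocity}, only $\eta_{p_\pm}$ and $\eta_{\partial\Omega}$ carry mass on $\partial\Omega$ by \eqref{eq:supportBulkInterfaceCutoff}, these cutoffs sum to $1$ thanks to $\eta_{\mathrm{bulk}}|_{\partial\Omega} = 0$ from \eqref{eq:partitionOfUnity}, and each of $\xi^{p_\pm}, \xi^{\partial\Omega}$ contracts with $\mathrm{n}_{\partial\Omega}$ to yield $\cos\alpha$ while each $B^a$ has vanishing normal component. For the first-order consistency \eqref{eq_consistencyProperty}, one has $\xi^I = \xi^{p_\pm} = \mathrm{n}_I$ on $I$ by \eqref{eq:consistencyBulkInterface} and \eqref{eq:consistencyContactPoints} so that the identity $\xi|_I = \mathrm{n}_I$ follows from $\sum_a \eta_a|_I = 1$; the condition $(\nabla\xi)^\mathsf{T}\mathrm{n}_I = 0$ then comes from the Leibniz rule upon additionally using $(\nabla\xi^I)^\mathsf{T}\mathrm{n}_I = (\nabla\xi^{p_\pm})^\mathsf{T}\mathrm{n}_I = 0$ on $I$ and the identities $\nabla\eta_{\mathrm{bulk}}|_I = \nabla\eta_{\partial\Omega}|_I = 0$ inferred from \eqref{eq:upperBoundsCutoffs}--\eqref{eq:upperBoundsGradientCutoffs}. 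The quadratic length constraint \eqref{eq_quadraticLengthConstraint} stems from the convex-combination bound $|\xi| \leq 1 - \eta_{\mathrm{bulk}} - \eta_{\partial\Omega}(1{-}\cos\alpha)$ (using $|\xi^I|=|\xi^{p_\pm}|=1$ and $|\xi^{\partial\Omega}|=\cos\alpha$), combined with the lower bound \eqref{eq:lowerBoundBulkCutoff}, a distinction between regions away from and near $\partial\Omega$, and a case analysis in the boundary wedges $W^{p_\pm}_{\partial\Omega}$ based on \eqref{eq:tangentialSquaredCutoff}.

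The calibration equations \eqref{eq_calibration1}--\eqref{eq_calibration4} and the interior estimate \eqref{eq:skewSymmetryGradVelInt} will be obtained by applying the product rule to \eqref{def:globalXi}--\eqref{def:globalVel} and sorting contributions into two classes. First, the weighted sum $\sum_a \eta_a [\cdots]^a$ of the local analogues is controlled by the local motion laws \eqref{eq:localEvolXiContactPoint}--\eqref{eq:localGradVelXiXiContactPoint} and \eqref{eq:bulkInterface2}--\eqref{eq:bulkInterface5}. Second, cross terms in which derivatives fall on the cutoffs will be rewritten using $\sum_a \nabla\eta_a = -\nabla\eta_{\mathrm{bulk}}$ (and the advective analogue via \eqref{eq:transportBulkCutoff}--\eqref{eq:transportBoundaryCutoff}) so as to produce differences of local vector fields, which decay linearly or quadratically in $\dist(\cdot,I(t))$ thanks to the compatibility estimates \eqref{eq:localComp1}--\eqref{eq:localComp4}; combination with the bounds \eqref{eq:upperBoundsGradientCutoffs}--\eqref{eq:upperBoundGradientInterfaceCutoffBoundary} then yields the required scaling. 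For the additional boundary constraints \eqref{eq:GradVelXiXi}--\eqref{eq:GradVelTangent}, one uses that $B^{\partial\Omega} \equiv 0$, that \eqref{eq:tildePlateau} forces $B|_{\partial\Omega} = B^{p_\pm}$ on $\supp\eta_{p_\pm} \cap \partial\Omega$, that $\nabla^{\mathrm{sym}}B^{p_\pm}$ vanishes on $\partial\Omega$ by \eqref{eq:gradVelSkewSym}, and that \eqref{eq:tangentialBump} kills the normal-derivative cross terms.

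The hardest technical step will be the quantitative control of the motion laws in the vicinity of the contact points $p_\pm$, where cutoff gradients can blow up like $\dist(\cdot,p_\pm)^{-1}$ (compare with \eqref{eq_calib_intpol_blowup1}). These singularities must be balanced against the quadratic compatibility estimates \eqref{eq:localComp1}--\eqref{eq:localComp4} and \eqref{eq:secondOrderCompEstimate}, as well as against the natural comparison $\dist(\cdot,I(t)) \lesssim \dist(\cdot,p_\pm(t))$ available inside the bulk interpolation wedges $W^{p_\pm}_\pm$ from Lemma~\ref{th_local_radius_contact}. The same cancellation mechanism already plays a central role in the proof of Theorem~\ref{th_localConstructionsContactPoint} and will need to be revisited carefully at the level of the global gluing.
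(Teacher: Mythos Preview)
Your outline is essentially correct and matches the paper's strategy step for step: regularity from the building blocks and cutoffs; boundary and consistency conditions from the partition-of-unity structure on $I$ and $\partial\Omega$; the length constraint via the convex-combination bound together with \eqref{eq:lowerBoundBulkCutoff} and \eqref{eq:tangentialSquaredCutoff}; and the motion laws by the product rule, with the ``diagonal'' part handled by the local motion laws and the cross terms by compatibility. The paper packages the cross-term mechanism as an intermediate set of \emph{global} compatibility estimates (comparing $\xi^n$ and $B^n$ directly against the glued $\xi$ and $B$ on $\supp\widetilde\eta_n$), but this is exactly what your rewriting via $\sum_a\nabla\eta_a=-\nabla\eta_{\mathrm{bulk}}$ produces.

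Your final paragraph, however, misidentifies the main difficulty. At the level of Proposition~\ref{prop:gluingConstruction} the localization functions are \emph{assumed} admissible, hence $C^1(\overline{\Omega}{\times}[0,T])$ with uniformly bounded first and second derivatives by \eqref{eq:qualRegCutoffs}--\eqref{eq:quantRegCutoffs}; there is no $\dist(\cdot,p_\pm)^{-1}$ blowup to balance. The singular interpolation functions $\lambda_\pm$ of Lemma~\ref{th_calib_intpol} have already been absorbed into the local fields $\xi^{p_\pm},B^{p_\pm}$ in the proof of Theorem~\ref{th_localConstructionsContactPoint} (they resurface only in the \emph{construction} of admissible cutoffs in Proposition~\ref{prop:existenceAdmissibleLocFunctions}, which is a separate statement). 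What the paper singles out as the genuinely delicate point is instead the \emph{quadratic} estimate \eqref{eq_calibration2}: since $B^n-B$ is only $O(\dist(\cdot,I))$ (not $O(\dist^2)$, cf.\ \eqref{eq:localComp3}), a naive product-rule expansion of $\xi\cdot(\partial_t\xi+(B\cdot\nabla)\xi)$ leaves a first-order residual. The paper cures this by first establishing the identity
\[
\xi\cdot(\partial_t\xi+(B\cdot\nabla)\xi)
=\sum_{n,n'\in\{I,p_\pm\}}\eta_{n'}\eta_n\,\xi^n\cdot\big(\partial_t\xi^{n'}+(B^{n'}\cdot\nabla)\xi^{n'}\big)
+O(\dist^2(\cdot,I)),
\]
which requires adding and subtracting terms so that the dangerous $(B-B^{n'})\cdot\nabla\xi^{n'}$ contributions end up contracted against $\xi$ (rather than $\xi^n$), whence $\xi\cdot((B-B^{n'})\cdot\nabla)\xi=\tfrac12(B-B^{n'})\cdot\nabla|\xi|^2$ is second order, and so that the cutoff-derivative terms collapse to $(\partial_t+B\cdot\nabla)\eta_{\mathrm{bulk}}$ via \eqref{eq:transportBulkCutoff}--\eqref{eq:transportBoundaryCutoff}. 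Only then do the local length-transport laws \eqref{eq:localEvolLengthXiContactPoint} and \eqref{eq:bulkInterface3} close the argument.
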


It thus remains to construct an admissible family of localization functions.

\begin{proposition}
\label{prop:existenceAdmissibleLocFunctions}
In the setting as described at the beginning of this subsection,
there exist $\eta_I,\eta_{p_{\pm}},\eta_{\partial\Omega},\widetilde\eta_I,
\widetilde\eta_{p_{\pm}},\widetilde\eta_{\partial\Omega}
\colon \overline{\Omega}{\times}[0,T] \to [0,1]$ which
form an admissible family of localization functions
in the sense of Definition~\ref{def:admissibleLocFunctions}.
\end{proposition}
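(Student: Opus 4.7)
The plan is to assemble the six localization functions from smooth scalar cutoffs of the signed distance functions $s^I$, $s^{\partial\Omega}$ and of the distances to the contact points, glued together using the wedge decomposition of Lemma~\ref{th_local_radius_contact}. Two design principles are forced by the requirements of Definition~\ref{def:admissibleLocFunctions}. First, the scalar profiles should be chosen with derivatives vanishing to sufficient order at the origin, which yields the quadratic decay in~\eqref{eq:upperBoundsCutoffs}--\eqref{eq:upperBoundsGradientCutoffs}. Second, within the tubular neighborhood of $\partial\Omega$ the cutoffs $\widetilde\eta_{p_\pm}$ and $\widetilde\eta_{\partial\Omega}$ should factor through the boundary projection $P^{\partial\Omega}$ in the tangential variable, so that they are exactly constant along normals to $\partial\Omega$ and hence~\eqref{eq:tangentialBump} holds automatically.

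Concretely, I would fix an even profile $\theta \in C^\infty(\R;[0,1])$ equal to $1$ on $[-1/2,1/2]$, vanishing outside $(-1,1)$, and satisfying $\theta^{(k)}(0)=0$ for $k=1,2$. Within $\mathrm{im}(X_{\partial\Omega})$ set
\[
\widetilde\eta_{p_\pm}(x,t) := \theta\!\left(\tfrac{2|P^{\partial\Omega}(x) - p_\pm(t)|}{\bar r}\right)\theta\!\left(\tfrac{s^{\partial\Omega}(x)}{\bar\delta\bar r}\right),
\]
extended by zero elsewhere. By the tangential factorization the normal derivative along $\partial\Omega$ vanishes, securing~\eqref{eq:tangentialBump}, while~\eqref{eq:overlapTwoContactPoints} follows from~\eqref{eq:choiceMinLocScale}. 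Using the wedge geometry, I would construct purely geometric angular cutoffs $\chi_{I,p_\pm}\in C^\infty(\overline{\Omega}{\times}[0,T];[0,1])$ with $\chi_{I,p_\pm}=1$ on $W^{p_\pm}_I$ and $\chi_{I,p_\pm}=0$ outside a thin enlargement of $W^{p_\pm}_I \cup W^{p_\pm}_+ \cup W^{p_\pm}_-$ (see~\cite[Lemma~32]{Fischer2020a} for an analogous construction). Define
\[
\widetilde\eta_I(x,t) := \theta\!\left(\tfrac{s^I(x,t)}{\bar\delta\bar r}\right)\prod_{p\in\{p_+,p_-\}}\!\Big(1 - \theta\!\left(\tfrac{|x-p(t)|}{\bar r}\right)(1-\chi_{I,p}(x,t))\Big)
\]
on $\mathrm{im}(X_I) \cap (\overline\Omega{\times}[0,T])$, extended by zero elsewhere, so that $\supp\widetilde\eta_I$ avoids the forbidden wedges near the contact points. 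Finally,
\[
\widetilde\eta_{\partial\Omega}(x,t) := \theta\!\left(\tfrac{s^{\partial\Omega}(x)}{\bar\delta\bar r}\right)\big(1 - \widetilde\eta_I - \widetilde\eta_{p_+} - \widetilde\eta_{p_-}\big)(x,t),
\]
which by the disjointness~\eqref{eq:2ndMinLocScale} is supported in $\mathrm{im}(X^{\bar r,\bar\delta}_{\partial\Omega}) \setminus (I \cap \Omega)$ and for which the subtraction enforces both the overlap conditions~\eqref{eq:overlapContactPointBoundary}--\eqref{eq:overlapBoundaryBulkInterface} and the partition-of-unity condition~\eqref{eq:partitionOfUnity}. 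The non-tilded versions $\eta_*$ are defined by the same recipe with strictly smaller threshold values so that the support inclusions in item~2 of Definition~\ref{def:admissibleLocFunctions} become strict. The regularity~\eqref{eq:qualRegCutoffs}--\eqref{eq:quantRegCutoffs} and the coercivity estimates~\eqref{eq:lowerBoundBulkCutoff}--\eqref{eq:tangentialSquaredCutoff} then follow from the explicit formulas, the higher-order vanishing of $\theta$ at $0$, and the regularity of $s^I, s^{\partial\Omega}, P^{\partial\Omega}$ from Remarks~\ref{th_strongsol_notation_tub}--\ref{th_strongsol_notation_bdry}.

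The hard part is the verification of the motion laws~\eqref{eq:transportBulkCutoff}--\eqref{eq:transportBoundaryCutoff}. Applying $\partial_t + B\cdot\nabla$ to the $s^I$-factor produces $\theta'(s^I/(\bar\delta\bar r))(\bar\delta\bar r)^{-1}(\partial_t + B\cdot\nabla)s^I$: on $\supp\widetilde\eta_I$ the vector field $B$ agrees with $B^I$ up to contributions from the contact-point and boundary vector fields which vanish to first order on $I$ by Lemma~\ref{lem:localConstructionsBulkInterface} and Theorem~\ref{th_localConstructionsContactPoint}, so~\eqref{eq:bulkInterface1} yields $|(\partial_t + B\cdot\nabla)s^I|\lesssim \dist(\cdot,I)$, and together with $|\theta'(s^I/(\bar\delta\bar r))| \lesssim |s^I|$ this delivers the required quadratic control. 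The $s^{\partial\Omega}$-factor is time-independent; since $B\cdot\nabla s^{\partial\Omega} = B\cdot \mathrm{n}_{\partial\Omega}$ vanishes on $\partial\Omega$ by~\eqref{eq_boundaryCondVelocity}, one picks up an additional factor of $s^{\partial\Omega}$, which combined with $\theta'(0)=0$ gives quadratic decay in $s^{\partial\Omega}$ and hence in $\dist(\cdot, I)$ on the bulk wedges where these quantities are comparable. The genuinely new contribution comes from the tangential factor $\theta(2|P^{\partial\Omega}(x) - p_\pm(t)|/\bar r)$, whose advective derivative involves $(\nabla P^{\partial\Omega})B - \tfrac{\mathrm{d}}{\mathrm{d}t}p_\pm(t)$; by~\eqref{eq:consistencyVelocity} and $B\cdot\mathrm{n}_{\partial\Omega}=0$ this bracket vanishes at $p_\pm(t)$, is Lipschitz in $x$ by the $C^1$-regularity of $B^{p_\pm}$, and combined with the linear decay of $\theta'$ at $0$ produces a contribution of order $|x-p_\pm(t)|^2$, which by the wedge geometry is bounded by $\dist^2(\cdot, I(t))$. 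It is precisely here that the consistency~\eqref{eq:consistencyVelocity} of $B^{p_\pm}$ with the trajectory of the contact point, and ultimately the compatibility condition~\eqref{eq_comp_cond2} enforced through Theorem~\ref{th_localConstructionsContactPoint}, enters decisively.
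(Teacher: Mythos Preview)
Your overall design is close to the paper's, and several of your choices (quadratic profiles for the signed-distance cutoffs, the tangential factor for $\widetilde\eta_{p_\pm}$ depending only on $P^{\partial\Omega}(x)$, the advective-derivative argument for the motion laws) are exactly the right ideas. However, there is a genuine gap in the way you couple $\widetilde\eta_I$, $\widetilde\eta_{p_\pm}$ and $\widetilde\eta_{\partial\Omega}$ near a contact point, and it causes several of the required properties to fail simultaneously.

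Take $x\in I(t)\cap W^{p_+}_I(t)$ at arclength of order $\bar\delta\bar r$ from $p_+(t)$. There $s^I(x,t)=0$, so $\theta(s^I/(\bar\delta\bar r))=1$, and since $\chi_{I,p_+}\equiv 1$ on $W^{p_+}_I$ your product factor equals $1$, hence $\widetilde\eta_I(x,t)=1$. On the other hand, $s^{\partial\Omega}(x)\sim|x-p_+(t)|\sin\alpha$ and $|P^{\partial\Omega}(x)-p_+(t)|\lesssim|x-p_+(t)|$ are both small, so for $\bar\delta$ small your definition forces $\widetilde\eta_{p_+}(x,t)$ to be close to $1$ as well. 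Consequently $1-\widetilde\eta_I-\widetilde\eta_{p_+}-\widetilde\eta_{p_-}$ is close to $-1$ at such points. Your subtractive formula then gives $\widetilde\eta_{\partial\Omega}(x,t)<0$, violating the range requirement; moreover $x$ lies on $I(t)\cap\Omega$ and in $W^{p_+}_I(t)$, so both~\eqref{eq:supportBoundaryCutoff} and~\eqref{eq:overlapContactPointBoundary} fail. Finally, a slight perturbation to $s^{\partial\Omega}\in(\tfrac{1}{2}\bar\delta\bar r,\bar\delta\bar r)$ makes the outer factor $(1-\theta(s^{\partial\Omega}/(\bar\delta\bar r)))$ strictly positive while $1-\widetilde\eta_I-\widetilde\eta_{p_+}$ stays negative, so $\widetilde\eta_{\mathrm{bulk}}<0$ and~\eqref{eq:partitionOfUnity} fails too. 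The same defect contaminates your motion-law argument: you invoke ``$|x-p_\pm(t)|^2\lesssim\dist^2(\cdot,I(t))$ by the wedge geometry'', but this comparison is only valid on $W^{p_\pm}_{\partial\Omega}\cup W^{p_\pm}_\pm$, and your $\eta_{\partial\Omega}$ does not stay there.

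The missing mechanism is that $\widetilde\eta_I$ and $\widetilde\eta_{p_\pm}$ must \emph{share} mass rather than be defined independently. The paper achieves this by piecewise definitions on the wedges of Lemma~\ref{th_local_radius_contact}: on $W^{p_\pm}_I$ one sets $\eta_I=(1{-}\zeta_{\partial\Omega})\zeta_I$ and $\eta_{p_\pm}=\zeta_{\partial\Omega}\zeta_I$, so that $\eta_I+\eta_{p_\pm}=\zeta_I\le 1$ automatically and $\eta_{\partial\Omega}\equiv 0$ there; on $W^{p_\pm}_{\partial\Omega}$ one uses the complementary splitting $\eta_{\partial\Omega}=(1{-}\zeta_{p_\pm})\zeta_{\partial\Omega}$, $\eta_{p_\pm}=\zeta_{p_\pm}\zeta_{\partial\Omega}$; and on the interpolation wedges $W^{p_\pm}_\pm$ one blends these via the interpolation functions $\lambda^{p_\pm}_\pm$ of Lemma~\ref{th_calib_intpol}. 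The tilde versions use a second profile $\widetilde\zeta$ with a wider plateau (so $\supp\eta_\ast\subset\supp\widetilde\eta_\ast$), rather than merely rescaled thresholds; this is what produces the plateau condition~\eqref{eq:tildePlateau}. With this coupled piecewise construction all of items~2--6 in Definition~\ref{def:admissibleLocFunctions} become direct algebraic consequences of the wedge identities, and the motion laws then follow along the lines you sketched.
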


The remainder of this subsection is devoted to the proofs of
these two results.

\begin{proof}[Proof of Proposition~\ref{prop:gluingConstruction}]
The proof is split into several steps.

\textit{Step 1: Proof of regularity~\emph{\eqref{eq_regularityXi}--\eqref{eq_regularityB}}.} 
This is an obvious consequence of the definitions~\eqref{def:globalXi}
and~\eqref{def:globalVel}, the regularity of the local building 
blocks~\eqref{eq:qualRegularityXiContactPoint}--\eqref{eq:quantRegularityContactPoint},
\eqref{eq:regBulkInterface}--\eqref{eq:quantRegBulkInterface},
and~\eqref{eq:regXiBoundary} (recall from~\eqref{eq:choiceXiVelBoundary}
that $B^{\partial\Omega}=0$), respectively, as well as the regularity of the localization 
functions~\eqref{eq:qualRegCutoffs}--\eqref{eq:quantRegCutoffs}.

\textit{Step 2: Proof of consistency~\emph{\eqref{eq_consistencyProperty}} 
and boundary conditions~\emph{\eqref{eq_boundaryCondXi}--\eqref{eq_boundaryCondVelocity}}.}
Plugging in the definition~\eqref{def:globalXi}, exploiting the
properties~\eqref{eq:supportBulkInterfaceCutoff}--\eqref{eq:supportContactPointCutoff}
and~\eqref{eq:partitionOfUnity}, as well as recalling~\eqref{eq:consistencyContactPoints}
and~\eqref{eq:consistencyBulkInterface} yields the first part of~\eqref{eq_consistencyProperty} due to
\begin{align*}
\xi(\cdot,t)|_{I(t) \cap \Omega} = 
\sum_{n \in \{I,p_+,p_-\}} (\eta_n\xi^n)(\cdot,t)|_{I(t) \cap \Omega}
&= \Big(\sum_{n \in \{I,p_+,p_-\}} \eta_n(\cdot,t)|_{I(t) \cap \Omega}\Big)\mathrm{n}_I(\cdot,t) 
\\
&= \mathrm{n}_I(\cdot,t).
\end{align*}
Relying in addition on~\eqref{eq:upperBoundsGradientCutoffs}
shows the second part of~\eqref{eq_consistencyProperty} due to
\begin{align*}
&(\nabla\xi(\cdot,t))^{\mathsf{T}}|_{I(t) \cap \Omega} \mathrm{n}_I(\cdot,t)
\\&
= \sum_{n \in \{I,p_+,p_-\}} \eta_n(\cdot,t)(\nabla\xi^n(\cdot,t))^\mathsf{T}|_{I(t) \cap \Omega}
\mathrm{n}_I(\cdot,t) + \sum_ {n \in \{I,p_+,p_-\}} \nabla\eta_n(\cdot,t)|_{I(t) \cap \Omega}
\\&
= - \nabla\eta_{\mathrm{bulk}}(\cdot,t)|_{I(t) \cap \Omega} = 0.
\end{align*}
The same properties of the localization functions together
with~\eqref{eq:boundaryConditionsContactPoints} and~\eqref{eq:choiceXiVelBoundary}
also imply~\eqref{eq_boundaryCondXi} as the following computation reveals:
\begin{align*}
\xi(\cdot,t)|_{\partial\Omega}\cdot\mathrm{n}_{\partial\Omega}
= \sum_{n \in \{\partial\Omega,p_+,p_-\}} (\eta_n\xi^n)(\cdot,t)|_{\partial\Omega}
\cdot\mathrm{n}_{\partial\Omega}
&= \Big(\sum_{n \in \{\partial\Omega,p_+,p_-\}} \eta_n(\cdot,t)|_{\partial\Omega}\Big)\cos\alpha
\\&
= \cos\alpha.
\end{align*}
One may finally infer~\eqref{eq_boundaryCondVelocity} analogously.

\textit{Step 3: Proof of coercivity estimate~\emph{\eqref{eq_quadraticLengthConstraint}}.}
Fix a point $(x,t) \in \Omega {\times} [0,T]$. 					
Let $n_{\mathrm{max}}(x,t) \in \{I,p_+,p_-,\partial\Omega\}$ be defined by
$n_{\mathrm{max}} = \arg\max_{n \in \{I,p_+,p_-,\partial\Omega\}} \eta_n(x,t)$.
Without loss of generality, we may assume that 
there exists $n \in \{I,p_+,p_-,\partial\Omega\}$ such that $x \in \supp\eta_{n}(\cdot,t)$
and that this topological feature satisfies $n=n_{\mathrm{max}}(x,t)$. Moreover, we may assume without
loss of generality that it holds $\eta_n(x,t) \geq \frac{1}{4}$. Indeed, 
otherwise we get $|\xi(x,t)| \leq \frac{3}{4}$ as a consequence
of the definition~\eqref{def:globalXi}, the triangle inequality,
and the fact that at most three localization functions can
be simultaneously strictly positive due to~\eqref{eq:overlapTwoContactPoints}.
The estimate $|\xi(x,t)| \leq \frac{3}{4}$ in turn of course implies~\eqref{eq_quadraticLengthConstraint}
for such~$(x,t)$. 

We now distinguish between two cases. First, if $n = n_{\mathrm{max}}(x,t) = \partial\Omega$,
it follows from~\eqref{eq:partitionOfUnity}, $\eta_{\partial\Omega}(x,t) \geq \frac{1}{4}$ and
$|\xi^{\partial\Omega}(x,t)|=\cos\alpha$, cf.\ \eqref{eq:choiceXiVelBoundary}, that
\begin{align*}
|\xi(x,t)| &\leq \eta_{\partial\Omega}(x,t)\cos\alpha
+ \sum_{n \in \{I,p_+,p_-\}} \eta_n(x,t)
\\&
\leq 1 - \eta_{\partial\Omega}(x,t)(1{-}\cos\alpha) 
\leq 1 - \frac{1}{4}(1{-}\cos\alpha),
\end{align*}
which in turn implies~\eqref{eq_quadraticLengthConstraint}.

If instead $n = n_{\mathrm{max}}(x,t) \in \{I,p_-,p_+\}$,
it follows from the localization properties~\eqref{eq:supportBulkInterfaceCutoff}
and~\eqref{eq:supportContactPointCutoff}
that $x \in \big(B_{\bar r}(p_+(t)) \cup B_{\bar r}(p_-(t))\big)
\cup \mathrm{im}(X_I^{\bar r,\bar\delta}(\cdot,t,\cdot))$.
In case of $x \notin B_{\bar r}(p_+(t)) \cup B_{\bar r}(p_-(t))$,
condition~\eqref{eq:2ndMinLocScale} ensures that there exists
$C \geq 1$ such that $\dist(x,I(t)) \leq C\dist(x,\partial\Omega)$.
We thus infer~\eqref{eq_quadraticLengthConstraint} for such~$x$
from~\eqref{eq:lowerBoundBulkCutoff} due to $|\xi(x,t)| \leq 1 - \eta_{\mathrm{bulk}}(x,t)$.
In case of $x \in B_{\bar r}(p_+(t)) \cup B_{\bar r}(p_-(t))$,
say for concreteness $x \in B_{\bar r}(p_+(t))$, the same conclusions
hold true if in addition $x \in W^{p_+}_I(t) \cup W^{p_+}_+(t) \cup W^{p_-}_I(t)$.
Hence, consider finally $x \in W^{p_+}_{\partial\Omega}(t) \cap B_{\bar r}(p_+(t))$.
Due to the localization 
properties~\eqref{eq:overlapTwoContactPoints}--\eqref{eq:overlapBoundaryBulkInterface}
it follows $\eta_I(x,t) = \eta_{p_-}(x,t) = 0$. 
Recalling further $|\xi^{\partial\Omega}(x,t)|=\cos\alpha$, cf.\ \eqref{eq:choiceXiVelBoundary},
we may then estimate
\begin{align*}
1 - |\xi(x,t)| \geq 1 - \eta_{p_+}(x,t) - (\cos\alpha)\eta_{\partial\Omega}(x,t)
\geq (1 {-} \cos\alpha)(1 {-} \eta_{p_+})(x,t).
\end{align*}
Hence, \eqref{eq_quadraticLengthConstraint} follows from~\eqref{eq:tangentialSquaredCutoff}.

\textit{Step 4: From local to global compatibility estimates.}
We claim that there exists a constant $C>0$ such that for all $n \in \{I,p_+,p_-\}$ it holds
in $\Omega{\times}[0,T]$ 
\begin{align}
\label{eq:globalComp1}
\chi_{\supp\widetilde\eta_n}
\big(|\xi^n {-} \xi| + |(\nabla\xi^{n} {-} \nabla\xi)^\mathsf{T}\xi^n|\big)
&\leq C\min\{1,\dist(\cdot,I)\}, 
\\ \label{eq:globalComp2}
\chi_{\supp\widetilde\eta_n} 
|(\xi^{n} {-} \xi) \cdot \xi^{n}|
&\leq C\min\{1,\dist^2(\cdot,I)\}, 
\\ \label{eq:globalComp3}
\chi_{\supp\widetilde\eta_n} |(B^{n} {-} B)|
&\leq C\min\{1,\dist(\cdot,I)\},
\\ \label{eq:globalComp4}
\chi_{\supp\widetilde\eta_n} |(\nabla B^{n} {-} \nabla B)^{\mathsf{T}}\xi^{n}|
&\leq C\min\{1,\dist(\cdot,I)\}.
\end{align}

Plugging in the definitions~\eqref{def:globalXi} and~\eqref{def:globalVel}
and making use of the estimate~\eqref{eq:upperBoundsCutoffs} entails 
\begin{align*}
\chi_{\supp\widetilde\eta_n} (\xi^{n} {-} \xi)
&= \chi_{\supp\widetilde\eta_n} \sum_{n' \in \{I,p_+,p_-\} \setminus\{n\}}
\eta_{n'}(\xi^{n} {-} \xi^{n'}) + O(\min\{1,\dist^2(\cdot,I)\}),
\\
\chi_{\supp\widetilde\eta_n} (B^n {-} B)
&= \chi_{\supp\widetilde\eta_n} \sum_{n' \in \{I,p_+,p_-\} \setminus\{n\}}
\widetilde\eta_{n'}(B^{n} {-} B^{n'}) + O(\min\{1,\dist^2(\cdot,I)\}).
\end{align*}
Hence, due to~\eqref{eq:overlapTwoContactPoints}, \eqref{eq:overlapContactPointBulkInterface}
and the choice~\eqref{eq:choiceMinLocScale}, the first part of~\eqref{eq:globalComp1}
follows from the first part of~\eqref{eq:localComp1}
and the first identity of the previous display.
The estimate~\eqref{eq:globalComp3} in turn follows
from~\eqref{eq:localComp3} and the second identity of the previous display.
Furthermore, the estimate~\eqref{eq:globalComp2} follows from~\eqref{eq:localComp2} and
contracting the first identity of the previous display with~$\xi^{n}$. 

We proceed computing based on the definition~\eqref{def:globalXi},
the estimate~\eqref{eq:upperBoundsGradientCutoffs}, the property~\eqref{eq:overlapTwoContactPoints},
and the first part of the estimate~\eqref{eq:localComp1}
\begin{align*}
&\chi_{\supp\widetilde\eta_n} (\nabla\xi^{n} {-} \nabla\xi)^\mathsf{T}\xi^n
\\&
= \chi_{\supp\widetilde\eta_n} \sum_{n' \in \{I,p_+,p_-\} \setminus\{n\}}
\big(\eta_{n'} (\nabla\xi^{n} {-} \nabla\xi^{n'})^\mathsf{T}\xi^n
+ \big((\xi^{n} {-} \xi^{n'})\cdot\xi^{n}\big) \nabla\eta_n\big)
\\&~~~
+ O(\min\{1,\dist^2(\cdot,I)\})
\\&
= \chi_{\supp\widetilde\eta_n} \sum_{n' \in \{I,p_+,p_-\} \setminus\{n\}}
\eta_{n'} (\nabla\xi^{n} {-} \nabla\xi^{n'})^\mathsf{T}\xi^n
+ O(\min\{1,\dist(\cdot,I)\}).
\end{align*}
Hence, the second part of~\eqref{eq:globalComp1}
follows now from~\eqref{eq:overlapTwoContactPoints},
\eqref{eq:overlapContactPointBulkInterface}, the choice~\eqref{eq:choiceMinLocScale},
and the second part of~\eqref{eq:localComp1}.
The proof of the remaining estimate~\eqref{eq:globalComp4} is analogous.

\textit{Step 5: Proof of error estimates~\emph{\eqref{eq_calibration1}--\eqref{eq_calibration4}}.}
For a proof of the estimates~\eqref{eq_calibration1}
and~\eqref{eq_calibrationEvolByMCF}, we may simply refer 
to the corresponding argument given in~\cite[Proof of Lemma~42]{Fischer2020a}.
Indeed, the whole structure of this argument solely relies on the structure of the 
definitions~\eqref{def:globalXi} and~\eqref{def:globalVel},
the coercivity estimates~\eqref{eq:upperBoundsCutoffs} and~\eqref{eq:upperBoundsGradientCutoffs},
the compatibility estimates~\eqref{eq:globalComp1}, \eqref{eq:globalComp3},
and~\eqref{eq:globalComp4}, the local counterparts~\eqref{eq:localEvolXiContactPoint}
and~\eqref{eq:bulkInterface2} of~\eqref{eq_calibration1}, the local
counterparts~\eqref{eq:localMCFContactPoint} and~\eqref{eq:bulkInterface4}
of~\eqref{eq_calibrationEvolByMCF}, and finally the regularity estimates
of the involved constructions. 

Next, we provide a proof of~\eqref{eq_calibration4}.
Starting with the definition~\eqref{def:globalXi},
the bound~\eqref{eq:upperBoundsCutoffs}, adding zero in form 
of $\xi^{n'} = (\xi^{n'} {-} \xi) + (\xi {-} \xi^{n}) + \xi^{n}$,
and the estimate~\eqref{eq:globalComp1}, we get
\begin{align*}
\xi \otimes \xi : \nabla B &= \sum_{n,n'\in\{I,p_+,p_-\}} \eta_{n'}\eta_{n}
\xi^{n'}\cdot(\nabla B)^\mathsf{T}\xi^{n} + O(\min\{1,\dist^2(\cdot,I)\})
\\&
= \sum_{n \in \{I,p_+,p_-\}} \eta_{n} \xi^{n} \cdot(\nabla B)^\mathsf{T}\xi^{n}
+ O(\min\{1,\dist(\cdot,I)\}).
\end{align*}
Hence, \eqref{eq_calibration4} is entailed by its local 
counterparts~\eqref{eq:localGradVelXiXiContactPoint} and~\eqref{eq:bulkInterface5}. 

In comparison to~\cite[Proof of Lemma~42]{Fischer2020a},
some changes are necessary for the proof of~\eqref{eq_calibration2}
due to the weaker compatibility estimate~\eqref{eq:globalComp3}.
In fact, the only essential difference concerns the verification 
of the preliminary estimate
\begin{align}
\label{eq:2ndOrderTransport}
&\xi \cdot (\partial_t\xi {+} (B\cdot\nabla)\xi)
\\ \nonumber
&= \sum_{n,n'\in\{I,p_+,p_-\}} \eta_{n'}\eta_{n}
\xi^{n} \cdot (\partial_t\xi^{n'} {+} (B^{n'}\cdot\nabla)\xi^{n'})
+ O(\min\{1,\dist^2(\cdot,I)\}).
\end{align}
Post-processing~\eqref{eq:2ndOrderTransport} to~\eqref{eq_calibration2}
can be done analogously to~\cite[Proof of Lemma~42]{Fischer2020a} 
because this argument solely relies on exploiting
the local estimates~\eqref{eq:localEvolXiContactPoint}--\eqref{eq:localEvolLengthXiContactPoint}
and~\eqref{eq:bulkInterface2}--\eqref{eq:bulkInterface3}, respectively,
as well as the compatibility estimates~\eqref{eq:globalComp1} and~\eqref{eq:globalComp4}.

Hence, it remains to carry out a proof of~\eqref{eq:2ndOrderTransport} for which we give details now.
Inserting the definition~\eqref{def:globalVel}, making use of
the estimates~\eqref{eq:upperBoundsCutoffs} and~\eqref{eq:transportBoundaryCutoff}, 
and adding zero in form of $B = (B{-}B^{n'}) + B^{n'}$ 
we obtain
\begin{align}
\nonumber
\xi \cdot (\partial_t\xi {+} (B\cdot\nabla)\xi)
&= \sum_{n \in \{I,p_+,p_-\}} \eta_n\xi^{n} \cdot (\partial_t\xi {+} (B\cdot\nabla)\xi)
+ O(\min\{1,\dist^2(\cdot,I)\})
\\& \label{eq:2ndOrderTransportAux1}
= \sum_{n,n' \in \{I,p_+,p_-\}}
\eta_n\eta_{n'}\xi^{n} \cdot (\partial_t\xi^{n'} {+} (B^{n'}\cdot\nabla)\xi^{n'})
\\&~~~ \nonumber
+ \sum_{n,n' \in \{I,p_+,p_-\}}
\eta_n\eta_{n'}\xi^{n} \cdot ((B{-}B^{n'})\cdot\nabla)\xi^{n'}
\\&~~~ \nonumber
+ \sum_{n,n' \in \{I,p_+,p_-\}} 
\eta_n (\xi^{n} \cdot \xi^{n'}) (\partial_t\eta_{n'} {+} (B\cdot\nabla)\eta_{n'})
\\&~~~ \nonumber
+ O(\min\{1,\dist^2(\cdot,I)\}).
\end{align}
Adding zero several times in form of $\xi^{n} \cdot \xi^{n'}
= |\xi|^2 - |\xi^{n} {-} \xi|^2 + (\xi^{n}{-}\xi)\cdot\xi^{n}
+ \xi^{n'} \cdot (\xi^{n'} {-} \xi) + (\xi^{n} {-} \xi^{n'}) \cdot (\xi^{n'} {-} \xi)$,
we get from~\eqref{eq:overlapTwoContactPoints},
\eqref{eq:overlapContactPointBulkInterface}, the choice~\eqref{eq:choiceMinLocScale},
and the compatibility estimates~\eqref{eq:localComp1},
\eqref{eq:globalComp1} and~\eqref{eq:globalComp2} that
\begin{align*}
&\sum_{n,n' \in \{I,p_+,p_-\}} 
\eta_n (\xi^{n} \cdot \xi^{n'}) (\partial_t {+} (B\cdot\nabla))\eta_{n'}
\\&
= |\xi|^2 \sum_{n,n' \in \{I,p_+,p_-\}}
\eta_n (\partial_t {+} (B\cdot\nabla))\eta_{n'}
+ O(\min\{1,\dist^2(\cdot,I)\}).
\end{align*}
Based on~\eqref{eq:upperBoundsCutoffs}, \eqref{eq:transportBulkCutoff}
and~\eqref{eq:transportBoundaryCutoff} this may be upgraded to
\begin{align}
\nonumber
&\sum_{n,n' \in \{I,p_+,p_-\}} 
\eta_n (\xi^{n} \cdot \xi^{n'}) (\partial_t {+} (B\cdot\nabla))\eta_{n'}
\\& \nonumber
= |\xi|^2 \sum_{n' \in \{I,p_+,p_-\}} 
(\partial_t {+} (B\cdot\nabla))\eta_{n'} + O(\min\{1,\dist^2(\cdot,I)\})
\\& 
= -|\xi|^2 (\partial_t {+} (B\cdot\nabla))\eta_{\mathrm{bulk}}
+ O(\min\{1,\dist^2(\cdot,I)\})
\label{eq:2ndOrderTransportAux2}
=  O(\min\{1,\dist^2(\cdot,I)\}).
\end{align}
Due to~\eqref{eq:overlapTwoContactPoints},
\eqref{eq:overlapContactPointBulkInterface}, the choice~\eqref{eq:choiceMinLocScale},
as well as the estimates~\eqref{eq:localComp1}, \eqref{eq:globalComp1}, \eqref{eq:globalComp3},
and~\eqref{eq:upperBoundsCutoffs}, we may further estimate
\begin{align}
\nonumber
&\sum_{n,n' \in \{I,p_+,p_-\}}
\eta_n\eta_{n'}\xi^{n} \cdot ((B{-}B^{n'})\cdot\nabla)\xi^{n'}
\\& \nonumber
= \sum_{n' \in \{I,p_+,p_-\}}
\eta_{n'}\xi^{n'} \cdot ((B{-}B^{n'})\cdot\nabla)\xi^{n'}
+ O(\min\{1,\dist^2(\cdot,I)\})
\\& \nonumber
= \sum_{n' \in \{I,p_+,p_-\}}
\eta_{n'}\xi^{n'} \cdot ((B{-}B^{n'})\cdot\nabla)\xi
+ O(\min\{1,\dist^2(\cdot,I)\})
\\& \nonumber
= \sum_{n' \in \{I,p_+,p_-\}}
\eta_{n'}\xi \cdot ((B{-}B^{n'})\cdot\nabla)\xi
+ O(\min\{1,\dist^2(\cdot,I)\})
\\& \label{eq:2ndOrderTransportAux3}
= O(\min\{1,\dist^2(\cdot,I)\}).
\end{align}
The combination of~\eqref{eq:2ndOrderTransportAux1},
\eqref{eq:2ndOrderTransportAux2}, and~\eqref{eq:2ndOrderTransportAux3}
thus implies~\eqref{eq:2ndOrderTransport} and therefore
concludes the proof of~\eqref{eq_calibration2}.

\textit{Step 6: Proof of additional estimates~\emph{\eqref{eq:GradVelXiXi}--\eqref{eq:skewSymmetryGradVelInt}}.}
Plugging in the definition~\eqref{def:globalVel} and exploiting the
properties~\eqref{eq:upperBoundsCutoffs}--\eqref{eq:upperBoundsGradientCutoffs}, we obtain
\begin{align*}
&(\xi \cdot \nabla^\mathrm{sym} B)(\cdot,t)
\\
&= \sum_{n \in \{I,p_+,p_-\}} (\widetilde\eta_n \xi \cdot \nabla^\mathrm{sym} B^n)(\cdot,t) 
+ \sum_{n \in \{I,p_+,p_-\}} (\xi \cdot (B^n \otimes \nabla\widetilde\eta_n)^\mathrm{sym})(\cdot,t)
\\&~~~
+ O(\min\{1,\dist(\cdot,I(t))\}).
\end{align*}
Due to the estimates~\eqref{eq:globalComp1}, \eqref{eq:globalComp3} 
and~\eqref{eq:upperBoundsGradientCutoffs}, the previous display upgrades to
\begin{align*}
&(\xi \cdot \nabla^\mathrm{sym} B)(\cdot,t)
\\
&= \sum_{n \in \{I,p_+,p_-\}} (\widetilde\eta_n \xi^n \cdot \nabla^\mathrm{sym} B^n)(\cdot,t) 
- (\xi \cdot (B \otimes \nabla\widetilde\eta_{\mathrm{bulk}})^\mathrm{sym})(\cdot,t)
\\&~~~
+ O(\min\{1,\dist(\cdot,I(t))\})
\\
&= \sum_{n \in \{I,p_+,p_-\}} (\widetilde\eta_n \xi^n \cdot \nabla^\mathrm{sym} B^n)(\cdot,t) 
+ O(\min\{1,\dist(\cdot,I(t))\}).
\end{align*}
Hence, \eqref{eq:skewSymmetryGradVelInt} follows from
the previous display and exploiting~\eqref{eq:gradVelSkewSym} and~\eqref{eq:bulkInterface5}.

For a proof of~\eqref{eq:GradVelXiXi} and~\eqref{eq:GradVelTangent},
let~$v$ be either $\xi(\cdot,t)$ or $\mathrm{n}_{\partial\Omega}$.
We first compute based on the definition~\eqref{def:globalXi}, the
properties~\eqref{eq:supportBulkInterfaceCutoff}--\eqref{eq:supportContactPointCutoff},
\eqref{eq:upperBoundGradientInterfaceCutoffBoundary}
and~\eqref{eq:tildePlateau} of the localization functions, as well as
the properties~\eqref{eq:gradVelSkewSym} and~\eqref{eq:choiceXiVelBoundary}
\begin{align*}
&(v \cdot \nabla^{\mathrm{sym}}B)(\cdot,t)|_{\partial\Omega}
\\&
= \sum_{n \in \{p_+,p_-\}} (\widetilde\eta_n v \cdot 
\nabla^\mathrm{sym} B^n)(\cdot,t)|_{\partial\Omega}
+ \sum_{n \in \{p_+,p_-\}} (v \cdot (B^n \otimes 
\nabla\widetilde\eta_n)^\mathrm{sym})(\cdot,t)|_{\partial\Omega}
\\&
= \sum_{n \in \{p_+,p_-\}} (v \cdot (B^n \otimes 
\nabla\widetilde\eta_n)^\mathrm{sym})(\cdot,t)|_{\partial\Omega \setminus \supp \eta_n(\cdot,t)}.
\end{align*}
Due to the second item of~\eqref{eq:boundaryConditionsContactPoints} and~\eqref{eq:tangentialBump},
we have on one side that $(B^n\otimes\nabla\widetilde\eta_n)(\cdot,t)$
only carries a $\tau_{\partial\Omega}\otimes\tau_{\partial\Omega}$ component
along $\partial\Omega \cap \supp\widetilde\eta_n(\cdot,t)$, $n \in \{p_+,p_-\}$.
On the other side, by the localization 
properties~\eqref{eq:supportBulkInterfaceCutoff}--\eqref{eq:supportContactPointCutoff}
and~\eqref{eq:overlapTwoContactPoints} as well as the definition~\eqref{eq:choiceXiVelBoundary},
we have $\tau_{\partial\Omega} \cdot \xi(\cdot,t) = 0$ along
$(\partial\Omega\cap\supp\widetilde\eta_n(\cdot,t)) \setminus \supp\eta_n(\cdot,t)$,
$n \in \{p_+,p_-\}$. Hence, for both choices of $v=\xi(\cdot,t)$
and $v=\mathrm{n}_{\partial\Omega}$ we obtain from these two facts and
the previous display that~\eqref{eq:GradVelXiXi} and~\eqref{eq:GradVelTangent}
are satisfied.

This in turn concludes the proof of Proposition~\ref{prop:gluingConstruction}.
\end{proof}

\begin{proof}[Proof of Proposition~\ref{prop:existenceAdmissibleLocFunctions}]
First, we provide the definition of the localization functions.
Afterwards, we prove that the required properties are satisfied.
This second part of the proof will be split into several steps.

Let us start with the choice of some suitable quadratic cutoff functions.
To this end, fix two smooth cutoffs $\theta,\widetilde\theta\colon\Rd[] \to [0,1]$
such that $\theta \equiv 1$ on $[-1/2,1/2]$ and $\theta \equiv 0$ on $\Rd[] \setminus (-1,1)$
as well as $\widetilde\theta \equiv 1$ on $[-3/2,3/2]$ and $\widetilde\theta \equiv 0$
on $\Rd[] \setminus (-2,2)$. Define
\begin{align}
\label{eq:auxCutoffXi}
\zeta(s) &:= \theta(s^2)(1 - s^2), \quad s \in \Rd[],
\\
\label{eq:auxCutoffVel}
\widetilde\zeta(s) &:= \widetilde\theta(s^2)
\begin{cases}
1 & |s| \leq 1, \\
1 - (s - 1)^2 & ~s ~> 1, \\
1 - (s - ({-}1))^2 & ~s ~< {-}1.
\end{cases}
\end{align}
We refer to Figure~\ref{fig_zetas} for a sketch.

For a given $\delta \in (0,\bar\delta]$ and a given~$\bar c \in (0,1]$ which we fix later,
we next define
\begin{align}
\label{eq:auxInterfaceCutoff}
(\zeta_I,\widetilde\zeta_I)(x,t) &:=
\begin{cases}
(\zeta,\widetilde\zeta)\big(\frac{s^I(x,t)}{\delta\bar r}\big) &
(x,t) \in \overline{\mathrm{im}(X_I)}, \\
(0,0) & \text{else},
\end{cases}
\\
\label{eq:auxBoundaryCutoff}
(\zeta_{\partial\Omega},\widetilde\zeta_{\partial\Omega})(x,t) 
&:= (\zeta,\widetilde\zeta)\Big(\frac{s^{\partial\Omega}(x)}{\delta\bar r}\Big),
\quad (x,t) \in \Rd[2] \times [0,T],
\\
\label{eq:auxContactPointsCutoff}
(\zeta_{p_\pm},\widetilde\zeta_{p_\pm})(x,t) 
&:= \begin{cases}
(\zeta,\widetilde\zeta)\Big(\frac{\dist(P^{\partial\Omega}(x),p_{\pm}(t))}{\bar c\bar r}\Big)
& (x,t) \in \overline{\mathrm{im}(X_{\partial\Omega})}, \\
(0,0) & \text{else}.
\end{cases}
\end{align}
For each of the two contact points~$p_{\pm}$,
denote by $\lambda^{p}_{\pm}$ the two associated interpolation
functions from Lemma~\ref{th_calib_intpol}. We have everything
in place to write down the definition of the localization
functions $\eta_I,\eta_p,\eta_{\partial\Omega}$: for 
all $(x,t) \in \overline{\Omega} {\times} [0,T]$, let
\begin{align}
\label{def:interfaceCutoff}
\eta_I(x,t) :=
\begin{cases}
\zeta_I(x,t) & x \in \overline{\mathrm{im}(X_I^{\bar r,\bar\delta}(\cdot,t,\cdot))}
\setminus \bigcup_{p \in \{p_+,p_-\}} B_{\bar r}(p(t)), 
\\
(1{-}\zeta_{\partial\Omega})\zeta_I(x,t)
& x \in B_{\bar r}(p_{\pm}(t)) \cap W^{p_{\pm}}_I(t),
\\
\lambda^{p_{\pm}}_{\pm}(1{-}\zeta_{\partial\Omega})\zeta_I(x,t)
& x \in B_{\bar r}(p_{\pm}(t)) \cap W^{p_{\pm}}_{\pm}(t),
\\
0 & \text{else},
\end{cases}
\end{align}
and
\begin{align}
\label{def:boundaryCutoff}
\eta_{\partial\Omega}(x,t) :=
\begin{cases}
\zeta_{\partial\Omega}(x,t) 
& x \in \overline{\mathrm{im}(X_{\partial\Omega}^{\bar r,\bar\delta}(\cdot,t,\cdot))}
\setminus \bigcup_{p \in \{p_+,p_-\}} B_{\bar r}(p(t)), 
\\
(1{-}\zeta_{p_{\pm}})\zeta_{\partial\Omega}(x,t)
& x \in B_{\bar r}(p_{\pm}(t)) \cap W^{p_{\pm}}_{\partial\Omega}(t),
\\
(1{-}\lambda^{p_{\pm}}_{\pm})(1{-}\zeta_{p_{\pm}})\zeta_{\partial\Omega}(x,t)
& x \in B_{\bar r}(p_{\pm}(t)) \cap W^{p_{\pm}}_{\pm}(t),
\\
0 & \text{else},
\end{cases}
\end{align}
as well as
\begin{align}
\label{def:contactPointCutoff}
\eta_{p_{\pm}}(x,t) :=
\begin{cases}
\zeta_{\partial\Omega}\zeta_I(x,t)
& x \in B_{\bar r}(p_{\pm}(t)) \cap W^{p_{\pm}}_{I}(t) , 
\\
\zeta_{p_{\pm}}\zeta_{\partial\Omega}(x,t)
& x \in B_{\bar r}(p_{\pm}(t)) \cap W^{p_{\pm}}_{\partial\Omega}(t),
\\
\lambda^{p_{\pm}}_{\pm}\zeta_{\partial\Omega}\zeta_I(x,t)
+ (1 {-} \lambda^{p_{\pm}}_{\pm})\zeta_{p_{\pm}}\zeta_{\partial\Omega}(x,t)
& x \in B_{\bar r}(p_{\pm}(t)) \cap W^{p_{\pm}}_{\pm}(t),
\\
0 & \text{else}.
\end{cases}
\end{align}
The localization functions $\widetilde\eta_I,\widetilde\eta_{p_{\pm}},\widetilde\eta_{\partial\Omega}$
are defined analogously in the sense that one simply replaces the cutoffs
$(\zeta_I,\zeta_{p_{\pm}},\zeta_{\partial\Omega})$ by
$(\widetilde\zeta_I,\widetilde\zeta_{p_{\pm}},\widetilde\zeta_{\partial\Omega})$.
We now continue with the verification of the required properties
from Definition~\ref{def:admissibleLocFunctions}.

\begin{figure}
	\begin{tikzpicture}[scale=2]
	\draw[->] (-2.3,0)--(2.3,0) node [below] {$r$};
	
	\draw (0.5,-0.01)--(0.5,0.01) node [below] {$1/2$};
	\draw (1,-0.01)--(1,0.01) node [below] {$1$};
	\draw (1.5,-0.01)--(1.5,0.01) node [below] {$3/2$};
	\draw (2,-0.01)--(2,0.01) node [below] {$2$};
	\draw (-0.5,-0.01)--(-0.5,0.01) node [below] {$-1/2$};
	\draw (-1,-0.01)--(-1,0.01) node [below] {$-1$};
	\draw (-1.5,-0.01)--(-1.5,0.01) node [below] {$-3/2$};
	\draw (-2,-0.01)--(-2,0.01) node [below] {$-2$};
	
	\draw[->] (0,-0.1)--(0,1.2);

	\draw (-0.15,0.975) node [above] {$1$};
	
	\draw (0.43,0.5) node  {$\zeta(r)$};
	
	\draw [thick, color=black, domain=-0.5:0.5, samples=200, smooth]
	plot (\x,{1-\x*\x});
	
	\draw [dashed, color=black, domain=-1:-0.5, samples=200, smooth]
	plot (\x,{1-\x*\x});
	\draw [dashed, color=black, domain=0.5:1, samples=200, smooth]
	plot (\x,{1-\x*\x});
	
	\draw[thick,smooth] (.5,.75)    to [out=-45,in=180]   (0.9,0);
	\draw[thick,smooth] (0.9,0)--(1.2,0);
	
	\draw[thick,smooth] (-.5,.75)    to[out=-135,in=0]    (-0.9,0);
	\draw[thick,smooth] (-0.9,0)--(-1.2,0);
	
	\draw (1.43,0.5) node  {$\tilde{\zeta}(r)$};
	
	\draw[thick,smooth] (-1,1)--(1,1);
	
	\draw [thick, color=black, domain=1:1.5, samples=200, smooth]
	plot (\x,{1-(\x-1)*(\x-1)});
	\draw [dashed, color=black, domain=1.5:2, samples=200, smooth]
	plot (\x,{1-(\x-1)*(\x-1)});
	
	\draw [thick, color=black, domain=-1:-1.5, samples=200, smooth]
	plot (\x,{1-(\x+1)*(\x+1)});
	\draw [dashed, color=black, domain=-1.5:-2, samples=200, smooth]
	plot (\x,{1-(\x+1)*(\x+1)});
	
	\draw[thick,smooth] (1.5,0.75)    to [out=-45,in=180]   (1.9,0);
	\draw[thick,smooth] (1.9,0)--(2.2,0);
	
	\draw[thick,smooth] (-1.5,0.75)    to [out=-135,in=0]   (-1.9,0);
	\draw[thick,smooth] (-1.9,0)--(-2.2,0);
	\end{tikzpicture}
	\caption{The cutoff functions $\zeta$ and $\widetilde{\zeta}$.}
	\label{fig_zetas}
\end{figure}
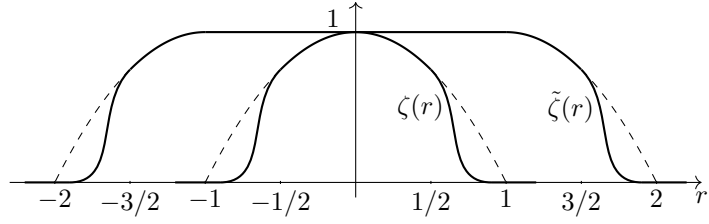

\textit{Step 1: Regularity and localization properties.}
In order to guarantee the required regularity~\eqref{eq:qualRegCutoffs}
and~\eqref{eq:quantRegCutoffs} for the piecewise 
definitions~\eqref{def:interfaceCutoff}--\eqref{def:contactPointCutoff}
it suffices to choose~$\delta \in (0,\bar{\delta}]$ and~$\bar c \in (0,1]$
small enough, respectively, and to recall the regularity assertions
from Remark~\ref{th_strongsol_notation_tub}, Remark~\ref{th_strongsol_notation_bdry}
and Lemma~\ref{th_calib_intpol}. For more details, one may consult
the arguments given in~\cite[Proof of Lemma~34, Steps~1--3]{Fischer2020a}.
Furthermore, the localization 
properties~\eqref{eq:supportBulkInterfaceCutoff}--\eqref{eq:overlapBoundaryBulkInterface}
are straightforward consequences of the definitions~\eqref{eq:auxCutoffXi}--\eqref{def:contactPointCutoff},
the choices~\eqref{eq:choiceMinLocScale}--\eqref{eq:2ndMinLocScale},
the properties~\eqref{eq_calib_intpol0}--\eqref{eq_calib_intpol1},
as well as choosing~$\delta \in (0,\bar{\delta}]$ and~$\bar c \in (0,1]$
sufficiently small again.

\textit{Step 2: Partition of unity.} For a proof of~\eqref{eq:partitionOfUnity},
we first provide some useful identities which will also be 
of help in later stages of the proof. Fix $t \in [0,T]$.
Due to the localization 
properties~\eqref{eq:supportBulkInterfaceCutoff}--\eqref{eq:supportContactPointCutoff}, 
it holds
\begin{align}
\label{eq:bulkCutoffBulk}
\eta_{\mathrm{bulk}}(\cdot,t) = 1
\quad\text{in } \overline{\Omega} \setminus \Big(\overline{\mathrm{im}(X_I^{\bar r,\bar\delta}(\cdot,t,\cdot))}
\cup \overline{\mathrm{im}(X_{\partial\Omega}^{\bar r,\bar\delta})}
\cup \bigcup_{p \in \{p_\pm\}} B_{\bar r}(p(t))\Big).
\end{align}
Using in addition the 
properties~\eqref{eq:overlapTwoContactPoints}--\eqref{eq:overlapBoundaryBulkInterface}
and the choice~\eqref{eq:2ndMinLocScale}, we also obtain from
plugging in the definitions~\eqref{def:interfaceCutoff}--\eqref{def:contactPointCutoff}
\begin{align}
\label{eq:bulkCutoffInterfaceBulk}
\eta_{\mathrm{bulk}}(\cdot,t)
&= (1 {-} \eta_I)(\cdot,t) 
= (1 {-} \zeta_I)(\cdot,t) 
\\& \nonumber
\qquad\qquad\text{in } \overline{\Omega} \cap \Big(
\overline{\mathrm{im}(X_I^{\bar r,\bar\delta}(\cdot,t,\cdot))}
\setminus \bigcup_{p \in \{p_\pm\}} B_{\bar r}(p(t))\Big),
\\
\label{eq:bulkCutoffBoundaryBulk}
\eta_{\mathrm{bulk}}(\cdot,t)
&= (1 {-} \eta_{\partial\Omega})(\cdot,t) 
= (1 {-} \zeta_{\partial\Omega})(\cdot,t)
\\& \nonumber
\qquad\qquad\text{in } \overline{\Omega} \cap \Big(
\overline{\mathrm{im}(X_{\partial\Omega}^{\bar r,\bar\delta})}
\setminus \bigcup_{p \in \{p_\pm\}} B_{\bar r}(p(t))\Big),
\\
\label{eq:bulkCutoffContactPointInterfaceWedge}
\eta_{\mathrm{bulk}}(\cdot,t)
&= (1 {-} \eta_{I} {-} \eta_{p_{\pm}})(\cdot,t)
= (1 {-} \zeta_{I})(\cdot,t)
\\& \nonumber
\qquad\qquad\text{in } \overline{\Omega} \cap \big(
B_{\bar r}(p_{\pm}(t)) \cap W^{p_\pm}_I(t)\big),
\\
\label{eq:bulkCutoffContactPointBoundaryWedge}
\eta_{\mathrm{bulk}}(\cdot,t)
&= (1 {-} \eta_{\partial\Omega} {-} \eta_{p_{\pm}})(\cdot,t)
= (1 {-} \zeta_{\partial\Omega})(\cdot,t)
\\& \nonumber
\qquad\qquad\text{in } \overline{\Omega} \cap \big(
B_{\bar r}(p_{\pm}(t)) \cap W^{p_\pm}_{\partial\Omega}(t)\big),
\\
\label{eq:bulkCutoffContactPointInterpolationWedge}
\eta_{\mathrm{bulk}}(\cdot,t)
&= (1 {-} \eta_{I} {-} \eta_{\partial\Omega} {-} \eta_{p_{\pm}})(\cdot,t)
= \big(\lambda^{p_{\pm}}_{\pm}(1 {-} \zeta_{I})
+ (1 {-} \lambda^{p_{\pm}}_{\pm})(1 {-} \zeta_{\partial\Omega})\big)(\cdot,t)
\\& \nonumber
\qquad\qquad\text{in } \overline{\Omega} \cap \big(
B_{\bar r}(p_{\pm}(t)) \cap W^{p_\pm}_{\pm}(t)\big).
\end{align}
The identities~\eqref{eq:bulkCutoffBulk}--\eqref{eq:bulkCutoffContactPointInterpolationWedge}
immediately imply~\eqref{eq:partitionOfUnity} due to the
definitions~\eqref{eq:auxInterfaceCutoff}--\eqref{eq:auxContactPointsCutoff}
and the properties of the wedges, cf.\ Lemma~\ref{th_local_radius_contact}.
Since the identities~\eqref{eq:bulkCutoffBulk}--\eqref{eq:bulkCutoffContactPointInterpolationWedge}
hold analogously with the localization functions $(\eta_I,\eta_{p_\pm},\eta_{\partial\Omega})$
replaced by $(\widetilde\eta_I,\widetilde\eta_{p_\pm},\widetilde\eta_{\partial\Omega})$
and the cutoff functions $(\zeta_I,\zeta_{p_\pm},\zeta_{\partial\Omega})$ replaced by
$(\widetilde\zeta_I,\widetilde\zeta_{p_\pm},\widetilde\zeta_{\partial\Omega})$,
respectively, \eqref{eq:partitionOfUnity} also follows in terms
of~$\widetilde\eta_{\mathrm{bulk}}$.

\textit{Step 3: Additional boundary constraints.}
The identities~\eqref{eq:tildePlateau} and~\eqref{eq:tangentialBump}
are straightforward consequences of the 
definitions~\eqref{eq:auxCutoffXi} and~\eqref{eq:auxCutoffVel},
the definitions~\eqref{eq:auxBoundaryCutoff} and~\eqref{eq:auxContactPointsCutoff},
and the definitions~\eqref{def:boundaryCutoff} and~\eqref{def:contactPointCutoff}, respectively.

\textit{Step 4: Coercivity estimates.} We first note that
by the properties of the wedges from Lemma~\ref{th_local_radius_contact}
and the choice~\eqref{eq:2ndMinLocScale} that there exists a constant $C \geq 1$ such that
for all $t \in [0,T]$ it holds
\begin{align}
\label{eq:auxCompDistances1}
1 & \leq C\min\{\dist(\cdot,I(t)),\dist(\cdot,\partial\Omega)\}
\text{ on the domain of } \eqref{eq:bulkCutoffBulk},
\\ \label{eq:auxCompDistances2}
\dist(\cdot,I(t)) &\leq C\dist(\cdot,\partial\Omega)
\text{ on the domains of } \eqref{eq:bulkCutoffInterfaceBulk},\,
\eqref{eq:bulkCutoffContactPointInterfaceWedge},\,
\eqref{eq:bulkCutoffContactPointInterpolationWedge},
\\ \label{eq:auxCompDistances3}
\dist(\cdot,\partial\Omega) &\leq C\dist(\cdot,I(t))
\text{ on the domains of } \eqref{eq:bulkCutoffBoundaryBulk},\,
\eqref{eq:bulkCutoffContactPointBoundaryWedge}\,,
\eqref{eq:bulkCutoffContactPointInterpolationWedge},
\\ \label{eq:auxCompDistances4}
\dist(\cdot,p_{\pm}(t)) &\leq C\dist(\cdot,I(t))
\text{ on the domain of } \eqref{eq:bulkCutoffContactPointBoundaryWedge}.
\\ \label{eq:auxCompDistances5}
\dist(\cdot,p_{\pm}(t)) &\leq C\min\{\dist(\cdot,I(t)),\dist(\cdot,\partial\Omega)\}
\text{ on the domain of } \eqref{eq:bulkCutoffContactPointInterpolationWedge}.
\end{align}
Furthermore, by the definitions~\eqref{eq:auxCutoffXi}--\eqref{def:contactPointCutoff}
it follows that there exists $C \geq 1$ such that for all $t \in [0,T]$ it holds
\begin{align}
\label{eq:auxLowerBoundCutoffs1}
C^{-1}\dist^2(\cdot,I(t)) &\leq |1 {-} \zeta_I(\cdot,t)|
\text{ on the domains of } \eqref{eq:bulkCutoffInterfaceBulk},\,
\eqref{eq:bulkCutoffContactPointInterfaceWedge},\,
\eqref{eq:bulkCutoffContactPointInterpolationWedge},
\\ \label{eq:auxLowerBoundCutoffs2}
C^{-1}\dist^2(\cdot,\partial\Omega) &\leq |1 {-} \zeta_{\partial\Omega}(\cdot,t)|
\text{ on the domains of } \eqref{eq:bulkCutoffBoundaryBulk},\,
\eqref{eq:bulkCutoffContactPointBoundaryWedge}\,,
\eqref{eq:bulkCutoffContactPointInterpolationWedge},
\\ \label{eq:auxLowerBoundCutoffs3}
C^{-1}\dist^2(\cdot,p_{\pm}(t)) &\leq |1 {-} \zeta_{p_\pm}(\cdot,t)|
\text{ on the domain of } \eqref{eq:bulkCutoffContactPointBoundaryWedge}.
\end{align}
The combination of the 
identities~\eqref{eq:bulkCutoffBulk}--\eqref{eq:bulkCutoffContactPointInterpolationWedge}
from the previous step with the estimates~\eqref{eq:auxCompDistances1}--\eqref{eq:auxLowerBoundCutoffs3}
from the current step and the definition~\eqref{def:contactPointCutoff}
therefore implies the coercivity estimates~\eqref{eq:lowerBoundBulkCutoff} and~\eqref{eq:tangentialSquaredCutoff}.

For a verification of the upper 
bounds~\eqref{eq:upperBoundsCutoffs}--\eqref{eq:upperBoundGradientInterfaceCutoffBoundary},
we first remark that as a straightforward consequence of the 
definitions~\eqref{eq:auxCutoffXi}--\eqref{def:contactPointCutoff}
there exists $C \geq 1$ such that for all $t \in [0,T]$ we have
\begin{align}
\label{eq:auxUpperBoundCutoffs1}
|1{-}\zeta_I(\cdot,t)| &\leq C\dist^2(\cdot,I(t))
\text{ on the domains of } \eqref{eq:bulkCutoffInterfaceBulk},\,
\eqref{eq:bulkCutoffContactPointInterfaceWedge},\,
\eqref{eq:bulkCutoffContactPointInterpolationWedge},
\\ \label{eq:auxUpperBoundCutoffs2}
|(\nabla,\partial_t)\zeta_I(\cdot,t)| &\leq C\dist(\cdot,I(t))
\text{ on the domains of } \eqref{eq:bulkCutoffInterfaceBulk},\,
\eqref{eq:bulkCutoffContactPointInterfaceWedge},\,
\eqref{eq:bulkCutoffContactPointInterpolationWedge},
\\ \label{eq:auxUpperBoundCutoffs3}
|1{-}\zeta_{\partial\Omega}(\cdot,t)| &\leq C\dist^2(\cdot,\partial\Omega)
\text{ on the domains of } 
\eqref{eq:bulkCutoffBoundaryBulk}\text{--}\eqref{eq:bulkCutoffContactPointInterpolationWedge},
\\ \label{eq:auxUpperBoundCutoffs4}
|(\nabla,\partial_t)\zeta_{\partial\Omega}(\cdot,t)| &\leq C\dist(\cdot,\partial\Omega)
\text{ on the domains of } 
\eqref{eq:bulkCutoffBoundaryBulk}\text{--}\eqref{eq:bulkCutoffContactPointInterpolationWedge},
\\ \label{eq:auxUpperBoundCutoffs5}
|(\zeta_{I} {-} \zeta_{\partial\Omega})(\cdot,t)| &\leq C\dist^2(\cdot,p_{\pm}(t))
\text { on the domain of } \eqref{eq:bulkCutoffContactPointInterpolationWedge},
\\ \label{eq:auxUpperBoundCutoffs6}
|1 {-} \zeta_{p_{\pm}}(\cdot,t)| & \leq C\dist^2(\cdot,p_{\pm}(t))
\text{ on the domains of } \eqref{eq:bulkCutoffContactPointBoundaryWedge},\,
\eqref{eq:bulkCutoffContactPointInterpolationWedge}, 
\\ \label{eq:auxUpperBoundCutoffs7}
|(\nabla,\partial_t)\zeta_{p_{\pm}}(\cdot,t)| & \leq C\dist(\cdot,p_{\pm}(t))
\text{ on the domains of } \eqref{eq:bulkCutoffContactPointBoundaryWedge},\,
\eqref{eq:bulkCutoffContactPointInterpolationWedge}.
\end{align}
The upper bounds~\eqref{eq:upperBoundsCutoffs}--\eqref{eq:upperBoundsGradientCutoffs}
with respect to~$\eta_{\mathrm{bulk}}$ thus follow from the 
estimates~\eqref{eq:auxUpperBoundCutoffs1}--\eqref{eq:auxUpperBoundCutoffs5}, the 
estimates~\eqref{eq:auxCompDistances1} and~\eqref{eq:auxCompDistances3},
the estimate~\eqref{eq_calib_intpol_blowup1}, as well as the 
identities~\eqref{eq:bulkCutoffBulk}--\eqref{eq:bulkCutoffContactPointInterpolationWedge}.
The upper bounds~\eqref{eq:upperBoundsCutoffs}--\eqref{eq:upperBoundsGradientCutoffs}
with respect to~$\eta_{\mathrm{\partial\Omega}}$ in turn are implied by the
estimates~\eqref{eq:auxUpperBoundCutoffs4} 
and~\eqref{eq:auxUpperBoundCutoffs6}--\eqref{eq:auxUpperBoundCutoffs7},
the estimates~\eqref{eq:auxCompDistances3}--\eqref{eq:auxCompDistances5},
the estimate~\eqref{eq_calib_intpol_blowup1}, as well as
the definition~\eqref{def:boundaryCutoff}. We also obtain the desired upper 
bound~\eqref{eq:upperBoundGradientInterfaceCutoffBoundary}
as a consequence of the estimates~\eqref{eq:auxUpperBoundCutoffs2}
and~\eqref{eq:auxUpperBoundCutoffs4},
the estimate~\eqref{eq:auxCompDistances2},
the estimate~\eqref{eq_calib_intpol_blowup1}, as well as
the definition~\eqref{def:interfaceCutoff}.

Finally, we remark that the upper
bounds~\eqref{eq:upperBoundsCutoffs}--\eqref{eq:upperBoundGradientInterfaceCutoffBoundary}
in terms of $(\widetilde\eta_{\mathrm{bulk}},\widetilde\eta_{I},\widetilde\eta_{\partial\Omega})$
follow analogously.

\textit{Step 5: Motion laws.} We claim that there exists $C>0$ such that it holds
\begin{align}
\label{eq:auxTimeEvolCutoffs1}
|(\partial_t + B\cdot\nabla)\zeta_I| &\leq C\dist^2(\cdot,I) 
&& \text{on } \overline{\mathrm{im}(X_I)} \cap (\overline{\Omega} {\times} [0,T]),
\\ \label{eq:auxTimeEvolCutoffs2}
|(\partial_t + B\cdot\nabla)\zeta_{\partial\Omega}| &\leq C\dist^2(\cdot,\partial\Omega)
&& \text{on } (\overline{\mathrm{im}(X_{\partial\Omega})} {\times} [0,T])
\cap (\overline{\Omega} {\times} [0,T]),
\\ \label{eq:auxTimeEvolCutoffs3}
|(\partial_t + B\cdot\nabla)\zeta_{p_\pm}| &\leq C\dist^2(\cdot,p_\pm)
&& \text{on } \bigcup_{t \in [0,T]} B_{\bar r}(p_{\pm}(t)) {\times} \{t\},
\\ \label{eq:auxTimeEvolCutoffs4}
|(\partial_t + B\cdot\nabla)\lambda^{p_\pm}_{\pm}| &\leq C
&& \text{on } \bigcup_{t \in [0,T]} \big(B_{\bar r}(p_{\pm}(t)) 
\cap W^{p_{\pm}}_{\pm}(t)\big) {\times} \{t\}.
\end{align}
Once these estimates are proved, one may argue along the lines
of~\cite[Proof of Lemma~40]{Fischer2020a} to establish~\eqref{eq:transportBulkCutoff}
and~\eqref{eq:transportBoundaryCutoff}. Indeed, apart
from~\eqref{eq:auxTimeEvolCutoffs1}--\eqref{eq:auxTimeEvolCutoffs4}
the structure of the argument of~\cite[Proof of Lemma~40]{Fischer2020a} only relies 
on the already established ingredients from Step~2 and Step~4 of this proof,
the localization properties~\eqref{eq:supportBulkInterfaceCutoff}--\eqref{eq:overlapBoundaryBulkInterface},
the structure of the definitions~\eqref{def:globalXi}--\eqref{def:globalVel}
and~\eqref{def:interfaceCutoff}--\eqref{def:contactPointCutoff}.

The estimate~\eqref{eq:auxTimeEvolCutoffs3} is an easy consequence of
$B(p_\pm(t),t)=\frac{\mathrm{d}}{\mathrm{d}t}p_{\pm}(t)$, the chain rule
in form of $(\partial_t + \frac{\mathrm{d}}{\mathrm{d}t}p_{\pm}(t) \cdot\nabla)\zeta_{p_{\pm}}=0$,
the estimate~\eqref{eq:auxUpperBoundCutoffs7}, and
finally the estimate $|B - B(p_\pm(t),t)| \leq C\dist(\cdot,p_\pm(t))$.
The bound~\eqref{eq:auxTimeEvolCutoffs4} follows similarly
thanks to the estimates~\eqref{eq_calib_intpol_blowup1} and~\eqref{eq_calib_intpol_advect}.
Furthermore, one derives~\eqref{eq:auxTimeEvolCutoffs2} 
by means of the definition~\eqref{eq:auxBoundaryCutoff}, the 
estimates~\eqref{eq:auxUpperBoundCutoffs4} and $|B(x,t) - B(P^{\partial\Omega}(x),t)|
\leq C\dist(x,\partial\Omega)$, and finally the fact that $\partial_t\zeta_{\partial\Omega}=0$
as well as $(B(P^{\partial\Omega}(x),t) \cdot\nabla)\zeta_{\partial\Omega}(x,t) = 0$.
The latter more precisely follows from $\nabla s_{\partial\Omega}=\mathrm{n}_{\partial\Omega}$
in form of $\nabla\zeta_{\partial\Omega}\cdot\tau_{\partial\Omega}=0$ and the boundary condition
$B|_{\partial\Omega} \cdot \mathrm{n}_{\partial\Omega} = 0$ (cf.\ Proof of 
Proposition~\ref{prop:gluingConstruction}, Step~2: Proof of~\eqref{eq_boundaryCondVelocity}).
It remains to establish the estimate~\eqref{eq:auxTimeEvolCutoffs1}.
This in turn follows from~\eqref{eq:bulkInterface1}, $B|_I = \eta_I B^I + \eta_{p_+} B^{p_+}
+ \eta_{p_-} B^{p_-}$ due to~\eqref{eq:supportBoundaryCutoff} and~\eqref{def:globalVel},
as well as the estimates~\eqref{eq:auxUpperBoundCutoffs2}, \eqref{eq:upperBoundsCutoffs},
and~\eqref{eq:localComp3} resulting in $|(B {-} B_I) \cdot \nabla\zeta_I| \leq C\dist^2(\cdot,I)$.
\end{proof}

\subsection{Construction of the transported weight $\vartheta$}\label{sec_calib_theta}
The last missing ingredient for the proof of Theorem~\ref{theo_existenceBoundaryAdaptedCalibration}
consists of the following result.

\begin{lemma}
\label{lem:existenceTransportedWeight}
Let the setting as described at the beginning of Subsection~\ref{sec_calib_global}
be in place. For a given set of admissible localization functions in the
sense of Definition~\ref{def:admissibleLocFunctions}, let~$B$ denote
the associated velocity field defined by~\eqref{def:globalVel}. There
then exists a map $\vartheta\colon\overline{\Omega}{\times}[0,T]\to[-1,1]$
which satisfies the corresponding requirements~\eqref{eq_regularityWeight}
and~\eqref{eq_weightNegativeInterior}--\eqref{eq_weightEvol} 
of a boundary adapted gradient flow calibration.
\end{lemma}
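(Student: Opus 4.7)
The plan is to construct $\vartheta$ as a smooth truncation of the signed distance function $s^I$ associated with the evolving interface $I$ (cf.~Remark~\ref{th_strongsol_notation_tub}), extended by the saturation values $\pm 1$ in the bulk regions of $\Omega$. Concretely, I would fix an odd smooth function $\theta\colon \R \to [-1,1]$ with $\theta(r) = r$ for $|r| \leq \bar\delta\bar r/4$, $\theta(r) = \pm 1$ for $\pm r \geq \bar\delta\bar r/2$, and with uniformly bounded $\theta'$ and $\theta''$, and then set
\[
\vartheta(x,t) := \begin{cases} -\theta(s^I(x,t)) & (x,t) \in \overline{\mathrm{im}(X_I^{\bar r, \bar\delta})}, \\ -1 & x \in \mathscr{A}(t) \text{ outside this tubular region}, \\ +1 & x \in \overline{\Omega} \setminus \overline{\mathscr{A}(t)} \text{ outside this tubular region}. \end{cases}
\]
The three pieces glue continuously because $|\theta(s^I)| \equiv 1$ on the lateral boundary of the tubular neighborhood, and because the convention $\nabla s^I = \mathrm{n}_I$ with $\mathrm{n}_I$ pointing into $\mathscr{A}$ makes the sign of $-\theta(s^I)$ match the prescribed bulk values. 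The qualitative regularity~\eqref{eq_regularityWeight} then follows from the $C_t^0 C_x^5 \cap C_t^1 C_x^3$ regularity of $s^I$ (Remark~\ref{th_strongsol_notation_tub}), and the sign conditions \eqref{eq_weightNegativeInterior}--\eqref{eq_weightZeroInterface} are immediate. For the coercivity~\eqref{eq_weightCoercivity}, I would combine $|s^I|(\cdot,t) = \dist(\cdot,I(t))$ inside the tubular neighborhood with the property $|\theta(r)| \gtrsim \min\{|r|,1\}$ and $|\vartheta| \equiv 1$ outside; the required form as a minimum also involving $\dist(\cdot,\partial\Omega)$ is then automatic.

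The main step is the transport estimate~\eqref{eq_weightEvol}. Outside the tubular neighborhood $\vartheta$ is constant, so both sides vanish. Inside, the chain rule gives
\[
\partial_t \vartheta + (B\cdot\nabla)\vartheta = -\theta'(s^I)\bigl(\partial_t s^I + (B\cdot\nabla) s^I\bigr),
\]
so, in view of $|\theta'(s^I)| \leq C$ and $|\theta(s^I)| \geq c|s^I|$ on the support of $\theta'(s^I)$, it suffices to show $|\partial_t s^I + (B\cdot\nabla) s^I| \leq C|s^I|$ throughout $\mathrm{im}(X_I^{\bar r, \bar\delta})$. Since~\eqref{eq:bulkInterface1} guarantees $\partial_t s^I + (B^I\cdot\nabla) s^I \equiv 0$, I would rewrite this as $(B - B^I)\cdot\nabla s^I$ and estimate $|B - B^I| \leq C|s^I|$ by unpacking the definition~\eqref{def:globalVel} of $B$: the $(\widetilde\eta_I {-} 1)B^I$ contribution is $O(\min\{1,\dist(\cdot,I)\})$ through \eqref{eq:upperBoundsCutoffs}, the $\widetilde\eta_{p_\pm}(B^{p_\pm}{-}B^I)$ contributions are $O(|s^I|)$ by the local compatibility~\eqref{eq:localComp3}, and $B^{\partial\Omega} \equiv 0$ by~\eqref{eq:choiceXiVelBoundary} so $\widetilde\eta_{\partial\Omega} B^{\partial\Omega}$ contributes nothing.

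The hard part is the behavior near the two contact points, where the tubular neighborhood of $I$ meets $\partial\Omega$ and $B$ is a genuine convex combination of several local building blocks rather than a single $B^I$. This is precisely why the linear-in-$|s^I|$ compatibility estimate~\eqref{eq:localComp3} from Lemma~\ref{lem:localConstructionsBulkInterface} was set up with constants uniform up to the contact points; combined with the fact that the bulk cutoff $\widetilde\eta_{\mathrm{bulk}}$ and the boundary cutoff $\widetilde\eta_{\partial\Omega}$ both vanish at least linearly at $I$ (by~\eqref{eq:upperBoundsCutoffs}), the corner geometry does not degrade the transport estimate, and the construction dovetails globally.
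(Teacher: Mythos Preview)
Your construction has a genuine gap at the contact points that you did not address. The tubular neighborhood $\mathrm{im}(X_I^{\bar r,\bar\delta})$ has, in addition to its lateral boundary $\{|s^I| = \bar\delta\bar r\}$, two \emph{end caps}: the normal segments $\{p_\pm(t) + s\,\mathrm{n}_I(p_\pm(t),t) : |s| < \bar\delta\bar r\}$ through the endpoints of $I(t)$. For any contact angle $\alpha < \pi/2$ one has $\mathrm{n}_I \cdot \mathrm{n}_{\partial\Omega} = \cos\alpha > 0$ at $p_\pm(t)$, so a portion of each end cap lies in the interior of $\Omega$. A point $x \in \Omega$ just outside the end cap (on the $\partial\Omega$ side) is not in $\mathrm{im}(X_I^{\bar r,\bar\delta})$ and hence receives the value $\pm 1$ in your definition, while points just inside the end cap have $-\theta(s^I)$ arbitrarily close to $0$. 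Your $\vartheta$ is therefore discontinuous across the end caps inside $\Omega$, violating~\eqref{eq_regularityWeight}. Your gluing argument only treats the lateral boundary and overlooks this.

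This is exactly the obstruction that forces the paper's more elaborate construction: near $\partial\Omega$ (away from $I$) one introduces a second auxiliary weight $\vartheta_{\partial\Omega}$ built from the signed distance $s^{\partial\Omega}$, and near each contact point one interpolates between $\vartheta_I$ and $\pm\vartheta_{\partial\Omega}$ on the interpolation wedges of Lemma~\ref{th_local_radius_contact} using the functions $\lambda^{p_\pm}_\pm$. The resulting weight vanishes along both $I(t)$ and $\partial\Omega$, which is permitted since the coercivity~\eqref{eq_weightCoercivity} only demands $|\vartheta| \gtrsim \min\{\dist(\cdot,I),\dist(\cdot,\partial\Omega),1\}$. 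Your transport argument via~\eqref{eq:bulkInterface1} and~\eqref{eq:localComp3} is the right idea in the interface and interpolation wedges, but a separate mechanism is needed in the boundary wedge: there the paper uses $B\cdot\mathrm{n}_{\partial\Omega}=0$ on $\partial\Omega$ together with the Lipschitz bound $|B(x,t)-B(P^{\partial\Omega}(x),t)|\leq C\dist(x,\partial\Omega)$ to control $(\partial_t + B\cdot\nabla)\vartheta_{\partial\Omega}$.
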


\begin{proof}[Proof of Theorem~\ref{theo_existenceBoundaryAdaptedCalibration}]
This now follows immediately from Proposition~\ref{prop:gluingConstruction},
Proposition~\ref{prop:existenceAdmissibleLocFunctions} and Lemma~\ref{lem:existenceTransportedWeight}.
Recall also in this context that the supplemental 
conditions~\eqref{eq:GradVelXiXi}--\eqref{eq:skewSymmetryGradVelInt}
are taken care of by Proposition~\ref{prop:gluingConstruction}.
\end{proof}

\begin{proof}[Proof of Lemma~\ref{lem:existenceTransportedWeight}]
We first provide a construction of the transported weight~$\vartheta$.
In a second step,  we establish the desired properties.

Let us start by fixing some useful notation. For the two localization
scales~$\bar r$ and~$\bar \delta$ defined by~\eqref{eq:choiceMinLocScale}
and~\eqref{eq:2ndMinLocScale}, respectively, define an associated neighborhood
of the network $I \cup (\partial\Omega {\times} [0,T])$
by means of 
\begin{align}
\mathscr{U}_{\bar r,\bar \delta}(t)
:= \overline{\mathrm{im}(X_{I}^{\bar r,\bar\delta}(\cdot,t,\cdot))}
\cup \overline{\mathrm{im}(X_{\partial\Omega}^{\bar r,\bar\delta})}
\cup \bigcup_{p \in \{p_\pm\}} B_{\bar r}(p(t)),
\quad t \in [0,T].
\end{align}
For each $t \in [0,T]$, we also introduce for convenience
the notation $\mathscr{A}_+(t) := \mathscr{A}(t)$ 
and $\mathscr{A}_-(t) := \Omega\setminus\overline{\mathscr{A}(t)}$.

Choose next a (up to the sign) smooth truncation of the identity $\bar\vartheta\colon\Rd[]\to[-1,1]$
in the sense that $\bar\vartheta(s) = -s$ for $s \in [-1/2,1/2]$, $\bar\vartheta'(s)<0$ for $s \in (-1,1)$,
$\bar\vartheta(s) = 1$ for $s \leq -1$ and $\bar\vartheta(s) = -1$ for $s \geq 1$.
For a given $\delta \in (0,\bar\delta]$ which we fix later, we next define two auxiliary maps
\begin{align}
\label{def:auxWeightInterface}
\vartheta_I(x,t) &:= \bar\vartheta\Big(\frac{s_I(x,t)}{\delta\bar r}\Big),
&& (x,t) \in \overline{\mathrm{im}(X_I)},
\\ \label{def:auxWeightBoundary}
\vartheta_{\partial\Omega}(x,t) &:= \bar\vartheta\Big(\frac{s_{\partial\Omega}(x)}{\delta\bar r}\Big),
&& (x,t) \in \overline{\mathrm{im}(X_{\partial\Omega})} {\times} [0,T].
\end{align}
We have everything set up to proceed with an adequate definition of the weight~$\vartheta$.
Fix $t \in [0,T]$. Away from the two contact points, we set
\begin{align}
\label{def:weightBulk1}
\vartheta(\cdot,t) := 
\begin{cases}
\pm 1 & \text{in } \mathscr{A}_{\pm}(t) \setminus \mathscr{U}_{\bar r,\bar \delta}(t),
\\
\pm \vartheta_{\partial\Omega}(\cdot,t) & \text{in } 
\overline{\mathrm{im}(X_{\partial\Omega}^{\bar r,\bar\delta})} 
\setminus \bigcup_{p \in \{p_\pm\}} B_{\bar r}(p(t)),
\\
\vartheta_I(\cdot,t) & \text{in }
\overline{\mathrm{im}(X_{I}^{\bar r,\bar\delta}(\cdot,t,\cdot))}
\setminus \bigcup_{p \in \{p_\pm\}} B_{\bar r}(p(t)),
\end{cases}
\end{align}
whereas we define in the vicinity of the two contact points
\begin{align}
\label{def:weightBulk2}
\vartheta(\cdot,t) := 
\begin{cases}
\vartheta_I(\cdot,t) & \text{in }
B_{\bar r}(p_{\pm}(t)) \cap W^{p_\pm}_I(t),
\\
\pm \vartheta_{\partial\Omega}(\cdot,t) & \text{in }
B_{\bar r}(p_{\pm}(t)) \cap W^{p_\pm}_{\partial\Omega}(t),
\\
\big(\lambda_{\pm}^{p_{\pm}}\vartheta_I \pm 
(1 {-} \lambda_{\pm}^{p_{\pm}})\vartheta_{\partial\Omega}\big)(\cdot,t)
& \text{in } B_{\bar r}(p_{\pm}(t)) \cap W^{p_\pm}_{\pm}(t).
\end{cases}
\end{align}

In order to guarantee the required regularity~\eqref{eq_regularityWeight} 
for the piecewise definitions~\eqref{def:weightBulk1}--\eqref{def:weightBulk2},
it simply suffices to choose~$\delta \in (0,\bar{\delta}]$
small enough and to recall the regularity assertions
from Remark~\ref{th_strongsol_notation_tub}, Remark~\ref{th_strongsol_notation_bdry}
and Lemma~\ref{th_calib_intpol}. The desired sign 
conditions~\eqref{eq_weightNegativeInterior}--\eqref{eq_weightZeroInterface}
also follow immediately from an inspection of the definitions~\eqref{def:weightBulk1}--\eqref{def:weightBulk2}.

For a proof of the coercivity estimate~\eqref{eq_weightCoercivity}, note that
by the properties of~$\bar\vartheta$ and the 
definitions~\eqref{def:auxWeightInterface}--\eqref{def:auxWeightBoundary}
there exists $C > 0$ such that for all $t \in [0,T]$
\begin{align}
\label{eq:lowerBoundAuxWeight1}
\dist(\cdot,I(t)) &\leq C|\vartheta_I(\cdot,t)|
&& \text{on } \overline{\mathrm{im}(X_I)(\cdot,t,\cdot)},
\\ \label{eq:lowerBoundAuxWeight2}
\dist(\cdot,\partial\Omega) &\leq C|\vartheta_{\partial\Omega}(\cdot,t)|
&& \text{on } \overline{\mathrm{im}(X_{\partial\Omega})}.
\end{align}
In view of the estimates~\eqref{eq:auxCompDistances1}--\eqref{eq:auxCompDistances5},
the required bound~\eqref{eq_weightCoercivity} therefore holds
as a consequence of the definitions~\eqref{def:weightBulk1}--\eqref{def:weightBulk2}.

For a proof of the estimate~\eqref{eq_weightEvol}, we first claim
that there exists $C > 0$ such that for all $t \in [0,T]$ it holds
\begin{align}
\label{eq:auxTimeEvolWeight1}
|(\partial_t + B\cdot\nabla)\vartheta_I|(\cdot,t) 
&\leq C\dist(\cdot,I(t)) 
&& \text{on } \overline{\mathrm{im}(X_I)(\cdot,t,\cdot)} \cap \overline{\Omega},
\\ \label{eq:auxTimeEvolWeight2}
|(\partial_t + B\cdot\nabla)\vartheta_{\partial\Omega}|(\cdot,t) 
&\leq C\dist(\cdot,\partial\Omega)
&& \text{on } \overline{\mathrm{im}(X_{\partial\Omega})} \cap \overline{\Omega},
\\ \label{eq:auxEstimateWeights}
|\vartheta_I - \vartheta_{\partial\Omega}|(\cdot,t) 
&\leq C\dist(\cdot,p_{\pm}(t))
&& \text{on } B_{\bar r}(p_{\pm}(t)) \cap W^{p_{\pm}}_{\pm}(t).
\end{align}
Since~\eqref{eq:auxEstimateWeights} is obvious, let us concentrate on
the proof of~\eqref{eq:auxTimeEvolWeight1} and~\eqref{eq:auxTimeEvolWeight2}.	
These two, however, can be derived along the lines of the argument
in favor of the two estimates~\eqref{eq:auxTimeEvolCutoffs1}
and~\eqref{eq:auxTimeEvolCutoffs2}, respectively.
Being equipped with the auxiliary estimates~\eqref{eq:auxTimeEvolWeight1}--\eqref{eq:auxEstimateWeights},
the desired bound~\eqref{eq_weightEvol} now follows from making use of
the definitions~\eqref{def:weightBulk1}--\eqref{def:weightBulk2},
the estimates~\eqref{eq:auxCompDistances1}--\eqref{eq:auxCompDistances5},
the estimate~\eqref{eq:auxTimeEvolCutoffs4}, and the already
established bound~\eqref{eq_weightCoercivity}.
\end{proof}

\appendix
\section{Weak solutions to the Allen--Cahn problem~\eqref{eq_AllenCahn}--\eqref{eq_initialData}}
\label{sec_appA}

\begin{proof}[Proof of Lemma~\ref{lem_existenceWeakSolutionAllenCahn}]

\textit{Step 1: Implicit time discretization.} Let $T>0$ be fixed, $N\in\N$ and $\tau=\tau(N):=\frac{T}{N}$. We define $ u_N^0:= u_{\eps,0}\in H^1(\Omega)$ and construct inductively for $k=1,...,N$: if $ u_N^{k-1}\in H^1(\Omega)$ is known, then let $ u_N^k$ be a minimizer of
\begin{align}\label{eq_min_energy}
E_k:H^1(\Omega)\rightarrow[0,\infty]: u\mapsto E_\varepsilon[ u]
+\frac{1}{2\tau}\| u- u_N^{k-1}\|_{L^2(\Omega)}^2.
\end{align}
Clearly, $E_k$ is non-trivial due to the assumptions on $W,\sigma$. The existence of a minimizer can be shown via the direct method, cf.~Step 2 below. Due to \eqref{eq_double_well2} it follows that $ u_N^k\in H^1(\Omega)\cap L^p(\Omega)$.

Due to the assumptions on $W$ and $\sigma$ one can proceed similar to Garcke \cite{Garcke2003}, Lemma 3.5, to obtain the associated Euler-Lagrange equation: for all test functions $\xi\in H^1(\Omega)\cap L^\infty(\Omega)$ it holds
\[
\varepsilon \int_\Omega \nabla u_N^k \cdot\nabla\xi 
+\int_\Omega \frac{ u_N^k- u_N^{k-1}}{\tau} \xi 
+\int_\Omega \frac{1}{\varepsilon}W'( u_N^k) \xi
+ \int_{\partial \Omega} \sigma'(\textup{tr}  u_N^k)\textup{tr}\xi\,d\Hc^{d-1}=0.
\]

We consider the piecewise constant extension $ u_N(t):= u_N^k$ on $((k-1)\tau,k\tau]$ for $k=0,...,N$ and the piecewise linear extension $\overline{ u}_N(t):=\lambda  u_N^{k-1}+ (1-\lambda)  u_N^k$ for $t=\lambda(k-1)\tau+(1-\lambda)k\tau$, where $\lambda\in[0,1], k=0,...,N$. 

\textit{Step 2: Existence of minimizers.} To this end, we consider a minimizing sequence $( u_n)_{n\in\N}$ for $E_k$ in $H^1(\Omega)$. The functional $E_k$ is coercive. More precisely, it holds
\[
E_k( u)\geq 
\frac{\varepsilon}{2}\|\nabla u\|_{L^2(\Omega)}^2+\frac{1}{4\tau} \| u\|_{L^2(\Omega)}^2
-C(\| u_N^k\|_{L^2(\Omega)}),
\]
where we used $W,\sigma\geq 0$ and Young's inequality. Hence $( u_n)_{n\in\N}$ is a bounded sequence in $H^1(\Omega)$ and there is a weakly convergent subsequence (for simplicity denoted with the same index) $ u_n \rightharpoonup\widetilde{ u}$ for $n\rightarrow\infty$ in $H^1(\Omega)$ for some $\widetilde{u}\in H^1(\Omega)$. The terms in $E_k$ without the $W$ and $\sigma$-contributions are convex and continuous, hence also weakly lower semi-continuous. Furthermore, because of the compact embedding $H^1(\Omega)\hookrightarrow\hookrightarrow L^2(\Omega)$ as well as the compactness of the trace operator $\textup{tr}:H^1(\Omega)\rightarrow L^2(\partial \Omega)$, we obtain after sub-sequence extractions that for $n\rightarrow\infty$
\begin{alignat*}{2}
& u_n\rightarrow\widetilde{ u}\text{ in }L^2(\Omega), 
&\quad & u_n\rightarrow\widetilde{ u}\text{ a.e. in }\Omega,\\
&\textup{tr} u_n\rightarrow\textup{tr}\widetilde{ u}\text{ in }L^2(\partial \Omega), 
&\quad& \textup{tr} u_n\rightarrow\textup{tr}\widetilde{ u}\text{ a.e. in }\partial \Omega.
\end{alignat*}
Finally, the Fatou Lemma yields that $\widetilde{ u}$ is a minimizer of $E_k$.

\textit{Step 3: Uniform Estimates.} Inserting $ u_N^k$ and $ u_N^{k-1}$ in $E_k$ from \eqref{eq_min_energy} yields 
\[
E_\varepsilon[ u_N^k]+\frac{1}{2\tau} \| u_N^k- u_N^{k-1}\|_{L^2(\Omega)}^2 
\leq E_\varepsilon[ u_N^{k-1}].
\]
Because of $ u_N^{k-1},  u_N^k\in H^1(\Omega)\cap L^p(\Omega)$ the terms stemming from $E_\varepsilon$ are finite. Therefore one can apply a telescope sum argument which implies
\begin{align*}
\int_\Omega \frac{\varepsilon}{2}|\nabla u_N^k|^2+\int_\Omega \frac{1}{\varepsilon}W( u_N^k)+\int_{\partial \Omega} \sigma( u_N^k)\,\Hc^{d-1}+\sum_{l=1}^k\frac{1}{2\tau}\| u_N^l- u_N^{l-1}\|_{L^2(\Omega)}^2\\
\leq \int_\Omega \frac{\varepsilon}{2} |\nabla  u_{\eps,0}|^2
+\int_\Omega \frac{1}{\varepsilon}W( u_{\eps,0}) 
+\int_{\partial \Omega} \sigma(\textup{tr}  u_{\eps,0})\,d\Hc^{d-1}.
\end{align*}
Therefore \eqref{eq_double_well1} yields that the $ u_N^k$ are uniformly bounded in $H^1(\Omega)\cap L^p(\Omega)$ independently of $k=0,...,N$ and $N\in\N$. Hence it follows that
\begin{align*}
&( u_N)_{N\in\N}\text{ is bounded in }L^\infty(0,T;H^1(\Omega)\cap L^p(\Omega)),\\
&(\overline{ u}_N)_{N\in\N}\text{ is bounded in }H^1(0,T,L^2(\Omega))\cap L^\infty(0,T;H^1(\Omega)\cap L^p(\Omega)).
\end{align*}

\textit{Step 4: Convergence.} There is a sub-sequence (not re-labelled) and a $\hat{ u}$ such that 
\[
 u_N\rightharpoonup^\ast \hat{ u} \quad\text{ in } L^\infty(0,T;H^1(\Omega)\cap L^p(\Omega)).
\]
Moreover, due to the Aubin-Lions-Lemma, cf.~Simon \cite{Simon1986}, Corollary 5 , the space $L^\infty(0,T,H^1(\Omega))\cap H^1(0,T,L^2(\Omega))$ is compactly embedded into $C([0,T],H^s(\Omega))$ for all $s\in[0,1)$, where we choose a fixed $s\in(\frac{1}{2},1)$. Therefore up to a sub-sequence for some $\overline{ u}$ it holds
\[
\overline{ u}_N\rightarrow\overline{ u}\quad\text{ in }C([0,T],H^s(\Omega)).
\]  
With the estimate
\[
\|\overline{ u}_N(t)- u_N(t)\|_{L^2(\Omega)}\leq\| u_N^k- u_N^{k-1}\|_{L^2(\Omega)}\leq C\sqrt{\tau}\quad\text{ for all }t\in[(k-1)\tau,k\tau]
\]
for some $C>0$ independent of $k,N$ and using $\tau=\frac{T}{N}$ we obtain
\[
 u_N \rightarrow \overline{ u}\quad\text{ in }L^\infty(0,T,L^2(\Omega))
\]
and $\hat{ u}=\overline{ u}$. Using $\overline{\phi}\in L^\infty(0,T,H^1(\Omega))$ and an interpolation estimate we get
\[
 u_N \rightarrow \overline{ u}\quad\text{ in }L^\infty(0,T,H^s(\Omega)). 
\]
Moreover, it holds $\overline{ u}\in C^{\frac{1}{2}}([0,T],L^2(\Omega))$ since $(\overline{ u}_N)_{N\in\N}$ is bounded in this space and an interpolation estimate yields $\overline{ u}_N\rightarrow 
\overline{ u}$ in $C^\alpha([0,T],L^2(\Omega))$ for all $\alpha\in(0,\frac{1}{2})$.
Furthermore, it holds
\[
\overline{ u}_N\rightharpoonup\overline{ u}\quad\text{ in }L^2(0,T,H^1(\Omega))\cap H^1(0,T,L^2(\Omega)),
\]
where the weak limit equals $\overline{ u}$ due to the compactness into $L^2(0,T,L^2(\Omega))$. Due to all these convergence properties and the continuity of the trace operator from $H^s(\Omega)$ to $L^2(\partial \Omega)$, we obtain after sub-sequence extraction that 
\[
 u_N,\overline{ u}_N\rightarrow\overline{ u}\quad\text{ a.e. in }\Omega\times(0,T),\quad \textup{tr} u_N, \textup{tr}\overline{ u}_N \rightarrow \textup{tr}\overline{ u}\quad\text{ a.e. in }\partial \Omega\times(0,T).
\]

\textit{Step 5: Weak formulation.} Using the above convergence properties one can pass to the limit in the Euler-Lagrange equation. This yields \eqref{eq_evolEquationWeakSolution}.

\textit{Step 6: Uniqueness and bound in Lemma \ref{lem_existenceWeakSolutionAllenCahn}.} Using a Gronwall-argument and the splitting \eqref{eq_double_well3} of $W$, one can prove uniqueness of weak solutions. Now assume that the initial phase field additionally satisfies $ u_{\varepsilon,0}\in[-1,1]$ a.e.~in $\Omega$. Then in the above construction of a weak solution via the implicit time discretization one can choose the minimizers $ u_N^k$ in such a way that $ u_N^k\in[-1,1]$ a.e.~in $\Omega$ for $k=1,...,N$, $N\in\N$. This follows via mathematical induction over $k$ since the energy $E_k( u)$ is non-increasing when truncating the values of $ u$ at $[-1,1]$ provided that this holds for $ u_N^{k-1}$. Then the obtained weak solution also has the desired property. 
\end{proof}

\begin{proof}[Proof of Lemma~\ref{lem_higherRegularityWeakSolutionAllenCahn}]
We split the proof into three steps. In principle, all of these steps
are based on standard arguments. However, due to the nonlinear Robin
boundary condition~\eqref{eq_RobinBoundaryCondition}, we decided to 
present some level of detail.

\textit{Step 1: Proof of the properties~\eqref{eq_weakSolutionsAreStrongSolutions}
and~\eqref{eq_weakFormRobinCondition}.}
Since the initial phase field satisfies 
$ u_{\eps,0} \in [-1,1]$ almost everywhere in~$\Omega$,
it follows from~\eqref{eq_comparisonPrincipleWeakSolution} in Lemma \ref{lem_existenceWeakSolutionAllenCahn} and the boundedness of~$W'$ on~$[-1,1]$ that
$W'( u_\eps)\in L^\infty(\Omega{\times}(0,T))$.
Testing~\eqref{eq_evolEquationWeakSolution} with
test functions which are compactly supported in~$\Omega{\times}(0,T)$
thus entails together with the regularity in time~\eqref{eq_regularityWeakSolution}
of~$ u_\eps$ that~$\Delta u_\eps$, as a distribution on~$\Omega{\times}(0,T)$,
is represented by an $L^2$-function on~$\Omega{\times}(0,T)$, namely 
$\partial_t u_\eps + \frac{1}{\eps^2}W'( u_\eps)$,
which in turn proves~\eqref{eq_weakSolutionsAreStrongSolutions}. Then \eqref{eq_weakFormRobinCondition} directly follows by testing~\eqref{eq_evolEquationWeakSolution} with
$\zeta\in C^\infty_{\mathrm{cpt}}((0,T);C^\infty(\overline{\Omega}))$.

\textit{Step 2: Proof of $\nabla\partial_t u_\eps\in L^2_{\mathrm{loc}}(0,T;L^2(\Omega))$.}
Let $0<s<t<T$, and let $\eta\in C^\infty_{\mathrm{cpt}}((0,T);[0,1])$ be
such that $\eta|_{[s,t]}\equiv 1$. Denote with $D^h_t f$ the difference quotient in the time variable for $h>0$ and some function $f$. We test~\eqref{eq_evolEquationWeakSolution} with $D^{-h}_t(\eta D^h_t u_\eps)$ for $|h|\ll_{s,t} 1$, which is an admissible test function after approximation. Then by approaching the characteristic function $\chi_{[s,t]}$ with~$\eta$ we obtain
\begin{align*}
&\int_{s}^{t} \int_{\Omega} |D^h_t\nabla u_\eps|^2 \dx \dt
+ \int_{s}^{t} \int_{\Omega} \partial_t\frac{1}{2}|D^h_t u_\eps|^2 \dx \dt 
\\&
= -\int_{s}^{t} \int_{\Omega} \frac{1}{\eps^2}D^h_t\big(W'( u_\eps)\big) D^h_t u_\eps \dx \dt
-\int_{s}^{t} \int_{\partial \Omega} \frac{1}{\eps}D^h_t\big(\sigma'( u_\eps)\big) D^h_t u_\eps \dH \dt
\end{align*}
for all $|h|\ll_{s,t} 1$. By a Lipschitz estimate and standard Sobolev theory for
difference quotients we have
\begin{align*}
\bigg|\int_{s}^{t} \int_{\Omega} \frac{1}{\eps^2}D^h_t\big(W'( u_\eps)\big) D^h_t u_\eps \dx \dt\bigg|
\leq C(\eps,\|W''\|_{L^\infty([-1,1])}) \int_0^T\int_{\Omega} |\partial_t u_\eps|^2 \dx \dt
\end{align*} 
for all $|h|\ll_{s,t} 1$. For an estimate of the boundary integral, we argue as follows.
Since the initial phase field satisfies 
$ u_{\eps,0} \in [-1,1]$ almost everywhere in~$\Omega$,
it follows from~\eqref{eq_comparisonPrincipleWeakSolution}
that we may replace~$\sigma$ by any $C^2$-density $\widetilde\sigma\colon\Rd[]\to\Rd[]$
which coincides with~$\sigma$ on~$[-1,1]$. Fix one such~$\widetilde\sigma$. Then analogous as before we have
$\big|D^h_t\big(\sigma'( u_\eps)\big)\big|
\leq C\|\widetilde\sigma''\|_{L^\infty([-1,1])} \big|D^h_t u_\eps\big|$. Using the trace (interpolation) inequality and Young's inequality as well as standard Sobolev theory for difference quotients we finally obtain the bound
\begin{align*}
\bigg|\int_{s}^{t} \int_{\partial \Omega} \frac{1}{\eps}D^h_t\big(\sigma'( u_\eps)\big) D^h_t u_\eps \dH \dt\bigg|
&\leq C(\delta,\eps,\|\widetilde\sigma''\|_{L^\infty([-1,1])})
\int_0^T\int_{\Omega} |\partial_t u_\eps|^2 \dx \dt
\\&~~~
+\delta \int_{s}^{t} \int_{\Omega} |D^h_t\nabla u_\eps|^2 \dx \dt
\end{align*}
for all~$\delta\in (0,1)$ and all $|h|\ll_{s,t} 1$. Hence, an absorption argument
together with the fundamental theorem of calculus (the latter facilitated by a 
standard mollification argument in the time variable) entails based on the
previous four displays that
\begin{align*}
&\int_{s}^{t} \int_{\Omega} |D^h_t\nabla u_\eps|^2 \dx \dt
\\&
\leq \int_{\Omega}|D^h_t u_\eps(\cdot,s)|^2 \dx
+ C(\eps,\|W''\|_{L^\infty([-1,1])},\|\widetilde\sigma''\|_{L^\infty([-1,1])})
\int_0^T\int_{\Omega} |\partial_t u_\eps|^2 \dx \dt
\end{align*}
for all~$0<s<t<T$ and all $|h|\ll_{s,t} 1$. In particular, since for almost every $s\in (0,T)$
it holds~$\int_{\Omega}|\partial_t u_\eps(\cdot,s)|^2\dx < \infty$, it follows that
for almost every $s\in (0,T)$ and all~$t\in (s,T)$ it holds 
$\int_{s}^{t} \int_{\Omega} |D^h_t\nabla u_\eps|^2 \dx \dt \lesssim 1$ 
uniformly over all $|h|\ll_{s,t} 1$. This in turn implies the claim
by standard Sobolev theory for difference quotients.

\textit{Step 3: Proof of $ u_\eps\in L^2(0,T;H^2(\Omega))$.}
We only provide details for the local estimate for tangential derivatives of $\nabla u_\varepsilon$
at a boundary point $x_0\in\partial \Omega$ after locally flattening the
boundary~$\partial \Omega$ around~$x_0$. With respect to the latter---up 
to a rotation and translation---we may assume that
$x_0=0$ and that there exists a radius $r>0$ as well as a 
$C^2$-map $g \colon B_r(0) \cap \Rd[d{-}1] \to \Rd[]$ such that $g(0)=0$ and
\begin{align*}
\Omega \cap B_r(0) &= \{x=(x',x_d) \in B_r(0) \colon x_d > g(x')\},
\\
\partial \Omega \cap B_r(0) &= \{x=(x',x_d) \in B_r(0) \colon x_d = g(x')\}.
\end{align*}
Defining the map $\Psi\colon B_r(0)\to\R^d:(x',x_d)\mapsto(x',x_d{-}g(x'))$, which is a $C^2$-diffeomorphism onto its image,
and the coefficient field $A:=\nabla\Psi^{-1}(\nabla\Psi^{-1})^\mathsf{T}$, 
we have that $\mathrm{det}\nabla\Psi = 1$ and that the operator $-\nabla(a\nabla\,\cdot)$ is
uniformly elliptic and bounded. Choosing $r'\in (0,1)$ small enough such that
$B_{r'}(0)\subset\subset\mathrm{im}\,\Psi$, we then obtain from~\eqref{eq_evolEquationWeakSolution}
and a change of variables that
\begin{align}
\nonumber
&\int_{0}^{T} \int_{B^+_{r'}(0)} \zeta\partial_t\widetilde u_\eps \dx \dt
+ \int_{0}^{T} \int_{B^+_{r'}(0)} \nabla\zeta\cdot A\nabla\widetilde u_\eps \dx \dt
\\& \label{eq_weakFormFlatBoundary}
= - \int_{0}^{T}\int_{B^+_{r'}(0)} \zeta \frac{1}{\eps^2} W'(\widetilde  u_\eps) \dx \dt
\\&~~~ \nonumber
- \int_{0}^{T} \int_{B_{r'}(0) \cap \{x_d=0\}} \zeta\sqrt{1{+}|\nabla_{x'}g(x')|^2} 
\frac{1}{\eps}(\sigma'\circ\widetilde u_\eps)\big(x',g(x')\big)  \dH \dt
\end{align}
for all~$\zeta\in C^\infty_{\mathrm{cpt}}(B_{r'}(0))$,
where we have also defined $\widetilde u_\eps:= u_\eps\circ\Psi^{-1}$
as well as $B^+_{r'}(0)=B_{r'}(0)\cap\{(x',x_d)\in\Rd \colon x_d > 0\}$.

Let~$\eta\in C^\infty_{\mathrm{cpt}}(\frac{1}{2}B_{r'}(0);[0,1])$. We denote by $D^h_x f$ for $h>0$ 
and some $f$ the difference quotient
in the spatial variables with respect to an arbitrary, but fixed, tangential direction. 
Testing~\eqref{eq_weakFormFlatBoundary}
with the (after approximation) admissible test function 
$D^{-h}_x(\eta^2D^h_x\widetilde u_\eps)$ for $|h| < \frac{1}{2}$ 
 we obtain
together with the fundamental theorem of calculus (which is facilitated by a standard
mollification argument in the time variable) and the uniform ellipticity of~$A$
\begin{align*}
& \int_{0}^{T} \int_{B^+_{r'}(0)} \eta^2 |D^h_x\nabla\widetilde u_\eps|^2 \dx \dt
\lesssim \int_{0}^{T} \int_{B^+_{r'}(0)} \eta^2 D^h_x\nabla\widetilde u_\eps
\cdot A D^h_x\nabla\widetilde u_\eps \dx \dt
\\&
\leq \int_{B^+_{r'}(0)} \eta^2|D^h_x\widetilde u_{\eps}(\cdot,0)|^2 \dx
- \int_{0}^{T} \int_{B^+_{r'}(0)} \eta^2 D^h_x\widetilde u_\eps
\frac{1}{\eps^2}D^h_x\big(W'(\widetilde u_\eps)\big) \dx \dt
\\&~~~
- \int_{0}^{T} \int_{B_{r'}(0) \cap \{x_d=0\}} \eta^2 D^h_x\widetilde u_\eps
\sqrt{1{+}|\nabla_{x'}g(x')|^2} \frac{1}{\eps}D^h_x\big((\sigma'\circ\widetilde u_\eps)(x',g(x'))\big)  \dH \dt
\\&~~~
- \int_{0}^{T} \int_{B^+_{r'}(0)} D^h_x\widetilde u_\eps 2\eta\nabla\eta
\cdot A D^h_x\nabla\widetilde u_\eps \dx \dt
\\&~~~
- \int_{0}^{T} \int_{B^+_{r'}(0)} \eta^2D^h_x\nabla\widetilde u_\eps
\cdot \big\{D^h_x\big(A\nabla\widetilde u_\eps\big) - A D^h_x\nabla\widetilde u_\eps\big\} \dx \dt
\\&~~~
- \int_{0}^{T} \int_{B^+_{r'}(0)} D^h_x\widetilde u_\eps 2\eta\nabla\eta
\cdot\big\{D^h_x\big(A\nabla\widetilde u_\eps\big) - A D^h_x\nabla\widetilde u_\eps\big\} \dx \dt
\\&~~~
- \int_{0}^{T} \int_{B^+_{r'}(0)} \eta^2D^h_x\widetilde u_\eps \cdot
\Big\{D^h_x\Big(\sqrt{1{+}|\nabla_{x'}g(x')|^2} 
\frac{1}{\eps}(\sigma'\circ\widetilde u_\eps)\big(x',g(x')\big)\Big)
\\&~~~~~~~~~~~~~~~~~~~~~~~~~~~~~~~~~~~~~
- \sqrt{1{+}|\nabla_{x'}g(x')|^2} \frac{1}{\eps}D^h_x\big((\sigma'\circ\widetilde u_\eps)(x',g(x'))\big) \Big\} \dH \dt
\end{align*}
for all $\eta\in C^\infty_{\mathrm{cpt}}(\frac{1}{2}B_{r'}(0);[0,1])$ and all $|h| < \frac{1}{2}$.

The terms on the right hand side without the first one can be estimated similarly as in \textit{Step~2} of this proof by
\begin{align*}
\delta \int_{0}^{T} \int_{B^+_{r'}(0)} \eta^2 |D^h_x\nabla\widetilde u_\eps|^2 \dx \dt
+ C(\delta)\int_0^T\int_{\Psi(\Omega\cap B_r(0))} |\widetilde u_\eps|^2 + |\nabla\widetilde u_\eps|^2 \dx \dt
\end{align*}
for all $\delta\in (0,1)$, $\eta\in C^\infty_{\mathrm{cpt}}(\frac{1}{2}B_{r'}(0);[0,1])$ and $|h| < \frac{1}{2}$, where the first three of these six terms can be estimated without the $|\widetilde u_\eps|^2$-term on the right hand side. Altogether, by an absorption argument and by fixing
$\eta\in C^\infty_{\mathrm{cpt}}(\frac{1}{2}B_{r'}(0);[0,1])$ such that
$\eta|_{\frac{1}{4}B_{r'}(0)} \equiv 1$, we obtain
\begin{align*}
&\int_{0}^{T} \int_{\frac{1}{4}B^+_{r'}(0)} |D^h_x\nabla\widetilde u_\eps|^2 \dx \dt
\\&
\leq C \int_{\Omega} | u_\eps(\cdot,0)|^2 + |\nabla u_\eps(\cdot,0)|^2 \dx
+ C \int_0^T\int_{\Omega} | u_\eps|^2 + |\nabla u_\eps|^2 \dx \dt
\end{align*}
uniformly over all $|h| < \frac{1}{2}$. This in turn establishes 
the desired local estimate for tangential derivatives 
at a boundary point $x_0\in\partial \Omega$ after locally flattening the
boundary~$\partial \Omega$ around~$x_0$. From here onwards, one may
proceed by standard arguments to deduce $ u_\eps\in L^2(0,T;H^2(\Omega))$.
\end{proof}

\begin{proof}[Proof of Lemma~\ref{lem_energyDissipationEquality}]
We proceed in two steps.

\textit{Step 1: Proof of~\eqref{eq_energyDissipationWeakSolution} under an additional assumption.}
In this step, we establish~\eqref{eq_energyDissipationWeakSolution}
assuming momentarily that the energy functional~$E_\eps[ u_\eps]$ is continuous on~$[0,T]$.
This fact will then be checked in a second step. Under this additional assumption
it clearly suffices to prove that for all~$0<s<T'<T$ it holds
\begin{align}
\label{eq_auxEnergyDissip}
E_\eps[ u_\eps(\cdot,T')] 
+ \int_{s}^{T'} \int_{\Omega} \eps\big|\partial_t  u_\eps\big|^2 \dx \dt
= E_\eps[ u_\eps(\cdot,s)].
\end{align}
Let $0<s<T'<T$, and let $\eta\in C^\infty_{\mathrm{cpt}}((0,T);[0,1])$
such that $\eta|_{[s,T']}\equiv 1$. Testing~\eqref{eq_weakFormFlatBoundary}
with the thanks to Lemma~\ref{lem_higherRegularityWeakSolutionAllenCahn} 
admissible test function $\eps\eta\partial_t u_\eps$
and by approaching the characteristic function $\chi_{[s,T']}$ with $\eta$ shows
\begin{align*}
&\int_{s}^{T'} \int_{\Omega} \eps\nabla u_\eps\cdot\partial_t\nabla u_\eps \dx \dt
+ \int_{s}^{T'} \int_{\Omega} \frac{1}{\eps} W'( u_\eps)\partial_t u_\eps \dx \dt
\\&
+ \int_{s}^{T'} \int_{\partial \Omega} \sigma'( u_\eps)\partial_t u_\eps \dH \dt
= -\int_{s}^{T'} \int_{\Omega} \eps|\partial_t u_\eps|^2 \dx \dt.
\end{align*}
By a standard mollification argument, the chain rule, and the fundamental theorem of calculus,
we thus obtain from the previous display the desired identity~\eqref{eq_auxEnergyDissip}.

\textit{Step 2: Proof of~$E_\eps[ u_\eps]\in C([0,T])$.}
Recalling that $ u_\eps\in C([0,T];L^2(\Omega))$,
it suffices to prove that the Dirichlet energy is continuous on~$[0,T]$.
Indeed, continuity for the other two energy contributions then
follows from the trace (interpolation) inequality and a Lipschitz estimate. Hence note that $ u_\eps\in H^1(0,T;L^2(\Omega))\cap L^2(0,T;H^2(\Omega))$ due to~\eqref{eq_regularityWeakSolution} and~\eqref{eq_higherRegularityPhaseField}. Interpolation yields
$ u_\eps \in C([0,T];H^1(\Omega))$ which concludes the claim.
\end{proof}

\section{Construction of well-prepared initial data}
\label{sec_appB}

\begin{proof}[Proof of Lemma~\ref{lem_wellPreparedInitialData}]
We split the proof into four steps. 

\textit{Step 1: Construction of auxiliary signed distance to initial bulk interface.}
As the $C^2$-interface $\overline{\partial^*\mathscr{A}(0) \cap \Omega}$ intersects
the $C^{2}$-domain boundary~$\partial \Omega$ non-tangentially
at two distinct points~$c_{\pm}(0)\in\partial \Omega$, we may choose
two localization scales~$r,\delta\in (0,1)$ being sufficiently small
such that the following properties hold true:

First, we require as usual that (with $\mathrm{n}(\cdot,0) := \mathrm{n}_{\partial^*\mathscr{A}(0)\cap\Omega}$)
\begin{align*}
\Psi\colon (\partial^*\mathscr{A}(0) {\cap} \Omega) \times (-r,r) \to \Rd[2],
\quad (x,s) \mapsto x + s\mathrm{n}(x,0)
\end{align*}
defines a $C^1$-diffeo onto its image~$\mathrm{im}\,\Psi$ such that
$\Psi\in C^1(\overline{\partial^*\mathscr{A}(0){\cap}\Omega}{\times}[-r,r])$ and 
$\Psi^{-1} \in C^1(\overline{\mathrm{im}\,\Psi})$.
Furthermore, denote by $L_{\pm}(0)$ the tangent line to $\overline{\partial^*\mathscr{A}(0){\cap}\Omega}$ 
at~$c_{\pm}(0)\in\partial \Omega$, respectively, and let $\vec{\tau}_{\pm}(0)\in L_{\pm}(0)$
the associated unit tangent to $\overline{\partial^*\mathscr{A}(0) \cap \Omega}$ 
at~$c_{\pm}(0)\in\partial \Omega$ pointing outside of~$\Omega$ (i.e., 
$c_{\pm}(0) {+} \ell\vec{\tau}_{\pm}(0)\in\Rd[2]\setminus\overline{\Omega}$ for all $0<\ell<r$ for $r$ small).
Denoting by~$\mathbb{H}_{\pm}(0)$ the open half-space given by
$\{x\in\Rd[2]\colon (x{-}c_{\pm}(0))\cdot\tau_{\pm}(0)>0\}$, we
next require that $r$ is small such that 
$\overline{B_{r}(y_{\pm})}\cap\overline{\partial^*\mathscr{A}(0){\cap}\Omega}=\{c_{\pm}(0)\}$
for all $y_{\pm}\in\partial B_{r}(c_{\pm}(0))\cap\overline{\mathbb{H}_{\pm}(0)}$.
By this choice of the scale~$r\in (0,1)$, the set
\begin{align}
\label{eq_extendedBulkInterface}
\widetilde I(0) := \overline{\partial^*\mathscr{A}(0){\cap}\Omega} \cup
\bigcup_{c_{\pm}(0)} \Big(\big(c_{\pm}(0){+}L_{\pm}(0)\big) 
\cap \mathbb{H}_{\pm}(0) \cap \overline{B_{\frac{r}{2}}(c_{\pm}(0))}\Big)
\end{align}
is an embedded, compact and orientable $C^1$-manifold with 
boundary~$\{c_{\pm}(0) {+} \frac{r}{2}\tau_{\pm}(0)\}$
extending the bulk interface~$\partial^*\mathscr{A}(0){\cap}\Omega$. We write~$\widetilde{\mathrm{n}}(\cdot,0)$
for the associated continuous unit normal vector field coinciding with~$\mathrm{n}(\cdot,0)$
along~$\partial^*\mathscr{A}(0){\cap}\Omega$. The second localization scale~$\delta\in (0,1)$
is now chosen sufficiently small such that
\begin{align}
\label{eq_extendedCoordinateChange}
\widetilde\Psi\colon \widetilde I(0) \times [-\delta r,\delta r] \to \Rd[2],
\quad (\widetilde x,\widetilde s) \mapsto \widetilde x + \widetilde s\,\widetilde{\mathrm{n}}(\widetilde x,0)
\end{align}
defines a homeomorphism onto its image~$\mathrm{im}\,\widetilde\Psi$, and such that
$\Omega\setminus\mathrm{im}\,\widetilde\Psi$ decomposes into two non-empty and disjoint connected
components~$\Omega_{\pm}(\widetilde\Psi)$ such that the set $\partial \Omega_{\pm}(\widetilde\Psi) \cap \Omega$
is given by $\widetilde\Psi(\widetilde I(0){\times}\{\pm \delta r\}) \cap \Omega$. 

With two such localization scales~$r,\delta\in (0,1)$ in place, we remark that
the projection onto the second coordinate of the inverse~$\widetilde\Psi^{-1}$
defines a $C^1$-function $\widetilde s$ which inside~$\Omega$ equals the signed
distance to~$\widetilde I(0)$.
Hence, by a slight abuse of notation we may extend $\tilde{s}$ to a $1$-Lipschitz continuous function
on~$\overline{\Omega}$ by means of
\begin{align}
\label{eq_auxSignedDistanceBulkInterface}
\widetilde s(x) :=
\begin{cases}
\pm\dist(x,\widetilde I(0)) & x \in \Omega_{\pm}(\widetilde\Psi), 
\\
\widetilde s(x) & x\in \Omega \cap \mathrm{im}\,\widetilde\Psi,
\end{cases}
\end{align}
which serves as a suitable extension of the signed distance function to the
initial bulk interface~$\partial^*\mathscr{A}(0) \cap \Omega$ (by which we again understand the
projection onto the second coordinate of the inverse~$\Psi^{-1}$).

\textit{Step 2: Definition of initial phase field~$ u_{\eps,0}$.}
Let~$\theta_0\colon\Rd[]\to (-1,1)$ denote the optimal transition
profile associated with the double-well potential~$W$, and fix a scale~$\eps\in (0,1)$.
Recalling the definition~\eqref{eq_auxSignedDistanceBulkInterface},
we then introduce an initial phase field by means of 
\begin{align}
\label{eq_initialPhaseField}
 u_{\eps,0}(x) := \theta_0\Big(\frac{\widetilde s(x)}{\eps}\Big),
\quad x \in \Omega.
\end{align}

\textit{Step 3: Properties of~$ u_{\eps,0}$ and optimal estimates for bulk energy contributions.}
That~$E_{\eps}[ u_{\eps,0}]<\infty$ and~\eqref{eq_boundInitialPhaseField}
hold true follows directly from the definitions~\eqref{eq_auxSignedDistanceBulkInterface}
and~\eqref{eq_initialPhaseField}. In terms of the required estimates, we claim that
\begin{align}
\label{eq:initialDataWellPreparednessBulk1}
\int_{\Omega} \frac{\eps}{2}|\nabla u_{\eps,0}|^2 
+ \frac{1}{\eps} W( u_{\eps,0}) - \nabla(\psi\circ u_{\eps,0})\cdot\xi(\cdot,0) \dx
&\lesssim \eps^2,
\\
\label{eq:initialDataWellPreparednessBulk2}
E_{\mathrm{bulk}}[ u_{\eps,0}|\mathscr{A}(0)] &\lesssim \eps^2.
\end{align}
In particular, in case of the specific choice~\eqref{eq_choiceBoundaryEnergyDensity}
for the boundary energy density, these two bounds immediately imply~\eqref{eq_wellPreparedInitialRelativeEnergy}
with optimal rate~$\eps^2$ since the boundary term in the definition~\eqref{eq_relEnergy}
of the relative energy simply vanishes in the special case~\eqref{eq_choiceBoundaryEnergyDensity}. 

For a proof of~\eqref{eq:initialDataWellPreparednessBulk1},
we split our task into two contributions by decomposing 
$\Omega = (\Omega \cap \{|\widetilde s\,| \geq \delta r\}) \cup (\Omega \cap \{|\widetilde s\,| < \delta r\})$.
By $|\nabla(\psi\circ u_{\eps,0})\cdot\xi(\cdot,0)| 
\leq \sqrt{2W( u_{\eps,0})}|\nabla u_{\eps,0}|$, the generalized chain rule for Lipschitz functions, 
$|\nabla\widetilde s\,|\leq 1$, and Young's inequality, we have
\begin{align*}
&\int_{\Omega {\cap} \{|\widetilde s\,| \,\geq \delta r\}} \frac{\eps}{2}|\nabla u_{\eps,0}|^2 
+ \frac{1}{\eps} W( u_{\eps,0}) - \nabla(\psi\circ u_{\eps,0})\cdot\xi(\cdot,0) \dx
\\&
\leq \frac{2}{\eps} \int_{\Omega {\cap} \{|\widetilde s\,| \,\geq \delta r\}} 
\Big|\theta_0'\Big(\frac{\widetilde s(x)}{\eps}\Big)\Big|^2 + W\Big(
\theta_0\Big(\frac{\widetilde s(x)}{\eps}\Big)\Big) \dx,
\end{align*}
which thanks to $\theta_0'(r)=\sqrt{2W(\theta_0(r))}$
for all~$r\in\Rd[]$ and the exponential decay of~$|\theta_0'|$ upgrades to 
\begin{align}
\label{eq_auxEstimate1}
\int_{\Omega {\cap} \{|\widetilde s\,| \,\geq \delta r\}} \frac{\eps}{2}|\nabla u_{\eps,0}|^2 
+ \frac{1}{\eps} W( u_{\eps,0}) - \nabla(\psi\circ u_{\eps,0})\cdot\xi(\cdot,0) \dx
\lesssim \eps^2.
\end{align}
For an estimate of the contribution from $\Omega \cap \{|\widetilde s\,| < \delta r\}$,
we note that $\nabla u_{\eps,0}(x)=\frac{1}{\varepsilon}\theta_0'(\frac{\widetilde s(x)}{\eps})\widetilde{\mathrm{n}}(P_{\widetilde I(0)}(x),0)$
and thus $\nabla(\psi\circ u_{\eps,0})(x) = \frac{1}{\varepsilon}|\theta_0'(\frac{\widetilde s(x)}{\eps})|^2\widetilde{\mathrm{n}}(P_{\widetilde I(0)}(x),0)$
for all $x\in \Omega \cap \{|\widetilde s\,| < \delta r\} \subset\mathrm{im}\,\widetilde\Psi$,
where the map~$P_{\widetilde I(0)}$ denotes the projection onto the nearest point on~$\widetilde I(0)$.
In particular,
\begin{align}
\nonumber
&\int_{\Omega {\cap} \{|\widetilde s\,| \,< \delta r\}} \frac{\eps}{2}|\nabla u_{\eps,0}|^2 
+ \frac{1}{\eps} W( u_{\eps,0}) - \nabla(\psi\circ u_{\eps,0})\cdot\xi(\cdot,0) \dx
\\& \label{eq_auxEstimate2}
= -\int_{\Omega {\cap} \{|\widetilde s\,| \,< \delta r\}}
\frac{1}{\varepsilon}\Big|\theta_0'\Big(\frac{\widetilde s(x)}{\eps}\Big)\Big|^2
\widetilde{\mathrm{n}}(P_{\widetilde I(0)}(x),0)\cdot\big(\xi(\cdot,0)
- \widetilde{\mathrm{n}}(P_{\widetilde I(0)}(\cdot),0)\big) \dx.
\end{align}
We claim that 
\begin{align}
\label{eq_auxEstimate3}
\big|\widetilde{\mathrm{n}}(P_{\widetilde I(0)}(x),0)\cdot\big(\xi(x,0)
- \widetilde{\mathrm{n}}(P_{\widetilde I(0)}(x),0)\big)\big|
\lesssim \widetilde s^{\,2}(x)
\end{align}
for all $x \in \Omega \cap \{|\widetilde s\,| < \delta r\}$.
Once the estimate~\eqref{eq_auxEstimate3} is established, it follows
in combination with~\eqref{eq_auxEstimate2} and the exponential decay of~$|\theta_0'|$ 
(together with a transformation argument) that
\begin{align}
\nonumber
&\int_{\Omega {\cap} \{|\widetilde s\,| \,< \delta r\}} \frac{\eps}{2}|\nabla u_{\eps,0}|^2 
+ \frac{1}{\eps} W( u_{\eps,0}) - \nabla(\psi\circ u_{\eps,0})\cdot\xi(\cdot,0) \dx
\\& \label{eq_auxEstimate4}
\lesssim \eps\int_{\Omega {\cap} \{|\widetilde s\,| \,< \delta r\}}
\Big|\theta_0'\Big(\frac{\widetilde s(x)}{\eps}\Big)\Big|^2
\frac{\widetilde s^{\,2}(x)}{\eps^2} \dx \lesssim  \eps^2.
\end{align}
Obviously, the estimates~\eqref{eq_auxEstimate1} and~\eqref{eq_auxEstimate4}
then imply the desired bound~\eqref{eq:initialDataWellPreparednessBulk1}, 
so that it remains to verify~\eqref{eq_auxEstimate3}. 

To this end, we observe first that for all 
$x \in \Omega \cap \{|\widetilde s\,| < \delta r\}\subset\mathrm{im}\,\widetilde\Psi$
it holds $\widetilde{\mathrm{n}}(P_{\widetilde I(0)}(x),0) = 
\mathrm{n}(P_{\overline{\partial^*\mathscr{A}(0){\cap} \Omega}}(x),0)$
due to the choice of~$r\in (0,1)$ and the definition of the extended
interface~$\widetilde I(0)$. Hence,
\begin{align*}
&\widetilde{\mathrm{n}}(P_{\widetilde I(0)}(x),0)\cdot\big(\xi(x,0)
- \widetilde{\mathrm{n}}(P_{\widetilde I(0)}(x),0)\big)
\\&
=\mathrm{n}(P_{\overline{\partial^*\mathscr{A}(0){\cap} \Omega}}(x),0)\cdot\big(\xi(x,0)
- \mathrm{n}(P_{\overline{\partial^*\mathscr{A}(0){\cap} \Omega}}(x),0)\big)
\end{align*}
for all $x \in \Omega \cap \{|\widetilde s\,| < \delta r\}$. In particular, 
a Taylor expansion argument based on the conditions~\eqref{eq_consistencyProperty} 
and the regularity~\eqref{eq_regularityXi} entails
\begin{align*}
\big|\widetilde{\mathrm{n}}(P_{\widetilde I(0)}(x),0)\cdot\big(\xi(x,0)
- \widetilde{\mathrm{n}}(P_{\widetilde I(0)}(x),0)\big)\big|
\lesssim 
\dist^2(x,\overline{\partial^*\mathscr{A}(0){\cap} \Omega})
\end{align*}
for all $x \in \Omega \cap \{|\widetilde s\,| < \delta r\}$.
The previous display can be post-processed to~\eqref{eq_auxEstimate3}
since $\dist(\cdot,\overline{\partial^*\mathscr{A}(0){\cap} \Omega}) 
\lesssim |\widetilde s\,|$ in $\Omega \cap \{|\widetilde s\,| < \delta r\}$.
Indeed, the latter claim is trivially true in the 
image $\widetilde\Psi((\overline{\partial^*\mathscr{A}(0){\cap} \Omega}){\times}(-\delta r,\delta r))$
as $\dist(\cdot,\overline{\partial^*\mathscr{A}(0){\cap} \Omega}) = |\widetilde s\,|$ on this set. For the remaining points 
$x\in(\Omega {\cap} \{|\widetilde s\,| {<} \delta r\}) \setminus 
\widetilde\Psi((\overline{\partial^*\mathscr{A}(0){\cap} \Omega}){\times}(-\delta r,\delta r)) \subset \mathrm{im}\,\widetilde\Psi$,
the claim follows from recognizing that for such points 
$P_{\overline{\partial^*\mathscr{A}(0){\cap} \Omega}}(x) \in \{c_{\pm}(0)\}$
and that the angle formed by the vectors $x - P_{\overline{\partial^*\mathscr{A}(0){\cap} \Omega}}(x)$
and $P_{\widetilde I(0)}(x) - P_{\overline{\partial^*\mathscr{A}(0){\cap} \Omega}}(x)$ 
is bounded away from zero uniformly (which
in turn holds true since the bulk interface intersects the domain boundary non-tangentially).

We next turn to the proof of the estimate~\eqref{eq:initialDataWellPreparednessBulk2}.
Recalling~\eqref{eq_bulkError}, we start by plugging in definitions in form of 
\begin{align*}
E_{\mathrm{bulk}}[ u_{\eps,0}|\mathscr{A}(0)]
&= \int_{\mathscr{A}(0)} |\vartheta(\cdot,0)|
\bigg|\int_{\theta_0(\frac{\widetilde s(x)}{\eps})}^{1}
\sqrt{2W(\ell)}\,\mathrm{d}\ell\bigg| \dx
\\&~~~
+ \int_{\Omega\setminus\mathscr{A}(0)} |\vartheta(\cdot,0)|
\bigg|\int_{-1}^{\theta_0(\frac{\widetilde s(x)}{\eps})}
\sqrt{2W(\ell)}\,\mathrm{d}\ell\bigg| \dx,
\end{align*}
so that one obtains the preliminary estimate
\begin{align*}
E_{\mathrm{bulk}}[ u_{\eps,0}|\mathscr{A}(0)]
&\lesssim \int_{\mathscr{A}(0)} |\vartheta(\cdot,0)|
\Big|\theta_0\Big(\frac{\widetilde s(x)}{\eps}\Big) - 1\Big| \dx
\\&~~~
+ \int_{\Omega\setminus\mathscr{A}(0)} |\vartheta(\cdot,0)|
\Big|\theta_0\Big(\frac{\widetilde s(x)}{\eps}\Big) - ({-}1)\Big| \dx.
\end{align*}
Both terms on the right hand side of the previous display can again be treated by decomposing 
$\Omega = (\Omega \cap \{|\widetilde s\,| \geq \delta r\}) \cup (\Omega \cap \{|\widetilde s\,| < \delta r\})$.
Throughout $\Omega \cap \{|\widetilde s\,| \geq \delta r\}$, one then simply capitalizes on the
fact that the optimal profile~$\theta_0(r)$ converges exponentially fast to~$\pm 1$ as $r\to\pm\infty$ 
and that $\chi=0,1$ correlates with the sign of~$\tilde{s}$.
Throughout $\Omega \cap \{|\widetilde s\,| < \delta r\}$, one in addition makes use of the
Lipschitz estimate $|\vartheta(\cdot,0)| \lesssim \dist(\cdot,\overline{\partial^*\mathscr{A}(0){\cap}\Omega})
\lesssim |\widetilde s\,|$ (recall for the first inequality that $\vartheta(\cdot,0)=0$
along $\overline{\partial^*\mathscr{A}(0){\cap}\Omega}$). In summary, 
one obtains~\eqref{eq:initialDataWellPreparednessBulk2}.

\textit{Step 4: Estimate for boundary energy contribution.}
We claim that 
\begin{align}
\label{eq:initialDataWellPreparednessBoundary}
0 \leq \int_{\partial \Omega} \sigma( u_{\eps,0}) - 
\psi( u_{\eps,0})\cos\alpha \dH \lesssim \eps.
\end{align}
Note that together with the estimates from the previous step, we in particular 
obtain the asserted bound~\eqref{eq_wellPreparedInitialRelativeEnergy}
once~\eqref{eq:initialDataWellPreparednessBoundary} is proven.

Due to the definition~\eqref{eq_auxIndicatorPhaseField} and the 
compatibility conditions between~$\sigma$ and~$\psi$ 
at the endpoints~$\pm 1$ from~\eqref{eq_propBoundaryEnergyDensity2},
it follows again from the exponentially fast convergence $\theta_0(r) \to \pm 1$ 
as $r\to\pm\infty$ that the contribution coming from the integration over the set $\partial\Omega \cap
\{|\widetilde s| \geq \delta r\}$ is of higher order compared 
with the claim~\eqref{eq:initialDataWellPreparednessBoundary}.
On $\partial\Omega \cap \{0 \leq |\widetilde s| < \delta r\}$ we can additionally 
use an integral transformation and a scaling argument to 
obtain~\eqref{eq:initialDataWellPreparednessBoundary}.
\end{proof}

\section*{Acknowledgments}
This project has received funding from the European Research Council 
(ERC) under the European Union's Horizon 2020 research and innovation 
programme (grant agreement No 948819)
\begin{tabular}{@{}c@{}}\includegraphics[width=8ex]{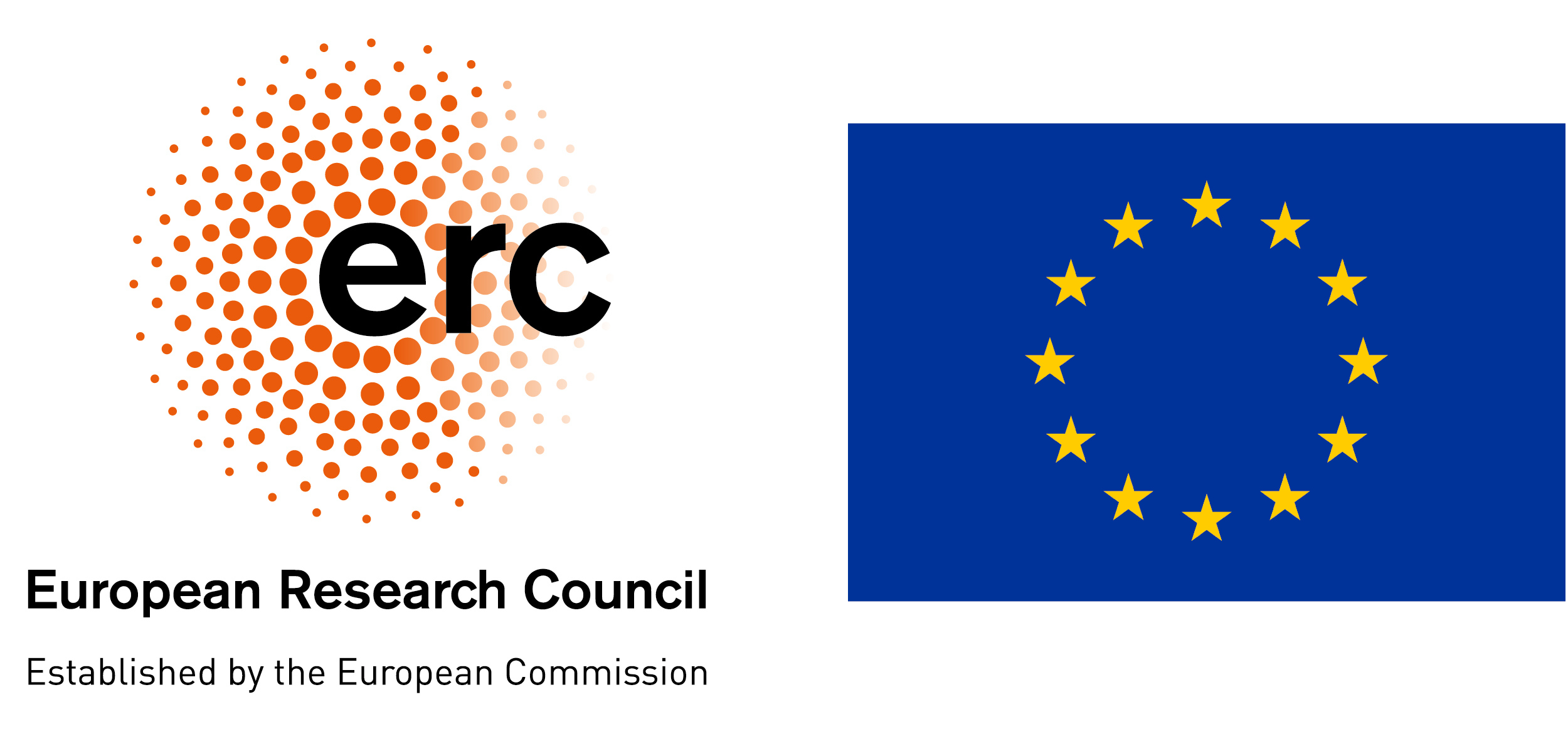}\end{tabular}, and 
from the Deutsche Forschungsgemeinschaft (DFG, German Research Foundation) under 
Germany's Excellence Strategy -- EXC-2047/1 -- 390685813.

\bibliographystyle{abbrv}
\bibliography{Convergence_rates_fixed_contact_angle}

\begin{thebibliography}{10}

\bibitem{Abels2019}
H.~Abels and M.~Moser.
\newblock Convergence of the {A}llen-{C}ahn equation to the mean curvature flow
  with $90^\circ$-contact angle in 2{D}.
\newblock {\em Interfaces Free Bound.}, 21(3):313--365, 2019.
\newblock \href{https://doi.org/10.4171/IFB/425}{doi:10.4171/IFB/425}.

\bibitem{Abels2021}
H.~Abels and M.~Moser.
\newblock Convergence of the {A}llen--{C}ahn equation with a nonlinear {R}obin
  boundary condition to mean curvature flow with contact angle close to
  $90^\circ$.
\newblock {\em To appear in SIAM J. Math. Anal.}, 2021.
\newblock \href{https://arxiv.org/abs/2105.08434}{arXiv:2105.08434}.

\bibitem{bogomolnyi}
E.~B. Bogomol'nyi.
\newblock The stability of classical solutions.
\newblock {\em Soviet J. Nuclear Phys.}, 24(4):861--870, 1976.

\bibitem{Cahn1977}
J.~W. Cahn.
\newblock Critical point wetting.
\newblock {\em J. Chem. Phys.}, 66(8):3667--3672, 1977.
\newblock \href{https://doi.org/10.1063/1.434402}{doi:10.1063/1.434402}.

\bibitem{Chen1992}
X.~Chen.
\newblock Generation and propagation of interfaces for reaction-diffusion
  equations.
\newblock {\em J.\ Differential Equations}, 96(1):116--141, 1992.
\newblock
  \href{https://doi.org/10.1016/0022-0396(92)90146-E}{doi:10.1016/0022-0396(92)90146-E}.

\bibitem{DeMottoni1995}
P.~{De~Mottoni} and M.~Schatzman.
\newblock Geometrical evolution of developed interfaces.
\newblock {\em Trans. Amer. Math. Soc.}, 347(5):1533--1589, 1995.
\newblock \href{https://doi.org/10.2307/2154960}{doi:10.2307/2154960}.

\bibitem{Evans1992}
L.~C. Evans, H.~M. Soner, and P.~E. Souganidis.
\newblock Phase transitions and generalized motion by mean curvature.
\newblock {\em Comm. Pure Appl. Math.}, 45(9):1097--1123, 1992.
\newblock
  \href{https://doi.org/10.1002/cpa.3160450903}{doi:10.1002/cpa.3160450903}.

\bibitem{Fischer2020b}
J.~Fischer and S.~Hensel.
\newblock Weak-strong uniqueness for the {N}avier--{S}tokes equation for two
  fluids with surface tension.
\newblock {\em Arch. Ration. Mech. Anal.}, 236(2):967--1087, 2020.
\newblock
  \href{https://doi.org/10.1007/s00205-019-01486-2}{doi:10.1007/s00205-019-01486-2}.

\bibitem{Fischer2020a}
J.~Fischer, S.~Hensel, T.~Laux, and T.~Simon.
\newblock The local structure of the energy landscape in multiphase mean
  curvature flow: {W}eak-strong uniqueness and stability of evolutions.
\newblock {\em arXiv preprint}, 2020.
\newblock \href{https://arxiv.org/abs/2003.05478}{arXiv:2003.05478v2}.

\bibitem{Fischer2020}
J.~Fischer, T.~Laux, and T.~M. Simon.
\newblock Convergence rates of the {A}llen--{C}ahn equation to mean curvature
  flow: {A} short proof based on relative entropies.
\newblock {\em SIAM J. Math. Anal.}, 52(6):6222--6233, 2020.
\newblock \href{https://doi.org/10.1137/20M1322182}{doi:10.1137/20M1322182}.

\bibitem{Gage1986}
M.~Gage and R.~S. Hamilton.
\newblock The heat equation shrinking convex plane curves.
\newblock {\em J. Differential Geom.}, 23(1):69--96, 1986.
\newblock
  \href{https://doi.org/10.4310/jdg/12144399020}{doi:10.4310/jdg/1214439902}.

\bibitem{Garcke2003}
H.~Garcke.
\newblock On {C}ahn--{H}illiard systems with elasticity.
\newblock {\em Proc. Roy. Soc. Edinburgh}, 133(A):307--331, 2003.
\newblock
  \href{https://doi.org/10.1017/S0308210500002419}{doi:10.1017/S0308210500002419}.

\bibitem{Grayson1987}
M.~A. Grayson.
\newblock The heat equation shrinks embedded plane curves to round points.
\newblock {\em J. Differential Geom.}, 26(2):285--314, 1987.
\newblock
  \href{https://doi.org/10.4310/jdg/1214441371}{doi:10.4310/jdg/1214441371}.

\bibitem{Hensel2021a}
S.~Hensel.
\newblock {\em Curvature driven interface evolution: {U}niqueness properties of
  weak solution concepts}.
\newblock PhD thesis, IST Austria, 2021.
\newblock
  \href{https://doi.org/10.15479/at:ista:10007}{doi:10.15479/at:ista:10007}.

\bibitem{Hensel2021d}
S.~Hensel and T.~Laux.
\newblock {BV} solutions to mean curvature flow with constant contact angle:
  {A}llen--{C}ahn approximation and weak-strong uniqueness.
\newblock {\em arXiv preprint}, 2021.

\bibitem{Hensel2021e}
S.~Hensel and T.~Laux.
\newblock A new varifold solution concept for mean curvature flow:
  {C}onvergence of the {A}llen--{C}ahn equation and weak-strong uniqueness.
\newblock {\em arXiv preprint}, 2021.
\newblock \href{https://arxiv.org/abs/2109.04233}{arXiv:2109.04233}.

\bibitem{Hensel2021}
S.~Hensel and T.~Laux.
\newblock Weak-strong uniqueness for the mean curvature flow of double bubbles.
\newblock {\em arXiv preprint}, 2021.
\newblock \href{https://arxiv.org/abs/2108.01733}{arXiv:2108.01733}.

\bibitem{Fischer2021}
S.~Hensel and A.~Marveggio.
\newblock Weak-strong uniqueness for the {N}avier--{S}tokes equation for two
  fluids with ninety degree contact angle and same viscosities.
\newblock {\em arXiv preprint}, 2021.

\bibitem{ilmanen}
T.~Ilmanen.
\newblock Convergence of the {A}llen--{C}ahn equation to {B}rakke's motion by
  mean curvature.
\newblock {\em J. Differential Geom.}, 38(2):417--461, 1993.
\newblock
  \href{https://doi.org/10.4310/jdg/1214454300}{doi:10.4310/jdg/1214454300}.

\bibitem{Jerrard2015}
R.~Jerrard and D.~Smets.
\newblock On the motion of a curve by its binormal curvature.
\newblock {\em J. Eur. Math. Soc.}, 17(6):1487--1515, 2015.
\newblock \href{https://doi.org/10.4171/JEMS/536}{doi:10.4171/JEMS/536}.

\bibitem{Kagaya2018a}
T.~Kagaya.
\newblock Convergence of the {A}llen--{C}ahn equation with a zero {N}eumann
  boundary condition on non-convex domains.
\newblock {\em Math. Ann.}, 373(3-4):1485--1528, 2018.
\newblock
  \href{https://doi.org/10.1007/s00208-018-1720-x}{doi:10.1007/s00208-018-1720-x}.

\bibitem{Katsoulakis1995}
M.~Katsoulakis, G.~T. Kossioris, and F.~Reitich.
\newblock Generalized motion by mean curvature with {N}eumann conditions and
  the {A}llen-{C}ahn model for phase transitions.
\newblock {\em J. Geom. Anal.}, 5(2):255--279, 1995.
\newblock \href{https://doi.org/10.1007/bf02921677}{doi:10.1007/bf02921677}.

\bibitem{Langer1980}
J.~Langer.
\newblock Instabilities and pattern formation in crystal growth.
\newblock {\em Rev.\ Modern Phys.}, 52(1):1--28, 1980.
\newblock
  \href{https://doi.org/10.1103/RevModPhys.52.1}{doi:10.1103/RevModPhys.52.1}.

\bibitem{LauxSimon18}
T.~Laux and T.~M. Simon.
\newblock Convergence of the {A}llen--{C}ahn equation to multiphase mean
  curvature flow.
\newblock {\em Comm.~Pure~Appl.~Math.}, 71(8):1597--1647, 2018.
\newblock
  \href{http://onlinelibrary.wiley.com/doi/abs/10.1002/cpa.21747}{doi:10.1002/cpa.21747}.

\bibitem{Luckhaus1995}
S.~Luckhaus and T.~Sturzenhecker.
\newblock Implicit time discretization for the mean curvature flow equation.
\newblock {\em Calc. Var. Partial Differential Equations}, 3(2):253--271, 1995.
\newblock \href{https://doi.org/10.1007/BF01205007}{doi:10.1007/BF01205007}.

\bibitem{Markstein1951}
G.~H. Markstein.
\newblock Experimental and theoretical studies of flame front stability.
\newblock {\em J.\ Aero.\ Sci.}, 18:199--209, 1951.
\newblock
  \href{https://doi.org/10.1016/B978-0-08-092523-3.50045-9}{doi:10.1016/B978-0-08-092523-3.50045-9}.

\bibitem{Mizuno2015}
M.~Mizuno and Y.~Tonegawa.
\newblock Convergence of the {A}llen--{C}ahn equation with {N}eumann boundary
  conditions.
\newblock {\em {SIAM} J. Math. Anal.}, 47(3):1906--1932, 2015.
\newblock \href{https://doi.org/10.1137/140987808}{doi:10.1137/140987808}.

\bibitem{Modica1987}
L.~Modica.
\newblock Gradient theory of phase transitions with boundary contact energy.
\newblock {\em Annales de l'I.H.P. Analyse non lin\'eqaire}, 4(5):487--512,
  1987.
\newblock
  \href{https://doi.org/10.1016/s0294-1449(16)30360-2}{doi:10.1016/s0294-1449(16)30360-2}.

\bibitem{Modica1977}
L.~Modica and S.~Mortola.
\newblock Un esempio di gamma-convergenza.
\newblock {\em Bolletino della Unione Matematica Itialana B (5)},
  14(1):285--299, 1977.

\bibitem{Moser2021}
M.~Moser.
\newblock Convergence of the scalar- and vector-valued {A}llen--{C}ahn equation
  to mean curvature flow with $90^\circ$-contact angle in higher dimensions.
\newblock {\em arXiv preprint}, 2021.
\newblock \href{https://arxiv.org/abs/2105.07100}{arXiv:2105.07100}.

\bibitem{Mullins1956}
W.~W. Mullins.
\newblock Two-dimensional motion of idealized grain boundaries.
\newblock {\em J. Appl. Phys.}, 27:900--904, 1956.
\newblock \href{https://doi.org/10.1063/1.1722511}{doi:10.1063/1.1722511}.

\bibitem{Owen1992}
N.~Owen and P.~Sternberg.
\newblock Gradient flow and front propagation with boundary contact energy.
\newblock {\em Proc.\ Math.\ Phys.\ Sci.}, 437:715--728, 1992.

\bibitem{Sapiro2001}
G.~Sapiro.
\newblock {\em Geometric Partial Differential Equations and Image Analysis}.
\newblock Cambridge University Press, 2001.

\bibitem{Simon1986}
J.~Simon.
\newblock Compact sets in the space ${L}^p(0,{T};{B})$.
\newblock {\em Ann. Mat. Pura Appl.}, 146:65--96, 1986.
\newblock \href{https://doi.org/10.1007/BF01762360}{doi:10.1007/BF01762360}.

\end{thebibliography}

\end{document}